\newtheorem{theorem}{Theorem}[section]
\newtheorem{proposition}[theorem]{Proposition}
\newtheorem{lemma}[theorem]{Lemma}
\newtheorem{corollary}[theorem]{Corollary}
\newtheorem{remark}[theorem]{Remark}
\newtheorem{definition}[theorem]{Definition}
\newtheorem{example}[theorem]{Example}
\newtheorem{prop}[theorem]{Proposition}
\theoremstyle{definition}
\newcommand{\edpf} { {$\square$ \par } }
\newcommand\Z{\mathbb{Z}}
\newcommand\R{\mathbb{R}}
\newcommand\C{\mathbb{C}}
\newcommand\N{\mathbb{N}}
\newcommand\g{\frak{g}}
\newcommand\vu{\frak{v}}
\newcommand\n{\frak{n}}
\newcommand{\eps} {\varepsilon}
\begin{document} \title[Asymptotic shape of balls in groups with polynomial growth]{Geometry of locally compact groups of polynomial
growth and shape of large balls.}
\author{Emmanuel Breuillard}
\email{emmanuel.breuillard@math.u-psud.fr}
\address{Universit\'e Paris-Sud 11, Laboratoire de Math\'ematiques, 91405 Orsay, France}
\date{April 2012}

\begin{abstract} We show that any locally compact group $G$ with
polynomial growth is weakly commensurable to some simply connected
solvable Lie group $S$, the Lie shadow of $G$. We then study the shape of
large balls and show, generalizing work of P. Pansu, that after a suitable
renormalization, they converge to a limiting compact set, which is isometric to the unit ball for a left-invariant subFinsler metric on the so-called graded nilshadow of $S$. As by-products, we obtain asymptotics for the volume
of large balls, we prove that balls are Folner and hence that the ergodic theorem holds for all ball averages. Along the way we also answer negatively a question of
Burago and Margulis \cite{Bur2} on asymptotic word metrics and recover some results of Stoll \cite{Sto2} of the rationality of growth series of Heisenberg groups.
\end{abstract}

\maketitle

\setcounter{tocdepth}{1}
\tableofcontents

\section{Introduction}

\subsection{Groups with polynomial growth}

Let $G$ be a locally compact group with left Haar measure $vol_{G}.$ We will
assume that $G$ is generated by a compact symmetric subset $\Omega .$
Classically, $G$ is said to have \textit{polynomial growth }if there exist $%
C>0$ and $k>0$ such that for any integer $n\geq 1$%
\begin{equation*}
vol_{G}(\Omega ^{n})\leq C\cdot n^{k},
\end{equation*}
where $\Omega^n=\Omega_\cdot\ldots \cdot \Omega$ is the $n$-fold product set. Another choice for $\Omega $ would only change the constant $C$, but not the
polynomial nature of the bound. One of the consequences of the analysis
carried out in this paper is the following theorem:

\begin{theorem}[Volume asymptotics]
\label{firsthm}Let $G$ be a locally compact group with polynomial growth and $%
\Omega $ a compact symmetric generating subset of $G.$ Then there exists $%
c(\Omega )>0$ and an integer $d(G)\geq 0$ depending on $G$ only such that
the following holds:
\begin{equation*}
\lim_{n\rightarrow +\infty }\frac{vol_{G}(\Omega ^{n})}{n^{d(G)}}=c(\Omega )
\end{equation*}
\end{theorem}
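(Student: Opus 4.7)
The plan is to derive Theorem \ref{firsthm} as a quantitative consequence of the asymptotic shape result announced in the abstract, using three ingredients: reduction to a Lie shadow, rescaling inside the graded nilshadow, and the homogeneity of Haar measure under the natural dilations.

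First, I would use the structural assertion that $G$ is weakly commensurable to some simply connected solvable Lie group $S$, the Lie shadow. Since weak commensurability is designed to preserve growth type and match Haar measures up to bounded multiplicative factors, it suffices to prove the analogous limit for some compact symmetric generating set $\Omega_S \subset S$ and the left Haar measure $vol_S$; the constant absorbed in this step contributes to $c(\Omega)$ but is independent of $n$.

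Second, I would invoke the asymptotic shape theorem, which identifies the rescaled balls of $S$ with a fixed compact body inside the graded nilshadow $S_\infty$. Recall that $S_\infty$ is a graded simply connected nilpotent Lie group whose Lie algebra splits as $\g_\infty = \bigoplus_{i\geq 1} \vu_i$, giving rise to a one-parameter family of dilations $\delta_t : S_\infty \to S_\infty$. Because the nilshadow construction deforms only the product law and keeps the underlying manifold and its Haar measure unchanged, one may regard $\Omega_S^n$ as a subset of $S_\infty$ as well. The asymptotic shape theorem then asserts that $\delta_{1/n}(\Omega_S^n)$ converges, in the Hausdorff topology on compact subsets of $S_\infty$, to the unit ball $B_\Omega$ of a left-invariant subFinsler metric on $S_\infty$. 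Third, the Haar measure on $S_\infty$ transforms under dilations by the Jacobian $t^{d(G)}$, where
\[
d(G) \;:=\; \sum_{i\geq 1} i \cdot \dim \vu_i
\]
is the homogeneous dimension of the grading. This quantity is an invariant of the graded Lie algebra of the nilshadow, hence depends only on $G$ (recovering the Bass--Guivarc'h exponent). Combining the dilation scaling with the shape convergence gives
\[
\frac{vol_S(\Omega_S^n)}{n^{d(G)}} \;=\; vol\bigl(\delta_{1/n}(\Omega_S^n)\bigr) \;\longrightarrow\; vol(B_\Omega) \;=:\; c(\Omega),
\]
provided the boundary of $B_\Omega$ has Lebesgue measure zero so that volume is continuous along Hausdorff-converging sets. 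That positivity $c(\Omega)>0$ holds because $B_\Omega$ has nonempty interior, being the unit ball of a genuine left-invariant metric.

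The main obstacle is not the short calculation above but the substantial machinery feeding into it. Establishing the Lie shadow of an arbitrary locally compact polynomial growth group requires the full Gromov--Losert--Trofimov structure theory adapted to the non-discrete setting; constructing the graded nilshadow and proving that the rescaled large balls actually converge, as metric subsets of the common underlying manifold, to a subFinsler unit ball is the main technical contribution of the paper. In particular, one must upgrade pointed Gromov--Hausdorff convergence of rescaled metric groups to Hausdorff convergence of ball shapes inside a fixed ambient manifold; this is exactly what allows one to commute volume with the $n\to\infty$ limit, and to verify that the error terms coming from the difference between the group law of $S$ and the nilpotent law on $S_\infty$ are of lower weighted order under $\delta_{1/n}$, so that boundary contributions do not spoil the limit.
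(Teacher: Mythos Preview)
Your outline follows essentially the same route as the paper: reduce to a simply connected solvable Lie shadow $S$ via Theorem~\ref{weaklycom}, apply the main asymptotic shape result (Theorem~\ref{MetComp} and Corollary~\ref{shape}) to see that $\delta_{1/t}(B_\rho(t))$ converges to a fixed compact body, and read off the volume limit from the scaling of Haar measure under $\delta_t$.

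Two points deserve correction. First, in your reduction step you assert that it suffices to prove the limit for ``some compact symmetric generating set $\Omega_S\subset S$''. This is not how the transfer works: the word metric $\rho_\Omega$ on $G$ is pushed, via moding out by a compact normal subgroup and restricting/extending along co-compact subgroups, to a \emph{periodic pseudodistance} $\rho_S$ on $S$ (Proposition~\ref{bdeddistance} and the Claims in \S\ref{pfgen}+1), and there is no reason for $\rho_S$ to be a word metric. The paper stresses precisely this point: even to handle word metrics on $G$ one is forced to prove the asymptotic shape theorem for general periodic (asymptotically geodesic) pseudodistances, because the averaging in (\ref{rhok}) destroys the word-metric structure. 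Second, your appeal to ``the boundary of $B_\Omega$ has Lebesgue measure zero so that volume is continuous along Hausdorff-converging sets'' is unnecessary and not how the paper argues. The metric comparison $\rho(e,x)/d_\infty(e,x)\to 1$ of Theorem~\ref{MetComp} yields directly the sandwich $B_{d_\infty}((1-\varepsilon)t)\subset B_\rho(t)\subset B_{d_\infty}((1+\varepsilon)t)$ for large $t$; since $vol_S(B_{d_\infty}(t))=t^{d(G)}vol_S(\mathcal{C})$ exactly, the volume limit follows without any regularity of $\partial\mathcal{C}$.
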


This extends the main result of Pansu \cite{Pan}. The integer $d(G)$ coincides with the exponent of
growth of a naturally associated graded nilpotent Lie group, the asymptotic cone of $G$, and is
given by the Bass-Guivarc'h formula $(\ref{BG})$ below. The constant $c(\Omega )$ will be
interpreted as the volume of the unit ball of a sub-Riemannian Finsler metric on this nilpotent Lie
group. Theorem \ref {firsthm} is a by-product of our study of the asymptotic behavior of
\textit{periodic pseudodistances} on $G$, that is pseudodistances that are invariant under a
co-compact subgroup of $G$ and satisfy a weak kind of the existence of geodesics axiom (see
Definition \ref{periodic}).

Our first task is to get a better understanding of the structure of locally
compact groups of polynomial growth. Guivarc'h \cite{Gui} proved that locally compact groups of polynomial growth are amenable and unimodular and that every compactly generated\footnote{in fact it follows from the Gromov-Losert structure theory that every closed subgroup is compactly generated.} closed subgroup also has polynomial growth.

Guivarc'h \cite{Gui} and Jenkins \cite{Jen} also characterized connected Lie groups with polynomial growth: a connected Lie group has polynomial growth if
and only if it is of type $(R),$ that is if for all $x\in Lie(S)$, $ad(x)$
has only purely imaginary eigenvalues. Such groups are solvable-by-compact
and any connected nilpotent Lie group is of type $(R)$. 

It is much more difficult to characterize discrete groups with polynomial growth, and this was done in a celebrated paper of Gromov \cite{Gro}, proving that they are virtually nilpotent. Losert \cite{Los} generalized Gromov's method of proof and showed that it applied with little modification to arbitrary locally compact groups with polynomial growth. In particular he showed that they contain a normal compact subgroup modulo which the quotient is a (not necessarily connected) Lie group. We will prove the following refinement.

\begin{theorem}[Lie shadow]
\label{weaklycom}\label{reduction}\label{general}Let $G$ be a locally
compact group of polynomial growth. Then there exists a connected and simply connected
solvable Lie group $S$ of type $(R),$ which is weakly commensurable to $G.$
We call such a Lie group a \textit{Lie shadow} of $G.$
\end{theorem}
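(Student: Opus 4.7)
The plan is to construct the Lie shadow $S$ by a chain of reductions, each of which is a weak commensurability --- namely a quotient by a compact normal subgroup, or the inclusion of a cocompact closed subgroup --- terminating in a simply connected solvable Lie group of type $(R)$.

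First I apply the Gromov--Losert structure theorem quoted in the introduction: there is a compact normal $K \triangleleft G$ such that $H := G/K$ is a Lie group, and the quotient is a weak commensurability. The identity component $H_0$ is open in $H$, so the component group $H/H_0$ is discrete of polynomial growth, hence virtually nilpotent by Gromov. Passing to the preimage of a torsion-free finite-index nilpotent subgroup (another weak commensurability), I may assume $H$ is an extension $1 \to H_0 \to H \to \Gamma \to 1$ with $\Gamma$ finitely generated torsion-free nilpotent; by Mal'cev, $\Gamma$ is a cocompact lattice in a canonical simply connected nilpotent Lie group $N$.

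Meanwhile, by Guivarc'h--Jenkins, the connected group $H_0$ is of type $(R)$, hence solvable-by-compact: its radical $R_0$ is a connected characteristic subgroup with $H_0/R_0$ compact semisimple. Being characteristic in $H_0 \triangleleft H$, the radical $R_0$ is normal in $H$, and $H/R_0$ is an extension of $\Gamma$ by the compact group $H_0/R_0$.

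The main and most delicate step is to assemble $R_0$ and $\Gamma$ into a connected, simply connected solvable Lie group $S$ of type $(R)$ weakly commensurable to $H$. The strategy is to extend the outer action of $\Gamma$ on $R_0$ to a continuous action of its Mal'cev envelope $N$ by $(R)$-preserving automorphisms of $R_0$, and then form the semidirect product $R_0 \rtimes N$; thanks to the lattice embedding $\Gamma \hookrightarrow N$, a cocompact image of $H$ sits naturally inside the resulting Lie group. Existence of the extension to $N$ rests on the fact that the Zariski closure of the image of $\Gamma$ in $\mathrm{Aut}(R_0)$ is compact-by-unipotent --- a consequence of the type $(R)$ hypothesis applied to $H$ --- and that its unipotent part prolongs canonically along the real Mal'cev completion. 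The resulting semidirect product is solvable; verifying that it is of type $(R)$ and passing to a simply connected replacement, after absorbing residual compact normal factors, produces the desired Lie shadow $S$. The principal obstacle is precisely this action-extension construction and the propagation of the type $(R)$ property throughout the assembly.
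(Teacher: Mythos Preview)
Your outline tracks the paper through the first reductions (Losert, trimming the component group to torsion-free nilpotent, isolating the radical $R_0$ of $H_0$), but diverges at the main step. The paper does \emph{not} try to build the Lie shadow directly as a semidirect product $R_0 \rtimes N$. Instead it first replaces $H$ by the kernel of the conjugation action $H \to \mathrm{Aut}(H_0/R_0)$, which is co-compact and has \emph{solvable} identity component; it then shows (via a nilradical $G_N$, a Mal'cev-style argument that $G/G_N$ is virtually abelian, and a torsion-freeness lemma) that a further co-compact subgroup is an $\mathcal{S}$-group in the sense of H.~C.~Wang, and finally invokes Wang's embedding theorem as a black box to produce the simply connected solvable envelope. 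The type $(R)$ property is checked a posteriori from co-compactness.

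Your semidirect-product route has two genuine gaps. First, $\Gamma = H/H_0$ does not act on $R_0$ by automorphisms, only by \emph{outer} automorphisms: conjugation by $H$ gives a map $H \to \mathrm{Aut}(R_0)$, but $H_0$ (indeed $R_0$ itself) already contributes nontrivial inner automorphisms, so there is no homomorphism $\Gamma \to \mathrm{Aut}(R_0)$ whose Zariski closure you can take and prolong to $N$. Second --- and this is the real obstruction --- even granting a genuine action of $N$ on $R_0$, the group $H$ is an \emph{extension} of $\Gamma$ by a connected group, and this extension need not split; there is no mechanism in your sketch for why a co-compact piece of $H$ should embed in the split group $R_0 \rtimes N$. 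Absorbing that extension cocycle into a connected solvable Lie group is exactly the hard content of Wang's theorem, which the paper invokes rather than reproves. Your last sentence correctly flags the action-extension step as the principal obstacle, but the proposal gives no indication of how to overcome it; as written it is a plan rather than a proof.
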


Two locally compact groups are said to be weakly commensurable if, up to
moding out by a compact kernel, they have a common closed co-compact
subgroup. More precisely, we will show that, for some normal compact
subgroup $K$, $G/K$ has a co-compact subgroup $H/K$ which can be embedded as
a closed and co-compact subgroup of a connected and simply connected solvable Lie group $S$
of type $(R).$ 

We must be aware that being weakly commensurable is not an
equivalence relation among locally compact groups (unlike among finitely
generated groups). Additionally, the Lie shadow $S$ is not
unique up to isomorphism (e.g. $\Bbb{Z}^{3}$ is a co-compact lattice in both
$\Bbb{R}^{3}$ and the universal cover of the group of motions of the plane).

We cannot replace the word solvable by the word nilpotent in the above theorem. We refer the reader to Example \ref{exx} for an example of a connected solvable Lie group of type $(R)$ without compact normal subgroups, which admits no co-compact nilpotent subgroup. In fact this is typical for Lie groups of type $(R)$. So in the general locally compact case (or just the Lie case) groups of polynomial growth can be genuinely not nilpotent, unlike what happens in the discrete case. There are important differences between the discrete case and the general case. For example, we will show that no rate of convergence can be expected in Theorem \ref{firsthm} when $G$ is solvable not nilpotent, while some polynomial rate always holds in the nilpotent discrete case \cite{breuillard-ledonne}.

Theorem \ref{weaklycom} will enable us to reduce most geometric questions about locally compact groups of polynomial growth, and in particular the proof of Theorem \ref{firsthm}, to the connected Lie
group case. Observe also that Theorem \ref{weaklycom} subsumes Gromov's
theorem on polynomial growth, because it is not hard to see that a co-compact lattice in a solvable Lie group of
polynomial growth must be virtually nilpotent (see Remark \ref{polattice}). Of
course in the proof we make use of Gromov's
theorem, in its generalized form for locally compact groups due to Losert.
The rest of the proof combines ideas of Y. Guivarc'h, D. Mostow and a crucial embedding theorem of H.C.
Wang. It is given in Paragraph \ref{pfgen} and is largely independent of the rest of the paper.

\subsection{Asymptotic shapes}

The main part of the paper is devoted to the asymptotic behavior of \emph{periodic pseudodistances} on $G$. We refer the reader to Definition \ref{periodic} for the precise definition of this term, suffices it to say now that it is a class of pseudodistances which contains both left-invariant word metrics on $G$ and geodesic metrics on $G$ that are left-invariant under co-compact subgroup of $G$.

Theorem \ref{weaklycom} enables us to assume that $G$ is a co-compact subgroup of a simply connected solvable Lie
group $S,$ and rather than looking at pseudodistances on $G$, we will look
at pseudodistances on $S$ that are left-invariant under a co-compact
subgroup $H$. More precisely a direct consequence of Theorem \ref{general} is the following:

\begin{proposition}\label{bdeddistance} Let $G$ be a locally compact group with polynomial growth and $\rho$ a periodic metric on $G$. Then $(G,\rho)$ is $(1,C)$-quasi-isometric to $(S,\rho_S)$ for some finite $C>0$, where $S$ is a connected and simply connected solvable Lie group of type $(R)$ and $\rho_S$ some periodic metric on $S$.
\end{proposition}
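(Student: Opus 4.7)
The plan is to chain three $(1,C)$-quasi-isometries obtained by unwinding Theorem~\ref{weaklycom}, which yields a compact normal subgroup $K\triangleleft G$, a closed co-compact subgroup $H\leq G$ containing $K$, and an embedding of $H/K$ as a closed co-compact subgroup of the simply connected solvable type-$(R)$ Lie group $S$. Accordingly I transfer $\rho$ from $G$ to $S$ in three stages: first to $G/K$, then restricting to $H/K$, and finally extending to $S$.

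The first two stages are routine. Since $\rho$ is a periodic pseudodistance it is locally bounded, and combined with the compactness of $K$ this yields a finite constant bounding $\rho(x,xk)$ uniformly in $x\in G$ and $k\in K$. Hence the quotient $\bar\rho(xK,yK):=\inf_{k,k'\in K}\rho(xk,yk')$ differs from $\rho$ by a bounded amount, making $(G,\rho)\to(G/K,\bar\rho)$ a $(1,C_1)$-QI; moreover $\bar\rho$ inherits an invariance group that is co-compact in $G/K$. Co-compactness of $H/K$ in $G/K$ then makes the inclusion $(H/K,\bar\rho|_{H/K})\hookrightarrow(G/K,\bar\rho)$ a $(1,C_2)$-QI, with $C_2$ the $\bar\rho$-diameter of a compact transversal.

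The main construction is Stage~3: building a periodic pseudometric $\rho_S$ on $S$ whose restriction to $H/K$ is exactly $\bar\rho|_{H/K}$ and which makes $H/K$ even $0$-dense in $(S,\rho_S)$, so that the inclusion $(H/K,\bar\rho|_{H/K})\hookrightarrow(S,\rho_S)$ is a $(1,0)$-QI. Let $J\leq H/K$ be a co-compact subgroup under which $\bar\rho$ is left-invariant; since $H/K$ is co-compact in $S$, $J$ is co-compact in $S$. Fix a relatively compact Borel fundamental domain $F\subset S$ for the left $J$-action, so that every $s\in S$ has a unique decomposition $s=j(s)f(s)$ with $j(s)\in J$, $f(s)\in F$; note that $f(\bar h)\in F\cap H/K$ automatically for $\bar h\in H/K$. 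Define a Borel retraction $\bar h_0:F\to H/K$ that is the identity on $F\cap H/K$ and assigns to any $f\in F\setminus H/K$ a nearby element of $H/K$. Then the map $\bar h:S\to H/K$ given by $\bar h(s):=j(s)\bar h_0(f(s))$ is $J$-equivariant and fixes $H/K$ pointwise, so the pullback $\rho_S(s_1,s_2):=\bar\rho(\bar h(s_1),\bar h(s_2))$ is a $J$-invariant (hence periodic) pseudometric on $S$ which agrees with $\bar\rho$ on $H/K$ and satisfies $\rho_S(s,\bar h(s))=0$ for all $s\in S$. Composing the three QIs then gives the proposition. The main obstacle will be verifying the weak geodesic axiom of Definition~\ref{periodic} for this $\rho_S$; since it is pulled back from $\bar\rho$ along a retraction onto the closed subgroup $H/K$, geodesic-like sequences should lift along the section $H/K\hookrightarrow S$, but this verification requires care against the precise form of the axiom and may require replacing $\rho_S$ by a path-metric modification without disturbing its exact agreement with $\bar\rho$ on $H/K$.
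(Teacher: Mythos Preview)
Your three-stage chain is exactly the route the paper takes in \S\ref{locg}: pass to $G/K$, restrict to the co-compact $H/K$, then extend to $S$ via a fundamental domain (the paper even records the restriction and extension steps as the ``basic properties'' (4) and (5) in \S\ref{periodic-metrics}, so your worry at the end about the asymptotically geodesic axiom for $\rho_S$ is already handled there). One small difference: the paper first replaces $\rho$ by its restriction to its full stabilizer, reducing to the left-invariant case before quotienting; this is cosmetic.

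There is, however, a genuine technical slip in Stage~1. Your $\bar\rho(xK,yK):=\inf_{k,k'\in K}\rho(xk,yk')$ need not satisfy the triangle inequality: the right $K$-action is not by $\rho$-isometries, so the optimal midpoint $b\in yK$ for the leg $xK\to yK$ may differ from the optimal one for $yK\to zK$, and you only get $\bar\rho(xK,zK)\leq \bar\rho(xK,yK)+\bar\rho(yK,zK)+C_0$ with the constant from Lemma~\ref{bdedistance}. Since $\rho_S$ is pulled back from $\bar\rho$, it would then fail to be a pseudodistance and hence not a periodic metric in the sense of Definition~\ref{periodic}. The paper avoids this by using the $K$-average $\rho^{K}(x,y)=\int_{K\times K}\rho(xk_1,yk_2)\,dk_1dk_2$ of (\ref{average}), which \emph{is} an honest pseudodistance (Fubini gives the triangle inequality) and is still at bounded distance from $\rho$; swapping your $\bar\rho$ for $\rho^{K}$ fixes the argument with no other changes.
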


Recall that two metric spaces $(X,d_X)$ and $(Y,d_Y)$ are called $(1,C)$-quasi-isometric if there exists a map $\phi:X \to Y$ such that any $y\in Y$ is at distance at most $C$ from some element in the image of $\phi$ and if $|d_Y(\phi(x),\phi(x')) - d_X(x,x')| \leq C$ for all $x,x' \in X$. 

In the case when $S$ is $\Bbb{R}^{d}$ and $H$ is $\Bbb{Z}^{d}$, it is a simple
exercise to show that any periodic pseudodistance is asymptotic to a norm on
$\Bbb{R}^{d},$ i.e. $\rho (e,x)/\left\| x\right\| \rightarrow 1$ as $%
x\rightarrow \infty $, where $\left\| x\right\| =\lim \frac{1}{n}\rho (e,nx)$ is
a well defined norm on $\Bbb{R}^{d}$. Burago in \cite{Bur} showed a much finer
result, namely that if $\rho $ is coarsely geodesic, then $\rho (e,x)-\left\|
x\right\| $ is bounded when $x$ ranges over $\Bbb{R}^{d}.$ When $S$ is a
nilpotent Lie group and $H$ a lattice in $S,$ then Pansu proved in his thesis
\cite{Pan}, that a similar result holds, namely that $ \rho (e,x)/\left|
x \right| \rightarrow 1$ for some (unique only after a choice of a one-parameter
group of dilations) homogeneous quasi-norm $\left| x\right| $ on the nilpotent
Lie group. However, we show in Section \ref {speed}, that it is not true in
general that $\rho (e,x)-\left| x\right| $ stays bounded, even for finitely
generated nilpotent groups, thus answering a question of Burago (see also Gromov
\cite{Gro3}). Our main purpose here will be to extend Pansu's result to
solvable Lie groups of polynomial growth.\\

As was first noticed by Guivarc'h in his thesis \cite{Gui}, when dealing
with geometric properties of solvable Lie groups, it is useful to consider
the so-called nilshadow of the group, a construction first introduced by
Auslander and Green in \cite{AG}. According to this construction, it is
possible to modify the Lie product on $S$ in a natural way, by so to speak
removing the semisimple part of the action on the nilradical, in order to
turn $S$ into a nilpotent Lie group, its nilshadow $S_{N}$. The two Lie
groups have the same underlying manifold, which is diffeomorphic to $\Bbb{R}%
^{n},$ only a different Lie product. They also share the same Haar measure.
This ``semisimple part'' is a commutative relatively compact subgroup $T(S)$
of automorphisms of $S$, image of $S$ under a homomorphism $T:S\rightarrow
Aut(S)$. The new product $g*h$ is defined as follows by twisting the old one $g \cdot h$ by means of $%
T(S)$,
\begin{equation}\label{nilproduct}
g*h:=g\cdot T(g^{-1})h
\end{equation}
The two groups $S$ and $S_{N}$ are easily seen to be
quasi-isometric, and this is why any locally compact group of polynomial
growth $G$ is quasi-isometric to some nilpotent Lie group. In particular, their asymptotic cones are bi-Lipschitz. The asymptotic cone of a nilpotent Lie group is a certain associated graded nilpotent Lie group endowed with a left invariant geodesic distance (or Carnot group). The graded group associated to $S_N$ will be called the \emph{graded nilshadow} of $S$. Section \ref{nilshadow} will be devoted to the construction and basic properties of the nilshadow and its graded group.

In this paper, we are dealing with a finer relation than quasi-isometry. We will be interested in when do two left invariant (or periodic) distances are asymptotic\footnote{Yet a finer equivalence relation is $(1,C)$-quasi-isometry, i.e. being at bounded distance in Gromov-Hausdorff metric; classifying periodic metrics up to this kind of equivalence is much harder.} (in the sense that $\frac{d_1(e,g)}{d_2(e,g)} \to 1$ when $g \to \infty$). In particular, for every locally compact group $G$ with polynomial growth, we will identify its asymptotic cone up to isometry and not only up to quasi-isometry or bi-Lipschitz equivalence (see Corollary \ref{asycones} below). One of our main results is the following:

\begin{theorem}[Main theorem]\label{main-theorem}\label{MetComp} Let $S$ be a simply connected solvable Lie group with polynomial growth. Let $\rho(x,y)$ be periodic pseudodistance on $S$ which is invariant under a co-compact subgroup $H$ of $S$ (see Def. \ref{periodic}). On the manifold $S$, one can put a new Lie group structure, which turns $S$ into a stratified nilpotent Lie group, the graded nilshadow of $S$, and a subFinsler metric $d_\infty(x,y)$ on $S$ which is left-invariant for this new group structure such that
$$\frac{\rho(e,g)}{d_\infty(e,g)} \to 1$$
as $g \to \infty$ in $S$. Moreover every automorphism in $T(H)$ is an isometry of $d_\infty$.
\end{theorem}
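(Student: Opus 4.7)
My plan is to reduce the statement to the nilpotent setting via the nilshadow, then adapt Pansu's asymptotic cone construction. By Proposition \ref{bdeddistance} I may already assume the ambient group is $S$, a simply connected solvable Lie group of type $(R)$. The first step is to transfer $\rho$ to the nilshadow $S_N$. Since $T(H)$ sits in the compact abelian group $\overline{T(S)}$ of automorphisms of $S$, averaging $\rho$ over $\overline{T(H)}$ yields a new periodic pseudodistance $\bar\rho$ which remains at bounded distance from $\rho$ (hence asymptotic to it), and which is $T(H)$-invariant. Because left translation by $h\in H$ in $S_N$ equals left translation by $h$ in $S$ composed with $T(h^{-1})$, the $T(H)$-invariance makes $\bar\rho$ left-$H$-invariant for the nilshadow product $*$. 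So it suffices to prove the theorem assuming $S=S_N$ is nilpotent.

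Next, I pass to the graded nilshadow $S_\infty$, which shares the manifold of $S_N$ and carries the one-parameter group of dilations $\delta_t$ adapted to the lower central series. Following Pansu, I define the rescaled pseudodistances $\rho_t(x,y):=\tfrac{1}{t}\rho(\delta_tx,\delta_ty)$ and claim they converge, uniformly on compacts of $S_\infty\times S_\infty$, to a left-$S_\infty$-invariant, $\delta_t$-homogeneous pseudodistance $d_\infty$. For $x$ in the first stratum of the Lie algebra of $S_\infty$, the function $n\mapsto\rho(e,\exp(nx))$ is almost subadditive (up to an additive constant controlled by periodicity and by the choice of an $H$-fundamental domain), so Fekete's lemma gives a limit $\|x\|:=\lim_n \tfrac{1}{n}\rho(e,\delta_n\exp(x))$. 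One checks $\|\cdot\|$ is a norm on the first stratum. The key point is that horizontal paths determine the large-scale geometry: any $g\in S_\infty$ can be approximated by a product of horizontal increments whose rescaled $\rho$-lengths sum to the Carnot-Carath\'eodory length associated to $\|\cdot\|$, while non-horizontal components rescale by strictly smaller powers of $t$ and so become negligible as $t\to\infty$. This both produces $d_\infty$ as the subFinsler metric attached to $\|\cdot\|$ and establishes uniform convergence $\rho_t\to d_\infty$ on compacts, exploiting coarse geodesicity of $\rho$ inherited from the periodicity assumption.

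The asymptotic statement is then a compactness argument. Suppose $g_n\to\infty$ in $S$; set $t_n:=d_\infty(e,g_n)\to\infty$ and $h_n:=\delta_{1/t_n}(g_n)$, which lies on the $d_\infty$-unit sphere. Passing to a subsequence so that $h_n\to h_\infty$ on this compact sphere, the uniform convergence gives
\[
\frac{\rho(e,g_n)}{d_\infty(e,g_n)}=\rho_{t_n}(e,h_n)\longrightarrow d_\infty(e,h_\infty)=1,
\]
proving the ratio tends to $1$ independently of the subsequence. Finally, $T(H)$ acts by isometries of $d_\infty$ because, after the averaging step, it acts by isometries of $\bar\rho$ and commutes with $\delta_t$ on the graded nilshadow. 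The main obstacle is the second step: showing that the rescaled metrics $\rho_t$ have a well-defined limit on all of $S_\infty$, not merely along horizontal one-parameter subgroups. This requires combining approximate subadditivity with the fact that periodicity of $\rho$ provides enough ``coarse geodesics'' to ensure that horizontal polygonal approximations of arbitrary elements are $\rho$-efficient up to lower-order corrections --- this is where the geometry of the graded nilshadow, and not just of the nilshadow, is essential.
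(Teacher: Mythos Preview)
Your reduction to the nilpotent case has a genuine gap. You claim that averaging $\rho$ over $K=\overline{T(H)}$ produces a pseudodistance $\bar\rho$ at \emph{bounded distance} from $\rho$. This is false: $K$ acts by automorphisms, not by translations, so there is no analogue of Lemma~\ref{bdedistance} here. In the example of $\Bbb{Z}\ltimes_\alpha\Bbb{R}^2$ from \S\ref{cones} with the ellipse generating set, $\rho(e,k(0,x))\approx\|R_\theta x\|$ differs from $\rho(e,(0,x))\approx\|x\|$ by a quantity of order $\|x\|$, not $O(1)$. What is true, and what the paper actually proves in Proposition~\ref{invK}, is that $\rho$ is \emph{asymptotically} $K$-invariant: $\rho(e,k(x))/\rho(e,x)\to 1$. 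This is the heart of the reduction and is not free; it relies on the crucial observation (Lemma~\ref{lem2}, reducing to Lemma~\ref{lem4}) that conjugation by a bounded element induces only sublinear distortion, so that the action of $T(h)$ can be approximated by left translation by $h$. You have skipped this entirely.

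There is a second, related problem: your single average over $K$ destroys $H$-left-invariance. Indeed $\bar\rho(hx,hy)=\int_K\rho(k(h)k(x),k(h)k(y))\,dk$, and $k(h)$ need not lie in $H$, so you cannot invoke $H$-invariance of $\rho$. Thus your $\bar\rho$ is $K$-invariant but not periodic for either Lie structure, and the argument that it is left-$H$-invariant for the nilshadow product collapses. The paper handles this by a \emph{double} averaging (equation~(\ref{rhok})): first over $K$, then over $H\backslash H_K$ where $H_K$ is the closed subgroup generated by all $k(h)$. Only this second average restores periodicity, and verifying that the resulting $\rho_K$ is still asymptotic to $\rho$ again requires Proposition~\ref{invK} together with the fact that $H_K$ lies in the ``asymptotic stabilizer'' $L$ of $\rho$ (proof of Proposition~\ref{pop}). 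Your treatment of the nilpotent case in Step~2 is broadly in the spirit of Pansu's argument (Theorem~\ref{Pansu}), though sketchy; but the reduction step as written does not go through.
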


The reader who wishes to see a simple illustration of this theorem can go directly to subsection \ref{cones}, where we have treated in detail a specific example of periodic metric on the universal cover of the groups of motions of the plane.

The new stratified nilpotent Lie group structure on $S$ given by the graded nilshadow comes with a one-parameter family of so-called \emph{homogeneous dilations} $\{\delta_t\}_{t>0}$. It also comes with an extra group of automorphisms, namely the image of $H$ under the homomorphism $T$. This yields automorphisms of $S$ for both the original group structure on $S$ and the new graded nilshadow group structure. Moreover the dilations $\{\delta_t\}_{t>0}$ are automorphisms of the graded nilshadow and they commute with $T(H)$.

A subFinsler metric is a geodesic distance which is defined exactly as subRiemannian (or Carnot-Caratheodory) metrics on Carnot groups are defined (see  e.g. \cite{Monty}), except that the norm used to compute the length of horizontal paths is not necessarily a Euclidean norm. We refer the reader to Section \ref{CCmetrics} for a precise definition.

In Theorem \ref{main-theorem} the subFinsler metric $d_\infty$ is left invariant for the new Lie structure on $S$ and it is also invariant under all automorphisms in $T(H)$ (these form a relatively compact commutative group of automorphisms). Moreover it satisfies the following pleasing scaling law: $$d_\infty(\delta_t(x),\delta_t(y))=td_\infty(x,y) \textnormal{ } \forall t>0.$$\\

The proof of Theorem \ref{main-theorem} splits in two important steps. The first is a reduction to the nilpotent case and is performed in Section \ref{nilpotent-reduction}. Using a double averaging of the pseudodistance $\rho$ over both $K:=\overline{T(H)}$ and $S/H$, we construct an associated pseudodistance, which is periodic for the nilshadow structure on $S$ (i.e. left-invariant by a co-compact subgroup for this structure), and we prove that it is asymptotic to the original $\rho$. This reduces the problem to nilpotent Lie groups. The key to this reduction is the following crucial observation: that unipotent automorphisms of $S$ induce only a sublinear distortion, forcing the metric $\rho$ to be asymptotically invariant under $T(H)$. The second step of the proof assumes that $S$ is nilpotent. This part is dealt with in Section \ref{nilpotent-case} and is essentially a reformulation of the arguments used by Pansu in \cite{Pan}.
\\

Incidently, we stress the fact that the generality in which Section \ref{nilpotent-case} is treated (i.e. for general coarsely geodesic, and even asymptotically geodesic periodic metrics) is necessary to prove even the most basic case (i.e. word metrics) of Theorem \ref{main-theorem} for non-nilpotent solvable groups. So even if we were only interested in the asymptotics of left invariant word metrics on a solvable Lie group of polynomial growth $S$, we would still need to understand the asymptotics of arbitrary coarsely geodesic left invariant distances (and not only word metrics!) on nilpotent Lie groups. This is because the new pseudodistance obtained by averaging, see $(\ref{rhok})$, is no longer a word metric.\\

The subFinsler metric $d_{\infty }(e,x)$ in the above theorem is induced by a certain $T(H)$-invariant norm on the first stratum $m_1$ of the graded nilshadow (which is $T(H)$-invariant complementary subspace of the commutator subalgebra of the nilshadow). This norm can be described rather explicitly as follows.

Recall that we have\footnote{The subspace $m_1$ can be identified with the abelianized nilshadow (or abelianized graded nilshadow) by first identifying the nilshadow with its Lie algebra via the exponential map and then projecting modulo the commutator subalgebra. The map does not depend on the choice involved in the construction of the nilshadow. See also Remark \ref{indept}.} a canonical map $\pi_1: S \to m_1$, which is a group homomorphism for both the nilshadow and graded nilshadow structures. Then:
\begin{equation*}
\{v\in m_{1},\left\| v\right\| _{\infty}\leq 1\}=\bigcap_{ F \subset S}\overline{CvxHull}\left\{ \frac{%
\pi _{1}(h)}{\rho(e,h)},h\in H\backslash F \right\},
\end{equation*}
where the right hand side is the intersection over all compact subsets $F$ of $S$ of the closed convex hull of the points $\pi_1(h)/\rho(e,h)$ for $h \in H \backslash F$.\\

Figure 1 gives an illustration of the limit shape corresponding to the word metric on the $3$-dimensional discrete Heisenberg group with standard generators. We explain in the Appendix how one can compute explicitly the geodesics of the limit metric and the limit shape in this example.

When $S$ itself is nilpotent to begin with and $\rho $ is (in restriction to $H$) the word metric associated
to a symmetric compact generating set $\Omega $ of $H$ (namely $\rho_\Omega(e,h):=\inf\{n \in \N; h \in \Omega^n\}$), the above norm takes the
following simple form:
\begin{equation}
\{v\in m_{1},\left\| v\right\| _{\infty}\leq 1\}=CvxHull\left\{ \pi
_{1}(\omega ),\omega \in \Omega \right\}   \label{WordMetricBall}
\end{equation}
For instance, in the special case when $H$ is a torsion-free finitely generated nilpotent
group with generating set $\Omega $ and $S$ is its Malcev closure, the unit ball $\{v\in
m_{1},\left\| v\right\| _{\infty}\leq 1\}$ is a polyhedron in $m_{1}.$ This was Pansu's description in \cite{Pan}.

However when $S$ is not nilpotent, and is equipped with a word metric $\rho_\Omega$ on a co-compact subgroup, then the determination of the limit shape, i.e. the determination of the limit norm $\|\cdot\|_\infty$ on the abelianized nilshadow, is much more difficult. Clearly $\|\cdot\|_\infty$ is $K$-invariant and it is a simple observation that the unit ball for $\|\cdot\|_\infty$ is always contained in the convex hull of the $K$-orbit of $\pi_1(\Omega)$. Nevertheless the unit ball is typically smaller than that (unless $\Omega$ was $K$-invariant to begin with).

In general it would be interesting to determine whether there exists a simple description of the limit shape of an arbitrary word metric on a solvable Lie group with polynomial growth. We refer the reader to Section \ref{speed} and Paragraph \ref{solshape} for an example of a class of word metrics on the universal cover of the group of motions of the plane, for which we were able to compute the limit shape.





Another by-product of Theorem \ref{main-theorem} is the following result.

\begin{corollary}[Asymptotic shape] \label{shape}Let $S$ be a simply connected solvable Lie
group with polynomial growth and $H$ a co-compact subgroup. Let $\rho$ be an $H$-periodic pseudodistance on $S$.
Then in the Hausdorff metric,
\begin{equation*}
\lim_{t\rightarrow +\infty }\delta _{\frac{1}{t}}(B_{\rho }(t))=\mathcal{C},
\end{equation*}
where $\mathcal{C}$ is a $T(H)$-invariant compact neighborhood of the identity in $S$, $B_{\rho }(t)$ is the $\rho $-ball of radius $t$ in $S$ and $\{\delta _{t}\}_{t>0}$ is a one-parameter group of dilations on $S$ (equipped with the graded nilshadow structure). Moreover, $\mathcal{C}=\left\{ g\in S,d_{\infty }(e,g)\leq 1\right\} $ is
the unit ball of the limit subFinsler metric from Theorem \ref{main-theorem}.
\end{corollary}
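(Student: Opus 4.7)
The plan is to deduce the Corollary from the main asymptotic formula of Theorem \ref{main-theorem} by a change of variable. Since $\{\delta_t\}_{t>0}$ is a one-parameter group of bijections of $S$, one has
\[
\delta_{\frac{1}{t}}(B_\rho(t)) = \{h \in S : \rho(e, \delta_t(h)) \leq t\}.
\]
The scaling law $d_\infty(e, \delta_t(h)) = t \cdot d_\infty(e, h)$ together with Theorem \ref{main-theorem} gives $\rho(e, \delta_t(h))/t \to d_\infty(e, h)$ for each fixed $h \neq e$ (since then $\delta_t(h) \to \infty$ as $t \to \infty$ because $d_\infty$ is proper), and so the target set $\mathcal{C} = \{h : d_\infty(e, h) \leq 1\}$ emerges pointwise. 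The task is to upgrade this to Hausdorff convergence.

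The first step is to make the convergence uniform on annuli. For $0 < \delta < R$, on $A_{\delta, R} = \{h : \delta \leq d_\infty(e, h) \leq R\}$ the point $\delta_t(h)$ has $d_\infty$-norm at least $t\delta$, so it escapes every compact set uniformly in $h$ as $t \to \infty$. Hence Theorem \ref{main-theorem} yields $\rho(e, \delta_t(h))/(t \cdot d_\infty(e, h)) \to 1$ uniformly on $A_{\delta, R}$. Combined with the fact that $\mathcal{C}$ is compact (as $d_\infty$ is a genuine proper metric inducing the manifold topology on $S$), this uniform estimate gives both one-sided Hausdorff inclusions. Concretely: if $h \in \delta_{\frac{1}{t}}(B_\rho(t))$ had $d_\infty(e, h) > 1 + \varepsilon$, then from $\rho(e, \delta_t(h)) \leq t$ and the uniform asymptotic we would get $t \geq (1 - o(1)) t \cdot d_\infty(e, h) > (1 + \varepsilon/2) t$ for $t$ large, a contradiction; hence $\delta_{\frac{1}{t}}(B_\rho(t)) \subseteq N_\varepsilon(\mathcal{C})$ eventually. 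For the reverse inclusion, given $h \in \mathcal{C}$ with $d_\infty(e, h) \geq \varepsilon$, set $h_t = \delta_{1 - 1/\sqrt{t}}(h)$; then $\rho(e, \delta_t(h_t)) \leq t$ for $t$ large by the same uniform estimate, so $h_t \in \delta_{\frac{1}{t}}(B_\rho(t))$, while $d_\infty(h, h_t) = (1/\sqrt{t}) d_\infty(e, h) \to 0$ uniformly. Points $h$ with $d_\infty(e, h) < \varepsilon$ are already $\varepsilon$-close to $e \in \delta_{\frac{1}{t}}(B_\rho(t))$, so $\mathcal{C} \subseteq N_\varepsilon(\delta_{\frac{1}{t}}(B_\rho(t)))$.

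The $T(H)$-invariance of $\mathcal{C}$ is then immediate from Theorem \ref{main-theorem}: each $\tau \in T(H)$ is a $d_\infty$-isometry fixing $e$ (being an automorphism of the graded nilshadow), so it preserves the unit ball. The only technical difficulty is precisely the uniformity near the identity: the asymptotic $\rho(e, g) \sim d_\infty(e, g)$ from Theorem \ref{main-theorem} only controls $g$ far from $e$, and a naive rescaling would require uniform control of $\rho$ near the identity, which is not available. The resolution, as above, is to split $\mathcal{C}$ into a small $d_\infty$-ball about $e$ (handled by the trivial inclusion $e \in \delta_{\frac{1}{t}}(B_\rho(t))$) and its complement (where the escape rate $d_\infty(e, \delta_t(h)) \geq t\delta$ makes Theorem \ref{main-theorem} applicable uniformly).
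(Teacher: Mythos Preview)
Your argument is correct in outline and morally equivalent to the paper's, but the paper's proof is far shorter: from $\rho(e,g)/d_\infty(e,g)\to 1$ one gets directly, for every $\varepsilon>0$ and all large $t$, the sandwich $B_{d_\infty}((1-\varepsilon)t)\subset B_\rho(t)\subset B_{d_\infty}((1+\varepsilon)t)$ (points in the bounded region where the asymptotic does not yet apply are absorbed once $t$ is large). Applying $\delta_{1/t}$ and using the scaling law gives $B_{d_\infty}(1-\varepsilon)\subset \delta_{1/t}(B_\rho(t))\subset B_{d_\infty}(1+\varepsilon)$, and since $d_\infty$ is geodesic these concentric balls are within Hausdorff distance $\varepsilon$ of $\mathcal{C}$. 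This bypasses your change of variables, the annulus decomposition, and the separate treatment of the region near $e$. Your route works too, but note one slip: the claimed equality $d_\infty(h,h_t)=(1/\sqrt{t})\,d_\infty(e,h)$ is false in general, since the curve $s\mapsto \delta_s(h)$ is not horizontal unless $h\in m_1$. What you actually need (and what holds) is that $d_\infty(h,\delta_s(h))\to 0$ uniformly for $h\in\mathcal{C}$ as $s\to 1$, which follows from joint continuity of $(s,h)\mapsto\delta_s(h)$ on the compact set $[1/2,1]\times\mathcal{C}$; alternatively, use that $d_\infty$ is geodesic to move along a geodesic from $e$ to $h$ rather than along the dilation orbit.
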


\proof%
By Theorem \ref{main-theorem}, for every $\varepsilon >0$ we have $B_{d_{\infty
}}(t-\varepsilon t)\subset B_{\rho }(t)\subset B_{d_{\infty }}(t+\varepsilon
t)$ if $t$ is large enough. Since $\delta _{\frac{1}{t}}(B_{d_{\infty }}(t))=%
\mathcal{C}$, for all $t>0,$ we are done.%
\endproof

\begin{figure}\label{fig1}
\begin{center}
\includegraphics[scale=.5]{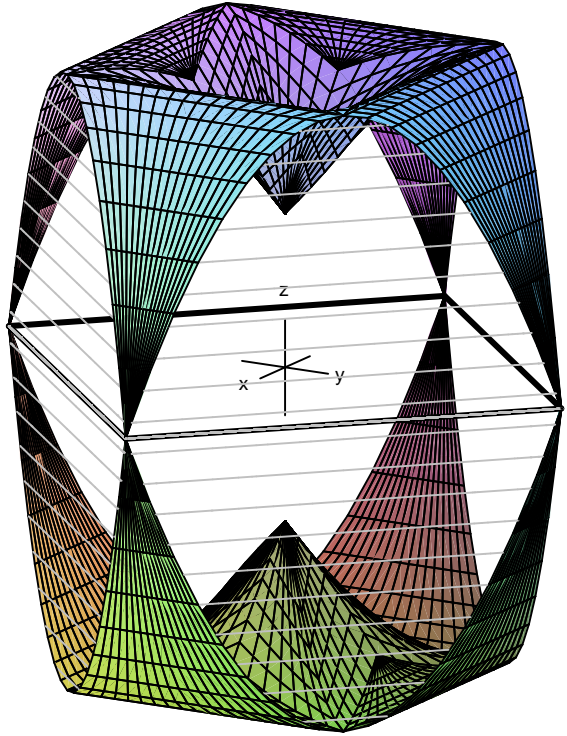}
\caption{The asymptotic shape of large balls in the Cayley graph of the Heisenberg group
$H(\Bbb{Z})=\left\langle x,y|[x,[x,y]]=[y,[x,y]]=1\right\rangle $ viewed in exponential coordinates.}
\end{center}
\end{figure}


Combining this with Theorem \ref{weaklycom}, we also get the following corollary, of which Theorem \ref{firsthm} is only
a special case with $\rho $ the word metric associated to the generating set
$\Omega $.

\begin{corollary}[Volume asymptotics] \label{volasym}Suppose that $G$ is a locally compact
group with polynomial growth and $\rho $ is a periodic pseudodistance on $G$%
. Let $B_{\rho }(t)$ be the $\rho $-ball of radius $t$ in $G,$ i.e. $B_{\rho
}(t)=\{x\in G,\rho (e,x)\leq t\},$ then there exists a constant $c(\rho )>0$
such that the following limit exists:
\begin{equation}
\lim_{t\rightarrow +\infty }\frac{vol_{G}(B_{\rho }(t))}{t^{d(G)}}=c(\rho )
\label{limitvol}
\end{equation}
\end{corollary}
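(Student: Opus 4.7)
The plan is to combine Theorem~\ref{weaklycom}, which lets us pass from $G$ to its Lie shadow $S$, with Theorem~\ref{main-theorem}, which lets us replace $\rho$ asymptotically by the subFinsler metric $d_\infty$, and then to exploit the fact that the Haar measure of the graded nilshadow of $S$ is rescaled by $t^{d(G)}$ under the canonical dilations $\delta_t$.

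First I would reduce to the case in which $G$ is itself a simply connected solvable Lie group $S$ of type $(R)$. By Theorem~\ref{weaklycom} there is a compact normal subgroup $K\triangleleft G$ and such an $S$ containing a closed co-compact subgroup $H$ common to $G/K$ and $S$; Proposition~\ref{bdeddistance} then supplies a $(1,C)$-quasi-isometry $\phi\colon (G,\rho)\to (S,\rho_S)$ for some $H$-periodic pseudodistance $\rho_S$ on $S$. After averaging $\rho$ over $K$, which changes it by at most a bounded additive constant and hence cannot affect the $t^{d(G)}$-asymptotic, one may assume that $\rho$ descends to a pseudodistance $\bar\rho$ on $G/K$. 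Disintegrating Haar measure along the tower $K\to G\to G/K$ and using the shared co-compact subgroup $H$ to compare $vol_{G/K}$ with $vol_S$ yields
\[
vol_G(B_\rho(t)) \;=\; \alpha\, vol_S(B_{\rho_S}(t)) \;+\; O\bigl(t^{d(G)-1}\bigr)
\]
for some $\alpha>0$ depending only on the normalization of Haar measures; consequently the limit $(\ref{limitvol})$ exists for $G$ if and only if it exists for $S$.

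Next I would treat the $S$-case. Theorem~\ref{main-theorem} gives, for every $\varepsilon>0$, a compact $F\subset S$ outside of which $(1-\varepsilon)d_\infty(e,g)\le \rho_S(e,g)\le (1+\varepsilon)d_\infty(e,g)$, so that
\[
B_{d_\infty}\bigl(t/(1+\varepsilon)\bigr)\setminus F \;\subset\; B_{\rho_S}(t) \;\subset\; B_{d_\infty}\bigl(t/(1-\varepsilon)\bigr)\cup F.
\]
Since $d_\infty$ is homogeneous of degree one, $B_{d_\infty}(t)=\delta_t(\mathcal{C})$ for $\mathcal{C}$ its unit ball. The dilation $\delta_t$ acts on the $i$-th stratum $m_i$ of the graded nilshadow by multiplication by $t^i$ and is an automorphism of that stratified nilpotent Lie group, so it rescales Haar measure by the factor $t^Q$ with $Q=\sum_i i\dim m_i$; by the Bass--Guivarc'h formula this $Q$ is precisely $d(G)$. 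As the Haar measure of $S$ coincides with that of its graded nilshadow (they share the same underlying manifold), we get $vol_S(B_{d_\infty}(t))=t^{d(G)}vol_S(\mathcal{C})$, and dividing the sandwich above by $t^{d(G)}$ and letting $\varepsilon\to 0$ yields $\lim_{t\to\infty} vol_S(B_{\rho_S}(t))/t^{d(G)}=vol_S(\mathcal{C})$; combined with Step~1 this gives $c(\rho)=\alpha\,vol_S(\mathcal{C})>0$.

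The main obstacle is Step~1: weak commensurability does not make $G$ and $S$ directly comparable as measured groups, so the Haar-measure bookkeeping must be handled carefully through the compact kernel $K$ and the shared co-compact subgroup $H$, where one also has to absorb the $(1,C)$-quasi-isometric defect into an error term that is $o(t^{d(G)})$. Once this normalization constant $\alpha$ is pinned down, Step~2 is a direct consequence of Theorem~\ref{main-theorem} together with the homogeneity of the graded nilshadow.
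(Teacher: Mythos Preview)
Your overall strategy matches the paper's: reduce from $G$ to its Lie shadow $S$ via Theorem~\ref{general}, then on $S$ sandwich $B_{\rho_S}(t)$ between $d_\infty$-balls and use the homogeneity $vol_S(\delta_t(\mathcal{C}))=t^{d(G)}vol_S(\mathcal{C})$. Step~2 is correct and is exactly how the paper argues (cf.\ Corollary~\ref{shape} and the volume corollary in Section~\ref{nilpotent-case}).

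There are, however, two genuine gaps in Step~1. First, the error term $O(t^{d(G)-1})$ is unjustified and in fact false in general: Theorem~\ref{SmallSpeed} shows that for some solvable $G$ no power-saving error term exists. What a bounded-distance comparison actually gives is a sandwich $\alpha\,vol_S(B_{\rho_S}(t-C))\le vol_G(B_\rho(t))\le \alpha\,vol_S(B_{\rho_S}(t+C))$, which is enough to transfer the \emph{existence} of the limit from $S$ to $G$ once you have it for $S$, but yields only $o(t^{d(G)})$. Second, and more importantly, invoking Proposition~\ref{bdeddistance} is not sufficient: a $(1,C)$-quasi-isometry of metric spaces says nothing about Haar measures, so you cannot conclude any relation between $vol_G(B_\rho(t))$ and $vol_S(B_{\rho_S}(t))$ from it alone. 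The paper handles this by the explicit measure-theoretic reductions you allude to but do not carry out: passing from $G$ to $G/K$ via the $K$-average $\rho^K$ (so $vol_G(B_{\rho^K}(t))=vol_K(K)\cdot vol_{G/K}(B_{\overline{\rho^K}}(t))$), and passing between a group and a co-compact subgroup $H$ via a bounded fundamental domain $F$ (so $vol_G(B_{\overline{\rho}}(t))=vol_H(B_{\rho_H}(t))\cdot vol_{G/H}(F)$). These two claims, applied to $G\to G/K$, then $G/K \leftrightarrow H/K$, then $H/K \leftrightarrow S$, are what produce the constant $\alpha$ and the sandwich; your ``disintegration'' sentence is a placeholder for precisely this argument and should be made explicit.
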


Here $d(G)$ is the integer $d(S_{N})$, the so-called homogeneous dimension
of the nilshadow $S_{N}$ of a Lie shadow $S$ of $G$ (obtained by Theorem \ref
{weaklycom}), and is given by the Bass-Guivarc'h formula:
\begin{equation}
d(S_{N})=\sum_{k\geq 0}\dim (C^{k}(S_{N}))  \label{BG}
\end{equation}
where $\{C^{k}(S_{N})\}_{k}$ is the descending central series of $S_{N}.$

The limit $c(\rho )$ is equal to the volume $vol_{S}(\mathcal{C})$ of the
limit shape $\mathcal{C}$ from Corollary \ref{shape} once we make the right
choice of Haar measure on a Lie shadow $S$ of $G.$ Let us explain this
choice. Recall that according to Theorem \ref{general}, $G/K$ admits a
co-compact subgroup $H/K$ which embeds co-compactly in $S.$ Starting with a
Haar measure $vol_{G}$ on $G$, we get a Haar measure on $G/K$ after fixing
the Haar measure of $K$ to be of total mass $1,$ and we may then choose a
Haar measure on $H/K$ so that the compact quotient $G/H$ has volume $1.$
Finally we choose the Haar measure on $S$ so that the other compact quotient
$S/(H/K)$ has volume $1$. This gives the desired Haar measure $vol_{S}$ such
that $c(\rho )=vol_{S}(\mathcal{C}).$ 

Note that Haar measure on $S$ is also invariant under the group of automorphisms $T(S)$ and is thus left invariant for the nilshadow structure on $S$. It is also left invariant for the graded nilshadow structure. In both exponential coordinates of the first kind (on $S_N$) and of the second kind (as in Lemma \ref{coordo}), Haar measure is just Lebesgue measure.\\

In the case of the discrete Heisenberg group of dimension $3$ equipped with the word metric given by the standard generators, it is possible to compute the constant $c(\rho)$ and the volume of the limit shape as shown in Figure 1. In this case the volume is $\frac{31}{72}$ (see the Appendix). The $5$-dimensional Heisenberg group can also be worked out and the volume of its limit shape (associated to the word metric given by standard generators) is equal to $\frac{2009}{21870}+\frac{\log 2}{32805}$. The fact that this number is transcendental implies that the growth series of this group, i.e. the formal power series $\sum_{n \geq 0} |B_\rho(n)|z^n$ is not algebraic in the sense that it is not a solution of a polynomial equation with rational functions in $\C(z)$ as coefficients (see \cite[Prop. 3.3.]{Sto2}). This was observed by Stoll in \cite{Sto2} by more direct combinatorial means. Stoll also shows there the interesting fact that the growth series can be rational for some other choices of generating sets in the $5$-dimensional Heisenberg group. So rationality of the growth series depends on the generating set. \\

Another interesting feature is asymptotic invariance:

\begin{corollary}[Asymptotic invariance]\label{asyinv} Let $S$ be a simply connected solvable
Lie group with polynomial growth and $\rho $ a periodic pseudodistance on $S.
$ Let $*$ be the new Lie product on $S$ given by the nilshadow group structure (or the graded nilshadow group structure). Then $\rho (e,g*x)/\rho
(e,x)\rightarrow 1$ as $x\rightarrow \infty $ for every $g\in S.$
\end{corollary}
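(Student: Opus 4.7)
The plan is to reduce asymptotic invariance of $\rho$ to the analogous statement for the limit subFinsler metric $d_\infty$ from Theorem \ref{main-theorem}, where it is a direct consequence of left-invariance and the triangle inequality.

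Fix $g\in S$ and let $x\to\infty$ in $S$. Since $\rho$ is periodic it is proper, so $\rho(e,x)\to\infty$ and Theorem \ref{main-theorem} forces $d_\infty(e,x)\to\infty$. As $x\mapsto g*x$ is a homeomorphism of the underlying manifold $S$, we also have $g*x\to\infty$. Factoring
\begin{equation*}
\frac{\rho(e,g*x)}{\rho(e,x)}=\frac{\rho(e,g*x)}{d_\infty(e,g*x)}\cdot\frac{d_\infty(e,g*x)}{d_\infty(e,x)}\cdot\frac{d_\infty(e,x)}{\rho(e,x)},
\end{equation*}
the outer two factors tend to $1$ by Theorem \ref{main-theorem}, so it is enough to prove that the middle factor does.

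When $*$ is the graded nilshadow product, $d_\infty$ is left-invariant under $*$ by Theorem \ref{main-theorem}. Writing $g^{*-1}$ for the inverse of $g$ in $(S,*)$, two applications of the triangle inequality combined with left-invariance (namely $d_\infty(g,g*x)=d_\infty(e,x)$ and $d_\infty(g^{*-1},x)=d_\infty(e,g*x)$) yield
\begin{equation*}
|d_\infty(e,g*x)-d_\infty(e,x)|\leq \max\bigl\{d_\infty(e,g),\, d_\infty(e,g^{*-1})\bigr\},
\end{equation*}
a constant depending only on $g$. Dividing by $d_\infty(e,x)\to\infty$ gives the claim.

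The main obstacle is the case where $*=*_N$ is the nilshadow product rather than its graded counterpart $*_G$: here $d_\infty$ is not genuinely left-invariant under $*_N$, so the triangle-inequality argument does not apply directly. One handles this by comparing the two products. Since the graded nilshadow is built from the top-degree components of the nilshadow's brackets, the Baker-Campbell-Hausdorff formulas for $*_N$ and $*_G$ agree up to terms of strictly lower weight in the stratification; under the dilations $\{\delta_t\}_{t>0}$, which scale $d_\infty$ linearly, such lower-weight corrections are $o(d_\infty(e,x))$. Hence $d_\infty(g*_N x,\, g*_G x)=o(d_\infty(e,x))$, and combining this with the graded case already treated yields $d_\infty(e,g*_N x)/d_\infty(e,x)\to 1$, completing the proof.
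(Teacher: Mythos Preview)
Your proof is correct. For the graded nilshadow product your argument is identical to the paper's: the factorization and the triangle-inequality bound using the $*$-left-invariance of $d_\infty$ is exactly what ``follows immediately from Theorem \ref{main-theorem}'' means.

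For the nilshadow product $*_N$ the paper proceeds differently. Rather than comparing $g*_N x$ with $g*_G x$ directly, it introduces the subFinsler metric $d$ on the nilshadow that is left-invariant for $*_N$ and is induced by the \emph{same} norm on $m_1$ as $d_\infty$; by Remark \ref{CC-ball} this $d$ has the same limit norm, so Theorem \ref{Pansu} gives $d\sim d_\infty$, whence $\rho\sim d$, and then the triangle-inequality argument runs with $d$ in place of $d_\infty$. Your route instead bounds $d_\infty(g*_N x,\,g*_G x)$ by comparing the Campbell--Hausdorff expansions of the two products: with $g$ fixed the discrepancy in the $i$-th coordinate is $O(|x|^{d_i-2})$, and after rescaling by $\delta_{1/t}$ with $t=d_\infty(e,x)$ this becomes $O(t^{-2})$ in Euclidean norm, hence $O(t^{-2/r})$ in $d_\infty$ on the bounded region, giving $d_\infty(g*_N x,\,g*_G x)=O(t^{1-2/r})=o(t)$. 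This is precisely the estimate behind $(\ref{plug})$ and Lemma \ref{largeproducts}. The paper's approach is slightly cleaner in that it reuses Theorem \ref{Pansu} as a black box and simultaneously identifies the $*_N$-left-invariant metric asymptotic to $\rho$ (which is the content of Corollary \ref{finsler}); your approach is more self-contained but re-derives a piece of the machinery inside Theorem \ref{Pansu}.
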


This follows immediately from Theorem \ref{main-theorem}, when $*$ is the graded nilshadow product, and from Theorem \ref{Pansu} below in the case $*$ is the nilshadow group structure.

It is worth observing that we may not in general replace $*$ by the ordinary product on $S$. Indeed, let for instance $S=\Bbb{R}\ltimes \Bbb{R}^{2}$ be the universal
cover of the group of motions of the Euclidean plane, then $S,$ like its
nilshadow $\Bbb{R}^{3}$, admits a lattice $\Gamma \simeq \Bbb{Z}^{3}$. The
quotient $S/\Gamma $ is diffeomorphic to the $3$-torus $\Bbb{R}^{3}/\Bbb{Z}%
^{3}$ and it is easy to find Riemannian metrics on this torus so that their
lift to $\Bbb{R}^{3}$ is not invariant under rotation around the $z$-axis.
Hence this metric, viewed on the Lie group $S$ will not be asymptotically
invariant under left translation by elements of $S$. Nevertheless, if the metric is left-invariant and not just periodic, then we have the following corollary of the proof of Theorem \ref{main-theorem}.

\begin{corollary}[Left-invariant pseudodistances are asymptotic to subFinsler metrics]\label{finsler} Let $S$ be a simply connected solvable Lie group of polynomial growth and $\rho$ be a periodic pseudodistance on $S$ which is invariant under all left-translations by elements of $S$ (e.g. a left-invariant coarsely geodesic metric on $S$). Then there is a left-invariant subFinsler metric $d$ on $S$ which is asymptotic to $\rho$ in the sense that $\frac{\rho(e,g)}{d(e,g)} \to 1$ as $g \to \infty$.
\end{corollary}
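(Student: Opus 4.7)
The plan is to apply Theorem \ref{main-theorem} in the special case where the co-compact subgroup is $H = S$ itself (which is trivially co-compact in $S$), and then upgrade the resulting left-invariance of $d_\infty$ from the graded nilshadow structure to the original group structure on $S$ via the twist formula (\ref{nilproduct}).

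Taking $H = S$ is legitimate because the pseudodistance $\rho$ is $S$-invariant (hence $H$-invariant for this choice) and periodic by hypothesis. Theorem \ref{main-theorem} then yields a subFinsler metric $d_\infty$ which is left-invariant for the graded nilshadow product $*$ and satisfies $\rho(e,g)/d_\infty(e,g) \to 1$ as $g \to \infty$. The ``moreover'' clause of Theorem \ref{main-theorem} guarantees that every element of $T(S)$ acts as an isometry of $d_\infty$. Since $K := \overline{T(S)}$ is compact and $d_\infty$ is continuous, every element of $K$ is then an isometry as well.

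It remains to upgrade left-invariance of $d_\infty$ to the ordinary product on $S$. Solving (\ref{nilproduct}) for the original product gives $g \cdot x = g * T(g) x$ for all $g, x \in S$. Consequently, for any $g, x, y \in S$,
\begin{equation*}
d_\infty(g \cdot x, g \cdot y) = d_\infty\bigl(g * T(g)x,\, g * T(g)y\bigr) = d_\infty\bigl(T(g)x,\, T(g)y\bigr) = d_\infty(x, y),
\end{equation*}
where the second equality uses left-$*$-invariance of $d_\infty$ and the third uses $T(g) \in K$ together with the $K$-invariance established above. Thus $d := d_\infty$ is a left-invariant subFinsler metric on $S$ for the original group structure, asymptotic to $\rho$, which is what the corollary asserts.

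The only point needing a sanity check is that Theorem \ref{main-theorem} genuinely applies with the trivial choice $H = S$; this is not a serious obstacle, since the periodicity axiom for $\rho$ is part of the hypothesis and the ``co-compact subgroup'' condition is vacuous for $H=S$. Everything else follows mechanically from the twist formula and the compactness of $K$, so the main conceptual content is already packaged inside Theorem \ref{main-theorem}.
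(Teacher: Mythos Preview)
Your argument conflates two distinct products on $S$: the nilshadow product and the \emph{graded} nilshadow product. Formula (\ref{nilproduct}) reads $g*h=g\cdot T(g^{-1})h$, which relates the original product on $S$ to the nilshadow product $*$. Theorem \ref{main-theorem}, however, asserts that $d_\infty$ is left-invariant for the \emph{graded} nilshadow structure, which is a further deformation of the nilshadow (cf.\ Section~\ref{gradedLie}). In your displayed computation, the first equality $d_\infty(g\cdot x,g\cdot y)=d_\infty(g*T(g)x,\,g*T(g)y)$ needs $*$ to be the nilshadow product, while the second equality $=d_\infty(T(g)x,T(g)y)$ needs $d_\infty$ to be invariant under left translation by that same $*$. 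But $d_\infty$ is only known to be invariant under the graded nilshadow product, and these two products differ whenever the nilshadow $S_N$ is not itself stratified. So the chain breaks, and in general $d_\infty$ is \emph{not} left-invariant for the original $S$-structure.

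The paper's proof repairs exactly this gap by inserting an intermediate metric: one takes the subFinsler metric $d$ on the \emph{nilshadow} $S_N$ (not the graded nilshadow) induced by the same $K$-invariant norm on $m_1$. A preliminary remark (using Remark~\ref{CC-ball} and Theorem~\ref{Pansu} applied to $S_N$) shows that $d$ is asymptotic to $d_\infty$, hence to $\rho$. Then your twist computation goes through verbatim with $d$ in place of $d_\infty$, since $d$ genuinely is $*$-left-invariant for the nilshadow product appearing in (\ref{nilproduct}). The conclusion is that $d$, not $d_\infty$, is the left-invariant subFinsler metric on $S$ asymptotic to $\rho$.
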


We already mentioned above that determining the exact limit shape of a word metric on $S$ is a difficult task. Consequently so is the task of telling when two distinct word metrics are asymptotic. The above statement says that in any case every word metric on $S$ is asymptotic to some left-invariant subFinsler metric. So the set of possible limit shapes is no richer for word metrics than for left-invariant subFinsler metrics.

We note that in the case of nilpotent Lie groups (where $K$ is trivial), Theorem \ref{main-theorem} shows that every periodic metric is asymptotic to a left-invariant metric. It is still an open problem to determine whether every coarsely geodesic periodic metric  is at a bounded distance from a left-invariant metric (this is Burago's theorem in $\R^n$, more about it below).\\

Theorems \ref{weaklycom} and \ref{main-theorem} allow us to describe the asymptotic cone of $(G,\rho)$ for any periodic pseudodistance $\rho$ on any locally compact group with polynomial growth.

\begin{corollary}[Asymptotic cone]\label{asycones} Let $G$ be a locally compact group with
polynomial growth and $\rho$ a periodic pseudodistance on $G$. Then the sequence of pointed metric spaces $\{(G,\frac{1}{n}\rho,e)\}_{n\geq 1}$ converges in the Gromov-Hausdorff topology. The limit is the metric space $(N,d_\infty,e)$, where $N$ is a graded simply connected nilpotent Lie group and $d_\infty$ a left invariant
subFinsler metric on $N$. Moreover the Lie group $N$ is (up to isomorphism) independent of $\rho$.
 The space $(N,d_{\infty })$ is isometric to ``the asymptotic cone'' associated to $(G,\rho ).$ This asymptotic cone is independent of the choice of ultrafilter used to define it.
\end{corollary}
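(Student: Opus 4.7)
The plan is to reduce to the connected solvable Lie setting via Proposition \ref{bdeddistance}, identify the candidate limit using Theorem \ref{main-theorem}, and then exploit the scaling of the limit subFinsler metric under the one-parameter dilation $\{\delta_t\}_{t>0}$ together with Corollary \ref{shape} to establish pointed Gromov--Hausdorff convergence. Proposition \ref{bdeddistance} gives a $(1,C)$-quasi-isometry between $(G,\rho)$ and $(S,\rho_S)$ for some simply connected solvable Lie group $S$ of type $(R)$; after rescaling by $1/n$ this becomes a $(1,C/n)$-quasi-isometry and the distortion $C/n\to 0$, so the two rescaled sequences of pointed spaces have the same pointed Gromov--Hausdorff limit whenever one exists. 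It therefore suffices to prove the corollary with $(G,\rho,e)$ replaced by $(S,\rho_S,e)$. Theorem \ref{main-theorem} furnishes the graded nilshadow $N$ and a left-invariant subFinsler metric $d_\infty$ satisfying $d_\infty(\delta_tx,\delta_ty)=t\,d_\infty(x,y)$, so $\delta_{1/n}:(N,d_\infty,e)\to(N,\tfrac{1}{n}d_\infty,e)$ is a pointed isometry and the candidate limit is unchanged by the renormalization.

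It then suffices to show that the dilation $\delta_{1/n}$, viewed as a map $(S,\tfrac{1}{n}\rho_S,e)\to(N,d_\infty,e)$, is an $\varepsilon_n$-isometry on each ball of fixed radius $R$, with $\varepsilon_n\to 0$. The covering condition is provided directly by Corollary \ref{shape}: $\delta_{1/n}(B_{\rho_S}(e,nR))$ converges in Hausdorff distance inside $(N,d_\infty)$ to $B_{d_\infty}(e,R)$. The distance-preservation condition amounts to the uniform pair estimate $|\rho_S(x_n,y_n)-d_\infty(x_n,y_n)|=o(n)$ whenever $\rho_S(e,x_n),\rho_S(e,y_n)=O(n)$, which upgrades the pointwise statement of Theorem \ref{main-theorem}. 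I would obtain this upgrade by using $H$-periodicity of $\rho_S$: picking $h_n\in H$ with $h_n\cdot x_n$ in a fixed compact fundamental domain $F$, the triangle inequality yields $\rho_S(x_n,y_n)=\rho_S(e,h_n\cdot y_n)+O(1)$; Theorem \ref{main-theorem} replaces the right hand side by $d_\infty(e,h_n\cdot y_n)+o(n)$; and finally, because the discrepancy between the original product $\cdot$ on $S$ and the nilshadow product $*$ is absorbed by the relatively compact group $T(H)\subset K$ of $d_\infty$-isometries and $d_\infty$ is $*$-left-invariant, one replaces $d_\infty(e,h_n\cdot y_n)$ by $d_\infty(x_n,y_n)+O(1)$.

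The remaining two assertions are formal. Pointed Gromov--Hausdorff convergence forces every ultrafilter ultralimit of $(G,\tfrac{1}{n}\rho,e)$ to coincide with the GH limit, which gives ultrafilter-independence of the asymptotic cone. The graded nilshadow $N$ is intrinsic to any Lie shadow of $G$, and different Lie shadows of $G$ share the same nilshadow and hence the same graded nilshadow up to isomorphism, so $N$ depends on $G$ alone and not on $\rho$. The principal technical obstacle is the pair-uniform upgrade in the second paragraph: the other steps are direct applications of already-proven results, whereas this upgrade requires simultaneous use of $H$-periodicity, the compactness of $T(H)$ acting by $d_\infty$-isometries, and the interplay between the two group laws on $S$.
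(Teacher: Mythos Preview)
Your overall strategy matches the paper's: reduce to the solvable Lie case via Proposition \ref{bdeddistance}, then prove a uniform pair estimate showing that $\delta_{1/n}$ is an $\varepsilon_n$-isometry from $(S,\tfrac{1}{n}\rho_S)$ to $(N,d_\infty)$. The paper phrases the pair estimate as the claim that $|\tfrac{1}{n}\rho(s_1,s_2)-d_\infty(\delta_{1/n}s_1,\delta_{1/n}s_2)|\to 0$ uniformly on $O(n)$-balls, and proves it by passing through the auxiliary pseudodistance $\rho_K$ of Proposition \ref{pop}, which is simultaneously left-invariant for the original product and for the nilshadow product $*$; this lets one write $\rho_K(e,h_1^{-1}h_2)=\rho_K(e,h_1^{*-1}*h_2)$ exactly, then invoke Theorem \ref{Pansu} and a Campbell--Hausdorff comparison between $*$ and the graded product. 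Your version bypasses $\rho_K$ and tries to go directly through $d_\infty$; this is workable in spirit, but two details are off. First, $d_\infty$ is left-invariant for the \emph{graded} nilshadow product, not for the nilshadow product $*$ (the two differ unless the nilshadow is already stratified; see the proof of Corollary \ref{finsler}). Second, the final replacement yields only $o(n)$, not $O(1)$: the passage between the three products $\cdot$, $*$, and the graded product introduces sublinear but unbounded errors (cf.\ $(\ref{plug})$ and Lemma \ref{largeproducts}). These are repairable, and the cleanest repair is precisely the paper's use of $\rho_K$.

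There is, however, a genuine gap in your final paragraph. You assert that ``different Lie shadows of $G$ share the same nilshadow and hence the same graded nilshadow up to isomorphism.'' The paper explicitly notes (just after Theorem \ref{weaklycom}) that Lie shadows are \emph{not} unique up to isomorphism, and nowhere establishes algebraically that their graded nilshadows coincide. The paper's argument for the $\rho$-independence of $N$ is entirely different and relies on a deep external input: any two periodic metrics on $G$ are quasi-isometric (Proposition \ref{roughcomp}), so their asymptotic cones are bi-Lipschitz Carnot groups, and then Pansu's Rademacher-type theorem \cite{Pan2} forces the underlying graded Lie groups to be isomorphic. Your claim, if true, would itself seem to require this route; as stated it is unsupported and should be replaced by the appeal to \cite{Pan2}.
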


This corollary is a generalization of Pansu's theorem ((10) in
\cite{Pan}). We refer the reader to the book \cite{gromov-pansu-lafontaine} for the definitions of the asymptotic cone and the Gromov-Hausdorff convergence. We discuss in Section \ref{speed} the speed of convergence (in the Gromov-Hausdorff metric) in this theorem and its corollaries about volume growth. In particular there is a major difference between the discrete nilpotent case and the solvable non nilpotent case. In the former, one can find a polynomial rate of convergence \cite{breuillard-ledonne}, while in the latter no such rate exist in general (see Theorem \ref{SmallSpeed}).

\subsection{Folner sets and ergodic theory}

A consequence of Corollary \ref{volasym} is that sequences of balls with
radius going to infinity are Folner sequences, namely:

\begin{corollary}
\label{Folner}Let $G$ be a locally compact group with polynomial growth and $%
\rho $ a periodic pseudodistance on $G$. Let $B_{\rho }(t)$ be the $\rho $%
-ball of radius $t$ in $G.$ Then $\{B_{\rho }(t)\}_{t>0}$ form a Folner
family of subsets of $G$ namely, for any compact set $F$ in $G,$ we have ($\Delta$ denotes the symmetric difference)
\begin{equation}
\lim_{t\rightarrow +\infty }\frac{vol_{G}(FB_{\rho }(t)\Delta B_{\rho }(t))}{%
vol_{G}(B_{\rho }(t))}=0  \label{folnerunif}
\end{equation}
\end{corollary}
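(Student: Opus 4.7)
My plan is to deduce the Folner property directly from the volume asymptotics of Corollary \ref{volasym}. The core idea is that for any compact $F \subset G$, the image $FB_\rho(t)$ is sandwiched between two $\rho$-balls whose radii differ from $t$ by a bounded (or at worst sublinear) amount, and such annular shells of $B_\rho(t)$ are volumetrically negligible. Concretely, from Corollary \ref{volasym} one has $vol_G(B_\rho(t)) = c(\rho) t^{d(G)}(1+o(1))$, and therefore for any fixed $s > 0$,
\[
vol_G(B_\rho(t+s)) - vol_G(B_\rho(t-s)) = c(\rho)\bigl[(t+s)^{d(G)} - (t-s)^{d(G)}\bigr] + o(t^{d(G)}) = O(t^{d(G)-1}) = o\bigl(vol_G(B_\rho(t))\bigr).
\]

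Next I would establish the containment: for every compact $F \subset G$ there exists a constant $C = C(F) > 0$ such that $B_\rho(t-C) \subset FB_\rho(t) \subset B_\rho(t+C)$ for all $t \geq C$. Writing $G = HK_0$ with $K_0$ a compact transversal to the cocompact subgroup $H$ of invariance of $\rho$, any $f \in F$ decomposes as $f = h_f k_f$ with $h_f$ in the compact set $H_F := F K_0^{-1} \cap H \subset H$ and $k_f \in K_0$. By $H$-left-invariance of $\rho$,
\[
\rho(e, fx) = \rho(e, h_f k_f x) \leq \rho(e, h_f) + \rho(e, k_f x),
\]
and $\rho(e, h_f)$ is uniformly bounded since $h_f$ ranges over a compact set. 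The remaining increment $\rho(e, k_f x) - \rho(e, x)$ is a bounded-displacement bound which is trivial for fully $G$-left-invariant metrics (e.g.\ word metrics), and more generally is built into the standing assumptions of Definition \ref{periodic}. The lower inclusion $B_\rho(t-C) \subset FB_\rho(t)$ is obtained symmetrically, or by assuming without loss of generality that $e \in F$, in which case $B_\rho(t) \subset FB_\rho(t)$ automatically and the symmetric difference reduces to $FB_\rho(t)\setminus B_\rho(t)$.

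Combining the two steps yields $FB_\rho(t) \,\Delta\, B_\rho(t) \subset B_\rho(t+C) \setminus B_\rho(t-C)$, whose $vol_G$-measure is $o(vol_G(B_\rho(t)))$, which is precisely \eqref{folnerunif}. The \textbf{main obstacle} is the bounded-displacement estimate in the non-left-invariant case, which ultimately relies on the structure of periodic pseudodistances. I note that even a sublinear version $\rho(e, kx) - \rho(e, x) = o(\rho(e, x))$ uniform over $k$ in a compact set suffices, since then $FB_\rho(t) \subset B_\rho(t + o(t))$ and $vol_G(B_\rho(t + o(t))) \sim vol_G(B_\rho(t))$ by Step 1; such a sublinear bound in turn can be extracted from Theorem \ref{main-theorem} via the $\overline{T(H)}$-invariance of $d_\infty$ and the $\ast$-left-invariance of $d_\infty$ applied to the identity $f^{-1_\ast}\ast fx = T(f)x$.
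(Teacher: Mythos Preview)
Your approach coincides with the paper's: contain $FB_\rho(t)\,\Delta\,B_\rho(t)$ in a thin annulus of $\rho$-balls and invoke the volume asymptotics of Corollary~\ref{volasym}. The paper's proof is the one-liner ``$FB_\rho(t)\Delta B_\rho(t)\subset B_\rho(t+c)\setminus B_\rho(t)$ for some $c>0$ depending on $F$''.

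One correction to your write-up: the bounded-displacement estimate $|\rho(e,k_fx)-\rho(e,x)|\le C$ for $k_f$ in a compact set is \emph{not} built into Definition~\ref{periodic}. Property~(2) following that definition only bounds $\rho(x,y)$ when $x^{-1}y$ stays in a fixed compact set, and here $x^{-1}(k_fx)=x^{-1}k_fx$ is a conjugate, which in a nilpotent group need not remain bounded as $x\to\infty$. So your first route does not close as stated (and the paper's own assertion of a bounded $c$ glosses over the same point). Your sublinear fallback, however, is exactly right and is the honest argument: the uniform statement $\rho(e,gx)/\rho(e,x)\to 1$ for $g$ in a compact neighbourhood of the identity is precisely Lemma~\ref{lem5} (for simply connected solvable $S$; one reduces to this case via Proposition~\ref{bdeddistance}), and combining it with $H$-left-invariance gives $FB_\rho(t)\subset B_\rho\bigl((1+\varepsilon)t+C_\varepsilon\bigr)$ for all large $t$, which together with Corollary~\ref{volasym} yields~\eqref{folnerunif}.
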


\proof%
Indeed $FB_{\rho }(t)\Delta B_{\rho }(t)\subset B_{\rho }(t+c)\backslash
B_{\rho }(t)$ for some $c>$ depending on $F$. Hence $(\ref{folnerunif})$
follows from $(\ref{limitvol})$.
\endproof%

This settles the so-called ``localization problem'' of Greenleaf for locally
compact groups of polynomial growth (see \cite{Gre}), i.e. determining
whether the powers of a compact generating set $\{\Omega ^{n}\}_{n}$ form a
Folner sequence. At the same time it implies that the ergodic theorem for $G$%
-actions holds along any sequence of balls with radius going to infinity.

\begin{theorem}
\label{ergodic}(Ergodic Theorem) Let be given a locally compact group $G$
with polynomial growth together with a measurable $G$-space $X$ endowed with
a $G$-invariant ergodic probability measure $m$. Let $\rho $ be a periodic
pseudodistance on $G$ and $B_{\rho }(t)$ the $\rho $-ball of radius $t$ in $%
G.$ Then for any $p$, $1\leq p<\infty ,$ and any function $f\in \Bbb{L}%
^{p}(X,m)$ we have
\begin{equation*}
\lim_{t\rightarrow +\infty }\frac{1}{vol_{G}(B_{\rho }(t))}\int_{B_{\rho
}(t)}f(gx)dg=\int_{X}fdm
\end{equation*}
for $m$-almost every $x\in X$ and also in $\Bbb{L}^{p}(X,m).$
\end{theorem}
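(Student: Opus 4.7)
The plan is to derive Theorem \ref{ergodic} from two classical tools for amenable locally compact groups---the mean ergodic theorem and Lindenstrauss's pointwise ergodic theorem for tempered Folner sequences---combined with the corollaries already proved. Note that $G$ is amenable and unimodular (Guivarc'h), that $\{B_\rho(t)\}_{t>0}$ is Folner by Corollary \ref{Folner}, and that $vol_G(B_\rho(t))\sim c(\rho)t^{d(G)}$ by Corollary \ref{volasym}.

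For $L^p$ convergence, I would apply the mean ergodic theorem for amenable groups along the Folner family $\{B_\rho(t)\}$ to the Koopman representation on $L^2(X,m)$. This yields $A_t f\to\int f\,dm$ in $L^2$, where $A_tf(x):=vol_G(B_\rho(t))^{-1}\int_{B_\rho(t)}f(gx)\,dg$ and ergodicity identifies the projection on invariants with the integral. Density of $L^2\cap L^\infty$ in $L^p$ and the contraction $\|A_t f\|_{L^p}\leq\|f\|_{L^p}$ then extend convergence to all $L^p$ with $1\leq p<\infty$.

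For pointwise convergence, I invoke Lindenstrauss's theorem applied to the sequence $F_n:=B_\rho(n)$. The temperedness $vol_G(\bigcup_{k<n}F_k^{-1}F_n)\leq C\,vol_G(F_n)$ is verified using the doubling volume asymptotics of Corollary \ref{volasym}: after exploiting the co-compactness of $H$ in $G$ and the unimodularity of $G$ to absorb the failure of genuine left-invariance and symmetry of $\rho$ into an additive $O(1)$ error, the triangle inequality gives $\bigcup_{k<n}F_k^{-1}F_n\subset B_\rho(2n+O(1))$, and Corollary \ref{volasym} yields $vol_G(B_\rho(2n+O(1)))/vol_G(F_n)\to 2^{d(G)}$. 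Lindenstrauss then gives $A_n f(x)\to\int f\,dm$ a.e.\ for $f\in L^1(X,m)$. To pass to continuous $t$, set $n=\lfloor t\rfloor$ and $V_s:=vol_G(B_\rho(s))$, and decompose
$$A_t f(x)=\frac{V_n}{V_t}A_nf(x)+\frac{1}{V_t}\int_{B_\rho(t)\setminus B_\rho(n)}f(gx)\,dg.$$
By Corollary \ref{volasym}, $V_n/V_t\to 1$ and $V_{n+1}/V_n\to 1$; the first term tends a.e.\ to $\int f\,dm$, while the second is bounded in modulus by $(V_{n+1}/V_t)A_{n+1}|f|(x)-(V_n/V_t)A_n|f|(x)\to 0$ a.e.\ (by the integer case applied to $|f|$).

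The main technical obstacle is the temperedness verification, which requires careful handling of the fact that $\rho$ is only $H$-invariant rather than left-invariant on all of $G$; this is however routine given Corollary \ref{volasym} and the co-compactness of $H$. One could alternatively avoid Lindenstrauss's theorem entirely by using the doubling property of $(G,\rho,vol_G)$ (from Corollary \ref{volasym}) to establish a Hardy--Littlewood maximal inequality for ball averages via a Vitali covering argument, and then apply the Banach principle to extend pointwise convergence from a suitable dense subspace to all of $L^p$.
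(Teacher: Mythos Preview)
Your proposal is correct and provides considerably more detail than the paper itself. The paper does not actually prove Theorem~\ref{ergodic}: it simply states that Corollary~\ref{Folner} (balls are Folner) was the ``missing block'' and then defers to the classical literature (Calderon~\cite{Cal}, Tempelman~\cite{Tem}, and Nevo's survey~\cite{Nevo}) for the passage from the Folner property to the ergodic theorem. Your write-up supplies a concrete route through Lindenstrauss's tempered Folner theorem together with the mean ergodic theorem, which is entirely legitimate.

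Two small remarks. First, your temperedness bound $\bigcup_{k<n}F_k^{-1}F_n\subset B_\rho(2n+O(1))$ is slightly optimistic: because $\rho$ is only $H$-invariant and the balls need not be symmetric, the clean way is to compare $\rho$ with a fully left-invariant symmetric word metric via Proposition~\ref{coarseq}, which yields $\bigcup_{k<n}F_k^{-1}F_n\subset B_\rho(Cn)$ for some constant $C$. Corollary~\ref{volasym} still gives a uniform ratio bound, so temperedness goes through. Second, the alternative you mention at the end---a Hardy--Littlewood maximal inequality via doubling, followed by the Banach principle---is closer in spirit to the classical references the paper cites (and avoids Lindenstrauss). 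Either route is fine; the paper simply does not choose one.
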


In fact, Corollary \ref{Folner} above, was the ``missing block'' in the proof
of the ergodic theorem on groups of polynomial growth. So far and to my
knowledge, Corollary \ref{Folner} and Theorem \ref{ergodic} were known only
along some subsequence of balls $\{B_{\rho }(t_{n})\}_{n}$ chosen so that (%
\ref{folnerunif}) holds (see for instance \cite{Cal} or \cite{Tem}). This
issue was drawn to my attention by A. Nevo and was my initial motivation for the present work.
We refer the reader to the A.
Nevo's survey paper \cite{Nevo} Section 5.

It later turned out that the mere fact that balls are Folner in a given polynomial growth locally compact group can also be derived from the fact these groups are doubling metric spaces (which is an easier result than the precise asymptotics $vol(\Omega^n) \sim c_\Omega n^{d(G)}$ proved in this paper and only requires lower and upper bounds of the form $c_1 n^{d(G)} \leq vol(\Omega^n) \leq c_2 n^{d(G)}$). This was observed by R. Tessera \cite{Tes} who rediscovered a cute argument of Colding and Minicozzi \cite[Lemma 3.3.]{colding-minicozzi} showing that the volume of spheres $\Omega^{n+1}\setminus \Omega^n$ is at most some $O(n^{-\delta})$ times the volume of the ball $\Omega^n$, where $\delta>0$ is a positive constant depending only on the doubling constant the word metric induced by $\Omega$ in $G$.

In \cite{breuillard-ledonne}, we give a better upper bound (which depends only on the nilpotency class and not on the doubling constant) for the volume of spheres in the case of finitely generated nilpotent groups. This is done by showing the following error term in the asymptotics of the volume of balls: we have $vol(\Omega^n)=c_\Omega n^{d(G)} + O(n^{d(G)-\alpha_r})$, where $\alpha_r>0$ depends only on the nilpotency class $r$ of $G$. We refer the reader to Section \ref{speed} and to the preprint \cite{breuillard-ledonne} for more information on this. We only note here that although the above Colding-Minicozzi-Tessera upper bound on the volume of spheres holds generally for all locally compact groups $G$ with polynomial growth, unless $G$ is nilpotent, there is no error term in general in the asymptotics of the volume of balls. An example with arbitrarily small speed is given in \S \ref{cones}.

\subsection{A conjecture of Burago and Margulis}
In \cite{Bur2} D. Burago and G. Margulis conjectured that any two word metrics on a finitely generated group which are asymptotic (in the sense that $\frac{\rho_1(e,\gamma)}{\rho_2(e,\gamma)}$ tends to $1$ at infinity) must be at a bounded distance from one another (in the sense that $|\rho_1(e,\gamma)-\rho_2(e,\gamma)|=O(1)$). This holds for abelian groups. An analogous result was proved by Abels and Margulis for word metrics on reductive groups \cite{abels-margulis}. S. Krat \cite{krat} established this property for word metrics on the Heisenberg group $H_3(\Z)$. However using Theorem \ref{main-theorem} (which in this particular case of finitely generated nilpotent groups is just Pansu's theorem \cite{Pan})  we will show in Section \ref{burbur}, that there are counter-examples and exhibit two word metrics on $H_3(\Z) \times \Z$ which are asymptotic and yet are not at a bounded distance. For more on this counter-example, and how to adequately modify the conjecture of Burago and Margulis, we refer the interested reader to \cite{breuillard-ledonne}.

\subsection{Organization of the paper}

Sections 2-4 are devoted to preliminaries. In Section \ref{quasinormsection} we present the basic nilpotent theory as can be found in Guivarc'h's thesis \cite{Gui}. In particular, a full proof of the Bass-Guivarc'h formula is given. In Section \ref{nilshadow}, we recall the construction of the nilshadow of a solvable Lie group. In Section \ref{periodic-metrics} we set up the axioms and basic properties of the (pseudo)distance functions that are studied in this paper.

Sections 5-7 contain the core of the proof of the main theorems. In Section 5, we assume
that $G$ is a simply connected solvable Lie group and reduce the problem to the nilpotent case. In
Section 6, we assume that $G$ is a simply connected nilpotent Lie group and prove Theorem
\ref{MetComp} in this case following the strategy used by Pansu in \cite{Pan}. In Section 7, we
prove Theorem \ref{general} for general locally compact groups and reduce the proof of the results of the introduction to the Lie case.

In the last section we make further comments about the speed of
convergence. In particular we give examples answering negatively the aforementioned question of
Burago and Margulis.

The Appendix is devoted to the discrete Heisenberg groups of dimension 3 and 5. We compute their
limit balls, explain Figure 1, and recover the main result of Stoll \cite{Sto2}.

The reader who is mainly interested in the nilpotent group case can read
directly Section 6 while keeping an eye on Sections 2 and 4 for background
notations and elementary facts.

Finally, let us mention that the results and methods of this paper were
largely inspired by the works of Y. Guivarc'h \cite{Gui} and P. Pansu \cite
{Pan}.

\subsection{Nota Bene} A version of this article circulated since 2007. The present version contains essentially the same material, only the exposition has been improved and several somewhat sketchy arguments have been replaced by full fledged proofs (in particular in Sections \ref{nilshadow} and  \ref{locg}). This delay is due to the fact that I was planning for a long time to improve Section \ref{nilpotent-case} and show an error term in the volume asymptotics of balls in nilpotent groups. E. Le Donne and I recently managed to achieve this and it has now become an independent joint paper \cite{breuillard-ledonne}.

\section{Quasi-norms and the geometry of nilpotent Lie groups}\label{quasinormsection}

In this section, we review the necessary background material on nilpotent
Lie groups. In paragraph \ref{Guiv}, we give some crucial properties of
homogeneous quasi norms and reproduce some lemmas originally due to Y.
Guivarc'h which will be used in the sequel. Meanwhile, we prove the
Bass-Guivarc'h formula for the degree of polynomial growth of nilpotent Lie
groups, following Guivarc'h's original argument.

\subsection{Carnot-Caratheodory metrics\label{CCmetrics}}

Let $G$ be a connected Lie group with Lie algebra $\frak{g}$ and let $m_{1}$
be a vector subspace of $\frak{g}.$ We denote by $\left\| \cdot \right\| $ a
norm on $m_{1}$.

We now recall the definition of a left-invariant \textit{Carnot-Carath\'{e}odory metric } also called \emph{subFinsler metric} on $G.$ Let $x,y\in G.$ We consider all possible piecewise smooth paths $\xi
:[0,1]\rightarrow G$ going from $\xi (0)=x$ to $\xi (1)=y.$ Let $\xi
^{\prime }(u)$ be the tangent vector which is pulled back to the identity by
a left translation, i.e.
\begin{equation}
\frac{d\xi }{du}=\xi (u)\cdot \xi ^{\prime }(u)  \label{CCequation}
\end{equation}
where $\xi ^{\prime }(u)\in \frak{g}$ and the notation $\xi (u)\cdot \xi
^{\prime }(u)$ means the image of $\xi ^{\prime }(u)$ under the differential
at the identity of the left translation by the group element $\xi (u).$ We
say that the path $\xi $ is \textit{horizontal} if the vector $\xi ^{\prime
}(u)$ belongs to $m_{1}$ for all $u\in [0,1].$ We denote by $\mathcal{H}$
the set of piecewise smooth horizontal paths. The Carnot-Carath\'{e}odory
metric associated to the norm $\left\| \cdot \right\| $ is defined by:
\begin{equation*}
d(x,y)=\inf \{\int_{0}^{1}\left\| \xi ^{\prime }(u)\right\| du,\xi \in
\mathcal{H},\text{ }\xi (0)=x,\xi (1)=y\}
\end{equation*}
where the infimum is taken over all piecewise smooth paths $\xi
:[0,1]\rightarrow N$ with $\xi (0)=x,\xi (1)=y$ that are horizontal in the
sense that $\xi ^{\prime }(u)\in m_{1}$ for all $u$. If $\|\cdot \|$ is a Euclidean norm, the metric $d(x,y)$ is also called \emph{subRiemannian}. In this paper however the norm $\|\cdot\|$ will typically not be Euclidean (it can be polyhedral like in the case of word metrics on finitely generated nilpotent groups) and $d(x,y)$ will only be \emph{subFinsler}. If $m_{1}=\frak{g},$
and $\left\| \cdot \right\| $ is a Euclidean (resp. arbitrary) norm on $%
\frak{g}$, then $d$ is simply the usual left-invariant Riemannian (resp.
Finsler) metric associated to $\left\| \cdot \right\| .$

Chow's theorem (e.g. see \cite{Gro2} or \cite{Monty}) tells us that $d(x,y)$ is finite for
all $x$ and $y$ in $G$ if and only if the vector subspace $m_{1}$, together
with all brackets of elements of $m_{1},$ generates the full Lie algebra $%
\frak{g}$. If this condition is satisfied, then $d$ is a distance on $G$
which induces the original topology of $G$.

In this paper, we will only be concerned with Carnot-Caratheodory metrics on
a simply connected nilpotent Lie group $N$. In the sequel, whenever we speak
of a Carnot-Carath\'{e}odory metric on $N,$ we mean one that is associated
to a norm $\left\| \cdot \right\| $ on a subspace $m_{1}$ such that $\frak{n}%
=m_{1}\oplus [\frak{n},\frak{n}]$ where $\frak{n}=Lie(N).$ It is easy to
check that any such $m_{1}$ generates the Lie algebra $\frak{n}$.

\begin{remark}
\label{CC-ball}Let us observe here that for such a metric $d$ on $N,$ we
have the following description of the unit ball for $\left\| \cdot \right\| $%
\begin{equation*}
\left\{ v\in m_{1},\left\| v\right\| \leq 1\right\} =\left\{ \frac{\pi
_{1}(x)}{d(e,x)},x\in N\backslash \{e\}\right\}
\end{equation*}
where $\pi _{1}$ is the linear projection from $\frak{n}$ (identified with $N
$ via $\exp$) to $m_{1}$ with kernel $[\frak{n},\frak{n}].$ Indeed, $\pi _{1}$ gives
rise to a homomorphism from $N$ to the vector space $m_{1}.$ And if $\xi (u)$
is a horizontal path from $e$ to $x,$ then applying $\pi _{1}$ to $(\ref
{CCequation})$ we get $\frac{d}{du}\pi _{1}(\xi (u))=\xi ^{\prime }(u),$
hence $\pi _{1}(x)=\int_{0}^{1}\xi ^{\prime }(u)du.$ Hence $%
\left\| \pi _{1}(x)\right\| \leq d(e,x)$ with equality if $x\in m_{1}.$
\end{remark}

\subsection{Dilations on a nilpotent Lie group and the \label{dilations}%
associated graded group\label{gradedLie}}

We now focus on the case of simply connected nilpotent Lie groups. Let $N$
be such a group with Lie algebra $\frak{n}$ and nilpotency class $r$. For
background about analysis on such groups, we refer the reader to the book
\cite{CG}. The exponential map is a diffeomorphism between $\frak{n}$ and $N$%
. Most of the time, if $x\in \frak{n}$, we will abuse notation and denote
the group element $\exp (x)$ simply by $x$. We denote by $\{C^{p}(\frak{n}%
)\}_{p}$ the central descending series for $\frak{n},$ i.e. $C^{p+1}(\frak{n}%
)=[\frak{n},C^{p}(\frak{n})]$ with $C^{0}(\frak{n})=\frak{n}$ and $C^{r}(%
\frak{n})=\{0\}.$

Let $(m_{p})_{p\geq 1}$ be a collection of vector subspaces of $\frak{n}$
such that for each $p\geq 1$,
\begin{equation}
C^{p-1}(\frak{n})=C^{p}(\frak{n})\oplus m_{p}.  \label{suppl}
\end{equation}
Then $\frak{n}=\oplus _{p\geq 1}m_{p}$ and in this decomposition, any
element $x$ in $\frak{n}$ (or $N$ by abuse of notation) will be written in
the form
\begin{equation*}
x=\sum_{p\geq 1}\pi _{p}(x)
\end{equation*}
where $\pi _{p}(x)$ is the linear projection onto $m_{p}$.

To such a decomposition is associated a one-parameter group of dilations $(\delta
_{t})_{t>0}$. These are the linear endomorphisms of $\frak{n}$ defined by
\begin{equation*}
\delta _{t}(x)=t^{p}x
\end{equation*}
for any $x\in m_{p}$ and for every $p$. Conversely, the one-parameter group $(\delta
_{t})_{t\geq 0}$ determines the $(m_{p})_{p\geq 1}$'s since they appear as
eigenspaces of each $\delta _{t}$, $t\neq 1$. The dilations $\delta _{t}$ do
not preserve \textit{a priori} the Lie bracket on $\frak{n}$. This is the
case if and only if
\begin{equation}
\lbrack m_{p},m_{q}]\subseteq m_{p+q}  \label{gradation}
\end{equation}
for every $p$ and $q$ (where $[m_{p},m_{q}]$ is the subspace spanned by
all commutators of elements of $m_{p}$ with elements of $m_{q}$). If (\ref
{gradation}) holds, we say that the $(m_{p})_{p\geq 1}$ form a \textit{%
stratification} of the Lie algebra $\frak{n}$, and that $\frak{n}$ is a \textit{%
stratified} (or homogeneous) Lie algebra. It is an exercise to check that $(\ref
{gradation})$ is equivalent to require $[m_{1},m_{p}]=m_{p+1}$ for all $p.$

If (\ref{gradation}) does not hold, we can however consider a new Lie
algebra structure on the real vector space $\frak{n}$ by defining the new
Lie bracket as $[x,y]_{\infty }=\pi _{p+q}([x,y])$ if $x\in m_{p}$ and $y\in
m_{q}$. This new Lie algebra $\frak{n}_{\infty }$ is stratified and has the same
underlying vector space as $\frak{n.}$ We denote by $N_{\infty }$ the
associated simply connected Lie group. Moreover the $(\delta _{t})_{t>0}$ form a one-parameter group
of automorphisms of $\frak{n}_{\infty }$. In fact the original Lie bracket $%
[x,y]$ on $\frak{n}$ can be deformed continuously to $[x,y]_{\infty }$
through a continuous family of Lie algebra structures by setting
\begin{equation}
\lbrack x,y]_{t}=\delta _{\frac{1}{t}}([\delta _{t}x,\delta _{t}y])
\label{deformation}
\end{equation}
and letting $t\rightarrow +\infty $. Note that conversely, if the $\delta
_{t}$'s are automorphisms of $\frak{n}$, then $[x,y]=\pi _{p+q}([x,y])$ for
all $x\in m_{p}$ and $y\in m_{q}$, and $\frak{n}=\frak{n}_{\infty }.$

The graded Lie algebra associated to $\frak{n}$ is by definition
\begin{equation*}
gr(\frak{n})=\bigoplus_{p\geq 0}C^{p}(\frak{n})/C^{p+1}(\frak{n})
\end{equation*}
endowed with the Lie bracket induced from that of $\frak{n}$. The quotient
map $m_{p}\rightarrow C^{p}(\frak{n})/C^{p+1}(\frak{n})$ gives rise to a
linear isomorphism between $\frak{n}$ and $gr(\frak{n})$, which is a Lie
algebra isomorphism between the new Lie algebra structure $\frak{n}_{\infty
} $ and $gr(\frak{n}).$ Hence stratified Lie algebra structures induced by a
choice of supplementary subspaces $(m_{p})_{p\geq 1}$ as in (\ref{suppl})
are all isomorphic to $gr(\frak{n}).$

On $N_\infty$ the left-invariant subFinsler metrics $d_{\infty }$ associated to a choice of norm on $m_{1}$ are of
special interest. The one-parameter group of dilations $\{\delta _{t}\}_t$ is an automorphism of $N_{\infty }$ and that
\begin{equation}
d_{\infty }(\delta _{t}x,\delta _{t}y)=td_{\infty }(x,y)  \label{dilation}
\end{equation}
for any $x,y\in N_{\infty }$. The metric space $(N_\infty,d_\infty)$ is called a \emph{Carnot group}.

If on the other hand the simply connected nilpotent Lie group $N$ is not stratified,  then the group of dilations $(\delta _{t})_{t}$ associated to a
choice of supplementary vector subspaces $m_{i}$'s as in (\ref{suppl}) will
not consist of automorphisms of $N$ and the relation (\ref{dilation}) will
not hold.

Note also that if we are given two different choices of supplementary
subspaces $m_{i}$'s and $m_{i}^{\prime }$'s as in (\ref{suppl}), then the left-invariant
Carnot-Caratheodory metrics on the corresponding stratified Lie groups are
isometric if and only if $(m_{1},\left\| \cdot \right\| )$ and $%
(m_{1}^{\prime },\left\| \cdot \right\| ^{\prime })$ are isometric (a linear isomorphism from $m_{1}$ to $m_{1}^{\prime }$ that
sends $\left\| \cdot \right\| $ to $\left\| \cdot \right\| ^{\prime }$ extends to an isometry of the two Carnot groups).

\subsection{The Campbell-Hausdorff formula}

The exponential map $\exp :\frak{n}\rightarrow N$ is a diffeomorphism.
In the sequel, we will often abuse notation and identify $N$ and $\frak{n}$
without further notice. In particular, for two elements $x$ and $y$ of $%
\frak{n}$ (or $N$ equivalently) $xy$ will denote their product in $N$, while
$x+y$ denotes the sum in $\frak{n}$. Let $(\delta _{t})_{t}$ be a one-parameter group
of dilations associated to a choice of supplementary subspaces $m_{i}$'s as
in (\ref{suppl}). We denote the corresponding stratified Lie algebra by $\frak{n}%
_{\infty }$ as above and the Lie group by $N_{\infty }.$ The product on $%
N_{\infty }$ is denoted by $x*y$. On $N_{\infty }$ the dilations $(\delta
_{t})_{t}$ are automorphisms.

The Campbell-Hausdorff formula (see \cite{CG}) allows to give a more precise
form of the product in $N.$ Let $(e_{i})_{1\leq i\leq d}$ be a basis of $%
\frak{n}$ adapted to the decomposition into $m_{i}$'s, that is $%
m_{i}=span\{e_{j},e_{j}\in m_{i}\}.$ Let $x=x_{1}e_{1}+...+x_{d}e_{d}$ the
corresponding decomposition of an element $x\in \frak{n}$. Then define the
degree $d_{i}=\deg (e_{i})$ to be the largest $j$ such that $e_{i}\in
C^{j-1}(\frak{n}).$ If $\alpha =(\alpha _{1},...,\alpha _{d})\in \Bbb{N}^{d}$
is a multi-index, then let $d_{\alpha }=\deg (e_{1})\alpha _{1}+...+\deg
(e_{d})\alpha _{d}.$

The Campbell-Hausdorff formula yields
\begin{equation}
(xy)_{i}=x_{i}+y_{i}+\sum C_{\alpha ,\beta }x^{\alpha }y^{\beta }
\label{CHF0}
\end{equation}
where $C_{\alpha ,\beta }$ are real constants and the sum is over all
multi-indices $\alpha $ and $\beta $ such that $d_{\alpha }+d_{\beta }\leq
\deg (e_{i})$, $d_{\alpha }\geq 1$ and $d_{\beta }\geq 1$.

From (\ref{deformation}), it is easy to give the form of the associated
stratified Lie group law:
\begin{equation}
(x*y)_{i}=x_{i}+y_{i}+\sum C_{\alpha ,\beta }x^{\alpha }y^{\beta }
\label{CHF}
\end{equation}
where the sum is restricted to those $\alpha $'s and $\beta $'s such that $%
d_{\alpha }+d_{\beta }=\deg (e_{i})$, $d_{\alpha }\geq 1$ and $d_{\beta
}\geq 1$.

\subsection{Homogeneous quasi-norms and Guivarc'h's theorem on polynomial
growth}\label{Guiv}

Let $\frak{n}$ be a finite dimensional real nilpotent Lie algebra and
consider a decomposition
\begin{equation*}
\frak{n}=m_{1}\oplus ...\oplus m_{r}
\end{equation*}
by supplementary vector subspaces as in (\ref{suppl}). Let $(\delta
_{t})_{t>0}$ be the one parameter group of dilations associated to this
decomposition, that is $\delta _{t}(x)=t^{i}x$ if $x\in m_{i}$. We now
introduce the following definition.

\begin{definition}[Homogeneous quasi-norm]
\label{quasinorm}A continuous function $|\cdot |:\frak{n}\rightarrow \Bbb{R}%
_{+}$ is called a homogeneous quasi-norm associated to the dilations $%
(\delta _{t})_{t}$, if it satisfies the following properties:

$(i)$ $|x|=0\Leftrightarrow x=0.$

$(ii)$ $|\delta _{t}(x)|=t|x|$ for all $t>0.$
\end{definition}

\begin{example}
\label{exampl}(1) Quasi-norms of supremum type, i.e. $|x|=\max_{p}\left\|
\pi _{p}(x)\right\| _{p}^{1/p}$ where $\left\| \cdot \right\| _{p}$ are
ordinary norms on the vector space $m_{p}$ and $\pi _{p}$ is the projection
on $m_{p}$ as above.

(2) $|x|=d_{\infty }(e,x)$, where $d_{\infty }$ is a Carnot-Carath\'{e}odory
metric on a stratified nilpotent Lie group (as the relation (\ref{dilation})
shows).
\end{example}

Clearly, a quasi-norm is determined by its sphere of radius $1$ and two
quasi-norms (which are homogeneous with respect to the same group of
dilations) are always equivalent in the sense that
\begin{equation}
\frac{1}{c}\left| \cdot \right| _{1}\leq \left| \cdot \right| _{2}\leq
c\left| \cdot \right| _{1}  \label{equivalent}
\end{equation}
for some constant $c>0$ (indeed, by continuity, $|\cdot |_{2}$ admits a
maximum on the ``sphere'' $\{|x|_{1}=1\}$). If the two quasi-norms are
homogeneous with respect to two distinct semi-groups of dilations, then the
inequalities (\ref{equivalent}) continue to hold outside a neighborhood of $%
0 $, but may fail near $0$.

Homogeneous quasi-norms satisfy the following properties:

\begin{proposition}
\label{propert}Let $|\cdot |$ be a homogeneous quasi-norm on $\frak{n}$,
then there are constants $C,C_{1},C_{2}>0$ such that

$(a)$ $|x_{i}|\leq C\cdot |x|^{\deg (e_{i})}$ if $x=x_{1}e_{1}+...+x_{n}e_{n}
$ in an adapted basis $(e_{i})_{i}.$

$(b)$ $|x^{-1}|\leq C\cdot |x|.$

$(c)$ $|x+y|\leq C\cdot (|x|+|y|)$

$(d)$ $|xy|\leq C_{1}(|x|+|y|)+C_{2}.$
\end{proposition}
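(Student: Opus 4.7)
The plan is to exploit the equivalence $(\ref{equivalent})$ of any two homogeneous quasi-norms associated to a fixed one-parameter group $(\delta _{t})_{t>0}$ of dilations. Since each of properties (a)-(d) is stable under passing to an equivalent quasi-norm (at the cost of modifying the constants $C$, $C_{1}$, $C_{2}$), it will suffice to prove the four assertions for one convenient choice. I would take the sup-type quasi-norm $|x|_{0}:=\max_{p}\Vert \pi _{p}(x)\Vert _{p}^{1/p}$ from Example \ref{exampl}(1), choosing each $\Vert \cdot \Vert _{p}$ to be the sup-norm in the adapted basis $(e_{i})_{i}$ so that the coordinates of $x$ in $m_{p}$ are bounded by $\Vert \pi _{p}(x)\Vert _{p}$.

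With this choice, property (a) is immediate: $|x_{i}|\leq \Vert \pi _{\deg (e_{i})}(x)\Vert _{\deg (e_{i})}\leq |x|_{0}^{\deg (e_{i})}$. For (b) I would use that in a nilpotent Lie group $\exp $ is a diffeomorphism and $\exp (-X)=\exp (X)^{-1}$, so identifying $N$ with $\frak{n}$ via $\exp $ gives $x^{-1}=-x$ as a vector; symmetry of each $\Vert \cdot \Vert _{p}$ then yields $|-x|_{0}=|x|_{0}$ exactly. For (c), the componentwise triangle inequality combined with the elementary estimate $(a^{p}+b^{p})^{1/p}\leq a+b$ for $a,b\geq 0$ and $p\geq 1$ gives outright subadditivity $|x+y|_{0}\leq |x|_{0}+|y|_{0}$.

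The only real computation is (d). Writing $p=\deg (e_{j})$ and invoking the Campbell-Hausdorff expansion $(\ref{CHF0})$, I have $(xy)_{j}=x_{j}+y_{j}+\sum C_{\alpha ,\beta }x^{\alpha }y^{\beta }$ over multi-indices satisfying $d_{\alpha }+d_{\beta }\leq p$ and $d_{\alpha },d_{\beta }\geq 1$. Each monomial $x^{\alpha }y^{\beta }$ is bounded via (a) by $|x|_{0}^{d_{\alpha }}|y|_{0}^{d_{\beta }}$. A simple dichotomy on whether $|x|_{0}+|y|_{0}\geq 1$ or $\leq 1$ finishes the job: in the first regime $|x|_{0}^{d_{\alpha }}|y|_{0}^{d_{\beta }}\leq (|x|_{0}+|y|_{0})^{d_{\alpha }+d_{\beta }}\leq (|x|_{0}+|y|_{0})^{p}$, while in the second each monomial is bounded by $1$. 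Extracting $p$-th roots and taking the maximum over $j$ yields the announced bound $|xy|_{0}\leq C_{1}(|x|_{0}+|y|_{0})+C_{2}$.

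The hardest part conceptually is the additive constant $C_{2}$ in (d): it is genuinely unavoidable here because $\delta _{t}$ need not be a Lie algebra automorphism of $\frak{n}$, which is precisely why the Campbell-Hausdorff indices in $(\ref{CHF0})$ obey only $d_{\alpha }+d_{\beta }\leq p$ instead of equality. When $\frak{n}$ is already stratified, equality holds, the dichotomy collapses, and one recovers the clean subadditive bound $|xy|_{0}\leq C(|x|_{0}+|y|_{0})$.
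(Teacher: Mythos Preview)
Your proof is correct. Parts (a)--(c) match the paper's approach exactly: reduce to the sup-type quasi-norm via the equivalence $(\ref{equivalent})$ and read off the properties. For (d), the paper instead defers to Lemma~\ref{guivarch} (Guivarc'h's lemma), which is a sharper statement: by rescaling the norms $\Vert\cdot\Vert_p$ one can achieve $|xy|\leq |x|+|y|+\varepsilon$, i.e.\ $C_1=1$ with $C_2$ arbitrarily small. Your direct dichotomy on $|x|_0+|y|_0\gtrless 1$ applied to the Campbell--Hausdorff expansion $(\ref{CHF0})$ is a cleaner and more elementary route to (d) as stated, at the cost of uncontrolled constants; the paper's detour through Lemma~\ref{guivarch} is justified because that lemma is needed independently for the Bass--Guivarc'h growth computation. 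Your closing remark on the necessity of $C_2$ in the non-stratified case is also correct and is treated more fully in the Remark following Lemma~\ref{guivarch}.
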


Properties $(a)$, $(b)$ and $(c)$ are straightforward from the fact that $%
|x|=\max_{p}\left\| \pi _{p}(x)\right\| _{p}^{1/p}$ is a homogeneous
quasi-norm and from (\ref{equivalent}). Property $(d)$ justifies the term
``quasi-norm'' and follows from Lemma \ref{guivarch} below. It can be a
problem that the constant $C_{1}$ in $(d)$ may not be equal to $1$. In fact,
this is why we use the word quasi-norm instead of just norm, because we do
not require the triangle inequality axiom to hold. However the following
lemma of Guivarc'h is often a good enough remedy to this situation. Let $%
\left\| \cdot \right\| _{p}$ be an arbitrary norm on the vector space $m_{p}$%
.

\begin{lemma}
\label{guivarch}\label{GuivLem}\label{jauge}(Guivarc'h, \cite{Gui} lemme
II.1) Let $\varepsilon >0$. Up to rescaling each $\left\| \cdot \right\| _{p}
$ into a proportional norm $\lambda _{p}\left\| \cdot \right\| _{p}$ ($%
\lambda _{p}>0$) if necessary, the quasi-norm $|x|=\max_{p}\left\| \pi
_{p}(x)\right\| _{p}^{1/p}$ satisfies
\begin{equation}
|xy|\leq |x|+|y|+\varepsilon   \label{almostriangle}
\end{equation}
for all $x,y\in N$. If $N$ is stratified with respect to $(\delta _{t})_{t}$ we
can take $\varepsilon =0$.
\end{lemma}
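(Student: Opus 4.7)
My approach would be induction on the depth $p$ in the lower central series ($p = 1, \dots, r$), fixing $\lambda_1, \dots, \lambda_r$ in succession. By equivalence of norms on finite-dimensional spaces, after replacing each $\|\cdot\|_p$ by an equivalent norm and absorbing the constant into $\lambda_p$, I may assume $\|\pi_p(z)\|_p$ is the $\ell^\infty$-norm in an adapted basis, so that $|x|_\lambda \leq s$ gives the coordinate bound $|x_j| \leq s^{d_j}/\lambda_{d_j}$. The desired inequality $|xy|_\lambda \leq s+t+\varepsilon$ is equivalent to $\lambda_p|(xy)_i| \leq (s+t+\varepsilon)^p$ for every basis index $i$ with $d_i = p$, and this is the form I would try to verify at step $p$.

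Applying the Campbell--Hausdorff formula (\ref{CHF0}) and the coordinate estimate gives
\[
\lambda_p |(xy)_i| \;\leq\; s^p + t^p + \lambda_p \sum_{\alpha,\beta} D_{\alpha,\beta}^{(i)}\, s^{d_\alpha} t^{d_\beta},
\]
where $D_{\alpha,\beta}^{(i)} := |C_{\alpha,\beta}^{(i)}|\prod_j \lambda_{d_j}^{-\alpha_j-\beta_j}$ and the sum is over multi-indices with $d_\alpha,d_\beta \geq 1$ and $d_\alpha + d_\beta \leq p$. The crucial observation is that these constraints force every $d_j$ appearing in the product to satisfy $d_j \leq p-1$, so $D_{\alpha,\beta}^{(i)}$ is already determined and finite at step $p$.

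In the stratified case, only terms with $d_\alpha + d_\beta = p$ occur, and comparing coefficient-by-coefficient with the binomial expansion $(s+t)^p = s^p + t^p + \sum_{k=1}^{p-1}\binom{p}{k}s^k t^{p-k}$ reduces the required inequality to the finite list of conditions $\lambda_p \sum_{d_\alpha = k,\,d_\beta = p-k} D_{\alpha,\beta}^{(i)} \leq \binom{p}{k}$; these are satisfied by choosing $\lambda_p$ small, giving $\varepsilon = 0$. In the general case additional terms with $d_\alpha + d_\beta \leq p-1$ appear, bounded by $\lambda_p\cdot M\cdot (1+(s+t)^{p-1})$ for some finite $M$ that depends only on previously chosen data; the slack $(s+t+\varepsilon)^p - (s+t)^p \geq \max\bigl(\varepsilon^p,\, p\varepsilon (s+t)^{p-1}\bigr)$ absorbs them once $\lambda_p$ is also chosen small compared with $\varepsilon^p$, after splitting the analysis into the two regimes $s+t \leq 1$ and $s+t \geq 1$.

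The only subtle point, and what I would consider the main obstacle, is the bookkeeping that makes the induction self-consistent: at step $p$ the constraint on $\lambda_p$ must involve only $\lambda_1, \dots, \lambda_{p-1}$ and not loop back on itself or on later $\lambda_q$'s. This is guaranteed precisely by the observation that $d_\alpha, d_\beta \geq 1$ forces $d_j < p$ for every index $j$ contributing to the monomial $x^\alpha y^\beta$. Once this is noted, the rest reduces to a comparison of multivariate polynomials and the argument goes through without further difficulty.
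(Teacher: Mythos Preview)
Your proof is correct and follows essentially the same inductive scheme as the paper: fix $\lambda_1,\dots,\lambda_r$ in order, use the Campbell--Hausdorff formula to see that the nonlinear part of $(xy)_i$ at level $p$ involves only coordinates of degree $\leq p-1$, and choose $\lambda_p$ small in terms of the already-fixed $\lambda_1,\dots,\lambda_{p-1}$. The key observation you isolate (that $d_\alpha,d_\beta\geq 1$ and $d_\alpha+d_\beta\leq p$ force every contributing $d_j$ to be $<p$) is exactly the mechanism the paper uses, phrased there as ``$P_p$ depends only on the $\pi_i(x),\pi_i(y)$ with $i\leq p-1$''.

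The only difference is in the bookkeeping for the non-stratified case. You bound the low-degree terms by $\lambda_p M(1+(s+t)^{p-1})$ and split into $s+t\leq 1$ and $s+t\geq 1$ to absorb them into the slack $(s+t+\varepsilon)^p-(s+t)^p$. The paper avoids this case split by observing that the \emph{trinomial} expansion of $(|x|_\lambda+|y|_\lambda+\varepsilon)^p$ already contains every monomial $|x|_\lambda^{\,l}|y|_\lambda^{\,m}$ with $l+m\leq p$ with a strictly positive coefficient $\varepsilon_{l,m}$ (the missing degrees are filled by powers of $\varepsilon$), so each term in the polynomial bound can be matched directly and the condition becomes $\lambda_p\leq \bar\varepsilon/(C_p\Lambda_{p-1}^p)$. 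This is a bit slicker than your two-regime argument, but the content is the same.
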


This lemma is crucial also for computing the coarse asymptotics of volume
growth. For the reader's convenience, we reproduce here Guivarc'h's
argument, which is based on the Campbell-Hausdorff formula (\ref{CHF0}).

\proof%
We fix $\lambda _{1}=1$ and we are going to give a condition on the $\lambda
_{i}$'s so that (\ref{almostriangle}) holds. The $\lambda _{i}$'s will be
taken to be smaller and smaller as $i$ increases. We set $%
|x|=\max_{p}\left\| \pi _{p}(x)\right\| _{p}^{1/p}$ and let $|x|_{\lambda
}=\max_{p}\left\| \lambda _{p}\pi _{p}(x)\right\| _{p}^{1/p}$ for any $r$%
-tuple of $\lambda _{i}$'s. We want that for any index $p\leq r,$%
\begin{equation}
\lambda _{p}\left\| \pi _{p}(xy)\right\| _{p}\leq \left( |x|_{\lambda
}+|y|_{\lambda }+\varepsilon \right) ^{p}  \label{zooo}
\end{equation}
By (\ref{CHF0}) we have $\pi _{p}(xy)=\pi _{p}(x)+\pi _{p}(y)+P_{p}(x,y)$
where $P_{p}$ is a polynomial map into $m_{p}$ depending only on the $\pi
_{i}(x)$ and $\pi _{i}(y)$ with $i\leq p-1$ such that
\begin{equation*}
\left\| P_{p}(x,y)\right\| _{p}\leq C_{p}\cdot \sum_{l,m\geq 1,l+m\leq
p}M_{p-1}(x)^{l}M_{p-1}(y)^{m}
\end{equation*}
where $M_{k}(x):=\max_{i\leq k}\left\| \pi _{i}(x)\right\| _{i}^{1/i}$ and $%
C_{p}>0$ is a constant depending on $P_{p}$ and on the norms $\left\| \cdot
\right\| _{i}$'s. Since $\varepsilon >0,$ when expanding the right hand side
of (\ref{zooo}) all terms of the form $|x|_{\lambda }^{l}|y|_{\lambda }^{m}$
with $l+m\leq p$ appear with some positive coefficient, say $\varepsilon
_{l,m}$. The terms $|x|_{\lambda }^{p}$ and $|y|_{\lambda }^{p}$ appear with
coefficient $1$ and cause no trouble since we always have $\lambda
_{p}\left\| \pi _{p}(x)\right\| _{p}\leq |x|_{\lambda }^{p}$ and $\lambda
_{p}\left\| \pi _{p}(y)\right\| _{p}\leq |y|_{\lambda }^{p}$. Therefore, for
(\ref{zooo}) to hold, it is sufficient that
\begin{equation*}
\lambda _{p}C_{p}M_{p-1}(x)^{l}M_{p-1}(y)^{m}\leq \varepsilon
_{l,m}|x|_{\lambda }^{l}|y|_{\lambda }^{m}
\end{equation*}
for all remaining $l$ and $m.$ However, clearly $M_{k}(x)\leq \Lambda
_{k}\cdot |x|_{\lambda }$ where $\Lambda _{k}:=\max_{i\leq k}\{1/\lambda
_{i}^{1/i}\}\geq 1.$ Hence a sufficient condition for (\ref{zooo}) to hold
is
\begin{equation*}
\lambda _{p}\leq \frac{\overline{\varepsilon }}{C_{p}\Lambda _{p-1}^{p}}
\end{equation*}
where $\overline{\varepsilon }=\min \varepsilon _{l,m}$. Since $\Lambda
_{p-1}$ depends only on the first $p-1$ values of the $\lambda _{i}$'s, it
is obvious that such a set of conditions can be fulfilled by a suitable $r$%
-tuple $\lambda .$
\endproof

\begin{remark}
The constant $C_{2}$ in Property $(d)$ above can be taken to be $0$ when $N$
is stratified with respect to the $m_{i}$'s (i.e. the $\delta _{t}$'s are
automorphisms), as is easily seen after changing $x$ and $y$ into their
image under $\delta _{t}$. And conversely, if $C_{2}=0$ for some $\delta _{t}
$-homogeneous quasi-norm on $N,$ then $N$ admits a stratification. Indeed, from (\ref{CHF0}%
) and (\ref{CHF}), we see that if the $\delta _{t}$'s are not automorphisms, then one can find $%
x,y\in N$ such that, when $t$ is small enough, $|\delta _{t}(xy)-\delta
_{t}(x)\delta _{t}(y)|\geq ct^{(r-1)/r}$ for some $c>0.$ However, combining
Properties $(c)$ and Property $(d)$ with $C_{2}=0$ above we must have $%
|\delta _{t}(xy)-\delta _{t}(x)\delta _{t}(y)|=O(t)$ near $t=0.$ A
contradiction.
\end{remark}

Guivarc'h's lemma enables us to show:

\begin{theorem}
\label{guivthm}\label{coarsecomp}(Guivarc'h ibid.) Let $\Omega $ be a
compact neighborhood of the identity in a simply connected nilpotent Lie group $N$ and $\rho _{\Omega }(x,y)=\inf
\{n\geq 1,x^{-1}y\in \Omega ^{n}\}$. Then for any homogeneous quasi-norm $%
|\cdot |$ on $N,$ there is a constant $C>0$ such that
\begin{equation}
\frac{1}{C}|x|\leq \rho _{\Omega }(e,x)\leq C|x|+C  \label{zoop}
\end{equation}
\end{theorem}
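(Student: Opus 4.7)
The lower bound $|x|/C \le \rho_{\Omega}(e,x)$ follows readily from Guivarc'h's Lemma \ref{guivarch}. Rescaling the norms $\|\cdot\|_p$ on the $m_p$ as in that lemma produces a supremum-type homogeneous quasi-norm $|\cdot|_\lambda$ satisfying $|xy|_\lambda\le |x|_\lambda+|y|_\lambda+\varepsilon$ for some fixed $\varepsilon>0$. Since $\Omega$ is compact, $M:=\sup_{\omega\in\Omega}|\omega|_\lambda<\infty$; an induction on $n$ yields $|x|_\lambda\le n(M+\varepsilon)$ whenever $x\in\Omega^n$, i.e.\ $\rho_\Omega(e,x)\ge|x|_\lambda/(M+\varepsilon)$. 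The equivalence (\ref{equivalent}) of homogeneous quasi-norms then gives $\rho_\Omega(e,x)\ge|x|/C$ (the inequality is trivial in a bounded neighborhood of the identity).

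For the upper bound $\rho_\Omega(e,x)\le C|x|+C$ I would first dispose of a compact ``base case.'' Because $\Omega$ is a neighborhood of $e$ in the connected group $N$, it contains a symmetric open neighborhood of $e$ which generates $N$, so $N=\bigcup_{n\ge 1}\Omega^n$. The closed ball $\{|x|\le 1\}$ is compact (homogeneous quasi-norms are continuous and proper), hence contained in $\Omega^{n_0}$ for some integer $n_0$. It then suffices to show that every $x$ with $|x|=R\ge 1$ can be expressed as a product of $O(R)$ elements of $\{|x|\le 1\}$.

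The main step is carried out in coordinates of the second kind. Fix an adapted basis $(e_i)_{1\le i\le d}$ of $\mathfrak{n}$ with $e_i\in m_{\deg(e_i)}$ and write $x=\exp(x_1 e_1)\cdots\exp(x_d e_d)$; Property $(a)$ of Proposition \ref{propert} gives $|x_i|\le CR^{\deg(e_i)}$. For each factor $\exp(x_i e_i)$ with $e_i\in m_p$, choose a representation $e_i=[a_1,[a_2,\ldots[a_{p-1},a_p]\cdots]]$ as an iterated Lie bracket of elements $a_j\in m_1$. The Campbell--Hausdorff formula shows that the corresponding $p$-fold nested group commutator of one-parameter subgroup elements $\exp(s a_j)$ equals $\exp(s^p e_i)$ modulo a correction in $C^{p+1}(\mathfrak{n})$. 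Setting $s=|x_i|^{1/p}$, which is $O(R)$, realizes $\exp(x_i e_i)$ (up to correction) as a $2^p$-fold group commutator whose basic factors $\exp(sa_j)$ live on one-parameter subgroups in $m_1$; there the quasi-norm is linear in the parameter ($|\exp(ta_j)|=t\|a_j\|_1$), so each such factor splits into $\lceil s\rceil=O(R)$ pieces in $\{|x|\le 1\}\subset\Omega^{n_0}$. Concatenating over $i$ produces $x\in\Omega^{O(R)}$.

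The main obstacle is handling the Campbell--Hausdorff correction terms when $N$ is not stratified: the nested group commutator only realizes $\exp(s^p e_i)$ modulo an element of $C^{p+1}(\mathfrak{n})$, which must itself be reached within the same $O(R)$ budget. This is dealt with by induction on the nilpotency class $r$, applying the theorem to the closed subgroup $\exp(C^{p+1}(\mathfrak{n}))$ of smaller class, whose induced quasi-norm is comparable to the restriction of $|\cdot|$. A cleaner alternative would be to prove the upper bound first on the stratified nilshadow $N_\infty$ by a horizontal-path discretization argument for the sub-Finsler metric $d_\infty$ and then transfer to $N$, exploiting that $N$ and $N_\infty$ share the same underlying manifold and the same compact neighborhood $\Omega$, so only the words expressing $x$ differ and the comparison can be closed by applying Guivarc'h's lemma to both products.
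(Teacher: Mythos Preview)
Your lower bound is correct and matches the paper's argument. The upper bound, however, has two genuine gaps.

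First, the claim that each basis vector $e_i\in m_p$ admits a representation $e_i=[a_1,[a_2,\ldots[a_{p-1},a_p]\ldots]]$ as a \emph{single} iterated bracket of elements of $m_1$ is not true in general; one only knows that $e_i$ is a \emph{linear combination} of such brackets modulo $C^p(\mathfrak{n})$. This is fixable, but it compounds the second issue.

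Second, and more seriously, your induction does not close. You propose to absorb the Campbell--Hausdorff corrections by ``applying the theorem to the closed subgroup $\exp(C^{p+1}(\mathfrak{n}))$.'' But the theorem for that subgroup would require a compact neighborhood of the identity \emph{in the subgroup}, and $\Omega\cap\exp(C^{p+1}(\mathfrak{n}))$ is typically not one. What you actually need is that elements of $\exp(C^{p+1}(\mathfrak{n}))$ of quasi-norm $O(R)$ lie in $\Omega^{O(R)}$---which is precisely the statement you are trying to prove, so this is circular rather than inductive. Your alternative via $N_\infty$ is too vague to rescue this and risks the same circularity, since the comparison between words in $N$ and $N_\infty$ is itself proved in the paper \emph{using} this theorem.

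The paper avoids both problems by organizing the induction differently: it inducts on the nilpotency class via the \emph{quotient} $N/C^{r-1}(N)$ (where the image of $\Omega$ is still a compact neighborhood of the identity, so the inductive hypothesis applies cleanly), leaving a remainder $z\in C^{r-1}(N)$ with $|z|\le C_1 n$. This central piece is then handled directly, without appealing to the theorem for a subgroup: one writes $z=z_1^{n^r}$ with $z_1$ bounded, uses that products of length-$r$ commutators of elements of $\Omega$ form a neighborhood of $e$ in $C^{r-1}(N)$ to express $z_1$ as a bounded product of such commutators, and then invokes the identity $[x_1^{n},[x_2^{n},\ldots,x_r^{n}]\ldots]=[x_1,[x_2,\ldots,x_r]\ldots]^{n^r}$ (valid because the commutator is central) to realize $z$ as a word of length $O(n)$ in $\Omega$.
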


\proof%
Since any two homogeneous quasi-norms (w.r.t the same one-parameter group of dilations) are equivalent, it is enough to do the
proof for one of them, so we consider the quasi-norm obtained in Lemma \ref
{guivarch} with the extra property (\ref{almostriangle}). The lower bound in
(\ref{zoop}) is a direct consequence of (\ref{almostriangle}) and one can
take there $C$ to be $\max \{|x|,x\in \Omega \}+\varepsilon .$ For the upper
bound, it suffices to show that there is $C\in \Bbb{N}$ such that for all $%
n\in \Bbb{N}$, if $|x|\leq n$ then $x\in \Omega ^{Cn}.$ To achieve this, we
proceed by induction of the nilpotency length of $N.$ The result is clear
when $N$ is abelian. Otherwise, by induction we obtain $C_{0}\in \Bbb{N}$
such that $x=\omega _{1}\cdot ...\cdot \omega _{C_{0}n}\cdot z$ where $%
\omega _{i}\in \Omega $ and $z\in C^{r-1}(N)$ whenever $|x|\leq n.$ Hence $%
|z|\leq |x|+C_{0}n\cdot \max |\omega _{i}^{-1}|+\varepsilon C_{0}\cdot n\leq
C_{1}n$ for some other constant $C_{1}\in \Bbb{N}.$ So we have reduced the
problem to $x=z\in m_{r}=C^{r-1}(N)$ which is central in $N.$ We have $%
z=z_{1}^{n^{r}}$ where $|z_{1}|=|z|/n\leq C_{1}.$ Since $\Omega $ is a
neighborhood of the identity in $N,$ the set $\mathcal{U}$ of all products
of at most $\dim (m_{r})$ simple commutators of length $r$ of elements in $%
\Omega $ is a neighborhood of the identity in $C^{r-1}(N)$ (e.g. see \cite
{Gro2}, p113). It follows that there is a constant $C_{2}\in \Bbb{N}$ such
that $z_{1}$ is in $\mathcal{U}^{C_{2}}$, hence the product of at most $%
C_{2}\dim (m_{r})$ simple commutators. Then we are done because $z$ itself
will be equal to the same product of commutators where each letter $x_{i}\in
\Omega $ is replaced by $x_{i}^{n}.$ This last fact follows from the
following lemma:

\begin{lemma}
Let $G$ be a nilpotent group of nilpotency class $r$ and $n_{1},...,n_{r}$
be positive integers. Then for any $x_{1},...,x_{r}\in G$%
\begin{equation*}
\lbrack
x_{1}^{n_{1}},[x_{2}^{n_{2}},[...,x_{r}^{n_{r}}]...]=[x_{1},[x_{2},[...,x_{r}]...]^{n_{1}\cdot ...\cdot n_{r}}
\end{equation*}
\end{lemma}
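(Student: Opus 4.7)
I would prove the identity by induction on $r$, letting $r$ control simultaneously the nilpotency class of $G$ and the depth of the iterated commutator. The structural fact driving the whole argument is that any $r$-fold commutator in a class-$r$ group lies in $C^{r-1}(G)$, and in particular in the center $Z(G)$. For the base case $r=2$, the subgroup $[G,G]$ is central, so $ab = ba[a,b]$ and an immediate induction on the exponent gives $[x^m, y] = [x,y]^m$; applying this twice (using the centrality of $[x_1, x_2^{n_2}]$) yields $[x_1^{n_1}, x_2^{n_2}] = [x_1, x_2^{n_2}]^{n_1} = [x_1, x_2]^{n_1 n_2}$.

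For the inductive step, set $v := [x_2, [\ldots, x_r]\ldots]$ and $w := [x_2^{n_2}, [\ldots, x_r^{n_r}]\ldots]$, two elements of $C^{r-2}(G)$. I record two elementary observations. First, for any $a \in G$, the map $\phi_a : C^{r-2}(G) \to C^{r-1}(G)$ defined by $\phi_a(b) := [a,b]$ is a group homomorphism: the commutator identity $[a, bc] = [a,b]\cdot[a,c]^b$ together with the centrality of $[a,c] \in C^{r-1}(G) \subset Z(G)$ gives $[a,c]^b = [a,c]$, whence additivity. Second, the same centrality of $[a,b]$ for $b \in C^{r-2}(G)$ forces $[a^n, b] = [a,b]^n$ by the same $ab = ba[a,b]$ trick as in the base case. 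Specialising $a = x_1$, $b = w$ gives $[x_1^{n_1}, w] = [x_1, w]^{n_1}$.

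What remains is to show $[x_1, w] = [x_1, v]^{n_2 \cdots n_r}$. For this I would pass to $\bar G := G / C^{r-1}(G)$, a group of nilpotency class at most $r-1$. The inductive hypothesis, applied to the $(r-1)$-fold commutators $\bar v$ and $\bar w$ in $\bar G$, yields $\bar w = \bar v^{n_2 \cdots n_r}$, equivalently $w \cdot v^{-n_2 \cdots n_r} \in C^{r-1}(G) \subset Z(G) \subset \ker \phi_{x_1}$. Hence $[x_1, w] = [x_1, v^{n_2 \cdots n_r}]$, and one final application of the homomorphism property of $\phi_{x_1}$ on $C^{r-2}(G)$ delivers the desired equality $[x_1, v^{n_2 \cdots n_r}] = [x_1, v]^{n_2 \cdots n_r}$. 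The main obstacle is to arrange the induction so that descending to $\bar G$ genuinely lowers both the class and the commutator length by one simultaneously; once this bookkeeping is in place, the rest is formal manipulation of commutator identities in the central subgroup $C^{r-1}(G)$.
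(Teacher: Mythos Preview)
Your proof is correct and follows essentially the same approach as the paper, which merely says ``it suffices to use induction and the following obvious fact: if $[x,y]$ commutes with $x$ and $y$ then $[x^n,y]=[x,y]^n$.'' You have carefully fleshed out the induction by passing to the quotient $G/C^{r-1}(G)$ to lower the class and by making explicit the homomorphism property of $b\mapsto [a,b]$ on $C^{r-2}(G)$; this is exactly the natural way to make the paper's one-line sketch rigorous.
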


To prove the lemma it suffices to use induction and the following obvious
fact: if $[x,y]$ commutes to $x$ and $y$ then $[x^{n},y]=[x,y]^{n}.$%
\endproof%

Finally, we obtain:

\begin{corollary}
Let $\Omega $ be a compact neighborhood of the identity in $N.$ Then there
are positive constants $C_{1}$ and $C_{2}$ such that for all $n\in \Bbb{N},$%
\begin{equation*}
C_{1}n^{d}\leq vol_{N}(\Omega ^{n})\leq C_{2}n^{d}
\end{equation*}
where $d$ is given by the Bass-Guivarc'h formula:
\begin{equation}
d=\sum_{i\geq 1}i\cdot \dim m_{i}  \label{GuiBass}
\end{equation}
\proof%
By Theorem \ref{guivthm}, it is enough to estimate the volume of the
quasi-norm balls. By homogeneity of the quasi-norm, we have $%
vol_{N}\{x,|x|\leq t\}=t^{d}vol_{N}\{x,|x|\leq 1\}$.
\endproof%
\end{corollary}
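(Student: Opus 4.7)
My plan is to follow the strategy the author outlines after the corollary statement, namely: use Theorem \ref{guivthm} to compare the word ball $\Omega^n$ with a homogeneous quasi-norm ball, and then use the scaling behavior of Haar measure under dilations to evaluate the volume of a quasi-norm ball.

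First, I would fix a homogeneous quasi-norm $|\cdot|$ associated to the dilations $(\delta_t)_{t>0}$ coming from a choice of supplementary subspaces $(m_i)$ as in (\ref{suppl}); for concreteness one may take $|x|=\max_p \|\pi_p(x)\|_p^{1/p}$. Theorem \ref{guivthm} provides a constant $C>0$ such that
\begin{equation*}
\tfrac{1}{C}|x| \leq \rho_\Omega(e,x) \leq C|x|+C.
\end{equation*}
This implies inclusions of balls of the form $\{x: |x|\leq c_1 n\} \subset \Omega^n \subset \{x: |x|\leq c_2 n\}$ (for $n$ large enough, with minor adjustment for small $n$). Hence it suffices to show that the volume of the quasi-norm ball $B(t):=\{x\in N: |x|\leq t\}$ satisfies $vol_N(B(t)) = t^d\cdot vol_N(B(1))$, with $vol_N(B(1))$ finite and strictly positive.

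Next, I would use the homogeneity property $|\delta_t x| = t|x|$, which gives $B(t) = \delta_t(B(1))$. Identifying $N$ with $\mathfrak{n}$ via $\exp$, Haar measure on $N$ corresponds to Lebesgue measure on $\mathfrak{n}$; the dilation $\delta_t$ is the linear map which acts on each stratum $m_i$ by multiplication by $t^i$, so its Jacobian determinant equals
\begin{equation*}
\prod_{i\geq 1} t^{\,i\cdot \dim m_i} = t^{\sum_i i\cdot \dim m_i} = t^d.
\end{equation*}
A standard change of variables then yields $vol_N(B(t)) = t^d \cdot vol_N(B(1))$. Finally, $vol_N(B(1))$ is finite since the continuous function $|\cdot|$ is proper (it is bounded below on the complement of any neighborhood of $0$, as any two quasi-norms agree up to multiplicative constants outside a neighborhood of the origin) so $B(1)$ is compact; and $vol_N(B(1))>0$ because $B(1)$ is a neighborhood of the identity (by continuity of $|\cdot|$ at $0$ with $|0|=0$).

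The genuinely substantive input is Theorem \ref{guivthm}, which has already been established; beyond that the proof is purely a scaling argument. The only point requiring mild care is the Jacobian calculation: one needs to know that Haar measure on the simply connected nilpotent Lie group $N$ pulls back to Lebesgue measure on $\mathfrak{n}$ via $\exp$, which is standard for nilpotent Lie groups since $\exp$ is a diffeomorphism and the group is unimodular with the Campbell--Hausdorff product being unipotent (its Jacobian on $\mathfrak{n}\times \mathfrak{n}$ is identically $1$). Once this identification is made, the argument concludes cleanly with $C_1 = c_1^d \cdot vol_N(B(1))$ and $C_2 = c_2^d\cdot vol_N(B(1))$ (up to adjustments to absorb the additive $+C$ in (\ref{zoop}) for small $n$).
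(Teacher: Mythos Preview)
Your proof is correct and follows exactly the same approach as the paper: reduce to quasi-norm balls via Theorem~\ref{guivthm}, then use the homogeneity $vol_N\{|x|\leq t\}=t^d\,vol_N\{|x|\leq 1\}$. You have simply filled in the details (ball inclusions, Jacobian of $\delta_t$, finiteness and positivity of $vol_N(B(1))$) that the paper leaves implicit in its two-sentence argument.
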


\begin{remark}
The use of Malcev's embedding theorem allows, as Guivarc'h observed, to
deduce immediately that the analogous result holds for virtually nilpotent
finitely generated groups. This fact that was also proven independently by H.
Bass \cite{Bass} by a direct combinatorial argument. See also Tits' appendix to Gromov's paper \cite{Gro}. In fact Guivarc'h's Theorem \ref{guivthm} seems to have been rediscovered several times in the past 40 years, including by Pansu in his thesis \cite{Pan}, the latest example of that being \cite{Kar}.
\end{remark}

\section{The nilshadow\label{nilsh}\label{nilshadow}}\label{nilshadowsection}

The goal of this section is to introduce the nilshadow of a simply connected solvable Lie
group $G$. We will assume that $G$ has polynomial growth, although this last assumption is not necessary for almost everything we do in this section. The only statement which will be used afterwards in the paper (in Section \ref{nilpotent-reduction}) is Lemma \ref{Kinvdec} below. The reader familiar with the nilshadow can jump directly to the statement of this lemma and skip the forthcoming discussion.\\

\subsection{Construction of the nilshadow}
The nilshadow of $G$ is a simply connected nilpotent Lie group $G_{N}$, which is
associated to $G$ in a natural way. This notion was first introduced by
Auslander and Green in \cite{AG} in their study of flows on solvmanifolds.
They defined it as the unipotent radical of a \textit{semi-simple splitting}
of $G.$ However, we are going to follow a different approach for its
construction by working first at the Lie algebra level. We refer the reader
to the book \cite{DR} where this approach is taken up.

Let $\frak{g}$ be a solvable real Lie algebra and $\frak{n}$ the nilradical
of $\frak{g.}$ We have $[\frak{g},\frak{g}]\subset \frak{n}.$ If $x\in \frak{%
g},$ we write $ad(x)=ad_{s}(x)+ad_{n}(x)$ the Jordan decomposition of $ad(x)$
in $GL(\frak{g}).$ Since $ad(x)\in Der(\frak{g}),$ the space of derivations
of $\frak{g}$, and $Der(\frak{g})$ is the Lie algebra of the \textit{%
algebraic} group $Aut(\frak{g)},$ the Jordan components $ad_{s}(x)$ and $%
ad_{n}(x)$ also belong to $Der(\frak{g}).$ Moreover, for each $x\in \frak{g}$%
, $ad_{s}(x)$ sends $\frak{g}$ into $\frak{n}$ (because so does $ad(x)$ and $%
ad_{s}(x)$ is a polynomial in $ad(x)$). Let $\frak{h}$ be a Cartan
subalgebra of $\frak{g}$, namely a nilpotent self-normalizing subalgebra. Recall that the image of a Cartan subalgebra by a surjective Lie algebra homomorphism is again a Cartan subalgebra. Now since $\frak{g}/\frak{n}$ is abelian, it follows that $\frak{h}$ maps onto $\g/\n$, i.e.  $\frak{h}+\frak{n}=\frak{g}$. Moreover $ad_{s}(x)_{|\frak{h}}=0$ if $%
x\in \frak{h}$, because $\frak{h}$ is nilpotent. \\

Now pick any real
vector subspace $\frak{v}$ of $\frak{h}$ in direct sum with $\n$.
Then the following two conditions hold:

$(i)$ $\frak{v}\oplus \frak{n}=\frak{g}$ .

$(ii)$ $ad_{s}(x)(y)=0$ for all $x,y\in \frak{v}.$\\

From $(i)$ and $(ii)$, it follows easily that $ad_{s}(x)$ commutes with $%
ad(y)$, $ad_{s}(y)$ and $ad_{n}(y)$, for all $x,y$ in $\frak{v}$. We have:

\begin{lemma}
\label{linear}The map $\frak{v}\rightarrow Der(\frak{g})$ defined by $%
x\mapsto ad_{s}(x)$ is a Lie algebra homomorphism.
\end{lemma}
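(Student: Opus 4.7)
Since $\mathfrak{v}$ is complementary to $\mathfrak{n}$ and $[\mathfrak{g},\mathfrak{g}] \subseteq \mathfrak{n}$, the projection $\mathfrak{g} \to \mathfrak{g}/\mathfrak{n}$ restricts to a linear isomorphism $\mathfrak{v} \to \mathfrak{g}/\mathfrak{n}$ onto an abelian quotient; I therefore regard $\mathfrak{v}$ as an abelian Lie algebra. The assertion that $x \mapsto ad_s(x)$ is a Lie algebra homomorphism then reduces to two claims: linearity in $x \in \mathfrak{v}$, and $[ad_s(x), ad_s(y)] = 0$ for all $x,y \in \mathfrak{v}$. The bracket relation is already one of the commutations stated to follow from conditions $(i)$ and $(ii)$, so the substantive content is linearity.

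To prove linearity -- a nontrivial point, since the Jordan decomposition is not in general additive -- my plan is to use the generalized weight space decomposition of $\mathfrak{g}_{\mathbb{C}}$ viewed as a module over the nilpotent Lie algebra $\mathfrak{h}$ acting by $ad$. Classical representation theory of nilpotent Lie algebras gives a decomposition
\[
\mathfrak{g}_{\mathbb{C}} \;=\; \bigoplus_{\lambda} \mathfrak{g}_{\mathbb{C},\lambda}
\]
indexed by finitely many linear functionals $\lambda \in \mathfrak{h}^{*}_{\mathbb{C}}$, where $\mathfrak{g}_{\mathbb{C},\lambda}$ is the simultaneous generalized eigenspace on which every $ad(h)$, $h \in \mathfrak{h}$, has $\lambda(h)$ as its unique generalized eigenvalue. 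Uniqueness of the Jordan decomposition on each $\mathfrak{g}_{\mathbb{C},\lambda}$ then forces $ad_s(h)$ to act as the scalar $\lambda(h)\cdot \mathrm{Id}$, whence
\[
ad_s(h) \;=\; \sum_{\lambda} \lambda(h)\, P_{\lambda},
\]
with $P_\lambda$ the projection onto $\mathfrak{g}_{\mathbb{C},\lambda}$. Since each weight $\lambda$ is linear in $h$, this expression is manifestly linear in $h \in \mathfrak{h}$, and restricting to $\mathfrak{v}$ gives the required linearity.

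The main obstacle is precisely this linearity statement. A naive attack -- writing $S = \alpha\, ad_s(x) + \beta\, ad_s(y)$ and $N = \alpha\, ad_n(x) + \beta\, ad_n(y)$, noting that $S$ is semisimple and $[S,N] = 0$, and appealing to uniqueness of the Jordan decomposition -- founders on showing $N$ is nilpotent, because $ad_n(x)$ and $ad_n(y)$ need not commute (their commutator is $ad([x,y])$, which is nilpotent since $[x,y]\in \mathfrak{n}$, but typically nonzero, so the sum of the two non-commuting nilpotents need not itself be nilpotent). The weight-space viewpoint sidesteps this difficulty by reading off the semisimple part one eigenspace at a time, exploiting the built-in linearity of the weights as functionals on $\mathfrak{h}$.
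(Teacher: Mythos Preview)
Your argument is correct and takes a genuinely different route from the paper. The paper attacks exactly the obstacle you flag and overcomes it directly: it shows that $ad_n(x)+ad_n(y)$ \emph{is} nilpotent for $x,y\in\mathfrak{v}$, via the elementary remark that if $a$ is nilpotent, $V$ is a nilpotent subspace, and $[a,V]\subset V$, then $(a+V)^{nm}=0$. Applying this twice (first with $a=ad_n(x)$, $V=ad(\mathfrak{n})$, then with $a=ad_n(y)$ and $V$ the span of $ad_n(x)$ together with $ad((ad\,y)^k x)$, $k\ge 1$) gives nilpotency, and uniqueness of the Jordan decomposition finishes linearity.

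Your weight-space approach trades this ad hoc combinatorics for a standard structural fact: the primary decomposition of $\mathfrak{g}_{\mathbb{C}}$ under the nilpotent algebra $\mathfrak{h}$ immediately yields the explicit formula $ad_s(h)=\sum_\lambda \lambda(h)P_\lambda$, from which linearity is transparent. This is cleaner and more conceptual, and it has the bonus of exhibiting $ad_s$ concretely; the paper's argument, on the other hand, is more self-contained (no representation theory of nilpotent Lie algebras needed) and yields the extra information that $ad_n(x)+ad_n(y)$ is nilpotent, which is not obvious from your formula alone.
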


\proof%
First let us check that this map is linear. Let $x,y\in \frak{v}$. By the above $ad_{s}(y)$ and $ad_{s}(x)$ commute with
each other (hence their sum is semi-simple) and commute with $%
ad_{n}(x)+ad_{n}(y).$ From the uniqueness of the Jordan decomposition it
remains to check that $ad_{n}(x)+ad_{n}(y)$ is nilpotent if $x,y$ in $\frak{%
v.}$ To see this, apply the following obvious remark twice to $a=ad_{n}(x)$
and $V=ad(\frak{n})$ first and then to $a=ad_{n}(y)$ and $V=span\{ad_{n}(x),$
$ad((ad(y))^{n}x),n\geq 1\}$ : \textit{Let }$V$\textit{\ be a nilpotent
subspace of }$GL(\frak{g})$\textit{\ and }$a\in GL(\frak{g})$\textit{\
nilpotent, i.e. }$V^{n}=0$\textit{\ and }$a^{m}=0$\textit{\ for some }$%
n,m\in \Bbb{N}$\textit{\ and assume }$[a,V]\subset V.$\textit{\ Then }$%
(a+V)^{nm}=0.$

The fact that this map is a Lie algebra homomorphism follows  easily from the fact that
all $ad_{s}(x),$ $x\in \frak{v}$ commute with one another and with $[\frak{g},%
\frak{g}]\subset \frak{n}$.

\endproof%

We define a new Lie bracket on $\frak{g}$ by setting:
\begin{equation}
\lbrack x,y]_{N}=[x,y]-ad_{s}(x_{v})(y)+ad_{s}(y_{v})(x)  \label{NewBracket}
\end{equation}
where $x_{v}$ is the linear projection of $x$ on $\frak{v}$ according to the
direct sum $\frak{v}\oplus \frak{n}=\frak{g}.$ The Jacobi identity is
checked by a straightforward computation where the following fact is needed:
$ad_{s}\left( ad_{s}(x)(y)\right) =0$ for all $x,y\in \frak{g}.$ This holds
because, as we just saw, $ad_{s}(x)(\frak{g})\subset \frak{n}$ for all $x\in
\frak{g}$, and $ad_{s}(a)=0$ if $a\in \frak{n}$.

\begin{definition}
Let $\frak{g}_{N}$ be the vector space $\frak{g}$ endowed with the new Lie
algebra structure $[\cdot ,\cdot ]_{N}$ given by (\ref{NewBracket}). The%
\textbf{\ nilshadow} $G_{N}$ of $G$ is defined to be the simply connected
Lie group with Lie algebra $\frak{g}_{N}.$
\end{definition}

It is easy to check that $\frak{g}_{N}$\textit{\ is a nilpotent Lie algebra}%
. To see this, note first that $[\frak{g}_{N},\frak{g}_{N}]_{N}\subset \frak{%
n}$, and if $x\in \frak{g}_{N}$ and $y\in \frak{n}$ then $%
[x,y]_{N}=(ad_{n}(x_{v})+ad(x_{n}))(y).$ However, $ad_{n}(x_{v})+ad(x_{n})$
is a nilpotent endomorphism of $\frak{n}$ as follows from the same remark
used in the proof of Lemma \ref{linear}. Hence $\frak{g}_{N}$\textit{\ }is a
nilpotent.\\

The nilshadow Lie product on $G_N$ will be denoted by $*$ in order to distinguish it from the original Lie product on $G$. In the sequel, we will often identify $G$ (resp. $G_N$) with its Lie algebra $\g$ (resp. $\g_N$) via their respective exponential map. Since the underlying space of $\g_N$ was $\g$ itself, this gives an identification (although not a group isomorphism) between $G$ and $G_N$. Then the nilshadow Lie product can be expressed in terms of the original product as follows:

$$g*h=g \cdot (T(g^{-1})h)$$

Here $T$ is the Lie group homomorphism $G \rightarrow Aut(G)$ induced by the above choice of supplementary subspace $\vu$ as follows.

\begin{equation}\label{expl}
T(e^{a})(e^{b})=\exp (e^{ad_{s}(a_{v})}b) \textnormal{ }\forall a,b\in \frak{g}.\
\end{equation}

In other words, $T$ is the unique Lie group homomorphism whose differential at the identity is the Lie algebra homomorphism $d_e T: \g \rightarrow Der(\g)$ given by $d_e T(a)(b)=ad_s(a_v)b$, that is the composition of the map $\vu \rightarrow Der(\g)$ from Lemma \ref{linear} with the linear projection $\g \rightarrow \g/\n \simeq \vu$.

It is easy to check that this definition of the new product is compatible with the definition of the new Lie bracket.

It can also be checked that two choices of supplementary spaces $\vu$ as above yield isomorphic Lie structures (see \cite[Chap. III]{DR}). Hence by abuse of language, we speak of \textit{the} nilshadow of $\frak{g}$, when we mean the Lie structure on $G$ induced by a choice of $\vu$ as above.

The following example shows several of the features of a typical solvable
Lie group of polynomial growth.

\begin{example}[Nilshadow of a semi-direct product]
\label{exple}Let $G=\Bbb{R}\ltimes_{\phi }\Bbb{R}^{n}$ where $\phi
_{t}\in GL_{n}(\Bbb{R})$ is some one parameter subgroup given by $\phi
_{t}=\exp (tA)=k_{t}u_{t}$ where $A$ is some matrix in $M_{n}(\Bbb{R})$ and $%
A=A_{s}+A_{u}$ is its Jordan decomposition, giving rise to $k_{t}=\exp
(tA_{s})$ and $u_{t}=\exp (tA_{u}).$ The group $G$ is diffeomorphic to $\Bbb{%
R}^{n+1},$ hence simply connected. If all eigenvalues of $A_{s}$ are purely
imaginary, then $G$ has polynomial growth. However $G$ is not nilpotent
unless $A_{s}=0.$ So let us assume that neither $A_{s}$ nor $A_{u}$ is zero.
Then the nilshadow $G_{N}$ is the semi-direct product $\Bbb{R}%
\ltimes_{u}\Bbb{R}^{n}$ where $u_{t}$ is the unipotent part of $%
\phi _{t}.$

It is easy to compute the homogeneous dimension of $G$ (or $G_{N}$) in terms
of the dimension of the Jordan blocs of $A_{u}.$ If $n_{k}$ is the number of
Jordan blocks of $A_{u}$ of size $k,$ then
\begin{equation*}
d(G)=1+\sum_{k\geq 1}\frac{k(k+1)}{2}n_{k}
\end{equation*}
\end{example}

\subsection{Basic properties of the nilshadow}
We now list in the form of a few lemmas some basic properties of the nilshadow.

\begin{lemma}\label{abelian} The image of $T: G \rightarrow Aut(G)$ is abelian and relatively compact. Moreover $T(T(g)h)=T(h)$ for any $g,h\in G.$
\end{lemma}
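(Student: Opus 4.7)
I will separate the statement into three assertions: abelianness of $T(G)$, its relative compactness, and the identity $T(T(g)h)=T(h)$. Each uses the explicit description $d_{e}T(a)=ad_{s}(a_{v})$, hence $T(e^{a})=\exp(ad_{s}(a_{v}))$, together with ingredients already assembled in the construction of the nilshadow.

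For abelianness, the key input is the fact, recorded in the discussion preceding Lemma \ref{linear}, that $ad_{s}(x)$ and $ad_{s}(y)$ commute for all $x,y\in\vu$. Since $a\mapsto a_{v}$ is linear, the image of $d_{e}T$ lies in the commutative Lie subalgebra $ad_{s}(\vu)\subset\mathrm{Der}(\g)$. Exponentiating, $T(G)$ is contained in the connected abelian subgroup $\exp(ad_{s}(\vu))$ of $\mathrm{Aut}(\g)\cong\mathrm{Aut}(G)$.

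For relative compactness I invoke the standing assumption that $G$ has polynomial growth, hence is of type $(R)$ by Guivarc'h--Jenkins: every $ad(x)$ has purely imaginary spectrum. Since the eigenvalues of $ad_{s}(x)$ are a subset of those of $ad(x)$, the same is true of each $ad_{s}(x)$. The family $\{ad_{s}(a_{v}):a\in\g\}$ is thus a commuting family of semisimple endomorphisms of $\g$ with purely imaginary eigenvalues; simultaneously diagonalizing them over $\C$, their exponentials lie in a compact real torus inside $GL(\g_{\C})$, proving $T(G)$ is relatively compact in $\mathrm{Aut}(G)$.

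Finally, for $T(T(g)h)=T(h)$, my strategy is to show that $T$ factors through $G/N$, where $N=\exp(\n)$ is the nilradical. Indeed $d_{e}T$ vanishes on $\n$ since $a_{v}=0$ for every $a\in\n$ (by the direct sum $\g=\vu\oplus\n$); as $N$ is the connected, simply connected subgroup with Lie algebra $\n$, it follows that $T|_{N}$ is trivial. It then suffices to show that $T(g)h$ and $h$ lie in the same coset of $N$. Writing $g=e^{a}$, $h=e^{b}$, the formula $T(g)h=\exp(e^{ad_{s}(a_{v})}b)$, combined with the fact (recalled at the beginning of the nilshadow construction) that $ad_{s}(a_{v})$ maps $\g$ into $\n$, shows $e^{ad_{s}(a_{v})}b-b\in\n$. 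Hence $T(g)h$ and $h$ have the same image in $G/N$, and $T(T(g)h)=T(h)$ follows. I do not expect major obstacles; the most delicate step is the relative compactness, where one must combine the commutativity of the $ad_{s}(a_{v})$ with the type-$(R)$ condition to trap the exponentials in a torus.
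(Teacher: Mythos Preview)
Your proof is correct and follows the same approach as the paper's: compactness from the type-$(R)$ condition giving purely imaginary spectrum for each $ad_s(x)$, abelianness from $T$ factoring through the nilradical (you phrase this via $d_eT(\g)=ad_s(\vu)$ being a commutative subalgebra, which is equivalent), and $T(T(g)h)=T(h)$ from $ad_s(\g)\subset\n$ together with $T|_N=\mathrm{id}$, which the paper records as the identity $ad_s(ad_s(x)(y))=0$. The only cosmetic point is that in the third part you write $g=e^a$, $h=e^b$ although $\exp$ is not surjective on $G$; this is harmless since the identity then extends by density of $\exp(\g)$ and continuity of $T$ (and the paper's own use of formula~(\ref{expl}) carries the same implicit reliance).
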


\begin{proof} Since $G$ has polynomial growth it is of type $(R)$ by Guivarc'h's theorem. Hence all $ad_s(x)$ have purely imaginary eigenvalues. It follows that $K$ is compact. Since $T$ factors through the nilradical, its image is abelian. The last equality follows from $(\ref{expl})$ and the fact that $\forall x,y \in \g, \textnormal{ } ad_s(ad_s(x)(y))=0 $.
\end{proof}

\begin{lemma}\label{aut} $T(G)$ also belongs to $Aut(G_N)$ and $T$ is a group homomorphism $G_N \rightarrow Aut(G_N)$.
\end{lemma}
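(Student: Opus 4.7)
The plan is to work at the Lie algebra level, identifying $G$ and $G_N$ with $\g$ via their common exponential map. By $(\ref{expl})$, the differential of $T(e^a)$ at the identity is the linear automorphism $\phi_a := e^{ad_s(a_v)}$ of $\g$, which is already an automorphism of the original Lie algebra $(\g,[\cdot,\cdot])$.

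For the first claim, since $G_N$ is simply connected it suffices to show that $\phi_a$ also preserves the nilshadow bracket $[x,y]_N = [x,y] - ad_s(x_v)(y) + ad_s(y_v)(x)$. Two simple observations will do the work. First, for every $b \in \g$ the derivation $ad_s(b)$ maps $\g$ into $\n$, so every positive power of $ad_s(a_v)$ lands in $\n$; projecting onto $\vu$ along $\n$ therefore gives $(\phi_a x)_v = x_v$ for every $x\in \g$. Second, by the very commutativity of the family $\{ad_s(z_v) : z \in \g\}$ that made Lemma \ref{linear} work, $ad_s(a_v)$ commutes with $ad_s(x_v)$ for every $x$, and hence so does $\phi_a$. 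Combining these with the fact that $\phi_a$ already preserves the old bracket, a direct substitution
\[
[\phi_a x, \phi_a y]_N = [\phi_a x, \phi_a y] - ad_s(x_v)(\phi_a y) + ad_s(y_v)(\phi_a x) = \phi_a[x, y]_N
\]
shows that $\phi_a \in Aut(\g_N)$, and hence $T(e^a) \in Aut(G_N)$.

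For the homomorphism property, I compute for any $g, h \in G$
\[
T(g * h) = T\bigl(g \cdot T(g^{-1}) h\bigr) = T(g) \cdot T\bigl(T(g^{-1}) h\bigr) = T(g) \cdot T(h),
\]
where I successively use the definition of the nilshadow product $*$, the fact that $T : G \to Aut(G)$ is already a homomorphism for the original product, and the identity $T(T(k)h) = T(h)$ from Lemma \ref{abelian}. Since composition in $Aut(G_N)$ is the same map-composition as in $Aut(G)$, this is exactly $T(g*h) = T(g) \circ T(h)$ in $Aut(G_N)$. No serious obstacle is anticipated: the only subtlety is tracking the $\vu$-projection under $\phi_a$, and this is handled cleanly by the fact that $ad_s(a_v)$ takes all of $\g$ into $\n$.
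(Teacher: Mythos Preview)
Your proof is correct and follows essentially the same line as the paper's: you verify directly that $\phi_a = e^{ad_s(a_v)}$ is a Lie algebra automorphism of $\g_N$ (the paper phrases this equivalently as checking that $ad_s(a_v)$ is a derivation of $\g_N$, invoking $ad_s(ad_s(x)(y))=0$), and both deduce the second assertion from the identity $T(T(g)h)=T(h)$ of Lemma~\ref{abelian}. One small inaccuracy of wording: $G$ and $G_N$ do \emph{not} share a common exponential map (cf.\ Lemma~\ref{exponent} and the remark following it), but your argument never actually uses this---it only needs that $G_N$ is simply connected so that $Aut(G_N)\cong Aut(\g_N)$.
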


\begin{proof} The first assertion follows from $(\ref{expl})$ and the fact that $d_eT$ is a derivation of $\g_N$ as one can check from $(\ref{NewBracket})$ and the fact that $\forall x,y \in \g, \textnormal{ } ad_s(ad_s(x)(y))=0 $. The second assertion then follows from Lemma \ref{abelian}.
\end{proof}

We denote by $K$ the closure of $T(G)$ in $Aut(G) = Aut(\g)$.

\begin{lemma}[K-action on $\g_N$]\label{invofK} $K$ preserves $\vu$ and acts trivially on it. It also preserves the ideals $\n$ and the central descending series $\{C^i(\g_N)\}_i$ of $\g_N$.
\end{lemma}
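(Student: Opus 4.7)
My plan is to exploit the explicit formula $(\ref{expl})$ for $T$, which shows that every element of $T(G)$ is an automorphism of $\g$ of the form $e^{ad_s(x)}$ for some $x\in \vu$. Since $K$ is by definition the closure of $T(G)$ in $Aut(\g)$, all three claims will follow by first verifying them for the generators $e^{ad_s(x)}$ and then passing to the limit, using that each relevant subset ($\vu$, $\n$, the ideals $C^i(\g_N)$) is closed inside the finite-dimensional vector space $\g=\g_N$.

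For the first two assertions, the key input is property $(ii)$ appearing in the construction of the nilshadow, namely $ad_s(x)(y)=0$ for all $x,y\in \vu$. This says that the derivation $ad_s(x)$, for $x\in \vu$, vanishes identically on $\vu$. Therefore $e^{ad_s(x)}$ fixes $\vu$ pointwise, and the same then holds for every element of $T(G)$; by continuity every $k\in K$ fixes $\vu$ pointwise, which gives both that $K$ preserves $\vu$ and that it acts trivially on it.

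For the preservation of $\n$ one can simply note that $\n$ is the nilradical of $\g$, hence a characteristic ideal of $\g$, so every element of $Aut(\g)$, and in particular every element of $K$, preserves $\n$. (Alternatively, this follows directly from the already noted fact that $ad_s(x)$ maps $\g$ into $\n$ together with $\n$ being an ideal, so $e^{ad_s(x)}(\n)\subset \n$, and then one takes closures.)

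Finally, to show that $K$ preserves the central descending series $\{C^i(\g_N)\}_i$ of $\g_N$, I will combine Lemma \ref{aut} with the fact that $Aut(\g_N)$ is a closed (indeed algebraic) subgroup of $GL(\g)=GL(\g_N)$. By Lemma \ref{aut}, $T(G)\subset Aut(\g_N)\cap Aut(\g)$; taking closures in $Aut(\g)$ we get $K\subset Aut(\g_N)$. Since every automorphism of a Lie algebra preserves its characteristic ideals, and the terms of the central descending series are such ideals, each element of $K$ preserves $C^i(\g_N)$. The only conceptual subtlety is the very last point—confirming that closures taken in $Aut(\g)$ remain inside $Aut(\g_N)$—but this is immediate once one observes that $Aut(\g_N)\cap Aut(\g)$ is a closed subgroup of $Aut(\g)$ containing $T(G)$.
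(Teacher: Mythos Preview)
Your argument is correct and follows essentially the same route as the paper. The only cosmetic difference is that the paper works at the infinitesimal level throughout (it checks directly that each $ad_s(x)$, $x\in\vu$, is a derivation of $\g_N$ and hence preserves the characteristic ideals $C^i(\g_N)$), whereas you invoke Lemma~\ref{aut} to pass to the group level and then use that $Aut(\g_N)\cap Aut(\g)$ is closed in $Aut(\g)$; these are two phrasings of the same idea.
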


\begin{proof} It suffices to check that $ad_s(\vu)$ preserves $\n$ and $C^i(\g_N)$. It preserves $\n$ because $ad(x)$ preserves $\n$ for all $x \in \g$. It preserves $C^i(\g_N)$ because it acts as a derivation of $\g_N$ as we have already checked in the proof of Lemma \ref{aut}.
\end{proof}

\begin{remark}[Well-definedness of $\pi_1$]\label{indept} It is also easy to check from the definition of the nilshadow bracket that the commutator subalgebra $[\g_N,\g_N]$ and in fact each term of the central descending series $C^i(\g_N)$ is an ideal in $\g$ and \emph{does not depend} on the choice of supplementary subspace $\vu$ used to defined the nilshadow bracket. In particular the projection map $\pi_1: \g_N \rightarrow \g_N/[\g_N,\g_N]$ is a well defined linear map on $\g=\g_N$ (i.e. independently of the choice involved in the construction of the nilshadow Lie bracket).
\end{remark}

\begin{lemma}[Exponential map] \label{exponent} The respective exponential maps $\exp: \g \rightarrow G$ and $\exp_N : \g_N \rightarrow G_N$ coincide on $\n$ and on $\vu$.
\end{lemma}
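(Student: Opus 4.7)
The plan is to use the concrete realization of the nilshadow $G_N$ as the underlying manifold of $G$ equipped with the twisted product $g * h = g \cdot T(g^{-1}) h$ already introduced in the excerpt. Under this realization $\exp$ and $\exp_N$ are both maps from $\g = \g_N$ into the \emph{same} manifold, so it is meaningful to compare them pointwise. To prove $\exp_N(x) = \exp(x)$ for $x \in \n \cup \vu$, I will check in each case that $t \mapsto \exp(tx)$ (the one-parameter subgroup in the original group $G$) is also a one-parameter subgroup for the $*$-structure with the same initial velocity $x$. Uniqueness of one-parameter subgroups in a Lie group then forces $\exp_N(tx) = \exp(tx)$, and specializing to $t = 1$ yields the lemma.

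For $x \in \n$, the observation is that $T$ is trivial on $\exp(\n)$. Indeed, any $y \in \n$ has $y_v = 0$ by the decomposition $\vu \oplus \n = \g$, so formula (\ref{expl}) gives $T(\exp(y)) = \exp(ad_s(0)) = \mathrm{id}$. Consequently $\exp(tx) * \exp(sx) = \exp(tx) \cdot T(\exp(-tx))(\exp(sx)) = \exp(tx) \cdot \exp(sx) = \exp((t+s)x)$, so $t \mapsto \exp(tx)$ is the desired $*$-one-parameter subgroup.

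For $x \in \vu$, we have $(-tx)_v = -tx$, so by (\ref{expl}), $T(\exp(-tx))(\exp(sx)) = \exp\bigl(e^{-t\, ad_s(x)}(sx)\bigr)$. Property (ii) of $\vu$ built into the construction of the nilshadow forces $ad_s(x)(x) = 0$, whence $ad_s(x)^k(sx) = 0$ for all $k \geq 1$ and thus $e^{-t\, ad_s(x)}(sx) = sx$. Therefore $\exp(tx) * \exp(sx) = \exp(tx) \cdot \exp(sx) = \exp((t+s)x)$, and again $t \mapsto \exp(tx)$ is a $*$-one-parameter subgroup with initial velocity $x$.

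The proof reduces to these two short computations, so there is no real obstacle. The essential inputs are the explicit formula (\ref{expl}) for $T$ on group elements together with the two defining properties of $\vu$: that $x \in \n$ implies $x_v = 0$ (making $T$ trivial on $\exp(\n)$), and that $ad_s(x)(y) = 0$ for $x, y \in \vu$ (so that $T(\exp(-tx))$ fixes the one-parameter subgroup $\exp(\R x)$ pointwise).
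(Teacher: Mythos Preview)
Your proof is correct and follows essentially the same approach as the paper: for $x\in\n$ you use that $T$ is trivial on $\exp(\n)$ so the two products agree there, and for $x\in\vu$ you use $ad_s(x)(x)=0$ to show that $\{e^{tx}\}_t$ is a one-parameter subgroup for the $*$-product as well, then invoke uniqueness of one-parameter subgroups. The paper's argument is the same, just stated more tersely.
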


\begin{proof} Since the two Lie products coincide on $N=\exp(\n)$, so do their exponential map. For the second assertion, note that $T(e^{-tv})v=v$ for every $v \in \vu$ because $ad_s(x)(y)=0$ for all $x,y \in \nu$. It follows that $\{e^{tv}\}_t$ is a one-parameter subgroup for both Lie structures, hence it is equal to $\{\exp_N(tv)\}_t$.
\end{proof}

\begin{remark}[Surjectivity of the exponential map]
The exponential map is not always a diffeomorphism, as the example of the
universal cover $\widetilde{E}$ of the group $E$ of motions of the plane
shows (indeed any $1$-parameter subgroup of $E$ is either a translation
subgroup or a rotation subgroup, but the rotation subgroup is compact hence
a torus, so its lift will contain the (discrete) center of $E$, hence will
miss every lift of a non trivial translation). In fact, it is easy to see
that if $\frak{g}$ is the Lie algebra of a solvable (non-nilpotent) Lie
group of polynomial growth, then $\frak{g}$ maps surjectively on the Lie
algebra of $E.$ Hence,\textit{\ for a simply connected solvable and
non-nilpotent Lie group of polynomial growth, the exponential map is never
onto}. Nevertheless its image is easily seen to be dense.
\end{remark}

However, exponential coordinates of the second kind behave nicely. Note that $[\g_N,\g_N] \subset \n$.

\begin{lemma}[Exponential coordinates of the second kind]\label{coordo} Let $\{C^i(\g_N)\}_{i\geq 0}$ be the central descending series of $\g_N$ (with $C^1(\g_N)=[\g_N,\g_N]$) and pick linear subspaces $m_i$ in $\g_N$ such that $C^i(\g_N)=m_i \oplus C^{i-1}(\g_N)$ for $i \geq 2$. Let $\ell$ be a supplementary subspace of $C^1(\g_N)$ in $\n$. Define
exponential coordinates of the second kind by setting
\begin{eqnarray*}
m_{r}\oplus ...\oplus m_{2}\oplus \ell \oplus \frak{v} &\rightarrow &G \\
(\xi _{r},...,\xi _{1},v) &\mapsto &\exp _{N}(\xi _{r})*\ldots
*\exp _{N}(\xi _{1})*\exp _{N}(v)
\end{eqnarray*}
This map is a diffeomorphism. Moreover $\exp _{N}(\xi _{r})* \ldots *\exp
_{N}(\xi _{1})*\exp _{N}(v)=e^{\xi _{r}}\cdot ...\cdot e^{\xi
_{1}}\cdot e^{v}$ for all choices of $v\in \frak{v}$ and $\xi _{i}\in m_{i}.$
\end{lemma}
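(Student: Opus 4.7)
My plan is to address the product identity first, since it immediately reduces the diffeomorphism claim to a standard fact on simply connected nilpotent Lie groups.

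The crucial observation is that $T:G\to\operatorname{Aut}(G)$ is trivial on $N=\exp(\mathfrak{n})$. This follows from Lemma \ref{abelian}, which states that $T$ factors through $G/N$; alternatively, $d_e T(a)(b)=ad_s(a_v)(b)$ vanishes for $a\in\mathfrak{n}$ because the projection $a_v$ on $\mathfrak{v}$ is zero, and $T$ is determined on the connected group $N$ by its differential. Consequently $g*h = g\cdot T(g^{-1})h = g\cdot h$ whenever $g\in N$. Combined with Lemma \ref{exponent}, which gives $\exp_N(\xi)=e^\xi$ for $\xi\in\mathfrak{n}$ and $\exp_N(v)=e^v$ for $v\in\mathfrak{v}$, this slogan suffices. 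Setting $g_k:=\exp_N(\xi_r)*\cdots*\exp_N(\xi_k)$, I will run a straightforward downward induction on $k$ to show that $g_k=e^{\xi_r}\cdots e^{\xi_k}\in N$: the base case is Lemma \ref{exponent} for $\xi_r\in m_r\subset\mathfrak{n}$, and the induction step reads $g_{k-1}=g_k*\exp_N(\xi_{k-1})=g_k\cdot T(g_k^{-1})e^{\xi_{k-1}}=g_k\cdot e^{\xi_{k-1}}$, since $g_k\in N$ forces $T(g_k^{-1})=\operatorname{id}$. Applying the identity $g*h=g\cdot h$ once more with $g=g_1\in N$ and $h=e^v$ yields the second assertion.

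For the first assertion, the product identity just proved shows that our map agrees with the exponential map of the second kind
\[
\Phi(\xi_r,\ldots,\xi_1,v):=\exp_N(\xi_r)*\exp_N(\xi_{r-1})*\cdots*\exp_N(\xi_1)*\exp_N(v)
\]
on the simply connected nilpotent Lie group $G_N$, associated to the vector-space decomposition $\mathfrak{g}_N=m_r\oplus\cdots\oplus m_2\oplus\ell\oplus\mathfrak{v}$. I would then verify that this is a strong Malcev decomposition, i.e. that each ``head'' $m_r\oplus\cdots\oplus m_j$ is an ideal of $\mathfrak{g}_N$. By the paper's convention for $(\ref{suppl})$ in the nilshadow setting, $m_r\oplus\cdots\oplus m_j=C^{j-1}(\mathfrak{g}_N)$ is a term of the descending central series for $j\geq 2$, and the next head $m_r\oplus\cdots\oplus m_2\oplus\ell=\mathfrak{n}$ is the nilradical, which is also an ideal of $\mathfrak{g}_N$. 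The classical result that an adapted exponential chart of the second kind is a global diffeomorphism onto a simply connected nilpotent Lie group (proved by induction on the nilpotency class: quotient out the central ideal $\exp_N(m_r)$, apply the induction hypothesis to $G_N/\exp_N(m_r)$ with the induced decomposition, and recover $\Phi$ using the smooth section $\xi_r\mapsto\exp_N(\xi_r)$ of the central abelian extension) then finishes the proof.

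The main obstacle is conceptual rather than technical: recognising that the entire identity hinges on the single remark that $T|_N=\operatorname{id}$, which is immediate from the construction of the nilshadow; beyond that, every step is either an inductive bookkeeping or a textbook statement about nilpotent Lie groups.
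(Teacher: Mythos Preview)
Your proof is correct and follows essentially the same approach as the paper. The product identity is handled identically: both you and the paper observe that $T|_N=\operatorname{id}$ forces $g*h=g\cdot h$ for $g\in N$, then combine this with Lemma \ref{exponent}. For the diffeomorphism, there is a minor cosmetic difference: the paper first applies the standard second-kind coordinate result to the nilpotent group $N=\exp(\mathfrak{n})$ and then attaches the $\mathfrak{v}$-factor via the simply-connectedness of $G$ (so that $G/N$ is a vector group), whereas you work directly inside the simply connected nilpotent group $G_N$ with the full decomposition $m_r\oplus\cdots\oplus m_2\oplus\ell\oplus\mathfrak{v}$, noting that the heads form the ideal chain $C^{r-1}(\mathfrak{g}_N)\subset\cdots\subset C^1(\mathfrak{g}_N)\subset\mathfrak{n}\subset\mathfrak{g}_N$. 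Both routes are equally short; yours is slightly more uniform but relies on the (easy) observation that $\mathfrak{n}$ is an ideal of $\mathfrak{g}_N$, which follows from $[\mathfrak{g}_N,\mathfrak{g}_N]_N\subset\mathfrak{n}$.
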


\begin{proof} By Lemma \ref{exponent} the exponential maps of $G$ and $G_N$ coincide on $\n$ and on $\vu$. Moreover $g*h=g\cdot h$ whenever $g$ belongs to the nilradical $\exp(\n)$ of $G$. Hence $\exp _{N}(\xi _{r})* \ldots *\exp
_{N}(\xi _{1})*\exp _{N}(v)=\exp _{N}(\xi _{r})\cdot \ldots \cdot \exp
_{N}(\xi _{1})\cdot \exp _{N}(v)=e^{\xi _{r}}\cdot ...\cdot e^{\xi
_{1}}\cdot e^{v}$. The restriction of the map to $\n$ is a diffeomorphism onto $\exp(\n)$, because this map and its inverse are explicit polynomial maps (the $\xi_i$'s are coordinates of the second kind, see the book \cite{CG}). Now the map $\n \oplus \vu \to G$ sending $(n,v)$ to $e^n \cdot e^v$ is a diffeomorphism, because $G$ is simply connected and hence the quotient group $G/\exp(\n)$ isomorphic to a vector space and hence to $\exp(\vu)$.
\end{proof}

\begin{lemma}[``Bi-invariant'' Riemannian metric]\label{biinv} There exists a Riemannian metric on $G$ which is left invariant under
both Lie structures.
\end{lemma}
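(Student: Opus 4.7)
The plan is to exploit the explicit relation $g*h=g\cdot T(g^{-1})h$ between the two Lie products, which shows that left translation $L_g^N$ in $G_N$ factors as $L_g^N=L_g\circ T(g^{-1})$, where $L_g$ denotes left translation in $G$. Consequently, if $\rho$ is any left-$G$-invariant Riemannian metric on the manifold $G$, pulling back by $L_g^N$ yields $(L_g^N)^*\rho=T(g^{-1})^*\rho$. So the condition of simultaneous left-invariance under both Lie structures reduces to left-$G$-invariance together with invariance under the group of automorphisms $T(G)\subset Aut(G)$ (which, by Lemma \ref{aut}, are automorphisms of $G_N$ as well).

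Next I would produce such a metric by averaging. By Lemma \ref{abelian}, the closure $K=\overline{T(G)}$ in $Aut(G)$ is a compact abelian group. It acts linearly on $\g=T_eG$ via the differentials at $e$, so starting from any inner product on $\g$ and averaging against normalized Haar measure on $K$ produces a $K$-invariant inner product $\langle\cdot,\cdot\rangle$. Left-translating it along $G$ gives a left-$G$-invariant Riemannian metric $\rho$ on $G$. To verify that $\rho$ is also left-$G_N$-invariant, observe that each $T(g)$ is a Lie group automorphism of $G$ fixing $e$, so $T(g)^*\rho$ is again left-$G$-invariant; and its value at $e$ equals $(d_eT(g))^*\langle\cdot,\cdot\rangle=\langle\cdot,\cdot\rangle$ by $K$-invariance. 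Hence $T(g)^*\rho=\rho$ for all $g\in G$, and combined with the identity $(L_g^N)^*\rho=T(g^{-1})^*\rho$ this yields left-$G_N$-invariance.

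I do not expect any genuine obstacle here: the only essential ingredients are the explicit twisting formula defining the nilshadow product, the compactness of $K$ (already established in Lemma \ref{abelian} via the type $(R)$ assumption), and the elementary fact that a left-invariant Riemannian metric on a Lie group is determined by its inner product at the identity.
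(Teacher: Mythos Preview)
Your proposal is correct and is precisely the argument the paper gives, just spelled out in greater detail: the paper's one-line proof says to pick a $K$-invariant scalar product on $\frak{g}$ (where $K=\overline{T(G)}$), and your write-up supplies the verification that the resulting left-$G$-invariant metric is automatically left-$G_N$-invariant via the factorization $L_g^N=L_g\circ T(g^{-1})$.
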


\begin{proof}Indeed it suffices to pick a scalar product on $\frak{g}$
which is invariant under the compact subgroup $K=\overline{T(G)}\subset Aut(\frak{g}).$
\end{proof}

We identify $K=\overline{\{T(g),g\in G\}}$ with its image in $Aut(\frak{g})$
under the canonical isomorphism between $Aut(G)$ and $Aut(\frak{g}).$ Recall
that, according to Lemma \ref{invofK}, the central descending series
of $\frak{g}_{N}$ is invariant under $ad_{s}(x)$ for all $x\in \frak{v}$ and
consists of ideals of $\frak{g}.$ The same holds for $\n$.
It follows that these linear subspaces also invariant under $K$. However since $K$ is compact, its action on $\frak{g}$
is completely reducible. Therefore we have proved:

\begin{lemma}[K-invariant stratification of the nilshadow]\label{Kinvdec} Let $\g$ be the Lie algebra of a simply connected Lie group $G$ with polynomial growth. Let $\g_N$ be the nilshadow Lie algebra obtained from a splitting $\g=\n \oplus \vu$ as above (i.e. $\n$ is the nilradical and $\vu$ satisfies $ad_s(x)(y)=0$ for every $x,y \in \vu$). Let $K:=\overline{\{T(g),g \in G\}} \subset Aut(G)$, where $T$ is defined by $(\ref{expl})$. Then there is a choice of linear subspaces $m_i$'s and $\ell$ such that
\begin{equation}\label{decompo}
\g_N=m_r \oplus \ldots m_2 \oplus \ell \oplus \vu,
\end{equation}
where each term is $K$-invariant, $m_1:=\ell\oplus \vu$ and  the central descending series of $\g_N$ satisfies $C^i(\g_N)=m_i \oplus C^{i-1}(\g_N)$. Moreover the action on $K$ can be read off on the exponential coordinates of second kind in this decomposition, namely:
\begin{eqnarray*}
k\left( e^{\xi _{r}}\cdot ...\cdot e^{\xi _{0}}\right) &=&k(e^{\xi
_{r}})\cdot ...\cdot k(e^{\xi _{0}})=e^{k(\xi _{r})}\cdot ...\cdot e^{k(\xi
_{0})} \\
&=&\exp _{N}(k(\xi _{r}))*...*\exp _{N}(k(\xi _{0}))
\end{eqnarray*}
\end{lemma}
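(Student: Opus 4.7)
The plan is to build the decomposition inductively using the complete reducibility of the compact linear group $K\subset\mathrm{Aut}(\g)$, and then to read off the $K$-action on exponential coordinates of the second kind by exploiting the fact that $K$ is a group of automorphisms of \emph{both} $G$ and $G_N$.

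First I would record that all the relevant subspaces are $K$-stable. By Lemma \ref{abelian}, $K$ is compact, and by Lemma \ref{invofK} it preserves $\n$, acts trivially on $\vu$, and preserves every term $C^i(\g_N)$ of the central descending series of the nilshadow. Since $[\g_N,\g_N]\subset \n$, we have in particular $C^i(\g_N)\subset\n$ for $i\geq 1$, giving a chain $\{0\}\subset C^{r-1}(\g_N)\subset\cdots\subset C^1(\g_N)\subset\n\subset\g$ of $K$-invariant subspaces, while $\vu$ is a $K$-invariant supplementary to $\n$ in $\g$.

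Second I would construct the $m_i$'s and $\ell$ by a Weyl-averaging argument. Since $K$ is a compact subgroup of $GL(\g)$, its representation on $\g$ is completely reducible: for any pair of nested $K$-invariant subspaces $V\subset W\subset\g$, averaging an arbitrary linear projection $W\to V$ against normalized Haar measure on $K$ produces a $K$-equivariant projection whose kernel is a $K$-stable complement of $V$ in $W$. Applying this inductively to the pairs $(C^i(\g_N),C^{i-1}(\g_N))$ for $i=r,r-1,\ldots,2$ yields $K$-invariant subspaces $m_i$ with $C^{i-1}(\g_N)=m_i\oplus C^i(\g_N)$; applying it once more to $(C^1(\g_N),\n)$ yields $\ell$. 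Combined with the already $K$-invariant subspace $\vu$ and the decomposition $\g=\n\oplus\vu$, this produces $(\ref{decompo})$, and the subspace $m_1:=\ell\oplus\vu$ satisfies $\g_N=m_1\oplus C^1(\g_N)$.

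Third I would verify the formula for the action of $K$ on exponential coordinates of the second kind. Every $k\in K$ is by construction a Lie algebra automorphism of $\g$ and hence a Lie group automorphism of $G$, so $k(g\cdot h)=k(g)\cdot k(h)$ and $k(e^{\xi})=e^{k(\xi)}$ for all $\xi\in\g$; this yields the first two equalities. By Lemma \ref{aut}, $k$ is simultaneously an automorphism of $G_N$, and by $K$-invariance of each summand in $(\ref{decompo})$ the tuple $(k(\xi_r),\ldots,k(\xi_1),k(v))$ lies in $m_r\times\cdots\times\ell\times\vu$, so the last equality is exactly Lemma \ref{coordo} applied to this tuple. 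The only potential obstacle is the simultaneous $K$-invariance of the successive complements, but this is handled at each stage by the averaging trick since the ambient and subspace are both $K$-stable; so once the preceding structural lemmas are in place no serious difficulty remains.
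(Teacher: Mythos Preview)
Your proof is correct and follows essentially the same approach as the paper: the paper's argument (given in the paragraph immediately preceding the lemma) invokes Lemma \ref{invofK} to get $K$-invariance of $\n$ and of the central descending series, then appeals to complete reducibility of the compact group $K$ to produce invariant complements. You are simply more explicit than the paper, spelling out the Weyl averaging that underlies complete reducibility and giving the verification of the displayed formula via Lemma \ref{coordo} and the fact that $K$ acts by automorphisms of both Lie structures (Lemma \ref{aut}), whereas the paper leaves these points implicit.
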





\section{Periodic metrics}\label{periodic-metrics}

In this section, unless otherwise stated, $G$ will denote an arbitrary
locally compact group.

\subsection{Definitions}

By a \textit{pseudodistance} (or metric) on a topological space $X$, we mean
a function $\rho :X\times X\rightarrow \Bbb{R}_{+}$ satisfying $\rho
(x,y)=\rho (y,x)$ and $\rho (x,z)\leq \rho (x,y)+\rho (y,z)$ for any triplet
of points of $X$. However $\rho (x,y)$ may be equal to $0$ even if $x\neq y$.

We will require our pseudodistances to be\textit{\ locally bounded},
meaning that the image under $\rho $ of any compact subset of $G\times G$ is
a bounded subset of $\Bbb{R}_{+}$. To avoid irrelevant cases (for instance $%
\rho \equiv 0$) we will also assume that $\rho $ is \textit{proper}, i.e.
the map $y\mapsto \rho (e,y)$ is a proper map, namely the preimage of a bounded set is bounded (we do not ask that the map be continuous). When $\rho $ is locally
bounded then it is proper if and only if $y\mapsto \rho (x,y)$ is proper for
any $x\in G$.

A pseudodistance $\rho $ on $G$ is said to be \textit{asymptotically geodesic%
} if for every $\varepsilon >0$ there exists $s>0$ such that for any $x,y\in
G$ one can find a sequence of points $x_{1}=x,$ $x_{2},...,x_{n}=y$ in $G$
such that
\begin{equation}
\sum_{i=1}^{n-1}\rho (x_{i},x_{i+1})\leq (1+\varepsilon )\rho (x,y)
\label{asymgeo}
\end{equation}
and $\rho (x_{i},x_{i+1})\leq s$ for all $i=1,...,n-1$.

We will consider exclusively pseudodistances on a group $G$ that are \textit{%
invariant} under left translations by all elements of a fixed closed and
co-compact subgroup $H$ of $G$, meaning that for all $x,y\in G$ and all $%
h\in H,$ $\rho (hx,hy)=\rho (x,y).$

Combining all previous axioms, we set the following definition.

\begin{definition}
\label{periodic}\label{IAGDef}Let $G$ be a locally compact group. A
pseudodistance $\rho $ on $G$ will be said to be a $\mathbf{periodic}$ $%
\mathbf{metric}$ (or $H$-periodic metric) if it satisfies the following
properties:

$(i)$ $\rho $ is invariant under left translations by a closed co-compact
subgroup $H$.

$(ii)$ $\rho $ is locally bounded and proper.

$(iii)$ $\rho $ is asymptotically geodesic.
\end{definition}

\begin{remark}
The assumption that $\rho $ is symmetric, i.e. $\rho (x,y)=\rho (y,x)$ is
here only for the sake of simplicity, and most of what is proven in this
paper can be done without this hypothesis.
\end{remark}

\subsection{Basic properties}

Let $\rho $ be a periodic metric on $G$ and $H$ some co-compact subgroup of $%
G$. The following properties are straighforward.

$(1)$ \label{restrict0} $\rho $ is at a bounded distance from its restriction to $H.$ This
means that if $F$ is a bounded fundamental domain for $H$ in $G$ and for an
arbitrary $x\in G$, if $h_{x}$ denotes the element of $H$ such that $x\in
h_{x}F,$ then $\left| \rho (x,y)-\rho (h_{x},h_{y})\right| \leq C$ for some
constant $C>0$.

$(2)$ \label{properness}$\forall t>0$ there exists a compact subset $K_{t}$
of $G$ such that, $\forall x,y\in G,$ $\rho (x,y)\leq t\Rightarrow
x^{-1}y\in K_{t}$. And conversely, if $K$ is a compact subset of $G$, $%
\exists t(K)>0$ s.t. $x^{-1}y\in K\Rightarrow \rho (x,y)\leq t(K).$

$(3)$ If $\rho (x,y)\geq s$, the $x_{i}$'s in (\ref{asymgeo}) can be chosen
in such a way that $s\leq \rho (x_{i},x_{i+1})\leq 2s$ (one can take a
suitable subset of the original $x_{i}$'s).

$(4)$\label{restrict} The restriction of $\rho $ to $H\times H$ is a periodic pseudodistance
on $H$. This means that the $x_{i}$'s in (\ref{asymgeo}) can be chosen in $H$%
.

$(5)$ Conversely, given a periodic pseudodistance $\rho _{H}$ on $H,$ it is
possible to extend it to a periodic pseudodistance on $G$ by setting $\rho
(x,y)=\rho _{H}(h_{x},h_{y})$ where $x=h_{x}F$ for some bounded fundamental
domain $F$ for $H$ in $G$.

\subsection{Examples\label{examples}}

Let us give a few \smallskip examples of periodic pseudodistances.

$(1)$ Let $\Gamma $ be a finitely generated torsion free nilpotent group
which is embedded as a co-compact discrete subgroup of a simply connected
nilpotent Lie group $N$. Given a finite symmetric generating set $S$ of $%
\Gamma ,$ we can consider the corresponding word metric $d_{S}$ on $\Gamma $
which gives rise to a periodic metric on $N$ given by $\rho
(x,y)=d_{S}(\gamma _{x},\gamma _{y})$ where $x\in \gamma _{x}F$ and $y\in
\gamma _{y}F$ if $F$ is some fixed fundamental domain for $\Gamma $ in $N.$

$(2)$ Another example, given in \cite{Pan}, is as follows. Let $N/\Gamma $
be a nilmanifold with universal cover $N$ and fundamental group $\Gamma $.
Let $g$ be a Riemannian metric on $N/\Gamma .$ It can be lifted to the
universal cover and thus gives rise to a Riemannian metric $\widetilde{g}$
on $N$. This metric is $\Gamma $-invariant, proper and locally bounded.
Since $\Gamma $ is co-compact in $N,$ it is easy to check that it is also
asymptotically geodesic hence periodic$.$

$(3)$ Any word metric on $G$. That is, if $\Omega $ is a compact symmetric
generating subset of $G$, let $\Delta _{\Omega }(x)=\inf \{n\geq 1,x\in
\Omega ^{n}\}$. Then define $\rho (x,y)=\Delta _{\Omega }(x^{-1}y).$ Clearly
$\rho $ is a pseudodistance (although not a distance) and it is $G$%
-invariant on the left, it is also proper, locally bounded and
asymptotically geodesic, hence periodic$.$

$(4)$ If $G$ is a connected Lie group, any left invariant Riemannian metric
on $G.$ Here again $H=G$ and we obtain a periodic distance. Similarly, any
left invariant Carnot-Carath\'{e}odory metric on $G$ will do.

\begin{remark}[Berestovski's theorem] According to a result of Berestovski \cite{berestovski} every left-invariant geodesic distance on a connected Lie group is a subFinsler metric as defined in Paragraph \ref{CCmetrics}.
\end{remark}

\subsection{Coarse equivalence between invariant pseudodistances}

The following proposition is basic:

\begin{prop}
\label{coarseq}\label{roughcomp}Let $\rho _{1}$ and $\rho _{2}$ be two
periodic pseudodistances on $G.$ Then there is a constant $C>0$ such that
for all $x,y\in G$
\begin{equation}
\frac{1}{C}\rho _{2}(x,y)-C\leq \rho _{1}(x,y)\leq C\rho _{2}(x,y)+C
\label{uppercomparison}
\end{equation}
\end{prop}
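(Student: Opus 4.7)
The plan is to use the asymptotic geodesicity of $\rho_2$ to chop any pair $(x,y)$ into a controlled number of short hops, then use the coarse control (local boundedness + properness) relating $\rho_1$ and $\rho_2$ on compact sets to bound the $\rho_1$-length of each hop. The inequality is symmetric in $\rho_1, \rho_2$, so I will only prove the right-hand half $\rho_1(x,y)\leq C\rho_2(x,y)+C$; swapping roles yields the other.

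First I would fix $\varepsilon=1$ in the asymptotic-geodesic axiom for $\rho_2$, obtaining a scale $s>0$ such that any two points admit an interpolating sequence with jumps of $\rho_2$-size at most $s$ and total $\rho_2$-length at most $2\rho_2(x,y)$. Invoking property $(3)$ from the basic properties list, I may assume moreover that whenever $\rho_2(x,y)\geq s$ each jump satisfies $s\leq \rho_2(x_i,x_{i+1})\leq 2s$. In particular the number of jumps is bounded by
\begin{equation*}
n-1 \;\leq\; \frac{2\rho_2(x,y)}{s}.
\end{equation*}

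Next I apply property $(2)$ in both directions. Applied to $\rho_2$: the bound $\rho_2(x_i,x_{i+1})\leq 2s$ yields a compact set $K=K_{2s}^{(2)}\subset G$ containing every $x_i^{-1}x_{i+1}$. Applied (in converse form) to $\rho_1$: this compact $K$ produces a constant $M=t_1(K)>0$ with $\rho_1(x_i,x_{i+1})\leq M$ for all $i$. Summing via the triangle inequality for $\rho_1$,
\begin{equation*}
\rho_1(x,y)\;\leq\;\sum_{i=1}^{n-1}\rho_1(x_i,x_{i+1})\;\leq\;(n-1)M\;\leq\;\frac{2M}{s}\,\rho_2(x,y),
\end{equation*}
valid whenever $\rho_2(x,y)\geq s$.

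In the remaining regime $\rho_2(x,y)<s$, property $(2)$ for $\rho_2$ forces $x^{-1}y\in K_s^{(2)}$, and property $(2)$ for $\rho_1$ then bounds $\rho_1(x,y)\leq M'$ for a constant $M'$ depending only on $K_s^{(2)}$. Taking $C:=\max\{2M/s,\,M',\,1\}$ gives $\rho_1(x,y)\leq C\rho_2(x,y)+C$ in all cases. There is no real obstacle here—the argument is a straightforward bookkeeping exercise in the axioms of Definition \ref{periodic}; the one subtle point is that properties $(2)$ and $(3)$ must be applied to the correct pseudodistance at each step (asymptotic geodesicity of the one being bounded against, properness of both).
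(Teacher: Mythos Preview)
Your proof is correct and essentially identical to the paper's own argument: both fix $\varepsilon=1$ in the asymptotic-geodesic axiom for $\rho_2$, use property $(3)$ to get hops of size in $[s,2s]$, bound the number of hops by $2\rho_2(x,y)/s$, and invoke property $(2)$ twice (first for $\rho_2$ to land in a compact, then for $\rho_1$ to bound each hop), with the short-range case handled separately by the same mechanism. The only differences are notational.
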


\proof%
Clearly it suffices to prove the upper bound. Let $s>0$ be the number
corresponding to the choice $\varepsilon =1$ in (\ref{asymgeo}) for $\rho
_{2}.$ From \ref{properness} $(2)$, there exists a compact subset $K_{s}$ in
$G$ such that $\rho _{2}(x,y)\leq 2s\Rightarrow x^{-1}y\in K_{2s},$ and
there is a constant $t=t(K_{2s})>0$ such that $x^{-1}y\in K_{2s}\Rightarrow
\rho _{1}(x,y)\leq t$. Let $C=\max \{2t/s,t\},$ and let $x,y\in G.$ If $\rho
_{2}(x,y)\leq s$ then $\rho _{1}(x,y)\leq t$ so the right hand side of (\ref
{uppercomparison}) holds. If $\rho _{2}(x,y)\geq s$ then, from (\ref{asymgeo}%
) and \ref{properness} $(3)$, we get a sequence of $x_{i}$'s in $G$ from $x$
to $y$ such that $s\leq \rho _{2}(x_{i},x_{i+1})\leq 2s$ and $%
\sum_{1}^{N}\rho _{2}(x_{i},x_{i+1})\leq 2\rho _{2}(x,y)$. It follows that $%
\rho _{1}(x_{i},x_{i+1})\leq t$ for all $i.$ Hence $\rho _{1}(x,y)\leq \sum
\rho _{1}(x_{i},x_{i+1})\leq Nt\leq \frac{2}{s}t\rho _{2}(x,y)$ and the
right hand side of (\ref{uppercomparison}) holds.
\edpf%

In the particular case when $G=N$ is a simply connected nilpotent Lie group,
the distance to the origin $x\mapsto \rho (e,x)$ is also coarsely equivalent
to any homogeneous quasi-norm on $N.$ We have,

\begin{prop}
\label{coarsecomp1}Suppose $N$ is a simply connected nilpotent Lie group.
Let $\rho _{1}$ be a periodic pseudodistance on $N$ and $|\cdot |$ be a
homogeneous quasi-norm, then there exists $C>0$ such that for all $x\in N$%
\begin{equation}
\frac{1}{C}|x^{-1}y|-C\leq \rho _{1}(x,y)\leq C|x^{-1}y|+C
\label{uppercomparison2}
\end{equation}
Moreover, if $\rho _{2}$ is a periodic pseudodistance on the stratified
nilpotent group $N_{\infty }$ associated to $N,$ then again, there is a
constant $C>0$ such that
\begin{equation}
\frac{1}{C}\rho _{2}(e,x)-C\leq \rho _{1}(e,x)\leq C\rho _{2}(e,x)+C
\label{GG0}
\end{equation}
\end{prop}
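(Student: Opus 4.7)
The plan is to bootstrap from Theorem \ref{guivthm} (Guivarc'h's comparison between word metrics and homogeneous quasi-norms on $N$) via Proposition \ref{coarseq} (coarse equivalence of any two periodic pseudodistances on the same group). The crucial observation is that word metrics on $N$ with respect to a compact generating neighborhood are themselves periodic pseudodistances, so Proposition \ref{coarseq} directly compares $\rho_1$ to them. The main subtlety is the second half, which compares metrics living on two different Lie group structures sharing the same underlying manifold: here one must check that a common quasi-norm may be used on both sides.

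For the first inequality, choose a compact symmetric neighborhood $\Omega$ of the identity in $N$; since $N$ is connected, $\Omega$ generates $N$. The associated word metric $\rho_\Omega$ is a periodic pseudodistance on $N$ (invariant under all of $N$, locally bounded, proper, and asymptotically geodesic with $\varepsilon=0$ by concatenation of $\Omega$-paths), as noted in Example \ref{examples}(3). Proposition \ref{coarseq} applied to $\rho_1$ and $\rho_\Omega$ yields constants $C_1, C_1'>0$ such that
\begin{equation*}
\tfrac{1}{C_1}\rho_\Omega(x,y)-C_1' \leq \rho_1(x,y)\leq C_1\rho_\Omega(x,y)+C_1'.
\end{equation*}
By Theorem \ref{guivthm}, $\tfrac{1}{C_2}|z|-C_2' \leq \rho_\Omega(e,z) \leq C_2|z|+C_2'$ for every $z\in N$. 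Since $\rho_\Omega$ is left-invariant under all of $N$, $\rho_\Omega(x,y)=\rho_\Omega(e,x^{-1}y)$. Composing the two comparisons yields constants $C,C'>0$ with
\begin{equation*}
\tfrac{1}{C}|x^{-1}y|-C' \leq \rho_1(x,y) \leq C|x^{-1}y|+C',
\end{equation*}
which gives $(\ref{uppercomparison2})$ after enlarging $C$ to absorb $C'$.

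For the second inequality, I would apply the first part separately to $\rho_1$ on $N$ and to $\rho_2$ on $N_\infty$ using the \emph{same} homogeneous quasi-norm $|\cdot|$. This is legitimate because: $N$ and $N_\infty$ share the same underlying vector space $\frak{n}=\frak{n}_\infty$; the supplementary subspaces $m_p$ satisfying $C^{p-1}(\frak{n})=m_p\oplus C^p(\frak{n})$ also form a stratification of $\frak{n}_\infty$ (indeed the $C^p$-series of the two Lie algebras coincide as subspaces, since by construction $C^p(\frak{n}_\infty)=\bigoplus_{k>p}m_k=C^p(\frak{n})$); and the dilations $\delta_t$ are automorphisms of $\frak{n}_\infty$. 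Thus a supremum quasi-norm $|x|=\max_p\|\pi_p(x)\|_p^{1/p}$ serves as a homogeneous quasi-norm for \emph{both} dilation structures. From the first part we get $\rho_i(e,x)\asymp |x|$ for $i=1,2$ in the sense of $(\ref{uppercomparison2})$; concatenating these two-sided bounds yields $(\ref{GG0})$.

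No substantive obstacle arises: the argument is a straightforward composition of results already established in Sections \ref{quasinormsection} and \ref{periodic-metrics}. The only point requiring mild care is the verification in the last paragraph that the quasi-norm and its dilations can be interpreted simultaneously on $N$ and $N_\infty$, but this is immediate from the fact that the construction of $\frak{n}_\infty$ does not alter the central descending series as a filtration of the underlying vector space.
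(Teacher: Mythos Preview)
Your proof is correct and follows essentially the same route as the paper. The paper's proof is a one-sentence sketch invoking Guivarc'h's theorem (Theorem~\ref{guivthm}), the equivalence of homogeneous quasi-norms, and the fact that Carnot--Carath\'eodory metrics on $N_\infty$ are themselves homogeneous quasi-norms; you have simply made explicit the intermediate use of Proposition~\ref{coarseq} (to pass from an arbitrary periodic $\rho_1$ to a word metric $\rho_\Omega$) and the observation that a single $\delta_t$-homogeneous quasi-norm serves simultaneously on $N$ and $N_\infty$, which the paper leaves implicit.
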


The proposition follows at once from Guivarc'h's theorem (see Corollary \ref
{coarsecomp} above), the equivalence of homogeneous quasi-norms, and the
fact that left-invariant Carnot-Caratheodory metrics on $N_{\infty }$ are homogeneous
quasi norms. However, since the group structures on $N$ and $N_{\infty }$
differ, (\ref{GG0}) cannot in general be replaced by the stronger relation (%
\ref{uppercomparison}) as simple examples show.

The next proposition is of fundamental importance for the study of metrics
on Lie groups of polynomial growth:

\begin{proposition}
\label{coarsecomp2}Let $G$ be a simply connected solvable Lie group of
polynomial growth and $G_{N}$ its nilshadow. Let $\rho $ and $\rho _{N}$ be
arbitrary periodic pseudodistances on $G$ and $G_{N}$ respectively. Then
there is a constant $C>0$ such that for all $x,y\in G$%
\begin{equation}
\frac{1}{C}\rho _{N}(x,y)-C\leq \rho (x,y)\leq C\rho _{N}(x,y)+C
\label{uppercomparison3}
\end{equation}
\end{proposition}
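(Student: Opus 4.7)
My plan is to reduce the statement to Proposition \ref{coarseq} (coarse equivalence of two periodic pseudodistances on the \emph{same} group) by producing an explicit pseudodistance that is simultaneously periodic for the original Lie structure on $G$ and for the nilshadow structure on $G_N$. The natural candidate is the Riemannian distance furnished by Lemma \ref{biinv}.

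More concretely, I would proceed as follows. First, apply Lemma \ref{biinv} to fix a Riemannian metric on the underlying manifold whose tensor is left-invariant under both Lie products (this is obtained by choosing any $K$-invariant scalar product on $\g$, where $K = \overline{T(G)}$). Let $d$ denote the associated geodesic distance function. Since $G$ and $G_N$ are both simply connected Lie groups diffeomorphic to $\R^n$, this Riemannian metric is complete, hence $d$ is proper and locally bounded. It is also geodesic (in particular asymptotically geodesic), and by construction $d(gx,gy)=d(x,y)$ and $d(g*x,g*y)=d(x,y)$ for every $g$. Therefore $d$ satisfies Definition \ref{periodic} both as a $G$-periodic metric on $G$ and as a $G_N$-periodic metric on $G_N$ (with $H=G$ in both cases).

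Next, apply Proposition \ref{coarseq} twice: once to the pair $(\rho,d)$ viewed as two periodic pseudodistances on $G$, and once to the pair $(\rho_N,d)$ viewed as two periodic pseudodistances on $G_N$. This yields constants $C_1,C_2>0$ such that
\begin{equation*}
\tfrac{1}{C_1}d(x,y)-C_1 \leq \rho(x,y) \leq C_1 d(x,y)+C_1
\end{equation*}
and
\begin{equation*}
\tfrac{1}{C_2}d(x,y)-C_2 \leq \rho_N(x,y) \leq C_2 d(x,y)+C_2
\end{equation*}
for all $x,y$. Since $d$ is the \emph{same} function on the common underlying set, these two affine estimates combine (with a new constant $C=C_1 C_2 + C_1+C_2$, say) to give the desired inequality $\tfrac{1}{C}\rho_N(x,y)-C \leq \rho(x,y) \leq C\rho_N(x,y)+C$.

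The only subtle point in this plan is the verification that $d$ is periodic for both structures simultaneously; everything else is a routine application of earlier results. The key identity behind this is the relation $dL_g^* = dL_g \circ T(g^{-1})$ between the differentials of left-translation in $G$ and in $G_N$: since $T(g^{-1})$ lies in the compact group $K$, a $K$-invariant inner product at $e$ produces two Riemannian tensors on the manifold that in fact coincide. This is exactly what Lemma \ref{biinv} encapsulates, so no new work is needed.
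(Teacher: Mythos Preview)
Your proof is correct and follows exactly the same approach as the paper: use the bi-invariant Riemannian metric from Lemma \ref{biinv} as a common periodic pseudodistance on both $G$ and $G_N$, then invoke Proposition \ref{coarseq} to compare it with $\rho$ and with $\rho_N$ respectively. The paper's version is simply terser, omitting the explicit chaining of constants that you spell out.
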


\proof%
According to Proposition \ref{coarseq}, it is enough to show (\ref
{uppercomparison3}) for \textit{some} choice of periodic metrics on $G$ and $%
G_{N}.$ But in Lemma \ref{biinv} we constructed a Riemannian metric
on $G$ which is left invariant for both $G$ and $G_{N}.$ We are done.
\endproof%

\subsection{Right invariance under a compact subgroup}

Here we verify that, given a compact subgroup of $G,$ any periodic metric is
at bounded distance from another periodic metric which is invariant on the
right by this compact subgroup. Let $K$ be a compact subgroup of $G$ and $%
\rho $ a periodic pseudodistance on $G.$ We average $\rho $ with the help of
the normalized Haar measure on $K$ to get:
\begin{equation}
\rho^{K}(x,y)=\int_{K\times K}\rho (xk_1,yk_2)dk_1dk_2  \label{average}
\end{equation}
Then the following holds:

\begin{lemma}
\label{bdedistance}There is a constant $C_{0}>0$ depending only on $\rho $
and $K$ such that for all $k_1,k_2\in K$ and all $x,y\in G$%
\begin{equation}
|\rho (xk_1,yk_2)-\rho (x,y)|\leq C_{0}  \label{zob}
\end{equation}
\end{lemma}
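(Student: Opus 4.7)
\medskip

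\noindent\textbf{Proof plan for Lemma \ref{bdedistance}.} The plan is a direct application of the triangle inequality together with Property $(2)$ of periodic pseudodistances listed in the preceding subsection. Since $K$ is a \emph{subgroup} (compact, hence containing the identity and closed under inversion), for any $k_1,k_2\in K$ the elements
\[
(xk_1)^{-1}\cdot x = k_1^{-1}\in K, \qquad y^{-1}\cdot (yk_2) = k_2\in K
\]
both lie in the fixed compact set $K$. By Property $(2)$, applied to the compact set $K$, there exists a constant $t(K)>0$, depending only on $K$ and on $\rho$, such that $\rho(u,v)\leq t(K)$ whenever $u^{-1}v\in K$. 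In particular,
\[
\rho(xk_1,x)\leq t(K) \qquad \text{and} \qquad \rho(y,yk_2)\leq t(K),
\]
uniformly in $x,y\in G$ and $k_1,k_2\in K$.

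\medskip

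\noindent With these uniform bounds in hand, the triangle inequality gives
\[
\rho(xk_1,yk_2)\ \leq\ \rho(xk_1,x)+\rho(x,y)+\rho(y,yk_2)\ \leq\ \rho(x,y)+2t(K),
\]
and, symmetrically,
\[
\rho(x,y)\ \leq\ \rho(x,xk_1)+\rho(xk_1,yk_2)+\rho(yk_2,y)\ \leq\ \rho(xk_1,yk_2)+2t(K).
\]
Combining these two inequalities yields $|\rho(xk_1,yk_2)-\rho(x,y)|\leq 2t(K)$, so one may take $C_0:=2t(K)$, which depends only on $\rho$ and on the compact subgroup $K$, as required.

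\medskip

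\noindent\textbf{Remarks on the strategy.} There is essentially no obstacle: the whole statement is a packaging of the qualitative ``coarse right-invariance under compact subgroups'' that is built into the axioms of a periodic pseudodistance. The only subtle point is to be sure one is entitled to invoke Property $(2)$ with a compact set on the \emph{right}-hand side of $x^{-1}y$; this is legitimate because the statement of Property $(2)$ is formulated in terms of the element $x^{-1}y$ regardless of whether the invariance group $H$ of $\rho$ acts transitively on $G$, and the element $x^{-1}y$ is manifestly invariant under simultaneous left translation by $H$. Note also that the argument does not use the asymptotic geodesic property $(iii)$ of $\rho$ at all; only local boundedness and properness (which together yield Property $(2)$) are needed, together with the fact that $K$ is a subgroup so that $k_i^{-1}\in K$.
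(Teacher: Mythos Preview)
Your proof is correct and follows essentially the same approach as the paper: invoke Property~(2) with the compact set $K$ to bound $\rho(x,xk)$ uniformly, then apply the triangle inequality. The paper's proof is just a two-line compression of what you wrote.
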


\proof
From \ref{properness} $(2)$, $\exists t=t(K)>0$ s.t. $\forall x\in G$, $\rho
(x,xk)\leq t$. Applying the triangle inequality, we are done.%
\endproof

Hence we obtain:

\begin{proposition}
The pseudodistance $\rho^{K}$ is periodic and lies at a bounded distance
from $\rho .$ In particular, as $x$ tends to infinity in $G$ the following
limit holds
\begin{equation}
\lim_{x\rightarrow \infty }\frac{\rho^{K}(e,x)}{\rho (e,x)}=1  \label{zobo}
\end{equation}
\end{proposition}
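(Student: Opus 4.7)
The plan is straightforward: the whole proposition reduces to the single pointwise estimate $|\rho^K(x,y) - \rho(x,y)| \leq C_0$, valid for all $x, y \in G$. This is immediate from Lemma \ref{bdedistance} by integrating against the product of normalized Haar measures on $K \times K$. It yields at once that $\rho^K$ lies at uniformly bounded distance from $\rho$ (in fact within $C_0$), and since $\rho(e,x) \to \infty$ as $x \to \infty$ by properness of $\rho$, it also yields the limit $(\ref{zobo})$.

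Verifying that $\rho^K$ is itself a periodic pseudodistance in the sense of Definition \ref{periodic} is then largely formal. Symmetry, left $H$-invariance, local boundedness and properness are all inherited from $\rho$: the first two via Fubini, the last two via the bounded distance just established. The triangle inequality is obtained by integrating $\rho(xk_1, zk_2) \leq \rho(xk_1, yk_3) + \rho(yk_3, zk_2)$ against the Haar probability measure in $(k_1, k_2, k_3) \in K^3$ and noting that each right-hand term collapses to the corresponding $\rho^K$ after the unused variable integrates to $1$.

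The only axiom that requires genuine work is asymptotic geodesicity. Given $\varepsilon > 0$, I would pick an auxiliary $\varepsilon' > 0$ and apply the asymptotic geodesic property of $\rho$ together with property $(3)$ of Section \ref{periodic-metrics} to produce, for $x, y$ with $\rho(x,y)$ large, a chain $x = x_1, \ldots, x_n = y$ in $G$ with $s_0 \leq \rho(x_i, x_{i+1}) \leq 2 s_0$ and $\sum \rho(x_i, x_{i+1}) \leq (1+\varepsilon') \rho(x,y)$. The lower bound forces $n - 1 \leq (1+\varepsilon') \rho(x,y)/s_0$; summing $|\rho^K - \rho| \leq C_0$ along the chain and using $\rho(x,y) \leq \rho^K(x,y) + C_0$ then gives
\[
\sum_i \rho^K(x_i, x_{i+1}) \leq (1+\varepsilon')\!\left(1 + \frac{C_0}{s_0}\right)\!\bigl(\rho^K(x,y) + C_0\bigr).
\]
Choosing $\varepsilon'$ small and $s_0$ large (in terms of $\varepsilon$ and $C_0$) makes the multiplicative constant arbitrarily close to $1$; restricting to pairs with $\rho^K(x,y)$ above a suitable threshold absorbs the leftover additive $C_0$; and for pairs below that threshold the trivial one-step chain $x_1 = x, x_2 = y$ suffices. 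Taking $s = 2 s_0 + C_0$ (or larger) provides the required uniform upper bound on the $\rho^K$-length of each step.

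The one mild obstacle is precisely this last point: $K$-averaging controls $\rho^K$ by $\rho$ only up to an additive constant \emph{per step}, whereas a chain has many steps, so one must trade step number against step length. Property $(3)$ is exactly what allows this, forcing each step of the $\rho$-chain to be comparable to $s_0$; the additive errors then aggregate to a multiplicative distortion $1 + C_0/s_0$ that can be made as small as desired by taking $s_0$ large.
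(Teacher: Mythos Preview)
Your proof is correct and follows essentially the same approach as the paper's: integrate Lemma \ref{bdedistance} to get bounded distance (hence the limit and the easy axioms), and combine Lemma \ref{bdedistance} with property $(3)$ of \S\ref{periodic-metrics} to verify asymptotic geodesicity. The paper's own proof is a two-line sketch pointing to exactly these ingredients; you have simply filled in the details it leaves implicit, in particular the step-count-versus-step-length trade-off needed to control the accumulated additive error $C_0$ along a chain.
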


\proof%
From Lemma \ref{bdedistance} and \ref{properness} $(3),$ it is easy to check
that $\rho^{K}$ must be asymptotically geodesic, and periodic. Integrating (%
\ref{zob}) we get that $\rho^{K}$ is at a bounded distance from $\rho $ and
(\ref{zobo}) is obvious.%
\endproof%

If $K$ is normal in $G,$ we thus obtain a periodic metric $\rho^{K}$ on $%
G/K $ such that $\rho^{K}(p(x),p(y))$ is at a bounded distance from $\rho
(x,y)$, where $p$ is the quotient map $G\rightarrow G/K.$

\section{Reduction to the nilpotent case}\label{nilpotent-reduction}

In this section, $G$ denotes a \textit{simply connected} solvable Lie group
of polynomial growth. We are going to reduce the proof of the theorems of
the Introduction to the case of a nilpotent $G.$ This is performed by
showing that any $periodic$ pseudodistance $\rho $ on $G$ is asymptotic to
some associated $periodic$ pseudodistance $\rho _{N}$ on the nilshadow $%
G_{N}.$ We state this in Proposition \ref{pop} below.

The key step in the proof is Proposition \ref{invK} below, which shows the
asymptotic invariance of $\rho $ under the ``semisimple part'' of $G.$ The
crucial fact there is that the displacement of a distant point under a fixed
unipotent automorphism is negligible compared to the distance from the
identity (see Lemmas \ref{lem2}, \ref{lem4}), so that the action of the
semisimple part of large elements can be simply approximated by their action
by left translation.

\subsection{Asymptotic invariance under a compact group of automorphisms of $%
G\label{asyminv}$}

The main result of this section is the following. Let $G$ be a connected and simply connected solvable Lie group with polynomial growth and $G_N$ its nilshadow (see Section \ref{nilshadow}).

\begin{proposition}
\label{pop}Let $H$ be a closed co-compact subgroup of $G$ and $\rho $ an $H$%
-periodic pseudodistance (see Definition \ref{periodic}) on $G.$ There exist a closed subset $H_{K}$
containing $H$ which is a co-compact subgroup for both $G$ and $G_{N},$ and
an $H_{K}$-periodic (for both Lie structures) pseudodistance $\rho _{K}$ such
that
\begin{equation}
\lim_{x\rightarrow \infty }\frac{\rho _{K}(e,x)}{\rho (e,x)}=1  \label{Keq}
\end{equation}
\end{proposition}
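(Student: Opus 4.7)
The plan is to construct $\rho_K$ as a double average of $\rho$ over the compact automorphism group $K = \overline{T(H)} \subset \mathrm{Aut}(G)$ and over a co-compact subgroup $H_K \supseteq H$, with the main technical input being that $\rho$ is asymptotically invariant under $K$.

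First I would define $H_K$ to be the closed subgroup of $(G, \cdot)$ generated by the $K$-orbit $K(H) = \{k(h) : k \in K, h \in H\}$. The identity $g * h = g \cdot T(g^{-1}) h$ with $T(g^{-1}) \in K$, combined with the $K$-stability $K(H_K) \subseteq H_K$, shows that $H_K$ is also closed under $*$ and $*$-inversion, hence a subgroup of both Lie structures. Since $H \subseteq H_K \subseteq G$ and $H$ is already co-compact, $H_K$ is co-compact for both structures.

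The main technical step is to prove that $\rho$ is asymptotically $K$-invariant in the sense $\rho(e, k(x))/\rho(e, x) \to 1$ uniformly in $k \in K$ as $x \to \infty$. The key lemma is sublinear distortion under unipotent automorphisms: if $u$ is a unipotent automorphism of $(G_N, *)$ whose infinitesimal generator strictly raises the central descending series filtration on $\g_N$, then $\rho(x, u(x)) = o(\rho(e, x))$. I would prove this by transferring to a periodic metric on $G_N$ via Proposition \ref{coarsecomp2} and further to a homogeneous quasi-norm $|\cdot|$ via Proposition \ref{coarsecomp1}; by the strict filtration shift, the $*$-difference between $x$ and $u(x)$ has strictly smaller homogeneous degree than $|x|$, yielding the ratio estimate. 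For $h \in H$, the Jordan decomposition $\mathrm{Ad}(h) = T(h) \cdot V_h$ of conjugation in $G$ into its semisimple and unipotent parts gives $T(h)(x) = V_h^{-1}(h x h^{-1})$; combining sublinear distortion for $V_h^{-1}$ with $H$-left-invariance of $\rho$ to handle $hxh^{-1}$ (by reducing to $\rho(e, xh^{-1})$ and estimating the right-displacement via the quasi-norm on the nilshadow) yields asymptotic $T(H)$-invariance. A compactness argument using $K = \overline{T(G)}$ and a bounded fundamental domain $G = HF$ then extends this uniformly to all of $K$.

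Finally, I would set
\[
\rho_K(x, y) := \int_K \int_{H \backslash H_K} \rho(h \cdot k(x), h \cdot k(y)) \, d[h] \, dk,
\]
where $d[h]$ is the right-$H_K$-invariant probability measure on the compact quotient $H \backslash H_K$ (well-defined by unimodularity of $H_K$) and $dk$ is normalized Haar measure on $K$. Direct changes of variables show $\rho_K$ is $K$-invariant and $H_K$-$\cdot$-left-invariant, and these two properties together with $g * y = g \cdot T(g^{-1}) y$ immediately imply $H_K$-$*$-left-invariance. The desired limit (\ref{Keq}) then follows by combining the asymptotic $K$-invariance of $\rho$ with the fact that left-translating by an element of a compact set distorts distances to infinity by at most $o(\rho(e, x))$, so that every integrand in the double average equals $\rho(e, x)(1 + o(1))$ uniformly. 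The main obstacle is the sublinear distortion lemma, which crucially exploits the polynomial growth hypothesis via the quasi-norm machinery of Section \ref{quasinormsection} and the graded nilshadow structure from Section \ref{nilshadow}.
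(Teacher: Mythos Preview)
Your overall strategy---defining $H_K$ as the closed subgroup generated by $K(H)$, constructing $\rho_K$ by a double average over $K$ and $H\backslash H_K$, and reducing everything to asymptotic $K$-invariance of $\rho$ via a sublinear-distortion principle---is exactly the paper's approach, and your verification that $H_K$ is a subgroup for both Lie structures is correct.

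There are, however, two points where your write-up diverges from the paper and needs correction. First, your route to asymptotic $T(h)$-invariance via the Jordan decomposition $\mathrm{Ad}(h)=T(h)\cdot V_h$ is not quite right: in general $T(h)=e^{ad_s(a_{\frak v})}$ is \emph{not} the semisimple part of $\mathrm{Ad}(h)=e^{ad(a)}$, because $ad_s(a_{\frak v})$ need not commute with $ad(a_{\frak n})$ (one has $[ad_s(a_{\frak v}),ad(a_{\frak n})]=ad(ad_s(a_{\frak v})a_{\frak n})$, which typically does not vanish). The paper instead uses the group identity $hx=h*T(h)x$ directly: setting $y=T(h)x$, one has $\rho(T(h)x,hx)=\rho(y,h*y)$, and the latter is sublinear in $|y|$ by the elementary nilpotent estimate $|y^{*-1}*g*y|\le\varepsilon|y|$ for $g$ in a fixed compact set (the paper's Lemma~\ref{lem4}). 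This avoids conjugation and any appeal to a Jordan decomposition of $\mathrm{Ad}(h)$.

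Second, you omit the verification that $\rho_K$ is \emph{asymptotically geodesic}, which is part of Definition~\ref{periodic} and hence part of the statement to be proved. This is not automatic from the averaging formula: the paper devotes a full paragraph to it, taking a $\rho$-pseudogeodesic $x=z_1\cdots z_n$ with $s/2\le\rho(e,z_i)\le s$ and using the already-established asymptotic $K$- and $H_K$-invariance to show $\sum\rho_K(e,z_i)\le(1+O(\varepsilon))\rho_K(e,x)$. Without this check the conclusion that $\rho_K$ is $H_K$-periodic is incomplete.
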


The closed subgroup $H_K$ will be taken to be the closure of the group generated by all elements of the form $k(h)$, where $h$ belongs to $H$ and $k$ belongs to the closure $K$ in the group $Aut(G)$ of the image of $H$ under the homomorphism $T: G\rightarrow Aut(G)$ introduced in Section \ref{nilsh}. It is easy to check from the definition of the nilshadow product $(\ref{nilproduct})$ that this is indeed a subgroup in both $G$ and its nilshadow $G_N$.

The new pseudodistance $\rho_K$ is defined as follows, using a double averaging procedure:

\begin{equation}
\rho _{K}(x,y):=\int_{H\backslash H_{K}}\int_{K}\rho (gk(x),gk(y))dkd\mu (g)
\label{rhok}
\end{equation}

Here the measure $\mu$ is the normalized Haar measure on the coset space $H\backslash H_K$ and $dk$ is the normalized Haar measure on the compact group $K$. Recall that all closed subgroups of $S$ are unimodular (since they have polynomial growth by \cite{Gui}[Lemme I.3.]). Hence the existence of invariant measures on the coset spaces.\\

An essential part of the proof of Proposition \ref{pop} is enclosed in the
following statement:

\begin{proposition}
\label{invK}\label{convptw}Let $\rho $ be a periodic pseudodistance on $G$
which is invariant under a co-compact subgroup $H.$ Then $\rho $ is
asymptotically invariant under the action of $K=\overline{\{T(h),h\in H\}}%
\subseteq Aut(G).$ Namely, (uniformly) for all $k\in K$,
\begin{equation}
\lim_{x\rightarrow \infty }\frac{\rho (e,k(x))}{\rho (e,x)}=1  \label{Kinv}
\end{equation}
\end{proposition}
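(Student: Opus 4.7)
The plan is to attack the statement in three stages. First, I would reduce to pointwise convergence at $k=T(h)$ for $h\in H$: since $K\subset \mathrm{Aut}(G)$ is compact and preserves the bi-invariant Riemannian metric $d_R$ furnished by Lemma~\ref{biinv}, the $K$-action is $d_R$-isometric and hence, through the coarse comparability of $\rho$ and $d_R$ granted by Proposition~\ref{coarseq}, equicontinuous on $(G,\rho)$; density of $T(H)$ in $K$ together with compactness then upgrade pointwise to uniform convergence over $k\in K$.

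Second, for each fixed $h\in H$ I would exploit the Jordan decomposition $ad(h_v)=ad_s(h_v)+ad_n(h_v)$ inside $\mathrm{Der}(\g)$, where $h_v\in\vu$ denotes the $\vu$-component of $h$. Since $T$ factors through $G/N$ (Lemma~\ref{abelian}), $T(h)=T(\exp h_v)$ has derivative $e^{ad_s(h_v)}$ at the identity by formula $(\ref{expl})$, whereas the inner automorphism $\mathrm{Ad}(\exp h_v)$ has derivative $e^{ad(h_v)}=e^{ad_s(h_v)}\cdot e^{ad_n(h_v)}$, the two factors commuting as Jordan components of $ad(h_v)$. Therefore $\mathrm{Ad}(\exp h_v)=T(h)\circ U$ for a unipotent automorphism $U$ of $G$ whose derivative is $e^{ad_n(h_v)}$, equivalently
\[ T(h)(x)\cdot \exp(h_v)\;=\;\exp(h_v)\cdot U^{-1}(x). \]
Right-multiplication by the fixed element $\exp(h_v)$ changes $\rho$-distances by at most a constant (property $(2)$ of periodic metrics applied to the singleton $\{\exp(h_v)\}$), and the same bounded-error estimate holds for left-multiplication by $\exp(h_v)$ after writing $\exp(h_v)=h'\cdot f$ with $h'\in H$ and $f$ in a fixed bounded fundamental domain, then using $H$-invariance and the triangle inequality. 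Combining, one obtains $\rho(e,T(h)(x))=\rho(e,U^{-1}(x))+O_h(1)$.

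Third, the whole problem then reduces to the assertion that every unipotent automorphism $U$ of $G$ causes only \emph{sublinear} displacement with respect to $\rho$, namely $\rho(e,U(x))/\rho(e,x)\to 1$ as $x\to\infty$; this is the main obstacle. The guiding picture is that in the exponential coordinates of the second kind (Lemma~\ref{coordo}) adapted to the $K$-invariant graded decomposition of Lemma~\ref{Kinvdec}, a unipotent automorphism moves the coordinate $\xi_i\in m_i$ by a polynomial in the lower-stratum coordinates $\xi_j$, $j<i$, of weighted degree strictly less than $i$ in the canonical dilations $\delta_t(\xi_j)=t^j\xi_j$. Consequently, with respect to any homogeneous quasi-norm $|\cdot|$ of Definition~\ref{quasinorm}, one gets $|U(x)|-|x|=o(|x|)$; converting this into the desired $\rho$-sublinear estimate then rests on the asymptotic geodesicity of $\rho$ together with the coarse comparability between $\rho$ and quasi-norms on the nilshadow (Propositions~\ref{coarsecomp1}--\ref{coarsecomp2}). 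This is the delicate step that the forthcoming lemmas of the section are designed to accomplish.
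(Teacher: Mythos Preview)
Your overall architecture---reduce to a dense set $T(H)$ by a continuity/compactness argument, then handle each $T(h)$ by comparing it to an inner automorphism---matches the paper's strategy (its Lemmas \ref{lem1} and \ref{lem2}). However, two of your intermediate claims do not hold as stated.

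First, in stage two you assert that left-multiplication by the fixed element $\exp(h_v)$ changes $\rho(e,\cdot)$ by $O_h(1)$, arguing via $\exp(h_v)=h'f$ with $h'\in H$ and $f$ in a bounded fundamental domain. The $H$-invariance step is fine, but the residual left-multiplication by $f$ is \emph{not} a bounded perturbation: property~(2) controls $\rho(x,y)$ through $x^{-1}y$, and for $\rho(z,fz)$ one has $z^{-1}fz$, which ranges over an unbounded conjugacy class. So $\rho(e,\exp(h_v)\cdot z)-\rho(e,z)$ is genuinely not $O(1)$; at best it is $o(\rho(e,z))$, and proving even that already requires the nilpotent conjugation estimate you defer to stage three. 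The paper avoids this detour entirely by using the nilshadow identity $hx=h*T(h)x$ (equation~(\ref{nilproduct})) to write $|\rho(e,T(h)x)-\rho(e,hx)|\le\rho(y,h*y)$ with $y=T(h)x$, which is directly a $*$-conjugation displacement in the nilshadow and is handled by Lemma~\ref{lem4}.

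Second, your stage-three claim that ``every unipotent automorphism of $G$ causes only sublinear displacement'' is false in general: already on $\Bbb R^2$ the shear $U(x,y)=(x+y,y)$ is unipotent yet $|U(0,t)|/|(0,t)|\to\sqrt{2}\ne 1$. What makes your particular $U=e^{ad_n(h_v)}$ work is not unipotency per se but the identity $ad_n(h_v)=ad_N(h_v)$ (a direct computation from (\ref{NewBracket}) using property~(ii) of $\frak v$), which shows that $U$ is \emph{inner} in the nilshadow, namely $U=\mathrm{Ad}_N(\exp_N h_v)$. Inner automorphisms act trivially on the abelianization $m_1$, and this---not unipotency---is what forces the displacement in $\xi_i$ to have weighted degree $<i$. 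Once you recognize this, your stage three collapses to exactly Lemma~\ref{lem4}, and your route and the paper's coincide; the paper's formulation via $g*h=g\cdot T(g^{-1})h$ is simply a more direct way to reach the same nilshadow conjugation without passing through the Jordan decomposition.
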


The proof of Proposition \ref{invK} splits into two steps. First we show
that it is enough to prove (\ref{Kinv}) for a dense subset of $k$'s$.$ This
is a consequence of the following continuity statement:

\begin{lemma}
\label{lem1}Let $\varepsilon >0,$ then there is a neighborhood $U$ of the
identity in $K$ such that, for all $k\in U,$%
\begin{equation*}
 \overline{\lim }_{x\rightarrow \infty }\frac{\rho (x,k(x))}{%
\rho (e,x)}<\varepsilon
\end{equation*}
\end{lemma}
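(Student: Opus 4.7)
The plan is to transfer the estimate to the nilshadow $G_N$ via Proposition~\ref{coarsecomp2}, use the bi-invariant Riemannian metric of Lemma~\ref{biinv} to rewrite $d(x,k(x))$ as the distance from $e$ to the displacement $x^{-1}_N *_N k(x)$, and then bound this displacement by a sublinear multiple of $|x|$ using the Baker--Campbell--Hausdorff formula. The smallness comes from the fact that, for $k$ near the identity in $K$, the operator $k-I$ on $\g_N$ has small norm while every non-zero monomial of the BCH expansion of the displacement must carry at least one factor of $k-I$.

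In more detail: by Proposition~\ref{coarsecomp2}, it suffices to prove the statement with $\rho$ replaced by the Riemannian metric $d$ of Lemma~\ref{biinv}, which is left invariant for both the original and the nilshadow group structures on $G$. Left-invariance of $d$ under the nilshadow product gives
\begin{equation*}
 d(x,k(x)) \;=\; d\bigl(e,\,x^{-1}_N *_N k(x)\bigr).
\end{equation*}
By Lemma~\ref{Kinvdec} one picks a $K$-invariant splitting $\g_N = m_r\oplus\cdots\oplus m_1$ of the central descending series, and by averaging one assumes each norm $\|\cdot\|_p$ on $m_p$ is $K$-invariant. Set $|X|:=\max_p\|\pi_p(X)\|_p^{1/p}$; by Proposition~\ref{coarsecomp1}, $d$ is coarsely equivalent to this homogeneous quasi-norm on $G_N$.

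The key estimate is
\begin{equation*}
 \bigl|\,x^{-1}_N *_N k(x)\,\bigr| \;\le\; C_r\,\|k-I\|^{1/r}\,|x|
\end{equation*}
for all $x\in G_N$ with $|x|\ge 1$ and all $k\in K$ with $\|k-I\|\le 1$, where $r$ is the nilpotency class of $G_N$, $\|k-I\|$ is the operator norm on $\g_N$, and $C_r$ depends only on $G_N$. Writing $x=\exp_N(X)$ and $Y:=\log_N(\exp_N(-X)*_N\exp_N(k(X)))$, the BCH series applied to $-X$ and $k(X)=X+(k-I)X$ expands $Y$ as a finite sum of iterated brackets in $X$ and $(k-I)X$; all brackets in only copies of $X$ vanish, so every non-zero term carries at least one factor $(k-I)X$. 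Decomposing each factor into graded pieces $\xi_q\in m_q$, and using that $K$ preserves $m_q$ with $K$-invariant norm, any such monomial $[\xi_{q_1},[\dots,\xi_{q_\ell}]]$ lies in $\bigoplus_{s\ge \sum q_j} m_s$ by the standard filtration of brackets in a nilpotent Lie algebra, so its projection to $m_p$ vanishes unless $\sum q_j\le p$. Its $\|\cdot\|_p$-norm is then bounded by $C\|k-I\||X|^{\sum q_j}\le C\|k-I\||X|^p$ (using $|X|\ge 1$). Summing the finitely many BCH monomials yields $\|\pi_p(Y)\|_p\le C'\|k-I\||X|^p$, hence $\|\pi_p(Y)\|_p^{1/p}\le(C'\|k-I\|)^{1/p}|X|$, and taking the maximum over $p\in\{1,\dots,r\}$ (worst case $p=r$, since $a\mapsto a^{1/p}$ is increasing for $a\le 1$) gives the estimate.

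Chaining the estimate with the coarse equivalences between $d$, $|\cdot|$, and $\rho$ yields $\rho(x,k(x))\le C_*\|k-I\|^{1/r}\rho(e,x)+C_*$ whenever $\rho(e,x)$ is large. Taking $U:=\{k\in K:\|k-I\|^{1/r}<\varepsilon/(2C_*)\}$ and letting $x\to\infty$, so that the additive constant is absorbed by properness of $\rho$, finishes the proof. The main obstacle is the BCH bookkeeping in the third paragraph: one must exploit the $K$-invariant filtration of Lemma~\ref{Kinvdec} while accounting for the fact that $\g_N$ is generally not stratified, so that brackets of graded pieces land in higher parts of the filtration rather than in the exact expected stratum, which is precisely what forces the use of the quasi-norm $|X|=\max_p\|\pi_p(X)\|_p^{1/p}$ rather than a true homogeneous norm.
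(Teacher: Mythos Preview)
Your proof is correct. Both you and the paper first reduce to bounding $|x^{*-1}*k(x)|$ in the nilshadow via Propositions \ref{coarsecomp1} and \ref{coarsecomp2}, but the computations diverge from there. The paper works in exponential coordinates of the \emph{second} kind, writes $x=\exp_N(\xi_r)*\cdots*\exp_N(\xi_0)$ and $k(x)=\exp_N(k(\xi_r))*\cdots*\exp_N(k(\xi_0))$ (Lemma \ref{Kinvdec}), rescales by $\delta_{1/t}$ with $t=|x|$, and invokes the estimate $|\delta_{1/t}(a*b)-\delta_{1/t}(a)*\delta_{1/t}(b)|=O(t^{-1/r})$ to replace the product by one in the graded group; the conclusion then follows by continuity of the graded product since each $t^{-d_i}\xi_i$ stays bounded and $t^{-d_i}k(\xi_i)\to t^{-d_i}\xi_i$ uniformly as $k\to 1$. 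Your argument instead uses \emph{first}-kind coordinates $x=\exp_N(X)$, expands $\log_N(\exp_N(-X)*\exp_N(kX))$ directly via BCH, and exploits the algebraic observation that every surviving monomial carries a factor $(k-I)X$; combined with the $K$-invariant filtration this yields the explicit bound $|x^{*-1}*k(x)|\le C_r\|k-I\|^{1/r}|x|$. Your route is more self-contained and quantitative (you get an explicit modulus of continuity in $k$), while the paper's rescaling argument is more in keeping with the dilation techniques used elsewhere in the paper (e.g.\ Lemma \ref{largeproducts}). One small remark: your phrase ``every non-zero term carries at least one factor $(k-I)X$'' is literally true for right-nested Dynkin monomials (since the innermost bracket $[\pm X,\pm X]$ vanishes), but the cleanest justification is simply that the whole BCH polynomial vanishes identically when $(k-I)X=0$.
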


Then we show that the action of $T(g)$ can be approximated by the
conjugation by $g$, essentially because the unipotent part of this
conjugation does not move $x$ very much when $x$ is far. This is the content
of the following lemma:

\begin{lemma}
\label{lem2} Let $\rho $ be a periodic pseudodistance on $G$ which is
invariant under a co-compact subgroup $H$. Then for any $\varepsilon >0,$
and any compact subset $F$ in $H$ there is $s_{0}>0$ such that
\begin{equation*}
|\rho (e,T(h)x)-\rho (e,hx)|\leq \varepsilon \rho (e,x)
\end{equation*}
for any $h\in F$ and as soon as $\rho (e,x)>s_{0}.$
\end{lemma}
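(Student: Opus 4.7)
\emph{The plan} is to use the Jordan decomposition of the conjugation automorphism $c_h\colon g\mapsto hgh^{-1}$ to reduce the comparison of $\rho(e,T(h)x)$ and $\rho(e,hx)$ to the sublinear displacement induced by a unipotent automorphism. Writing $h=e^a$ with $a=a_v+a_n$ in the splitting $\frak{g}=\frak{v}\oplus\frak{n}$, the structural facts established in the proof of Lemma~\ref{linear} (namely that $ad_s(a_v)$ commutes with $ad_n(a_v)$ and with $ad(a_n)$, and that $ad_n(a_v)+ad(a_n)$ is a nilpotent derivation) show that $\mathrm{Ad}(h)=e^{ad_s(a_v)}\cdot e^{ad_n(a_v)+ad(a_n)}$ is the (commuting) Jordan decomposition in $\mathrm{Aut}(\frak{g})$. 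Since $G$ is simply connected, this lifts to an identity $c_h=T(h)\circ U(h)$ in $\mathrm{Aut}(G)$, where $U(h)$ is unipotent and depends continuously on $h$.

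\emph{Reduction.} From $hx=c_h(x)\cdot h=T(h)(U(h)(x))\cdot h$, the triangle inequality yields
\[
|\rho(e,T(h)x)-\rho(e,hx)|\leq\rho(T(h)x,hx)\leq\rho(T(h)x,c_h(x))+\rho(c_h(x),c_h(x)\cdot h).
\]
The second summand is uniformly bounded for $h\in F$ by the local-boundedness property of $\rho$, since $c_h(x)^{-1}(c_h(x)h)=h$ stays in the compact set $F$. Setting $y:=T(h)x$ and $V_h:=T(h)\,U(h)\,T(h)^{-1}$, the first summand equals $\rho(y,V_h(y))$, and the family $\{V_h:h\in F\}$ is a compact family of unipotent automorphisms of $G$ (since $U(h)$ is unipotent and $T(h)$ ranges in the compact group $K\subset\mathrm{Aut}(G)$).

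\emph{Conclusion and main obstacle.} I next invoke Lemma~\ref{lem4}, which asserts that any such compact family of unipotent automorphisms induces sublinear displacement, i.e.\ $\rho(y,V_h(y))=o(\rho(e,y))$ as $y\to\infty$ uniformly in $h\in F$. The $K$-invariant Riemannian metric from Lemma~\ref{biinv} is a periodic metric invariant under every $T(h)$, so Proposition~\ref{coarseq} gives $\rho(e,T(h)x)\leq C\rho(e,x)+C$ uniformly in $h\in F$, and hence $o(\rho(e,y))=o(\rho(e,x))$. Choosing $s_0$ so that the bounded summand and the $o$-term are each at most $\tfrac{\varepsilon}{2}\rho(e,x)$ for $\rho(e,x)\geq s_0$ yields the claim. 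The real work is in Lemma~\ref{lem4}: the intended proof uses the $K$-invariant stratification (Lemma~\ref{Kinvdec}) in the exponential coordinates of the second kind (Lemma~\ref{coordo}), in which a unipotent automorphism $V$ preserves the filtration $\{C^i(\frak{g}_N)\}$ and acts on each graded piece $m_i$ by a unipotent linear map, so that $y^{-1}V(y)$ has projections of strictly lower homogeneous order than those of $y$; combined with Guivarc'h's Theorem~\ref{guivthm}, this yields a displacement of strictly smaller order than $\rho(e,y)$.
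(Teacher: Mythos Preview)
Your reduction strategy has a genuine gap in the Jordan decomposition step. You claim that $ad_s(a_v)$ commutes with $ad(a_n)$, citing the structural facts around Lemma~\ref{linear}. But those facts assert commutation of $ad_s(x)$ with $ad(y)$ only for $x,y\in\frak{v}$, not for $y\in\frak{n}$. In general $[ad_s(a_v),ad(a_n)]=ad\bigl(ad_s(a_v)(a_n)\bigr)\neq 0$: already in $G=\Bbb{R}\ltimes\Bbb{R}^2$ (rotation action), taking $a_v=V$ and $a_n=w_0\in\Bbb{R}^2$ gives $[ad(V),ad(w_0)](V)=w_0\neq 0$. Hence $\mathrm{Ad}(h)=e^{ad_s(a_v)}\cdot e^{ad_n(a_v)+ad(a_n)}$ is \emph{not} a commuting product in general, and $T(h)^{-1}\circ c_h$ need not be a unipotent automorphism. (You also assume $h=e^a$, which fails on a set of positive measure since $\exp$ is not surjective here.) Separately, Lemma~\ref{lem4} as stated is about \emph{inner} automorphisms of a nilpotent Lie group, i.e.\ $|x^{-1}gx|$, not about an arbitrary compact family of unipotent automorphisms of $G$; what you invoke is a different (and unproven) statement.

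The paper's argument bypasses the Jordan decomposition entirely by using the nilshadow product identity $hx=h\ast T(h)x$ (from $g\ast h:=g\cdot T(g^{-1})h$). This gives directly $\rho(T(h)x,hx)=\rho(y,h\ast y)$ with $y=T(h)x$, and then Proposition~\ref{coarsecomp2} compares $\rho$ with a homogeneous quasi-norm on the nilshadow $G_N$, so that $\rho(y,h\ast y)\leq C|y^{\ast-1}\ast h\ast y|+C$. Now Lemma~\ref{lem4} applies verbatim to the nilpotent group $G_N$: it is precisely conjugation by the bounded element $h$ in $G_N$, yielding $|y^{\ast-1}\ast h\ast y|\leq\varepsilon|y|$. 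No decomposition of $\mathrm{Ad}(h)$ is needed, and the issue of $\exp$-surjectivity never arises.
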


\noindent \textit{Proof of Proposition \ref{invK} modulo Lemmas (\ref{lem1}) and (\ref
{lem2}):} As $\rho $ is assumed to be $H$-invariant, for every $h\in H,$ we
have $\rho (e,h^{-1}x)/\rho (e,x)\rightarrow 1.$ The proof of the
proposition then follows immediately from the combination of the last two
lemmas.
\endproof%

\subsection{Proof of Lemmas (\ref{lem1}) and (\ref{lem2})}

We choose $K$-invariant subspaces $m_i$'s and $\ell$ of the nilshadow $\g_N$ of $\g$ as in Lemma \ref{Kinvdec} from Section \ref{nilshadow}. In particular

$$\g_N=m_r \oplus \ldots \oplus m_2 \oplus \ell \oplus \vu,$$
where each term is $K$-invariant, $\frak{n}=[\frak{g}_{N},\frak{g}_{N}]\oplus \frak{l}$ and $C^i(\g_N)=m_i \oplus C^{i-1}(\g_N)$. Moreover $\delta_t(x)=t^ix$ if $x \in m_i$ (here $m_1=\ell \oplus \vu$).

We also set $v(x)=\max_{i}\left\|
\xi _{i}\right\| _{i}^{1/d_{i}}$ if $x=\exp _{N}(\xi _{r})*\ldots
*\exp _{N}(\xi _{0})$ and $d_{i}=i$ if $i>0$ and $d_{0}=1.$ And we let $|x|:=\max_i\|x_i\|^{1/d_i}$ if $x=x_r+\ldots+x_1+x_0$ in the above direct sum decomposition. 

Note that $|\cdot|$ is a $\delta_t$-homogeneous quasi-norm. Moreover, it is straightforward to verify (using the Campbell-Hausdorff formula $(\ref{CHF})$ and Proposition \ref{propert}) that $v(x) \leq C|x|+C$ for some constant $C>0$. In particular $\xi_i/|x|^{d_i}$ remains bounded as $|x|$ becomes large.

\vspace{.5cm}

\noindent \textit{Proof of Lemma \ref{lem1}. } Combining Propositions \ref{coarsecomp1} and \ref
{coarsecomp2}, there is a constant $C>0$ such that for all $x,y\in G,$ $\rho
(x,y)\leq C|x^{*-1}*y|+C.$ Therefore we have reduced to prove the statement for $|\cdot|$ instead of $\rho$, namely it is enough to show that $|x^{*-1}*k(x)|$ becomes negligible compared to $|x|$ as $|x|$ goes to infinity and $k$ tends to $1$.

It follows from the Campbell-Baker-Hausdorff formula $(\ref{CHF0})$ and $(\ref{CHF})$ that, if $x,y\in
G_{N}$ and $|x|,|y|$ are $O(t),$ then $|\delta_{\frac{1}{t}}(x*y)-\delta _{\frac{1}{t}}(x)*\delta _{\frac{1}{t}}(y)|=O(t^{-1/r}),$ and similarly $|\delta _{\frac{1}{t}}(x_1*\ldots*x_m)-\delta _{\frac{1}{t}}(x_1)*\ldots *\delta _{\frac{1}{t}}(x_m)|=O_m(t^{-1/r}),$ for $m$ elements $x_i$ with $|x_i|=O(t)$. Hence when writing $x=\exp _{N}(\xi _{r})*...%
*\exp _{N}(\xi _{0}),$ and setting $t=|x|,$ we thus obtain that
the following quantity
\begin{equation*}
\left| \delta _{\frac{1}{t}}(x^{*-1}*k(x))-\overset{*}{%
\prod_{0\leq i\leq r}}\exp _{N}(-t^{-d_{i}}\xi _{i})*\overset{*}{%
\prod_{0\leq i\leq r}}\exp _{N}(t^{-d_{r-i}}k(\xi _{r-i}))\right|
\end{equation*}
is a $O(t^{-1/r}).$ Indeed recall from Lemma \ref{Kinvdec} that $k(x)=\exp _{N}(k(\xi _{r}))*...%
*\exp _{N}(k(\xi _{0}))$. As $x$ gets larger, each $t^{-d_{i}}\xi _{i}$ remains in a
compact subset of $m_{i}.$ Therefore, as $k$ tends to the identity in $K$, each $t^{-d_{i}}k(\xi _{i})$ becomes uniformly close to $
t^{-d_{i}}\xi _{i}$ independently of the choice of $x\in G_{N}$ as long as $t=|x|$ is large. The result follows. \edpf

\vspace{.5cm}

\noindent \textit{Proof of Lemma \ref{lem2}.} Recall that $hx=h*T(h)x$ for all $x,h\in G$ (see $(\ref{nilproduct})$. By the triangle inequality it is enough to bound $\rho (y,h*y)$, where $y=T(h)x$. From Propositions \ref{coarsecomp1} and \ref{coarsecomp2}, $\rho$ is comparable (up to multiplicative and additive constants to the homogeneous quasi-norm $|\cdot|$. Hence the Lemma follows from the following:

\begin{lemma}
\label{lem4}Let $N$ be a simply connected nilpotent Lie group and let $%
|\cdot |$ be a homogeneous quasi norm on $N$ associated to some $1$%
-parameter group of dilations $(\delta _{t})_{t}$. For any $\varepsilon >0$
and any compact subset $F$ of $N,$ there is a constant $s_{2}>0$ such that
\begin{equation*}
|x^{-1}gx|\leq \varepsilon |x|
\end{equation*}
for all $g\in F$ and as soon as $|x|>s_{2}.$
\end{lemma}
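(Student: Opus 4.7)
The plan is to exploit the $\delta_t$-homogeneity of $|\cdot|$ together with the stratified Lie group $N_\infty$ on the underlying manifold of $N$ (with product $*$), on which the dilations $\delta_t$ act as automorphisms. Since all homogeneous quasi-norms with respect to the same $(\delta_t)_t$ are equivalent (see $(\ref{equivalent})$), I may assume that $|\cdot|$ is of supremum type as in Example \ref{exampl}(1), scaled so that it satisfies Guivarc'h's almost triangle inequality of Lemma \ref{GuivLem}; in particular iterated products are controlled by Proposition \ref{propert}(d).

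The main ingredient is a comparison between the original product $\cdot$ on $N$ and the stratified product $*$ on $N_\infty$ obtained by comparing $(\ref{CHF0})$ and $(\ref{CHF})$ coordinate by coordinate in an adapted basis: only monomials $x^\alpha y^\beta$ with $d_\alpha + d_\beta < \deg(e_i)$ distinguish $(xy)_i$ from $(x*y)_i$, which gives the basic estimate
\begin{equation*}
|\delta_{1/t}(xy) - \delta_{1/t}(x) * \delta_{1/t}(y)| = O(t^{-1/r})
\end{equation*}
as soon as $|x|, |y| = O(t)$. This is precisely the estimate already used in the proof of Lemma \ref{lem1}. Applying it to the triple product $x^{-1}gx$ (noting that $|x^{-1}|$ and $|x^{-1}g|$ are $O(t)$ when $t := |x|$ is large and $g$ varies in a compact set $F$, by Proposition \ref{propert}(b),(d)) yields
\begin{equation*}
\delta_{1/t}(x^{-1}gx) \;=\; \delta_{1/t}(x)^{*-1} * \delta_{1/t}(g) * \delta_{1/t}(x) \;+\; O(t^{-1/r}),
\end{equation*}
with the error uniform for $g \in F$.

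Now set $t = |x|$. On the one hand $\delta_{1/t}(x)$ has quasi-norm $1$ and therefore stays in the compact quasi-sphere $\{|y|=1\}$; on the other hand $|\delta_{1/t}(g)| = |g|/t \leq M_F/t$ tends to $0$ uniformly for $g \in F$, where $M_F := \sup_{g\in F}|g|$. Continuity of the product and inverse in $N_\infty$ then forces
\begin{equation*}
\delta_{1/t}(x)^{*-1} * \delta_{1/t}(g) * \delta_{1/t}(x) \longrightarrow e
\end{equation*}
uniformly on the compact set $\{|y|=1\} \times F$ as $t \to \infty$. Combining this with the $O(t^{-1/r})$ error and the $\delta_t$-homogeneity of $|\cdot|$ gives $|x^{-1}gx| = o(|x|)$ uniformly in $g \in F$, which is the statement of the lemma. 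The only real technical point is bookkeeping the Campbell-Hausdorff comparison between $\cdot$ and $*$ on a triple product; once this is done, everything reduces to continuity and compactness of the unit quasi-sphere.
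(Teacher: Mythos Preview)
Your proof is correct and follows essentially the same route as the paper: both reduce to the Campbell--Hausdorff comparison $|\delta_{1/t}(xy)-\delta_{1/t}(x)*\delta_{1/t}(y)|=O(t^{-1/r})$, apply it to the triple product with $t=|x|$, and finish by continuity of $*$-conjugation together with $\delta_{1/t}(g)\to e$ uniformly on $F$ and $\delta_{1/t}(x)$ confined to the unit quasi-sphere. Your write-up is slightly more explicit about justifying the hypotheses (bounding $|x^{-1}|$ and $|x^{-1}g|$ via Proposition~\ref{propert}, and distinguishing the $*$-inverse), but the argument is the same.
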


\proof%
Recall, as in the proof of the last lemma, that for any $c_{1}>0$ there is a
$c_{2}>0$ such that if $t>1$ and $x,y\in N$ are such that $|x|,|y|\leq
c_{1}t,$ then $|\delta _{\frac{1}{t}}(xy)-\delta _{\frac{1}{t}}(x)*\delta _{%
\frac{1}{t}}(y)|\leq c_{2}t^{-1/r}$. In particular, if
we set $t=|x|,$ then
\begin{equation*}
\left| \delta _{\frac{1}{t}}(x^{-1}gx)-\delta _{\frac{1}{t}}(x)^{-1}*\delta
_{\frac{1}{t}}(g)*\delta _{\frac{1}{t}}(x)\right| \leq c_{2}t^{-1/r}
\end{equation*}
On the other hand, as $g$ remains in the compact set $F,$ $\delta _{\frac{1}{%
t}}(g)$ tends uniformly to the identity when $t=|x|$ goes to infinity, and $%
\delta _{\frac{1}{t}}(x)$ remains in a compact set. By continuity, we see
that $\delta _{\frac{1}{t}}(x)^{-1}*\delta _{\frac{1}{t}}(g)*\delta _{\frac{1%
}{t}}(x)$ becomes arbitrarily small as $t$ increases. We are done.
\endproof%
\edpf

\subsection{Proof of Proposition \ref{pop}}

First we prove the following continuity statement:

\begin{lemma}
\label{lem5}Let $\rho $ be a periodic pseudodistance on $G$ and $\varepsilon
>0$. Then there exists a neighborhood of the identity $U$ in $G$ and $s_{3}>0
$ such that
\begin{equation*}
1-\varepsilon \leq \frac{\rho (e,gx)}{\rho (e,x)}\leq 1+\varepsilon
\end{equation*}
as soon $g\in U$ and $\rho (e,x)>s_{3}.$
\end{lemma}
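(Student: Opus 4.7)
The plan is to bound $|\rho(e,gx)-\rho(e,x)|$ from above by $\rho(x,gx)$ using the triangle inequality, and then to show that $\rho(x,gx) \leq \varepsilon\,\rho(e,x)$ uniformly for $g$ in a small enough neighborhood $U$ of $e$ once $\rho(e,x)$ is large. The main difficulty is that $\rho$ is only $H$-invariant, not $G$-invariant, so $\rho(x,gx)$ cannot be directly rewritten as $\rho(e,x^{-1}gx)$. The remedy is to exploit the nilshadow identity $g\cdot x = g * T(g)x$ from $(\ref{nilproduct})$ and to split
$$
\rho(x,gx) \,\leq\, \rho(x,\, T(g)x) \,+\, \rho(T(g)x,\, g * T(g)x),
$$
which expresses left $G$-translation by $g$ as the composition of an automorphism $T(g) \in K$ and a nilshadow left-translation by $g$.

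The first summand is controlled by Lemma~\ref{lem1}. Since the homomorphism $T\colon G \to K$ is continuous with $T(e)=\mathrm{id}$, we may choose $U$ small enough that $T(U)$ lies inside the neighborhood of the identity in $K$ produced by Lemma~\ref{lem1} for $\varepsilon/2$. This yields $\rho(x,T(g)x) < (\varepsilon/2)\,\rho(e,x)$ for all $g\in U$ and all $x$ with $\rho(e,x)$ sufficiently large.

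The second summand is controlled by Lemma~\ref{lem4} applied in the nilpotent group $G_N$. Let $d$ be a Riemannian metric on $G$ left-invariant for both the $G$- and the $G_N$-structures (Lemma~\ref{biinv}); by Proposition~\ref{coarseq} it is coarsely equivalent to $\rho$. Writing $y=T(g)x$ and using the $G_N$-left-invariance of $d$, one has $d(y,g*y) = d(e,\, y^{*-1}*g*y)$. Equip $G_N$ with a $K$-invariant homogeneous quasi-norm $|\cdot|$ built from the $K$-invariant stratification of Lemma~\ref{Kinvdec} and $K$-invariant norms on each stratum (such norms exist by averaging over the compact group $K$). Then Lemma~\ref{lem4} applied with $F=\overline{U}\subset G_N$ yields, for every $\eta>0$, a threshold $s(\eta)$ such that $|y^{*-1}*g*y| \leq \eta\,|y|$ whenever $g\in U$ and $|y|>s(\eta)$. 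Combined with the comparisons $d(e,\cdot) \leq C|\cdot| + C$ and $|\cdot| \leq C\,\rho(e,\cdot)+C$ from Propositions~\ref{coarsecomp1} and~\ref{coarsecomp2}, and with the $K$-invariance identity $|y|=|T(g)x|=|x|$, this gives $\rho(T(g)x,\, g*T(g)x) \leq \eta\,C'\,\rho(e,x) + O(1)$. Choosing $\eta$ so that $\eta C' < \varepsilon/2$ makes this summand smaller than $(\varepsilon/2)\rho(e,x)$ once $\rho(e,x)$ is large.

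Adding the two bounds gives $\rho(x,gx) < \varepsilon\,\rho(e,x)$, and then $|\rho(e,gx)-\rho(e,x)| \leq \rho(x,gx)$ finishes the proof. The delicate point is the $K$-invariance of the chosen quasi-norm: it ensures that the automorphism $T(g)$ does not change the size $|x|$, so that Lemma~\ref{lem4}'s estimates for $y=T(g)x$ translate cleanly into bounds in terms of $\rho(e,x)$.
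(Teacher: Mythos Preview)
Your proof is correct and follows essentially the same approach as the paper's: bound $|\rho(e,gx)-\rho(e,x)|$ by $\rho(x,gx)$, use the nilshadow identity $gx=g*T(g)x$ to split into an automorphism part controlled by Lemma~\ref{lem1} and a nilshadow-translation part controlled by Lemma~\ref{lem4}. The only cosmetic difference is the intermediate point in the triangle inequality: the paper goes through $g*x$ (so the second summand becomes $\rho(g*x,g*T(g)x)$, which after passing to $\rho_N$ reduces to $\rho_N(x,T(g)x)$), whereas you go through $T(g)x$; both routes invoke the same two lemmas, and your use of a $K$-invariant quasi-norm to get $|T(g)x|=|x|$ is a clean way to handle the bookkeeping.
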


\proof%
Let $\rho_N$ be a left invariant Riemannian metric on the nilshadow $G_N$.
\begin{equation*}
|\rho (e,x)-\rho (e,gx)|\leq \rho (x,gx)\leq \rho(x,g*x)+ \rho(g*x,gx)
\end{equation*}
However $\rho(a,b) \leq C\rho_N(a,b) +C $ for some $C>0$ by Proposition \ref{coarsecomp2}. Moreover by $(\ref{nilproduct})$ we have $gx=g * T(g)x$. Hence
\begin{equation*}
|\rho (e,x)-\rho (e,gx)|\leq C \rho_N(x,g*x) + C \rho_N(x,T(g)x) +2C
\end{equation*}
To complete the proof, we apply Lemmas \ref{lem4} and \ref{lem1} to the right hand side
above. \edpf

We proceed with the proof of Proposition \ref{pop}. Let $L$ be the set of
all $g\in G$ such that $\rho (e,gx)/\rho (e,x)$ tends to $1$ as $x$ tends to
infinity in $G$. Clearly $L$ is a subgroup of $G$. Lemma \ref{lem5} shows
that $L$ is closed. The $H$-invariance of $\rho $ insures that $L$ contains $%
H$. Moreover, Proposition \ref{invK}\textit{\ }implies that $L$ is invariant
under $K$. Consequently $L$ contains $H_{K},$ the closed subgroup generated
by all $k(h),$ $k\in K,$ $h\in H.$ This, together with Proposition \ref{invK}%
, grants pointwise convergence of the integrand in (\ref{Keq}). Convergence
of the integral follows by applying Lebesgue's dominated convergence
theorem. 

The fact that $\rho _{K}$ is invariant under left multiplication by $H$ and
invariant under precomposition by automorphisms from $K$ insures that $\rho
_{K}$ is invariant under $*$-left multiplication by any element $%
h\in H$, where $*$ is the multiplication in the nilshadow $G_{N}.$
Moreover we check that $T(g)\in K$ if $g\in H_{K},$ hence $H_{K}$ is a
\textit{subgroup} of $G_{N}.$ It is clearly co-compact in $G_{N}$ too (if $F$
is compact and $HF=G$ then $H*F_{K}=G$ where $F_{K}$ is the union
of all $k(F)$, $k\in K$).

Clearly $\rho _{K}$ is proper and locally bounded, so in order to finish the
proof, we need only to check that $\rho _{K}$ is asymptotically geodesic. By
$H$-invariance of $\rho _{K}$ and since $H$ is co-compact in $G$, it is
enough to exhibit a pseudogeodesic between $e$ and a point $x\in H.$ Let $%
x=z_{1}\cdot ...\cdot z_{n}$ with $z_{i}\in H$ and $\sum \rho (e,z_{i})\leq
(1+\varepsilon )\cdot \rho (e,x).$ Fix a compact fundamental domain $F$ for $%
H$ in $H_{K}$ so that integration in (\ref{Keq}) over $H\backslash H_{K}$ is
replaced by integration over $F.$ Then for some constant $C_{F}>0$ we have $%
|\rho (g,gz)-\rho (e,gz)|\leq C_{F}$ for $g\in F$ and $z\in H.$ Moreover, it
follows from Proposition \ref{invK}, Lemma \ref{lem5} and the fact that $%
H_{K}\subset L,$ that
\begin{equation}
\rho (e,gk(z))\leq (1+\varepsilon )\cdot \rho (e,z)  \label{ineq00}
\end{equation}
for all $g\in F,$ $k\in K$ and as soon as $z\in G$ is large enough. Fix $s$
large enough so that $C_{F}\leq \varepsilon s$ and so that (\ref{ineq00})
holds when $\rho (e,z)\geq s$. As already observed in the discussion
following Definition \ref{IAGDef} (property \ref{properness} (3)) we may
take the $z_{i}$'s so that $\frac{s}{2}\leq \rho (e,z_{i})\leq s.$ Then $%
nC_{F}\leq ns\varepsilon \leq 3\varepsilon \rho (e,x).$ Finally we get for $%
\varepsilon <1$ and $x$ large enough
\begin{eqnarray*}
\sum \rho _{K}(e,z_{i}) &\leq &C_{F}n+(1+\varepsilon )^{2}\rho (e,x) \\
&\leq &C_{F}n+(1+\varepsilon )^{3}\rho _{K}(e,x) \\
&\leq &(1+10\varepsilon )\cdot \rho _{K}(e,x)
\end{eqnarray*}
where we have used the convergence $\rho _{K}/\rho \rightarrow 1$ that we
just proved.
\edpf


\section{The nilpotent case}\label{nilpotent-case}

In this section, we prove Theorem \ref{main-theorem} and its corollaries stated in the Introduction
for a simply connected nilpotent Lie group. We essentially follow Pansu's argument from \cite{Pan}, although our approach differs somewhat in its presentation. Throughout the section, the nilpotent Lie
group will be denoted by $N,$ and its Lie algebra by $\frak{n}$.

Let $m_{1}$ be any vector subspace of $\frak{n}$ such that $\frak{n}%
=m_{1}\oplus [\frak{n},\frak{n}]$. Let $\pi _{1}$ the associated linear
projection of $\frak{n}$ onto $m_{1}$. Let $H$ be a closed co-compact
subgroup of $N$. To every $H$-periodic pseudodistance $\rho $ on $N$ we associate a
norm $\left\| \cdot \right\| _{0}$ on $m_{1}$ which is the norm whose unit
ball is defined to be the closed convex hull of all elements $\pi
_{1}(h)/\rho (e,h)$ for all $h\in H\backslash \{e\}.$ In other words,
\begin{equation}
E:=\{x\in m_{1},\left\| x\right\| _{0}\leq 1\}=\overline{CvxHull}\left\{
\frac{\pi _{1}(h)}{\rho (e,h)},h\in H\backslash \{e\}\right\}
\label{statement}
\end{equation}
The set $E$ is clearly a convex subset of $m_{1}$ which is symmetric around $%
0$ (since $\rho $ is symmetric). To check that $E$ is indeed the unit ball
of a norm on $m_{1}$ it remains to see that $E$ is bounded and that $0$ lies
in its interior. The first fact follows immediately from $(\ref
{uppercomparison2})$ and Example \ref{exampl}. If $0$ does not lie in the
interior of $E,$ then $E$ must be contained in a proper subspace of $m_{1},$
contradicting the fact that $H$ is co-compact in $N$.

Taking large powers $h^n$, we see that we can replace the set $H\setminus\{e\}$ in the above definition by any neighborhood of infinity in $H$. Similarly, it is easy to see that
the following holds:

\begin{prop}
\label{sphere}For $s>0$ let $E_{s}$ be the closed convex hull of all $\pi
_{1}(x)/\rho (e,x)$ with $x\in N$ and $\rho (e,x)>s$. Then $%
E=\bigcap_{s>0}E_{s}$.
\end{prop}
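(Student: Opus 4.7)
My plan is to establish the two inclusions separately; only one of them requires any work.

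\emph{The easy inclusion $E \subseteq \bigcap_{s>0} E_s$.} This is essentially a restatement of the observation made in the paragraph preceding the statement. For any $h \in H \setminus \{e\}$ and any $n \geq 1$, the point $\pi_1(h^n)/\rho(e,h^n) = (n/\rho(e,h^n))\,\pi_1(h)$ lies on the ray through $\pi_1(h)$, and the subadditivity $\rho(e,h^n) \leq n\rho(e,h)$ yields $n/\rho(e,h^n) \geq 1/\rho(e,h)$. Hence $\pi_1(h)/\rho(e,h)$ sits on the segment from $0$ to $\pi_1(h^n)/\rho(e,h^n)$. Since $h^n \to \infty$ in $N$ one may take $n$ so large that $\rho(e,h^n) > s$, and since $0$ lies in the closed convex hull of the symmetric generating family, $\pi_1(h)/\rho(e,h)$ itself lies in the closed convex hull of the restricted family $\{\pi_1(h')/\rho(e,h') : h' \in H,\,\rho(e,h')>s\}$. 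This restricted family is contained in the defining family of $E_s$, so taking closed convex hulls gives $E \subseteq E_s$ for every $s > 0$.

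\emph{The reverse inclusion $\bigcap_{s>0} E_s \subseteq E$.} I would argue by Hahn--Banach separation. Assume $v \notin E$. As $E$ is a closed convex subset of the finite-dimensional space $m_1$, there exist a linear functional $\ell$ on $m_1$ and a real number $M$ such that $\ell(u) \leq M$ for every $u \in E$ while $\ell(v) > M$. Set $\eta := \frac{1}{2}(\ell(v) - M) > 0$. It is enough to exhibit $s_0 > 0$ with the property that $\ell(\pi_1(x)/\rho(e,x)) \leq M + \eta$ for all $x \in N$ satisfying $\rho(e,x) > s_0$, because a continuous linear inequality passes to the closed convex hull, which would then force $\ell(u) \leq M + \eta < \ell(v)$ for every $u \in E_{s_0}$ and hence $v \notin E_{s_0}$.

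To obtain this uniform estimate, write $x = h\cdot f$ with $h \in H$ and $f$ in a fixed compact fundamental domain $F$ for $H$ in $N$. Since $\pi_1 : N \to m_1$ is a group homomorphism into the additive abelian group $m_1$, we have $\pi_1(x) = \pi_1(h) + \pi_1(f)$, and $\pi_1(f)$ stays bounded as $f$ ranges over $F$. Property (1) of periodic metrics from Section \ref{periodic-metrics} gives $|\rho(e,x) - \rho(e,h)| \leq C$ for some constant $C > 0$; combined with the already-established boundedness of $E$ (so $\pi_1(h)/\rho(e,h)$ stays bounded), the decomposition
\begin{equation*}
\frac{\pi_1(x)}{\rho(e,x)} - \frac{\pi_1(h)}{\rho(e,h)} = \frac{\pi_1(h)}{\rho(e,h)}\!\left(\frac{\rho(e,h)}{\rho(e,x)} - 1\right) + \frac{\pi_1(f)}{\rho(e,x)}
\end{equation*}
shows that the left-hand side tends to $0$ as $\rho(e,x) \to \infty$, uniformly in $x$. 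Applying $\ell$ and using $\ell(\pi_1(h)/\rho(e,h)) \leq M$ produces the needed inequality once $s_0$ is chosen so that the $o(1)$ error is smaller than $\eta$. The only real obstacle is securing the \emph{uniformity} of this approximation; it rests precisely on the three bounded quantities just listed ($\pi_1(F)$, $|\rho(e,x)-\rho(e,h)|$, and $E$ itself), each supplied by earlier material in the section.
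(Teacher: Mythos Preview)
Your proof is correct and follows essentially the same approach as the paper's own proof. The paper dispatches both inclusions in two sentences: the first using $\rho(e,h^n)\leq n\rho(e,h)$ exactly as you do, and the second by simply invoking that $\rho$ is at bounded distance from its restriction to $H$ (property (1) of \S\ref{periodic-metrics}); your Hahn--Banach argument is a fully detailed unpacking of what that hint means, via the same decomposition $\pi_1(x)/\rho(e,x)-\pi_1(h)/\rho(e,h)\to 0$.
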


\proof%
Since $\rho $ is $H$-periodic, we have $\rho (e,h^{n})\leq n\rho (e,h)$ for
all $n\in \Bbb{N}$ and $h\in H$. This shows $E\subset \bigcap_{s>0}E_{s}.$
The opposite inclusion follows easily from the fact that $\rho $ is at a
bounded distance from its restriction to $H,$ i.e. from \ref{properness} $%
(1) $.
\endproof%

We now choose a set of supplementary subspaces $(m_{i})$ starting with $%
m_{1} $ as in Paragraph \ref{dilations}. This defines a new Lie product $*$
on $N$ so that $N_{\infty }=(N,*)$ is stratified. We can then consider the $*$%
-left invariant Carnot-Carath\'{e}odory metric associated to the norm $%
\left\| \cdot \right\| _{0}$ as defined in Paragraph \ref{CCmetrics} on the
stratified nilpotent Lie group $N_{\infty }.$  In this section, we will prove Theorem \ref{main-theorem} for nilpotent groups in the following form:

\begin{theorem}
\label{Pansu}Let $\rho $ be a periodic pseudodistance on $N$ and $d_{\infty }$ the
Carnot-Carath\'{e}odory metric defined above, then as $x$ tends to
infinity in $N$
\begin{equation}
\lim \frac{\rho (e,x)}{d_{\infty }(e,x)}=1  \label{asym}
\end{equation}
\end{theorem}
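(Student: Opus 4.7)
The plan is to prove the two inequalities $\limsup \rho(e,x)/d_\infty(e,x)\le 1$ and $\liminf \rho(e,x)/d_\infty(e,x)\ge 1$ separately, both by a rescaling argument. The dilation law $d_\infty(\delta_t y,\delta_t z)=t\,d_\infty(y,z)$ lets us rescale by $\delta_{1/T}$ with $T:=d_\infty(e,x)$ so that the point $\delta_{1/T}(x)$ lives in a fixed $d_\infty$-ball of radius $1$; meanwhile the estimate $|\delta_{1/T}(ab)-\delta_{1/T}(a)*\delta_{1/T}(b)|=O(T^{-1/r})$ (already exploited in Section~\ref{nilpotent-reduction}) shows that, after rescaling, the original product on $N$ converges to the stratified product $*$ on $N_\infty$. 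Throughout, the coarse equivalences from Proposition~\ref{coarsecomp1} and the homogeneous quasi-norm estimates from Paragraph~\ref{Guiv} will be used freely.

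\textbf{Upper bound.} Fix $\varepsilon>0$ and let $T=d_\infty(e,x)$ be large. Choose a piecewise horizontal path $\xi:[0,1]\to N_\infty$ from $e$ to $x$, with constant velocity $v_i\in m_1$ on each piece, such that $\sum_i\|v_i\|_0\le (1+\varepsilon)T$. By Proposition~\ref{sphere} and the convex-hull description of the unit ball of $\|\cdot\|_0$, every $v_i$ can be written, up to error $\varepsilon$, as a non-negative combination $\sum_j \mu_{ij}\,\pi_1(h_{ij})$ with $h_{ij}\in H$ of large $\rho$-norm and $\sum_j \mu_{ij}\rho(e,h_{ij})\le (1+\varepsilon)\|v_i\|_0$. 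Setting $n_{ij}=\lfloor \mu_{ij}\rfloor$, the element $w_i:=h_{i1}^{n_{i1}}\cdots h_{ik_i}^{n_{ik_i}}\in H$ has $\rho$-cost at most $(1+\varepsilon)\|v_i\|_0+O(1)$ and satisfies $\pi_1(w_i)=v_i+O(1)$. Concatenating in $N$ gives $y:=w_1\cdots w_n$ with $\rho(e,y)\le (1+\varepsilon)T+O(n)$. Rescaling by $\delta_{1/T}$, the Campbell-Hausdorff estimate turns the product $w_1\cdots w_n$ in $N$ into the $*$-product $\delta_{1/T}(w_1)*\cdots *\delta_{1/T}(w_n)$ up to $O(T^{-1/r})$, which by construction approximates $\delta_{1/T}(x)$. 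Thus $|y^{-1}x|=o(T)$, and a final short $\rho$-correction of cost $o(T)$ (whose existence follows from Proposition~\ref{coarsecomp1} applied to the quasi-norm $|\cdot|$) brings us exactly to $x$, giving $\rho(e,x)\le (1+O(\varepsilon))T$.

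\textbf{Lower bound.} Take a sequence $x_n\to\infty$ with $L:=\lim\rho(e,x_n)/T_n$ where $T_n:=d_\infty(e,x_n)\to\infty$. By asymptotic geodesicity (property~\ref{properness}~(3)), write $x_n=y_1^{(n)}y_2^{(n)}\cdots y_{k_n}^{(n)}$ as a $\rho$-pseudogeodesic with increments $\zeta_i^{(n)}:=(y_{i-1}^{(n)})^{-1}y_i^{(n)}$ satisfying $s\le\rho(e,\zeta_i^{(n)})\le 2s$ and $\sum_i\rho(e,\zeta_i^{(n)})\le (1+\varepsilon)\rho(e,x_n)$. Rescale: the points $\delta_{1/T_n}(y_i^{(n)})$ lie in a fixed compact set of $N_\infty$ and, using the deformation estimate, their successive $*$-differences lie in a controlled neighborhood of $m_1$ with $\pi_1$-increments of $\|\cdot\|_0$-norm at most $(1+o(1))\rho(e,\zeta_i^{(n)})/T_n$ (by Proposition~\ref{sphere}), while the $\pi_i$-components for $i\ge 2$ have homogeneous size $O((s/T_n)^{1/i})$ and therefore vanish after rescaling. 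Passing to a limit via a diagonal/Arzelà-Ascoli argument, one extracts a Lipschitz curve $\gamma:[0,L]\to N_\infty$ from $e$ to $\tilde{x}_\infty:=\lim\delta_{1/T_n}(x_n)$ which is horizontal (tangent vectors lie in $m_1$) and of length at most $L$. Hence $1=d_\infty(e,\tilde{x}_\infty)\le L$.

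\textbf{Main obstacle.} The central technical difficulty is the mismatch between the two group laws: horizontal displacements are defined in $N_\infty$ with product $*$, but the metric $\rho$ and the approximating words $h_1^{n_1}\cdots h_k^{n_k}$ live in $N$ with product $\cdot$. Both halves of the proof therefore hinge on the Campbell-Hausdorff deformation estimate $|\delta_{1/T}(ab)-\delta_{1/T}(a)*\delta_{1/T}(b)|=O(T^{-1/r})$ and on the fact that, in the rescaled picture, the $m_i$-components for $i\ge 2$ enter only through subleading terms. A secondary subtlety, in the lower bound, is justifying that the rescaled pseudogeodesic converges (not merely sub-converges) to a curve whose tangent vector genuinely lies in the first stratum $m_1$; this requires combining Guivarc'h's Lemma~\ref{GuivLem} with the explicit form of the homogeneous quasi-norm to bound, uniformly in $n$, the higher-stratum components of the small-scale increments $\zeta_i^{(n)}$.
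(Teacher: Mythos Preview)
Your overall strategy and your upper bound match the paper's proof: approximate a near-optimal horizontal $d_\infty$-path by a word in $H$ via the convex-hull description of $\|\cdot\|_0$ (the paper packages this as Lemma~\ref{sphereapprox}), then invoke the Campbell--Hausdorff deformation estimate (Lemma~\ref{largeproducts}) to pass between the $N$-product and the $N_\infty$-product. The paper is a bit more careful about controlling the number of pieces (Lemma~\ref{fewdiscon}), but the idea is the same.

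For the lower bound, however, the paper takes a shorter and more elementary route than your Arzel\`a--Ascoli/limit-curve argument. Once $x=x_1\cdots x_n$ is written as a $\rho$-pseudogeodesic with bounded increments in $H$, the paper applies \emph{both} inequalities of Lemma~\ref{largeproducts} to obtain directly
\[
d_\infty\bigl(\delta_{1/t}(x),\ \delta_{1/t}(\pi_1(x_1))*\cdots*\delta_{1/t}(\pi_1(x_n))\bigr)=O(t^{-1/r}),
\]
and then the triangle inequality for $d_\infty$ together with the defining inequality $\|\pi_1(x_i)\|_0\le\rho(e,x_i)$ (immediate from~(\ref{statement}), no need for Proposition~\ref{sphere}) give $t=d_\infty(e,x)\le(1+\varepsilon)\rho(e,x)+o(t)$. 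No limit curve is extracted and no horizontality needs to be verified.

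Your compactness approach can be made to work, but the ``secondary subtlety'' you flag is real and not quite resolved by what you wrote: the rescaled increments $\delta_{1/T_n}(\zeta_i)$ are not in $m_1$, so the discrete path is a priori only $C$-Lipschitz for $d_\infty$ (for some $C>1$ coming from the quasi-norm equivalence), which would yield only the useless bound $1\le CL$. One must argue further that the $m_j$-components for $j\ge2$ contribute at order $O(T_n^{-j})$ while the parameter increment is of order $T_n^{-1}$, so the limiting curve is genuinely horizontal with speed bounded by $1$ in $\|\cdot\|_0$. This is doable, but it is exactly the content of the second inequality in Lemma~\ref{largeproducts} (replacing each $x_i$ by $\pi_1(x_i)$ \emph{inside the product} with controlled error), and once one has that lemma the compactness detour is unnecessary.
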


Note that $d_{\infty }$ is left-invariant for the $N_{\infty }$ Lie product,
but not the original Lie product on $N$.

Before going further, let us draw some simple consequences.

$(1)$ In Theorem \ref{Pansu} we may replace $d_{\infty }(e,x)$ by $%
d(e,x)$, where $d$ is the left invariant Carnot-Caratheodory metric on $N$
(rather than $N_{\infty }$) defined by the norm $\left\| \cdot \right\| _{0}$
(as opposed to $d_{\infty }$ which is $*$-left invariant). Hence $\rho ,d$
and $d_{\infty }$ are asymptotic. This follows from the combination of
Theorem \ref{Pansu} and Remark \ref{CC-ball}.

$(2)$ Observe that the choice of $m_{1}$ was arbitrary. Hence two
Carnot-Carath\'{e}odory metrics corresponding to two different choices of a
supplementary subspace $m_{1}$ with the same induced norm on $\frak{n}/[%
\frak{n},\frak{n}]$, are asymptotically equivalent (i.e. their ratio tends
to $1$), and in fact isometric (see Remark \ref{CC-ball}). Conversely, if
two Carnot-Carath\'{e}odory metrics are associated to the same supplementary
subspace $m_{1}$ and are asymptotically equivalent, they must be equal. This
shows that the set of all possible norms on the quotient vector space $\frak{%
n}/[\frak{n},\frak{n}]$ is in bijection with the set of all classes of
asymptotic equivalence of Carnot-Carath\'{e}odory metrics on $N_{\infty }$.

$(3)$ As another consequence we see that if a locally bounded proper and
asymptotically geodesic left-invariant pseudodistance on $N$ is also
homogeneous with respect to the $1$-parameter group $(\delta _{t})_{t}$
(i.e. $\rho (e,\delta _{t}x)=t\rho (e,x)$) then it has to be of the form $%
\rho (x,y)=d_{\infty }(e,x^{-1}y)$ where $d_{\infty }$ is a
Carnot-Carath\'{e}odory metric on $N_{\infty }$.

\subsection{Volume asymptotics}

Theorem \ref{Pansu} also yields a formula for the asymptotic volume of $\rho
$-balls of large radius. Let us fix a Haar measure on $N$ (for example
Lebesgue measure on $\frak{n}$ gives rise to a Haar measure on $N$ under $%
\exp $). Since $d_{\infty }$ is homogeneous, it is straightforward to compute
the volume of a $d_{\infty }$-ball:
\begin{equation*}
vol(\{x\in N,d_{\infty }(e,x)\leq t\})=t^{d(N)}vol(\{x\in N,d_{\infty
}(e,x)\leq 1\})
\end{equation*}
where $d(N)=\sum_{i\geq 1}\dim (C^{i}(\frak{n}))$ is the \textit{homogeneous
dimension} of $N.$ For a pseudodistance $\rho $ as in the statement of
Theorem \ref{Pansu}, we can define the \textit{asymptotic volume of }$\rho $
to be the volume of the unit ball for the associated Carnot-Carath\'{e}odory
metric $d_{\infty }$.
\begin{equation*}
AsVol(\rho )=vol(\{x\in N,d_{\infty }(e,x)\leq 1\})
\end{equation*}
Then we obtain as an immediate corollary of Theorem \ref{Pansu}:

\begin{corollary}
Let $\rho $ be periodic pseudodistance on $N.$ Then
\begin{equation*}
\lim_{t\rightarrow +\infty }\frac{1}{t^{d(N)}}vol(\{x\in N,\rho (e,x)\leq
t\})=AsVol(\rho )>0
\end{equation*}
\end{corollary}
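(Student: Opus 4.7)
The plan is to derive this volume asymptotic directly from Theorem \ref{Pansu} by a sandwich argument, using the homogeneity of the limit metric $d_\infty$ under the dilations $\{\delta_t\}_{t>0}$.

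First I would use Theorem \ref{Pansu} in its quantitative form: for any $\varepsilon\in(0,1)$, there exists a compact set $K_\varepsilon\subset N$ such that for every $x\notin K_\varepsilon$,
\begin{equation*}
(1-\varepsilon)\, d_\infty(e,x)\leq \rho(e,x)\leq (1+\varepsilon)\, d_\infty(e,x).
\end{equation*}
From this, a direct comparison of sublevel sets yields the inclusions
\begin{equation*}
B_{d_\infty}\!\bigl(\tfrac{t}{1+\varepsilon}\bigr)\setminus K_\varepsilon \;\subset\; B_\rho(t) \;\subset\; B_{d_\infty}\!\bigl(\tfrac{t}{1-\varepsilon}\bigr)\cup K_\varepsilon
\end{equation*}
valid for every $t>0$.

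Next I would compute the volume of $d_\infty$-balls by homogeneity. Since the underlying manifold (and Haar measure) of $N$ coincides with that of $N_\infty$, and since $\delta_s$ is an automorphism of $N_\infty$ scaling the Haar measure by $s^{d(N)}$ (with $d(N)=\sum_{i\geq 1}i\cdot\dim m_i=\sum_i \dim C^i(\frak n)$ by the Bass-Guivarc'h formula of Section \ref{Guiv}), and since $d_\infty(\delta_s x,\delta_s y)=s\, d_\infty(x,y)$ by $(\ref{dilation})$, we obtain
\begin{equation*}
vol\bigl(B_{d_\infty}(t)\bigr)=t^{d(N)}\cdot AsVol(\rho)
\end{equation*}
for every $t>0$, with $AsVol(\rho)=vol(B_{d_\infty}(1))$.

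Combining the two steps, dividing by $t^{d(N)}$, and noting that $vol(K_\varepsilon)$ is a fixed constant which becomes negligible compared to $t^{d(N)}$ as $t\to+\infty$, we get
\begin{equation*}
\tfrac{1}{(1+\varepsilon)^{d(N)}}\, AsVol(\rho)\;\leq\; \liminf_{t\to+\infty}\tfrac{vol(B_\rho(t))}{t^{d(N)}} \;\leq\; \limsup_{t\to+\infty}\tfrac{vol(B_\rho(t))}{t^{d(N)}} \;\leq\; \tfrac{1}{(1-\varepsilon)^{d(N)}}\, AsVol(\rho).
\end{equation*}
Letting $\varepsilon\to 0$ gives the desired limit. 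Finally, positivity $AsVol(\rho)>0$ is automatic: $d_\infty$ is a genuine left-invariant subFinsler metric on $N_\infty$ (its associated norm on $m_1$ is nondegenerate, as $E$ in $(\ref{statement})$ is bounded and contains $0$ in its interior by the co-compactness of $H$), so $B_{d_\infty}(1)$ is an open neighborhood of $e$ and has positive Haar measure. There is no real obstacle here beyond verifying that $vol(K_\varepsilon)=o(t^{d(N)})$, which is trivial, so the corollary is essentially a formal consequence of Theorem \ref{Pansu} together with the scaling law of $d_\infty$.
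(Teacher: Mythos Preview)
Your proof is correct and is precisely the argument the paper has in mind: the corollary is stated as ``immediate'' from Theorem~\ref{Pansu} together with the homogeneity identity $vol(B_{d_\infty}(t))=t^{d(N)}vol(B_{d_\infty}(1))$ computed just before it, and the sandwich via ball inclusions is exactly the mechanism used in the proof of Corollary~\ref{shape}. The only cosmetic difference is that the paper phrases the inclusions as $B_{d_\infty}((1-\varepsilon)t)\subset B_\rho(t)\subset B_{d_\infty}((1+\varepsilon)t)$ for $t$ large (absorbing your $K_\varepsilon$ into the large ball via local boundedness of $\rho$), but this is equivalent to your formulation.
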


Finally, if $\Gamma $ is an arbitrary finitely generated nilpotent group, we
need to take care of the torsion elements$.$ They form a normal finite
subgroup $T$ and applying Theorem \ref{Pansu} to $\Gamma /T$, we obtain:

\begin{corollary}
\label{counting}Let $S$ be a finite symmetric generating set of $\Gamma $
and $S^{n}$ the ball of radius $n$ is the word metric $\rho _{S}$
associated to $S,$ then
\begin{equation*}
\lim_{n\rightarrow +\infty }\frac{1}{n^{d(N)}}\#S^{n}=\#T\cdot \frac{%
AsVol(\rho _{\overline{S}})}{vol(N/\overline{\Gamma })}>0
\end{equation*}
where $N$ is the Malcev closure of $\overline{\Gamma }=\Gamma /T$, the
torsion free quotient of $\Gamma ,$ and $d_{\overline{S}}$ is the word
pseudodistance associated to $\overline{S}$, the projection of $S$ in $%
\overline{\Gamma }.$
\end{corollary}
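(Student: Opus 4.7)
The plan is to reduce to the torsion-free case, then realize the counting function $\#\overline{S}^n$ as a volume on the Malcev closure and apply the previous volume asymptotics corollary.

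First I would handle the torsion. Since $\Gamma$ is finitely generated nilpotent, the torsion subgroup $T$ is finite and normal. Let $p:\Gamma \to \overline{\Gamma}=\Gamma/T$ be the quotient map, so that $p(S^n) = \overline{S}^n$ and every fiber of $p$ has cardinality $\#T$. Choose $c \in \mathbb{N}$ with $T \subseteq S^c$; then for any $\gamma \in \Gamma$ with $p(\gamma) \in \overline{S}^{n-c}$, writing $\gamma = \gamma_0 \cdot t$ with $t\in T$ and $\gamma_0 \in S^{n-c}$, we get $\gamma \in S^{n-c}\cdot S^c = S^n$. Hence
\begin{equation*}
\#T\cdot \#\overline{S}^{n-c} \;\leq\; \#S^n \;\leq\; \#T\cdot \#\overline{S}^n,
\end{equation*}
so it suffices to prove $\#\overline{S}^n/n^{d(N)} \to AsVol(\rho_{\overline{S}})/vol(N/\overline{\Gamma})$ for the torsion-free quotient $\overline{\Gamma}$.

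Next I would realize the count as a volume. Since $\overline{\Gamma}$ is torsion-free finitely generated nilpotent, it embeds as a cocompact lattice in its Malcev closure $N$. Following Example 1 of Section \ref{examples}, pick a Borel bounded fundamental domain $F$ for $\overline{\Gamma}$ in $N$ (so $vol(F) = vol(N/\overline{\Gamma})$), and extend $\rho_{\overline{S}}$ to the $\overline{\Gamma}$-periodic pseudodistance $\rho(x,y) = \rho_{\overline{S}}(\gamma_x,\gamma_y)$ on $N$, where $x\in \gamma_x F$. By construction the $\rho$-ball of radius $n$ decomposes as the disjoint union
\begin{equation*}
B_\rho(n) \;=\; \bigsqcup_{\gamma \in \overline{S}^n} \gamma\cdot F,
\end{equation*}
so $vol_N(B_\rho(n)) = \#\overline{S}^n \cdot vol(N/\overline{\Gamma})$.

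Finally I would apply the volume asymptotics corollary from the previous paragraph to the periodic pseudodistance $\rho$ on $N$: it yields
\begin{equation*}
\lim_{n\to\infty} \frac{vol_N(B_\rho(n))}{n^{d(N)}} \;=\; AsVol(\rho).
\end{equation*}
The definition of the limit norm $\|\cdot\|_0$ on $m_1$ only involves the values of $\rho$ on a neighborhood of infinity in the invariance subgroup $H=\overline{\Gamma}$ (by Proposition \ref{sphere}), and on $\overline{\Gamma}$ the metrics $\rho$ and $\rho_{\overline{S}}$ coincide; hence $AsVol(\rho) = AsVol(\rho_{\overline{S}})$. Dividing by $vol(N/\overline{\Gamma})$ gives the asymptotic for $\#\overline{S}^n$, and combining with the sandwich from the first paragraph produces the stated formula. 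The only genuinely delicate point is the reduction to $\overline{\Gamma}$ (checking that the bounded shift of radius by $c$ does not affect the $n^{d(N)}$-asymptotic, which follows since $(n-c)^{d(N)}/n^{d(N)} \to 1$); the rest is bookkeeping.
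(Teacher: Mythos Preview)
Your proof is correct and follows exactly the approach the paper sketches (reduce to the torsion-free quotient, then invoke the volume asymptotics corollary on the Malcev closure). You have simply filled in the details that the paper leaves implicit; the sandwich $\#T\cdot\#\overline{S}^{n-c}\le\#S^n\le\#T\cdot\#\overline{S}^n$ and the fundamental-domain identification $vol_N(B_\rho(n))=\#\overline{S}^n\cdot vol(N/\overline{\Gamma})$ are the standard way to make this reduction precise.
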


Moreover, it is possible to be a bit more precise about $AsVol(\rho _{%
\overline{S}}).$ In fact, the norm $\left\| \cdot \right\| _{0}$ on $m_{1}$
used to define the limit Carnot-Carath\'{e}odory distance $d_{\infty }$
associated to $\rho _{\overline{S}}$ is a simple polyhedral norm defined by
\begin{equation*}
\left\{ \left\| x\right\| _{0}\leq 1\right\} =CvxHull\left( \pi _{1}(%
\overline{s}),s\in S\right)
\end{equation*}

More generally the following holds. Let $H$ be any closed, co-compact
subgroup of $N.$ Choose a Haar measure on $H$ so that $vol_{N}(N/H)=1$.
Theorem \ref{Pansu} yields:

\begin{corollary}
Let $\Omega $ be a compact symmetric (i.e. $\Omega =\Omega ^{-1}$)
neighborhood of the identity, which generates $H$. Let $\left\| \cdot
\right\| _{0}$ be the norm on $m_{1}$ whose unit ball is $\overline{CvxHull}%
\{\pi _{1}(\Omega )\}$ and let $d_{\infty }$ be the corresponding
Carnot-Carath\'{e}odory metric on $N_{\infty }.$ Then we have the following
limit in the Hausdorff topology
\begin{equation*}
\lim_{n\rightarrow +\infty }\delta _{\frac{1}{n}}(\Omega ^{n})=\left\{ g\in
N,d_{\infty }(e,g)\leq 1\right\}
\end{equation*}
and
\begin{equation*}
\lim_{n\rightarrow +\infty }\frac{vol_{H}(\Omega ^{n})}{n^{d(N)}}%
=vol_{N}\left( \left\{ g\in N,d_{\infty }(e,g)\leq 1\right\} \right)
\end{equation*}
\end{corollary}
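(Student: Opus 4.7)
The plan is to realise $\Omega^n$ as essentially a ball for a naturally associated $H$-periodic pseudodistance $\rho$ on $N$, to match the intrinsic limit norm from $(\ref{statement})$ attached to $\rho$ with the prescribed polyhedral norm, and then to read off both conclusions from Theorem \ref{Pansu} together with the shape and volume corollaries derived from it.

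First I would fix a bounded Borel fundamental domain $F$ for $H$ in $N$ with $e\in F$ and define $\rho_\Omega(h_1,h_2):=\inf\{n\geq 1:h_1^{-1}h_2\in\Omega^n\}$ on $H$. Extending as in Example 5 of Paragraph \ref{examples}, $\rho(x,y):=\rho_\Omega(h_x,h_y)$, where $x=h_xf_x$, $y=h_yf_y$ are the unique decompositions, is an $H$-periodic pseudodistance on $N$ (asymptotic geodesicity with step $1$ coming from minimal $\Omega$-factorisations). I would then identify the unit ball $E$ of the intrinsic norm $\|\cdot\|_0$ from $(\ref{statement})$ with $\overline{CvxHull}\{\pi_1(\Omega)\}$: since $\pi_1$ factors through the abelianisation of $N$, any minimal decomposition $h=\omega_1\cdots\omega_n$ with $n=\rho_\Omega(e,h)$ gives $\pi_1(h)/n=\frac{1}{n}\sum_i\pi_1(\omega_i)\in CvxHull\{\pi_1(\Omega)\}$, yielding $E\subseteq\overline{CvxHull}\{\pi_1(\Omega)\}$; conversely each $\omega\in\Omega$ satisfies $\rho_\Omega(e,\omega)=1$ and so $\pi_1(\omega)\in E$, giving the reverse inclusion. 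Consequently the Carnot--Carath\'eodory metric $d_\infty$ furnished by Theorem \ref{Pansu} applied to $\rho$ coincides with the one prescribed in the statement.

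By construction $B_\rho(n)=\Omega^n F$. Theorem \ref{Pansu} (or equivalently Corollary \ref{shape}) then yields $\delta_{1/n}(B_\rho(n))\to\mathcal{C}:=\{g:d_\infty(e,g)\leq 1\}$ in the Hausdorff topology. Since $F$ is bounded while $\delta_{1/n}$ contracts any bounded set to $\{e\}$ in any homogeneous quasi-norm, a short Campbell--Hausdorff estimate (in the spirit of Lemma \ref{lem1}, bounding $|\delta_{1/n}(hf)-\delta_{1/n}(h)|$ uniformly for $f\in F$ and $|h|=O(n)$) shows $\delta_{1/n}(\Omega^nF)$ and $\delta_{1/n}(\Omega^n)$ differ in Hausdorff distance by $o(1)$, giving the first assertion. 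For the volume, the bounded discrepancy between $\rho$ and its restriction to $H$ yields $\Omega^{n-C}F\subseteq B_\rho(n)\subseteq\Omega^{n+C}F$ for some constant $C$, and Fubini with the normalisation $vol_N(F)=1$ converts this to $vol_H(\Omega^{n-C})\leq vol_N(B_\rho(n))\leq vol_H(\Omega^{n+C})$; squeezing against $vol_N(B_\rho(n))/n^{d(N)}\to vol_N(\mathcal{C})$ (the volume corollary of Theorem \ref{Pansu} stated just above) delivers the second assertion.

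The step I expect to require the most care is the explicit identification of the limit norm $E$ with the prescribed polyhedral convex hull; the rest is essentially bookkeeping to convert between $\Omega^n$ and $B_\rho(n)$, between $vol_H$ and $vol_N$, and to absorb the bounded fundamental-domain correction after the dilation $\delta_{1/n}$.
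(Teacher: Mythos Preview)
Your proposal is correct and follows essentially the same route as the paper, which treats this corollary as an immediate consequence of Theorem~\ref{Pansu} (via Corollary~\ref{shape} and the volume computation in the proof of Corollary~\ref{volasym}) together with the identification of the limit norm stated in~(\ref{WordMetricBall}). The only minor slip is that with your choice $e\in F$ one has $B_\rho(n)=\Omega^nF$ exactly, so the constant $C$ in your squeeze argument is unnecessary; otherwise the details you supply are precisely those the paper leaves implicit.
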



\subsection{Outline of the proof}

We first devise some standard lemmas about piecewise approximations of
horizontal paths (Lemmas \ref{RiemIneq}, \ref{compageod}, \ref{fewdiscon}).
Then it is shown (Lemma \ref{largeproducts}) that the original product on $N$
and the product in the associated graded Lie group are asymptotic to each
other, namely, if $(\delta _{t})_{t}$ is a $1$-parameter group of dilations
of $N,$ then after renormalization by $\delta _{\frac{1}{t}},$ the product
of $O(t)$ elements lying in some bounded subset of $N,$ is very close to the
renormalized product of the same elements in the graded Lie group $N_{\infty
}$. This is why all complications due to the fact that $N$ may not be
\textit{a priori} graded and the $\delta _{t}$'s may not be automorphisms
disappear when looking at the large scale geometry of the group. Finally, we
observe (Lemma \ref{sphereapprox}), as follows from the very definition of
the unit ball $E$ for the limit norm $\left\| \cdot \right\| _{0},$ that any
vector in the boundary of $E$, can be approximated, after renormalizing by $%
\delta _{\frac{1}{s}}$ by some element $x\in N$ lying in a fixed annulus $%
s(1-\varepsilon )\leq \rho (e,x)\leq s(1+\varepsilon ).$ This enables us to
assert that any $\rho $-quasi geodesic gives rise, after renormalization, to
a $d_{\infty }$-geodesic (this gives the lower bound in Theorem \ref{Pansu}). And vice-versa, that any $d_{\infty
} $-geodesic can be approximated uniformly by some renormalized $\rho $%
-quasi geodesic (this gives the upper bound in Theorem \ref{Pansu}).

\subsection{Preliminary lemmas}

\begin{lemma}
\label{RiemIneq}Let $G$ be a Lie group and let $\left\| \cdot \right\| _{e}$
be a Euclidean norm on the Lie algebra of $G$ and $d_{e}(\cdot ,\cdot )$ the
associated left invariant Riemannian metric on $G$. Let $K$ be a compact subset of $G$. Then there is a constant
$C_{0}=C_{0}(d_{e},K)>0$ such that whenever $d_{e}(e,u)\leq 1$ and $x,y\in K$%
\begin{equation*}
\left| d_{e}(xu,yu)-d_{e}(x,y)\right| \leq C_{0}d_{e}(x,y)d_{e}(e,u)
\end{equation*}
\end{lemma}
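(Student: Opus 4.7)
The plan is to reduce the statement, via left-invariance, to an estimate on how conjugation distorts the distance to the identity, and then to read the distortion off the adjoint representation of $G$.

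First I would rewrite everything using left-invariance. Setting $g = x^{-1}y$ and denoting by $c_{u^{-1}}(z) = u^{-1}zu$ the conjugation, one has
\begin{equation*}
d_e(xu,yu) = d_e(e, u^{-1}x^{-1}yu) = d_e(e, c_{u^{-1}}(g)), \qquad d_e(x,y) = d_e(e,g).
\end{equation*}
As $x,y$ range over $K$, the element $g = x^{-1}y$ ranges over the compact set $K^{-1}K$, so the claim becomes: for $g$ in a compact set and $d_e(e,u) \leq 1$,
\begin{equation*}
\bigl| d_e(e, c_{u^{-1}}(g)) - d_e(e,g) \bigr| \leq C_0 \, d_e(e,g)\, d_e(e,u).
\end{equation*}

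Next I would compare the two distances by conjugating a length-minimizing path. Pick a unit-speed length-minimizing curve $\gamma:[0,T]\to G$ from $e$ to $g$, where $T = d_e(e,g)$; its left-invariant velocity $X(t) \in \mathfrak{g}$ satisfies $\|X(t)\|_e = 1$. Then $\tilde{\gamma}(t) := c_{u^{-1}}(\gamma(t))$ is a smooth path from $e$ to $c_{u^{-1}}(g)$, and a direct computation shows that its left-invariant velocity equals $\mathrm{Ad}(u^{-1})\,X(t)$. Therefore
\begin{equation*}
d_e(e, c_{u^{-1}}(g)) \;\leq\; L(\tilde\gamma) \;=\; \int_0^T \|\mathrm{Ad}(u^{-1})\,X(t)\|_e\, dt \;\leq\; \|\mathrm{Ad}(u^{-1})\|_{op}\cdot d_e(e,g).
\end{equation*}
The opposite inequality is obtained symmetrically by conjugating a length-minimizing path from $e$ to $c_{u^{-1}}(g)$ by $u$, which multiplies the velocity by $\mathrm{Ad}(u)$ and gives $d_e(e,g) \leq \|\mathrm{Ad}(u)\|_{op}\cdot d_e(e, c_{u^{-1}}(g))$.

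Finally I would estimate the operator norm. The map $u \mapsto \mathrm{Ad}(u^{\pm 1})$ from $G$ to $GL(\mathfrak{g})$ is smooth and sends $e$ to $I$; since the closed ball $\{d_e(e,u)\leq 1\}$ is compact, there exists a constant $C$ (depending only on $d_e$) with $\|\mathrm{Ad}(u^{\pm 1}) - I\|_{op} \leq C\, d_e(e,u)$ for $d_e(e,u)\leq 1$. Combining this with the two inequalities above yields $(1 - Cd_e(e,u))\,d_e(e,g) \leq d_e(e,c_{u^{-1}}(g)) \leq (1 + Cd_e(e,u))\,d_e(e,g)$ whenever $Cd_e(e,u)\leq \tfrac12$, from which the claimed bound with a suitable $C_0$ depending on $d_e$ and on $\sup\{d_e(e,g): g\in K^{-1}K\}$ follows immediately. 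For $d_e(e,u)$ not small (but still $\leq 1$) the estimate is trivial by enlarging $C_0$, since both $|d_e(e,c_{u^{-1}}(g))-d_e(e,g)|$ and $d_e(e,g)d_e(e,u)$ stay in a bounded range determined by $K$. The only real technical step is the Lipschitz bound on $\mathrm{Ad}$, and this is routine from the smoothness of $\mathrm{Ad}$ together with the compactness of the unit ball.
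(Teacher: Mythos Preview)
Your argument is correct and follows the same idea as the paper's one-line sketch: left-invariance reduces the question to how conjugation by $u$ distorts $d_e(e,g)$, and this distortion is governed by $\mathrm{Ad}(u^{\pm 1})$, whose distance to the identity is $O(d_e(e,u))$. The paper simply says the inequality ``boils down to $\|[X,Y]\|_e \leq c\|X\|_e\|Y\|_e$'', which is exactly the infinitesimal form of your Lipschitz bound on $\mathrm{Ad}$, since $d_e\mathrm{Ad}=\mathrm{ad}$. Your version has the virtue of being fully written out.

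One small correction: your final sentence handling the case ``$d_e(e,u)$ not small'' is not valid as written. You claim both sides ``stay in a bounded range determined by $K$'', but $d_e(e,g)$ can be arbitrarily small (take $x$ close to $y$), so the right-hand side $d_e(e,g)\,d_e(e,u)$ is not bounded below and you cannot conclude just by enlarging $C_0$. The fix is to drop the case split entirely: your bound $\|\mathrm{Ad}(u^{\pm 1})-I\|_{op}\leq C\,d_e(e,u)$ already holds on the whole unit ball, and from
\[
d_e(e,c_{u^{-1}}(g))\leq (1+C\,d_e(e,u))\,d_e(e,g),\qquad d_e(e,g)\leq (1+C\,d_e(e,u))\,d_e(e,c_{u^{-1}}(g))
\]
one gets directly $|d_e(e,c_{u^{-1}}(g))-d_e(e,g)|\leq C\,d_e(e,u)\,d_e(e,g)$ for all $d_e(e,u)\leq 1$, with no restriction on the size of $d_e(e,u)$. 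Incidentally, this shows that the constant depends only on $d_e$ and not on $K$.
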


\proof%
The proof reduces to the case when $u$ and $x^{-1}y$ are in a small
neighborhood of $e.$ Then the inequality boils down to the following $%
\left\| [X,Y]\right\| _{e}\leq c\left\| X\right\| _{e}\left\| Y\right\| _{e}$
for some $c>0$ and every $X,Y$ in $Lie(G).$
\endproof%

\begin{lemma}
\label{compageod}Let $G$ be a Lie group, let $\left\| \cdot \right\| $ be
some norm on the Lie algebra of $G$ and let $d_{e}(\cdot ,\cdot )$ be a left
invariant Riemannian metric on $G$. Then for every $L>0$ there is a constant
$C=C(d_{e},\left\| \cdot \right\| ,L)>0$ with the following property. Assume
$\xi _{1},\xi _{2}:[0,1]\rightarrow G$ are two piecewise smooth paths in the
Lie group $G$ with $\xi _{1}(0)=\xi _{2}(0)=e.$ Let $\xi _{i}^{\prime }\in
Lie(G)$ be the tangent vector pulled back at the identity by a left
translation of $G$. Assume that $\sup_{t\in [0,1]}\left\| \xi _{1}^{\prime
}(t)\right\| \leq L$, and that $\int_{0}^{1}\left\| \xi _{1}^{\prime
}(t)-\xi _{2}^{\prime }(t)\right\| dt\leq \varepsilon $. Then
\begin{equation*}
d_{e}(\xi _{1}(1),\xi _{2}(1))\leq C\varepsilon
\end{equation*}
\end{lemma}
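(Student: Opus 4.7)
The plan is to reduce the statement to a Gronwall-type differential inequality for the function $\phi(t) := d_e(\xi_1(t), \xi_2(t))$, combining Lemma \ref{RiemIneq} with an infinitesimal comparison of the two paths at each moment.

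First I would settle the localization issue. Since we may assume $\varepsilon \leq 1$, the hypotheses give $\int_0^1 \|\xi_2'(t)\|\,dt \leq L+1$ by the triangle inequality in the norm, and since $\|\cdot\|$ is equivalent to the norm on $Lie(G)$ inducing $d_e$, both $\xi_1(t)$ and $\xi_2(t)$ stay inside a fixed compact set $K \subset G$ depending only on $L$ (namely a closed $d_e$-ball of radius $c(L+1)$). This puts us in a position to apply Lemma \ref{RiemIneq} uniformly along the paths.

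Next I would differentiate $\phi$ in the weak sense. For small $h>0$, write $\xi_i(t+h) = \xi_i(t)\exp(h\xi_i'(t)) + O(h^2)$ and decompose
\begin{equation*}
\phi(t+h) \leq d_e\bigl(\xi_1(t)\exp(h\xi_1'(t)),\, \xi_2(t)\exp(h\xi_1'(t))\bigr) + d_e\bigl(\xi_2(t)\exp(h\xi_1'(t)),\, \xi_2(t)\exp(h\xi_2'(t))\bigr) + O(h^2).
\end{equation*}
Lemma \ref{RiemIneq} bounds the first term by $\phi(t)(1 + C_0 h L)$ (using $\|\xi_1'(t)\|\leq L$ and $K$-confinement), while left-invariance of $d_e$ reduces the second term to $d_e(\exp(h\xi_1'(t)), \exp(h\xi_2'(t))) \leq C_1 h \|\xi_1'(t) - \xi_2'(t)\|$ for a constant $C_1$ depending only on $d_e$ and $\|\cdot\|$ (via norm equivalence on $Lie(G)$ and the behavior of the exponential near $0$). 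Dividing by $h$ and letting $h \to 0^+$ yields, at every point of differentiability,
\begin{equation*}
\phi'(t) \leq C_0 L\, \phi(t) + C_1 \|\xi_1'(t) - \xi_2'(t)\|.
\end{equation*}

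Finally I would integrate. Since $\phi$ is continuous with $\phi(0)=0$ and piecewise smooth (with only finitely many corners coming from the piecewise smoothness of the $\xi_i$), the differential inequality can be integrated via Gronwall's lemma to give
\begin{equation*}
\phi(1) \leq C_1 e^{C_0 L} \int_0^1 \|\xi_1'(t) - \xi_2'(t)\|\, dt \leq C\varepsilon,
\end{equation*}
with $C = C_1 e^{C_0 L}$ depending only on $d_e$, $\|\cdot\|$, and $L$, as required.

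The main technical obstacle is the first step: producing the Gronwall-type inequality $\phi' \leq C_0 L \phi + C_1 \|\xi_1'-\xi_2'\|$ cleanly from Lemma \ref{RiemIneq}, in particular making sure the $O(h^2)$ remainders really are uniform in $t$ (which uses $K$-confinement of both paths) and handling the finitely many kinks where the $\xi_i'$ jump (at which $\phi$ remains continuous so the integrated inequality is unaffected).
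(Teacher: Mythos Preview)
Your proof is correct and follows essentially the same approach as the paper: derive a Gronwall-type differential inequality for $\phi(t)=d_e(\xi_1(t),\xi_2(t))$ by combining Lemma \ref{RiemIneq} with left-invariance, then integrate. The only minor difference is the choice of intermediate point in the triangle inequality: you insert $\xi_2(t)\exp(h\xi_1'(t))$, so that Lemma \ref{RiemIneq} is applied with $u=\exp(h\xi_1'(t))$ and the hypothesis $\|\xi_1'\|\leq L$ is used directly, whereas the paper inserts $\xi_1(t)\xi_2'(t)dt$ and applies Lemma \ref{RiemIneq} with $u=\xi_2'(t)dt$; your variant is arguably cleaner on this point, and you also make the compact-confinement step explicit, but the underlying argument is the same.
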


\proof%
The function $f(t)=d_{e}(\xi _{1}(t),\xi _{2}(t))$ is piecewise smooth. For
small $dt$ we may write, using Lemma \ref{RiemIneq}
\begin{eqnarray*}
f(t+dt)-f(t) &\leq &d_e(\xi _{1}(t)\xi _{1}^{\prime }(t)dt,\xi _{1}(t)\xi
_{2}^{\prime }(t)dt)+d_e(\xi _{1}(t)\xi _{2}^{\prime }(t)dt,\xi _{2}(t)\xi
_{2}^{\prime }(t)dt)-f(t)+o(dt) \\
&\leq &\left\| \xi _{1}^{\prime }(t)-\xi _{2}^{\prime }(t)\right\|
_{e}dt+C_{0}f(t)\left\| \xi _{2}^{\prime }(t)dt\right\| _{e}+o(dt) \\
&\leq &\varepsilon (t)dt+C_0Lf(t)dt+o(dt)
\end{eqnarray*}
where $\varepsilon (t)=\left\| \xi _{1}^{\prime }(t)-\xi _{2}^{\prime
}(t)\right\| _{e}.$ In other words,
\begin{equation*}
f^{\prime }(t)\leq \varepsilon (t)+C_{0}Lf(t)
\end{equation*}
Since $f(0)=0,$ Gronwall's lemma implies that $f(1)\leq
e^{C_{0}L}\int_{0}^{1}\varepsilon (s)e^{-C_{0}Ls}ds\leq $ $C\varepsilon .$
\edpf%

From now on, we will take \label{compRiemCC}$G$ to be the stratified nilpotent Lie group $%
N_{\infty }$, and $d_{e}(\cdot ,\cdot )$ will denote a left invariant
Riemannian metric on $N_{\infty }$ while $d_{\infty }(\cdot ,\cdot )$ is a
left invariant Carnot-Caratheodory Finsler metric on $N_{\infty }$
associated to some norm $\left\| \cdot \right\| $ on $m_{1}.$

\begin{remark}
There is $c_{0}>0$ such that $c_{0}^{-1}d_{e}(e,x)\leq d_{\infty }(e,x)\leq
c_{0}d_{e}(e,x)^{\frac{1}{r}}$ in a neighborhood of $e$. Hence in the
situation of the lemma we get $d_{\infty }(\xi _{1}(1),\xi _{2}(1))\leq
C_{1}\varepsilon ^{\frac{1}{r}}$ for some other constant $%
C_{1}=C_{1}(L,d_{\infty },d_{e}).$
\end{remark}

\begin{lemma}
\label{linearapprox}Let $N\in \Bbb{N}$ and $d_{N}(x,y)$ be the function in $%
N_{\infty }$ defined in the following way:
\begin{equation*}
d_{N}(x,y)=\inf \{\int_{0}^{1}\left\| \xi ^{\prime }(u)\right\| du,\xi \in
\mathcal{H}_{PL(N)},\text{ }\xi (0)=x,\xi (1)=y\}
\end{equation*}
where $\mathcal{H}_{PL(N)}$ is the set of horizontal paths $\xi $ which are
piecewise linear with at most $N$ possible values for $\xi ^{\prime }.$ Then
we have $d_{N}\rightarrow d_{\infty }$ uniformly on compact subsets of $%
N_{\infty }.$
\end{lemma}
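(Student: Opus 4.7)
The inequality $d_N \geq d_\infty$ is immediate since $\mathcal{H}_{PL(N)} \subset \mathcal{H}$, and the sequence $d_N$ is non-increasing in $N$; thus it suffices to produce, for each compact $K \subset N_\infty \times N_\infty$ and each $\varepsilon > 0$, an $N_0$ with $d_{N_0}(x,y) \leq d_\infty(x,y) + \varepsilon$ on $K$. By left-invariance of both $d_N$ and $d_\infty$ under the product $*$ of $N_\infty$, this reduces to bounding $d_{N_0}(e,g)$ for $g$ in a compact subset $K'$ of $N_\infty$; set $R := \sup_{g\in K'} d_\infty(e,g)+1$. The plan is to establish pointwise convergence $d_N(e,g) \to d_\infty(e,g)$ and then upgrade to uniform convergence via Dini's theorem.

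For pointwise convergence at a fixed $g$, I would pick a piecewise smooth horizontal path $\xi:[0,1]\to N_\infty$ from $e$ to $g$ of constant $\|\cdot\|$-speed with $L(\xi) \leq d_\infty(e,g)+\varepsilon/4$, so $\|\xi'(u)\| \leq R$. Fix an $\varepsilon/10$-net $V=\{v_1,\ldots,v_M\} \subset m_1$ of the closed $\|\cdot\|$-ball of radius $R$, partition $[0,1]$ into $m$ equal subintervals $I_i$, and form the horizontal path $\eta_m \in \mathcal{H}_{PL(M)}$ starting at $e$ whose tangent on $I_i$ is the element $v_{j(i)} \in V$ closest to the average $\bar\xi_i := m\int_{I_i}\xi'(u)\,du$. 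By convexity of $\|\cdot\|$ we have $L(\eta_m) \leq L(\xi)+\varepsilon/10$. Lemma \ref{compageod} in $N_\infty$ with $L = R$ gives
$$d_e(\eta_m(1),g) \leq C\int_0^1 \|\xi'(u)-\eta_m'(u)\|\,du,$$
and this tends to $\varepsilon/10$ as $m \to \infty$, since $\xi'$ is piecewise continuous so that its piecewise averages converge to it in $L^1$. The H\"older-type bound $d_\infty(a,b) \leq c_0 d_e(a,b)^{1/r}$ near $e$ (see the remark following Lemma \ref{compageod}) then gives $d_\infty(\eta_m(1),g) < \varepsilon/(10C')$ for $m$ large, where $C'$ is the ball-box constant introduced below.

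The endpoint is corrected by appending a short horizontal path from $\eta_m(1)$ to $g$: fixing a basis $\{v'_1,\ldots,v'_p\}$ of $m_1$, the ball-box theorem for the stratified group $N_\infty$ allows us to write any $g'$ with $d_\infty(e,g') \leq \delta$ as a product of a bounded number of factors $\exp_N(s_i v'_{k_i})$ (bound depending only on $\dim N_\infty$ and $r$) with $|s_i| \leq C'\delta$. This provides a correction in $\mathcal{H}_{PL(2p)}$ (allowing $\pm v'_i$) of length $<\varepsilon/10$, and concatenating yields a path in $\mathcal{H}_{PL(M+2p)}$ from $e$ to $g$ of length $\leq d_\infty(e,g)+\varepsilon$; hence $d_{M+2p}(e,g) \leq d_\infty(e,g)+\varepsilon$. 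Uniformity on $K'$ now follows from Dini's theorem: once $N$ is large enough that $V$ contains a basis of $m_1$, the same ball-box argument shows $d_N(e,\cdot)$ is continuous, and the non-increasing sequence $(d_N(e,\cdot))$ converges pointwise on the compact $K'$ to the continuous function $d_\infty(e,\cdot)$. In my view, the main technical point is precisely this endpoint correction with a uniformly bounded number of tangent directions, which rests crucially on the stratified structure of $N_\infty$ (absent which one would not have the ball-box construction with a finite pool of generating directions).
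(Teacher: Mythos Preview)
Your argument is essentially correct, and in fact your ``pointwise'' step already delivers the uniform bound: the net $V$, and hence $M=|V|$, depends only on $\varepsilon$ and $R$, while the number $m$ of subdivision intervals (which does depend on $g$ through $\xi$) has no effect on the cardinality of the tangent-value set. Thus $d_{M+2p}(e,g)\le d_\infty(e,g)+\varepsilon$ holds for \emph{every} $g\in K'$ simultaneously, and the appeal to Dini is superfluous. This is fortunate, since the continuity of $d_N(e,\cdot)$ for fixed $N$ that Dini would require is not quite established by your ball-box remark: concatenating a near-optimal $\mathcal H_{PL(N)}$-path to $g$ with a short ball-box correction to a nearby $g'$ lands in $\mathcal H_{PL(N+2p)}$, not $\mathcal H_{PL(N)}$.

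The paper's proof proceeds differently for the uniform step. After recording pointwise convergence (via the same $L^1$-density plus Lemma~\ref{compageod} idea), it invokes Chow's theorem to fix $K_0$ with $A:=\sup_{d_\infty(e,x)=1}d_{K_0}(e,x)<\infty$, and then argues by contradiction and compactness: if $d_n(e,x_n)\ge 1+\varepsilon_0$ along a sequence with $d_\infty(e,x_n)=1$, pass to $x_n\to x$, write $y_n=x^{*-1}*x_n$, and use the homogeneity $d_{K_0}(e,y_n)=t_n\,d_{K_0}(e,\delta_{1/t_n}(y_n))\le At_n$ (with $t_n=d_\infty(e,y_n)\to 0$) to force $d_n(e,x_n)\le d_{n-K_0}(e,x)+At_n\to 1$. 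Your approach is more constructive and spells out the approximation and endpoint correction explicitly; the paper's is shorter and exploits the dilation structure of $N_\infty$ in place of your net-plus-ball-box machinery. Both rest on the same essential point you correctly identified: in the stratified group a uniformly bounded number of horizontal directions suffices to reach any nearby point.
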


\proof%
Note that it follows from Chow's theorem (e.g. see \cite{Monty} or \cite{Gro2}) that there exists $K_{0}\in \Bbb{N}$
such that $A:=\sup_{d_{\infty }(e,x)=1}d_{K_{0}}(e,x)<\infty .$ Moreover,
since piecewise linear paths are dense in $L^{1},$ it follows for example from Lemma \ref
{compageod} that for each fixed $x$, $d_{n}(e,x)\rightarrow d_{\infty }(e,x)$%
. We need to show that $d_{N}(e,x)\rightarrow d_{\infty }(e,x)$ uniformly in
$x$ satisfying $d_{\infty }(e,x)=1.$ By contradiction, suppose there is a
sequence $(x_{n})_{n}$ such that $d_{\infty }(e,x_{n})=1$ and $%
d_{n}(e,x_{n})\geq 1+\varepsilon _{0}$ for some $\varepsilon _{0}>0.$ We may
assume that $(x_{n})_{n}$ converges to say $x.$ Let $y_{n}=x^{-1}*x_{n}$ and
$t_{n}=d_{\infty }(e,y_{n}).$ Then $d_{K_{0}}(e,y_{n})=t_{n}d_{K_{0}}(e,%
\delta _{\frac{1}{t_{n}}}(y_{n}))\leq At_{n}$. Thus $d_{n}(e,x_{n})\leq
d_{n}(e,x)+d_{n}(e,y_{n})\leq d_{n}(e,x)+At_{n}$ as soon as $n\geq K_{0}.$
As $n$ tends to $\infty $, we get a contradiction.
\endproof%

This lemma prompts the following notation. For $\varepsilon >0$, we let $%
N_{\varepsilon }\in \Bbb{N}$ be the first integer such that $1\leq
d_{N_{\varepsilon }}(e,x)\leq 1+\varepsilon $ for all $x$ with $d_{\infty
}(e,x)=1.$ Then we have:

\begin{lemma}
\label{fewdiscon}For every $x\in N_{\infty }$ with $d_{\infty }(e,x)=1,$ and
all $\varepsilon >0$ there exists a path $\xi :[0,1]\rightarrow N_{\infty }$
in $\mathcal{H}_{PL(N_{\varepsilon })}$ with unit speed (i.e. $\left\| \xi
^{\prime }\right\| =1$) such that $\xi (0)=e$ and $d_{\infty }(x,\xi
(1))\leq C_{2}\varepsilon $ and $\xi ^{\prime }$ has at most one
discontinuity on any subinterval of $[0,1]$ of length $\varepsilon
^{r}/N_{\varepsilon }$.
\end{lemma}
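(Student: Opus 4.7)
Set $\delta:=\varepsilon^r/N_\varepsilon$. By Lemma \ref{linearapprox} (applied at threshold $\varepsilon$, using $d_{N_\varepsilon}(e,x)\leq 1+\varepsilon$) there exists a horizontal piecewise-linear path $\eta\in\mathcal{H}_{PL(N_\varepsilon)}$ from $e$ to $x$ of total length at most $1+\varepsilon$, whose velocity takes at most $N_\varepsilon$ distinct unit-vector values $v_1,\dots,v_k\in m_1$. Reparametrizing $\eta$ by arc length and restricting to $[0,1]$ yields a unit-speed horizontal path $\xi_0:[0,1]\to N_\infty$ with $\xi_0(0)=e$ and $d_\infty(\xi_0(1),x)\leq\varepsilon$, since the discarded tail has arc length at most $\varepsilon$.

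From $\xi_0$ I will build $\xi$ by regrouping its segments direction by direction and then dropping the directions whose total time is too short. Let $T_i$ denote the total time $\xi_0$ spends in direction $v_i$, so $\sum_i T_i=1$. Set $J:=\{i:T_i\geq\delta\}$ and $S:=\sum_{i\notin J}T_i$; note $S\leq N_\varepsilon\delta=\varepsilon^r\leq\varepsilon$ (for $\varepsilon\leq 1$). Define $\xi:[0,1]\to N_\infty$ to be the unit-speed path running in each direction $v_i$, $i\in J$, for time $T_i':=T_i+S/|J|\geq\delta$, in some fixed order on $J$. Then $\xi\in\mathcal{H}_{PL(N_\varepsilon)}$, $\xi(0)=e$, and each segment of $\xi$ has length at least $\delta$, so any subinterval of $[0,1]$ of length $\delta$ contains at most one discontinuity of $\xi'$, as required.

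It remains to verify $d_\infty(\xi(1),x)\leq C_2\varepsilon$. Introduce the intermediate path $\xi_1$ that runs each direction $v_i$, $i\in J$, for the \emph{original} time $T_i$, in the same fixed order on $J$ as $\xi$. By the triangle inequality,
\begin{equation*}
d_\infty(\xi(1),x)\leq d_\infty(\xi(1),\xi_1(1))+d_\infty(\xi_1(1),\xi_0(1))+\varepsilon,
\end{equation*}
the $\varepsilon$ coming from the truncation estimate of the first paragraph. The first term is at most $S\leq\varepsilon$ by left-invariance of $d_\infty$: $\xi$ is obtained from $\xi_1$ by inserting, after each long-direction segment, an extra segment of length $S/|J|$, and each insertion contributes at most $S/|J|$ to the $d_\infty$-displacement, summing to $S$.

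The main obstacle is the middle term $d_\infty(\xi_1(1),\xi_0(1))$: the paths $\xi_0$ and $\xi_1$ traverse each long direction for the same total time but in generally very different orders, and $\xi_0$ additionally contains short-direction segments totaling $S$. A naive global rearrangement, analyzed with the Baker--Campbell--Hausdorff formula $(\ref{CHF})$, produces commutator contributions of order $1$ rather than of order $\varepsilon$. The plan to overcome this is to perform the regrouping \emph{locally}, window by window, on a partition of $[0,1]$ into intervals of length $\delta$: commutator contributions from rearranging segments within a single window of arc-length $\delta$ scale appropriately under the one-parameter dilations $\{\delta_t\}_{t>0}$, producing a controlled per-window drift which, after summing over the $1/\delta$ windows and invoking the homogeneity $d_\infty(\delta_t y,\delta_t y')=t\,d_\infty(y,y')$, totals $O(\varepsilon)$. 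The calibration $\delta=\varepsilon^r/N_\varepsilon$ is chosen precisely so that this bookkeeping closes; carrying out this localized BCH estimate, ultimately relying on Lemma \ref{largeproducts} comparing products in $N$ with those in $N_\infty$, is the main technical hurdle.
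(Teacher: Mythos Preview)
Your construction has a genuine gap at exactly the point you flag. You build $\xi$ by \emph{globally} reordering the segments of $\xi_0$ so that each direction $v_i$ appears as a single block. As you yourself observe, the displacement $d_\infty(\xi_1(1),\xi_0(1))$ caused by such a global reordering is governed by commutators of the form $[T_iv_i,T_jv_j]$, whose total contribution is $O(1)$, not $O(\varepsilon)$. Your ``plan'' to localize the estimate window by window is not carried out, and more importantly it cannot apply to the path you actually constructed: your $\xi$ has only $|J|$ segments, one per direction, so it is \emph{not} the result of local regrouping within windows of size $\delta$. A genuine window-by-window construction would give a different path (with many more discontinuities), and you would then have to start over. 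The reference to Lemma~\ref{largeproducts} is also off target: that lemma compares the two group laws on $N$ and $N_\infty$, not reorderings of a product inside $N_\infty$.

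The paper's argument avoids reordering entirely and is much simpler. Starting from the same unit-speed $\xi_0$ (piecewise constant on at most $N_\varepsilon$ intervals), one does not touch the order of the segments; one merely \emph{overwrites} the velocity on each short interval (length $\leq \varepsilon^r/N_\varepsilon$) with the velocity of an adjacent long interval. This merges each short interval into a neighbour, giving a path $\xi$ with at most one discontinuity per window of length $\varepsilon^r/N_\varepsilon$. Because only the short intervals are modified, and their total length is at most $N_\varepsilon\cdot(\varepsilon^r/N_\varepsilon)=\varepsilon^r$, one has $\int_0^1\|\xi'(t)-\xi_0'(t)\|\,dt\leq 2\varepsilon^r$. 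Lemma~\ref{compageod} then bounds $d_e(\xi(1),\xi_0(1))$ by $C\varepsilon^r$, and Remark~\ref{compRiemCC} converts this to $d_\infty(\xi(1),\xi_0(1))\leq C_1\varepsilon$. The key point you missed is that a \emph{local} modification of the velocity (no reordering) keeps $\int\|\xi'-\xi_0'\|$ small, and that is exactly the hypothesis of the Gronwall-type Lemma~\ref{compageod}.
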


\proof%
We know that there is a path in $\mathcal{H}_{PL(N_{\varepsilon })}$
connecting $e$ to $x$ with length $\ell \leq 1+\varepsilon .$
Reparametrizing the path so that it has unit speed, we get a path $\xi
_{0}:[0,\ell ]\rightarrow N_{\infty }$ in $\mathcal{H}_{PL(N_{\varepsilon
})} $ with $d_{\infty }(x,\xi _{0}(1))=d_{\infty }(\xi _{0}(\ell ),\xi
_{0}(1))\leq \varepsilon .$ The derivative $\xi _{0}^{\prime }$ is constant
on at most $N_{\varepsilon }$ different intervals say $[u_{i},u_{i+1}).$ Let
us remove all such intervals of length $\leq \varepsilon ^{r}/N_{\varepsilon
}$ by merging them to an adjacent interval and let us change the value of $%
\xi _{0}^{\prime }$ on these intervals to the value on the adjacent interval
(it doesn't matter if we choose the interval on the left or on the right).
We obtain a new path $\xi :[0,1]\rightarrow N_{\infty }$ in $\mathcal{H}%
_{PL(N_{\varepsilon })}$ with unit speed and such that $\xi ^{\prime }$ has
at most one discontinuity on any subinterval of $[0,1]$ of length $%
\varepsilon ^{r}/N_{\varepsilon }.$ Moreover $\int_{0}^{1}\left\| \xi
^{\prime }(t)-\xi _{0}^{\prime }(t)\right\| dt\leq \varepsilon ^{r}.$ By
Lemma \ref{compageod} and Remark \ref{compRiemCC}, we have $d_{\infty }(\xi
(1),\xi _{0}(1))\leq C_{1}\varepsilon ,$ hence
\begin{equation*}
d_{\infty }(\xi (1),x)\leq d_{\infty }(x,\xi _{0}(1))+d_{\infty }(\xi
_{0}(1),\xi (1))\leq (C_{1}+1)\varepsilon
\end{equation*}
\endproof%

\begin{lemma}[Piecewise horizontal approximation of paths]
\label{largeproducts}Let $x*y$ denote the product inside the stratified Lie
group $N_{\infty }$ and $x\cdot y$ the ordinary product in $N$. Let $n\in
\Bbb{N}$ and $t\geq n.$ Then for any compact subset $K$ of $N$, and any $%
x_{1},...,x_{n}$ elements of $K$, we have
\begin{equation*}
d_{e}(\delta _{\frac{1}{t}}(x_{1}\cdot ...\cdot x_{n}),\delta _{\frac{1}{t}%
}(x_{1}*...*x_{n}))\leq c_{1}\frac{1}{t}
\end{equation*}
and
\begin{equation*}
d_{e}(\delta _{\frac{1}{t}}(x_{1}*...*x_{n}),\delta _{\frac{1}{t}}(\pi
_{1}(x_{1})*...*\pi _{1}(x_{n})))\leq c_{2}\frac{1}{t}
\end{equation*}
where $c_{1},c_{2}$ depend on $K$ and $d_{e}$ only.
\end{lemma}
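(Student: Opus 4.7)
I would work entirely in exponential coordinates, using the adapted basis $(e_i)_i$ with $e_i\in m_{d_i}$ introduced after the Campbell–Hausdorff formula, and compare both products \emph{coordinate by coordinate}. The guiding observation is: in coordinates, $(x_1\cdots x_n)_i$ and $(x_1*\cdots*x_n)_i$ are both polynomial expressions in the variables $(x_k)_l$, and by the CH formulas $(\ref{CHF0})$ and $(\ref{CHF})$ they differ only in the monomials of \emph{weighted degree} strictly less than $d_i$ (where the variable $(x_k)_l$ carries weight $d_l$); all weighted-degree-$d_i$ terms coincide, because they are governed by the graded Lie bracket alone. Since $\delta_{\frac1t}$ scales the $i$-th coordinate by $t^{-d_i}$, only the lower-weighted-degree discrepancy needs to be controlled, and the condition $t\geq n$ will be exactly what lets us absorb the polynomial-in-$n$ number of monomials.

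For the first inequality, let $F_i$ (resp.\ $F_i'$) be the polynomial giving the $i$-th coordinate of $x_1\cdots x_n$ in $N$ (resp.\ of $x_1*\cdots*x_n$ in $N_\infty$). Iterating the two-term CH formula gives, by induction on $n$, that $F_i$ is a sum of monomials $(x_{k_1})_{l_1}\cdots(x_{k_p})_{l_p}$ with $\sum_j d_{l_j}\leq d_i$, universal bounded coefficients, and with at most $O(n^p)$ monomials for each fixed $p\leq d_i$; the top weighted-degree part ($p$ such that $\sum d_{l_j}=d_i$) is exactly $F_i'$. Hence $F_i-F_i'$ contains only monomials of weighted degree $\leq d_i-1$, involving at most $d_i-1$ factors, so with $x_k\in K$ compact we get $|F_i-F_i'|(x_1,\ldots,x_n)=O(n^{d_i-1})$. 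After multiplying by $t^{-d_i}$ and using $n\leq t$, the $i$-th coordinate difference is $O(n^{d_i-1}/t^{d_i})\leq O(1/t)$. Both renormalized products stay in a bounded region of $N$ (their homogeneous quasi-norms are $O(1)$ by Lemma \ref{guivarch}), so in that region the left-invariant Riemannian metric $d_e$ is bi-Lipschitz-equivalent to Euclidean coordinate distance, giving the bound $c_1/t$.

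For the second inequality both sides live in the stratified group $N_\infty$. In the $*$-CH expansion of $(x_1*\cdots*x_n)_i$, every monomial has weighted degree \emph{exactly} $d_i$. If $d_i=1$ the expansion reduces to $\sum_k(x_k)_i$, which equals $(\pi_1(x_1)*\cdots*\pi_1(x_n))_i$ since $e_i\in m_1$, so the two sides agree trivially on the first stratum. If $d_i\geq 2$, then any monomial whose variables all have weight $d_{l_j}=1$ depends only on the $m_1$-components $(x_k)_{l_j}=(\pi_1(x_k))_{l_j}$, so such monomials cancel in the difference. Every surviving monomial therefore contains at least one factor with $d_{l_j}\geq 2$, forcing $p\leq d_i-1$ factors. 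The same counting $O(n^p)$ with $p\leq d_i-1$ gives a coordinate difference of size $O(n^{d_i-1}/t^{d_i})\leq O(1/t)$, and the same conversion to $d_e$ yields the bound $c_2/t$.

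The main obstacle is the combinatorial bookkeeping of the iterated Campbell–Hausdorff expansion—specifically, verifying (i) that a monomial of weighted degree $m$ in the $n$-fold expansion has coefficient bounded independently of $n$, and (ii) that the number of such monomials is $O(n^m)$. Both are proved by induction on $n$: the inductive step multiplies the $n$-factor expansion by a single $x_{n+1}$ using $(\ref{CHF0})$, which preserves the weighted-degree bound and adds at most $O(n^{m-1})$ new monomials at weighted degree $m$. Everything else is routine: the identification of top-weighted-degree terms with the graded product is immediate from the definitions of $[\cdot,\cdot]_\infty$ and $(\ref{CHF})$, and the passage from coordinate estimates to the Riemannian distance $d_e$ uses only that both renormalized products remain in a fixed $K$-dependent compact set.
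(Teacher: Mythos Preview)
Your coordinate-by-coordinate argument is correct, but the route differs from the paper's. For the first inequality the paper does not expand the $n$-fold Campbell--Hausdorff polynomial; instead it telescopes. Writing $z_k=x_1\cdots x_{k-1}$ and $y_k=x_{k+1}*\cdots*x_n$, it uses the single two-factor estimate $\|\delta_{1/t}(xy)-\delta_{1/t}(x*y)\|=O(1/t^{2})$ (valid whenever $y$ is bounded and $\delta_{1/t}(x)$ is bounded) together with the Lipschitz property of right $*$-multiplication by bounded elements, and replaces one $\cdot$ by one $*$ at a time: each swap costs $O(1/t^{2})$, and summing $n$ swaps gives $O(n/t^{2})=O(1/t)$. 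For the second inequality the paper again avoids any iterated CH expansion and instead invokes Lemma~\ref{compageod} directly, viewing $\delta_{1/t}(x_1*\cdots*x_n)$ and $\delta_{1/t}(\pi_1(x_1)*\cdots*\pi_1(x_n))$ as the endpoints of two piecewise-linear paths in $N_\infty$ whose tangent vectors differ by $\|\delta_{1/t}(x_k)-\pi_1(x_k)/t\|=O(1/t^{2})$ on each of the $n$ pieces, so that the Gronwall estimate yields $O(1/t)$.

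Both arguments rest on the same structural fact---that the $\cdot$- and $*$-products agree at top weighted degree---but you extract it globally from the iterated expansion while the paper localizes it to a single two-term comparison and then propagates. Your approach is more self-contained (it does not need Lemma~\ref{compageod}) at the price of the monomial bookkeeping you flag; a clean way to justify your claim~(i) that coefficients are bounded independently of $n$ is to note that setting all but $p\le r$ of the $x_k$'s to the identity reduces the coefficient to one in the universal $p$-fold CH polynomial. The paper's approach is shorter once Lemma~\ref{compageod} is available and meshes naturally with the path-based viewpoint used in the rest of the proof of Theorem~\ref{Pansu}.
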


\proof%
Let $\left\| \cdot \right\| $ be a norm on the Lie algebra of $N.$ For $%
k=1,...,n$ let $z_{k}=$ $x_{1}\cdot ...\cdot x_{k-1}$ and $%
y_{k}=x_{k+1}*...*x_{n}.$ Since all $x_{i}$'s belong to $K,$ it follows from
$(\ref{GG0})$ that as soon as $t\geq n$, all $\delta _{\frac{1}{t}}(z_{k})$
and $\delta _{\frac{1}{t}}(y_{k})$ for $k=1,...,n$ remain in a bounded set
depending only on $K.$ Comparing $(\ref{CHF})$ and $(\ref{CHF0})$, we see
that whenever $y=O(1)$ and $\delta _{\frac{1}{t}}(x)=O(1)$, we have
\begin{equation}
\left\| \delta _{\frac{1}{t}}(xy)-\delta _{\frac{1}{t}}(x*y)\right\| =O(%
\frac{1}{t^{2}})  \label{plug}
\end{equation}
On the other hand, from $(\ref{CHF})$ it is easy to verify that right $*$%
-multiplication by a bounded element is Lipschitz for $\left\| \cdot
\right\| $ and the Lipschitz constant is locally bounded. It follows that
there is a constant $C_{1}>0$ (depending only on $K$ and $\left\| \cdot
\right\| $) such that for all $k\leq n$
\begin{equation*}
\left\| \delta _{\frac{1}{t}}((z_{k}\cdot x_{k})*y_{k})-\delta _{\frac{1}{t}%
}(z_{k}*x_{k}*y_{k})\right\| \leq C_{1}\left\| \delta _{\frac{1}{t}%
}(z_{k}\cdot x_{k})-\delta _{\frac{1}{t}}(z_{k}*x_{k})\right\|
\end{equation*}
Applying $n$ times the relation $(\ref{plug})$ with $x=x_{1}\cdot ...\cdot
x_{k-1}$ and $y=x_{k},$ we finally obtain
\begin{equation*}
\left\| \delta _{\frac{1}{t}}(x_{1}\cdot ...\cdot x_{n})-\delta _{\frac{1}{t}%
}(x_{1}*...*x_{n})\right\| =O(\frac{n}{t^{2}})=O(\frac{1}{t})
\end{equation*}
where $O()$ depends only on $K$. On the other hand, using $(\ref{CHF0}),$ it
is another simple verification to check that if $x,y$ lie in a bounded set,
then $\frac{1}{c_{2}}d_{e}(x,y)\leq \left\| x-y\right\| $ $\leq
c_{2}d_{e}(x,y)$ for some constant $c_{2}>0.$ The first inequality follows.

For the second inequality, we apply Lemma \ref{compageod} to the paths $\xi
_{1}$ and $\xi _{2}$ starting at $e$ and with derivative equal on $[\frac{k}{%
n},\frac{k+1}{n})$ to $n\delta _{\frac{1}{t}}(x_{k})$ for $\xi _{1}$ and to $%
n\frac{\pi _{1}(x_{k})}{t}$ for $\xi _{2}.$ We get
\begin{equation*}
d_{e}(\delta _{\frac{1}{t}}(x_{1}*...*x_{n}),\delta _{\frac{1}{t}}(\pi
_{1}(x_{1})*...*\pi _{1}(x_{n}))=O(\frac{1}{t}).
\end{equation*}
\edpf%

\begin{remark}
From Remark \ref{compRiemCC} we see that if we replace $d_{e}$ by $d_{\infty
}$ in the above lemma, we get the same result with $\frac{1}{t}$ replaced by
$t^{-\frac{1}{r}}.$
\end{remark}


\begin{lemma}[Approximation in the abelianized group]
\label{sphereapprox}Recall that $\left\| \cdot \right\| _{0}$ is the norm on
$m_{1}$ defined in ($\ref{statement})$. For any $\varepsilon >0,$ there
exists $s_{0}>0$ such that for every $s>s_{0}$ and every $v\in m_{1}$ such
that $\left\| v\right\| _{0}=1,$ there exists $h\in H$ such that
\begin{equation*}
(1-\varepsilon )s\leq \rho (e,h)\leq (1+\varepsilon )s
\end{equation*}
and
\begin{equation*}
\left\| \frac{\pi _{1}(h)}{\rho (e,h)}-v\right\| _{0}\leq \varepsilon
\end{equation*}
\end{lemma}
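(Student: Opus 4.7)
The proof rests on three ingredients: (i) the projection $\pi_1 \colon N \to m_1$ is a group homomorphism, since by the Campbell--Hausdorff formula every higher-order term in the product $xy$ lies in $[\mathfrak{n},\mathfrak{n}] = \ker\pi_1$ (this is precisely Remark \ref{CC-ball}); (ii) for every $h\in H\setminus\{e\}$ the defining formula $(\ref{statement})$ gives $\pi_1(h)/\rho(e,h)\in E$, and hence $\|\pi_1(h)\|_0 \leq \rho(e,h)$; and (iii) Carath\'eodory's theorem applied inside the $d$-dimensional compact convex body $E$, where $d = \dim m_1$. The strategy is to construct $h$ explicitly as a product of integer powers of finitely many generators of $H$ fixed once and for all, and then use (i)--(iii) to estimate both $\pi_1(h)$ and $\rho(e,h)$ to the desired precision.

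To make the construction uniform in $v$, I first cover the compact unit sphere $\{v\in m_1 : \|v\|_0 = 1\}$ by finitely many balls of radius $\delta$ centered at points $w_1,\ldots,w_J$, where $\delta$ will be taken to be a sufficiently small multiple of $\varepsilon$. Set $A := \{\pi_1(h)/\rho(e,h) : h \in H\setminus\{e\}\}$. Since $A$ is bounded and we are in finite dimension, $E = \mathrm{ConvHull}(\overline{A})$, so Carath\'eodory furnishes, for each $j$, a decomposition $w_j = \sum_{i=1}^{d+1} \lambda_{ij} v_{ij}$ with $v_{ij} \in \overline{A}$, $\lambda_{ij}\geq 0$, $\sum_i \lambda_{ij} = 1$. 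Approximating each $v_{ij}$ to precision $\delta/(d+1)$ by some $\tilde v_{ij} = \pi_1(h_{ij})/\rho(e,h_{ij}) \in A$ with $h_{ij} \in H$, and setting $\tilde w_j := \sum_i \lambda_{ij} \tilde v_{ij}$, we obtain $\|w_j - \tilde w_j\|_0 \leq \delta$ together with a finite collection $\{h_{ij}\}$ satisfying $M := \max_{i,j} \rho(e,h_{ij}) < \infty$.

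Given $s \geq s_0$ (with $s_0$ to be chosen depending on $M$ and $\delta$) and $v$ with $\|v\|_0 = 1$, pick $j$ with $\|v - w_j\|_0 < \delta$, set $N_{ij} := \lfloor \lambda_{ij} s / \rho(e,h_{ij}) \rfloor$ (so that $N_{ij} \rho(e,h_{ij}) = \lambda_{ij} s + O(M)$), and define
\[ h := h_{1j}^{N_{1j}} h_{2j}^{N_{2j}} \cdots h_{(d+1)j}^{N_{(d+1)j}} \in H. \]
By (i), $\pi_1(h) = \sum_i N_{ij} \pi_1(h_{ij}) = \sum_i N_{ij}\rho(e,h_{ij})\tilde v_{ij} = s\tilde w_j + O(M)$. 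The triangle inequality with left $H$-invariance gives $\rho(e,h) \leq \sum_i N_{ij} \rho(e,h_{ij}) \leq s$, while (ii) yields the matching lower bound $\rho(e,h) \geq \|\pi_1(h)\|_0 \geq s\|\tilde w_j\|_0 - O(M) \geq s(1-\delta) - O(M)$. Choosing $s_0$ large enough that $M/s_0$ is small compared to $\delta$, and $\delta$ small compared to $\varepsilon$, these two bounds combine into $\rho(e,h) \in [(1-\varepsilon)s,(1+\varepsilon)s]$; dividing the relation $\pi_1(h) = s\tilde w_j + O(M)$ by $\rho(e,h)$ and using $\|\tilde w_j - v\|_0 \leq 2\delta$ then gives $\|\pi_1(h)/\rho(e,h) - v\|_0 \leq \varepsilon$, as required.

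The main technical point is the uniformity of $s_0$ in $v$; the initial finite covering of the unit sphere is exactly what secures this, since otherwise the sizes $\rho(e,h_{ij})$ of the Carath\'eodory generators could grow without bound as $v$ varies, preventing $M$ (and hence $s_0$) from being chosen independently of $v$. All remaining steps are routine estimates built on the homomorphism property of $\pi_1$ and the trivial bound $\|\pi_1(h)\|_0 \leq \rho(e,h)$.
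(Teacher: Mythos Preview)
Your proof is correct and follows essentially the same strategy as the paper's: fix a finite collection of elements of $H$ whose normalized projections nearly span the unit ball $E$, express $v$ (approximately) as a convex combination of these projections, and realize $h$ as a product of integer powers. The only difference is in how the finite decomposition is obtained: the paper first approximates $E$ from inside by a polytope (the unit ball of a norm $\|\cdot\|_\varepsilon$ built from finitely many $g_i$'s), then uses the facet structure so that any $v$ on the $\|\cdot\|_\varepsilon$-sphere is a nonnegative combination of exactly $d$ vertices with coefficient sum equal to $\|v\|_\varepsilon$; you instead discretize the $\|\cdot\|_0$-sphere by a $\delta$-net and invoke Carath\'eodory at each net point, getting $d+1$ generators per point. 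Both devices serve the same purpose---securing a uniform bound $M$ on the $\rho$-sizes of the generators---and the remaining estimates (upper bound from subadditivity, lower bound from $\|\pi_1(h)\|_0\le\rho(e,h)$) are identical in spirit.
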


\proof%
Let $\varepsilon >0$ be fixed. Considering a finite $\varepsilon $-net in $E$%
, we see that there exists a finite symmetric subset $\{g_{1},...,g_{p}\}$
of $H\backslash \{e\}$ such that, if we consider the closed convex hull of $%
\frak{F}=\{f_{i}=\pi _{1}(g_{i})/\rho (e,g_{i})|i=1,...,p\}$ and $\left\|
\cdot \right\| _{\varepsilon }$ the associated norm on $m_{1},$ then $%
\left\| \cdot \right\| _{0}\leq \left\| \cdot \right\| _{\varepsilon }\leq
(1+2\varepsilon )\left\| \cdot \right\| _{0}$. Up to shrinking $\frak{F}$ if
necessary, we may assume that $\left\| f_{i}\right\| _{\varepsilon }=1$ for
all $i$'s. We may also assume that the $f_{i}$'s generate $m_{1}$ as a
vector space. The sphere $\{x,\left\| x\right\| _{\varepsilon }=1\}$ is a
symmetric polyhedron in $m_{1}$ and to each of its facets corresponds $%
d=\dim (m_{1})$ vertices lying in $\frak{F}$ and forming a vector basis of $%
m_{1}$. Let $f_{1},...,f_{d}$, say, be such vertices for a given facet. If $%
x\in m_{1}$ is of the form $x=\sum_{i=1}^{d}\lambda _{i}f_{i}$ with $\lambda
_{i}\geq 0$ for $1\leq i\leq d$ then we see that $\left\| x\right\|
_{\varepsilon }=\sum_{i=1}^{d}\lambda _{i}$, because the convex hull of $%
f_{1},...,f_{d}$ is precisely that facet, hence lies on the sphere $%
\{x,\left\| x\right\| _{\varepsilon }=1\}$.

Now let $v\in m_{1},$ $\left\| v\right\| _{0}=1,$ and let $s>0.$ The half
line $tv,$ $t>0$, hits the sphere $\{x,\left\| x\right\| _{\varepsilon }=1\}$
in one point. This point belongs to some facet and there are $d$ linearly
independent elements of $\frak{F}$, say $f_{1},...,f_{d}$, the vertices of
that facet, such that this point belongs to the convex hull of $%
f_{1},...,f_{d}$. The point $sv$ then lies in the convex cone generated by $%
\pi _{1}(g_{1}),...,\pi _{1}(g_{d})$. Moreover, there is a constant $%
C_{\varepsilon }>0$ ($C_{\varepsilon }\leq \frac{d}{2}\max_{1\leq i\leq
p}\rho (e,g_{i})$) such that
\begin{equation*}
\left\| sv-\sum_{i=1}^{d}n_{i}\pi _{1}(g_{i})\right\| _{\varepsilon }\leq
C_{\varepsilon }
\end{equation*}
for some non-negative integers $n_{1},...,n_{d}$ depending on $s>0.$ Hence
\begin{eqnarray*}
\frac{1}{s}\sum_{i=1}^{d}n_{i}\rho (e,g_{i}) &=&\frac{1}{s}\left\|
\sum_{i=1}^{d}n_{i}\pi _{1}(g_{i})\right\| _{\varepsilon }\leq \frac{1}{s}%
(\left\| sv\right\| _{\varepsilon }+C_{\varepsilon }) \\
&\leq &1+2\varepsilon +\frac{C_{\varepsilon }}{s}\leq 1+3\varepsilon
\end{eqnarray*}
where the last inequality holds as soon as $s>C_{\varepsilon }/\varepsilon .$

Now let $h=g_{1}^{n_{1}}\cdot ...\cdot g_{d}^{n_{d}}\in H$. We have $\pi
_{1}(h)=\sum_{i=1}^{d}n_{i}\pi _{1}(g_{i})$%
\begin{equation*}
\rho (e,h)\geq \left\| \pi _{1}(h)\right\| _{0}\geq s-C_{\varepsilon }\geq
s(1-\varepsilon )
\end{equation*}
Moreover
\begin{equation*}
\rho (e,h)\leq \sum_{i=1}^{d}n_{i}\rho (e,g_{i})\leq s(1+3\varepsilon )
\end{equation*}
Changing $\varepsilon $ into say $\frac{\varepsilon }{5}$ and for say $%
\varepsilon <\frac{1}{2},$ we get the desired result with $s_{0}(\varepsilon
)=\frac{d}{\varepsilon }\max_{1\leq i\leq p}\rho (e,g_{i})$.
\edpf%

\subsection{Proof of Theorem \ref{Pansu}}

We need to show that as $x\rightarrow \infty $ in $N$
\begin{equation*}
1\leq \underline{\lim }\frac{\rho (e,x)}{d_{\infty }(e,x)}\leq \overline{%
\lim }\frac{\rho (e,x)}{d_{\infty }(e,x)}\leq 1
\end{equation*}

First note that it is enough to prove the bounds for $x\in H.$ This follows
from (\ref{properness}) $(1)$.

Let us begin with the lower bound. We fix $\varepsilon >0$ and $%
s=s(\varepsilon )$ as in the definition of an asymptotically geodesic metric
(see $(\ref{asymgeo})$). We know by \ref{properness} $(3)$ and $(4)$ that as
soon as $\rho (e,x)\geq s$ we may find $x_{1},...,x_{n}$ in $H$ with $s\leq
\rho (e,x_{i})\leq 2s$ such that $x=\prod x_{i}$ and $\sum \rho
(e,x_{i})\leq (1+\varepsilon )\rho (e,x).$ Let $t=d_{\infty }(e,x),$ then $%
n\leq \frac{1+\varepsilon }{s}\rho (e,x)$, hence $n\leq \frac{C}{%
s(\varepsilon )}t$ where $C$ is a constant depending only on $\rho $ (see $(%
\ref{uppercomparison2})$). We may then apply Lemma \ref{largeproducts} (and
the remark following it) to get, as $t\geq n$ as soon as $s(\varepsilon
)\geq C,$%
\begin{equation*}
d_{\infty }(\delta _{\frac{1}{t}}(x),\delta _{\frac{1}{t}}(\pi
_{1}(x_{1})*...*\pi _{1}(x_{n})))\leq c_{1}^{\prime }t^{-\frac{1}{r}}
\end{equation*}
But for each $i$ we have $\left\| \pi _{1}(x_{i})\right\| _{0}\leq \rho
(e,x_{i})$ by definition of the norm, hence
\begin{equation*}
t=d_{\infty }(e,x)\leq \sum \left\| \pi _{1}(x_{i})\right\| _{0}+d_{\infty
}(x,\pi _{1}(x_{1})*...*\pi _{1}(x_{n}))\leq (1+\varepsilon )\rho
(e,x)+c_{1}^{\prime }t^{1-\frac{1}{r}}
\end{equation*}
Since $\varepsilon $ was arbitrary, letting $t\rightarrow \infty $ we obtain
\begin{equation*}
\underline{\lim }\frac{\rho (e,x)}{d_{\infty }(e,x)}\geq 1
\end{equation*}

We now turn to the upper bound. Let $t=d_{\infty }(e,x)$ and $\varepsilon
>0. $ According to Lemma \ref{fewdiscon}, there is a horizontal piecewise
linear path $\{\xi (u)\}_{u\in [0,1]}$ with unit speed such that $d_{\infty
}(\delta _{\frac{1}{t}}(x),\xi (1))\leq C_{2}\varepsilon $ and no interval
of length $\geq \frac{\varepsilon ^{r}}{N_{\varepsilon }}$contains more than
one change of direction. Let $s_{0}(\varepsilon )$ be given by Lemma \ref
{sphereapprox} and assume $t>s_{0}(\varepsilon ^{r})N_{\varepsilon
}/\varepsilon ^{r}.$ We split $[0,1]$ into $n$ subintervals of length $%
u_{1},...,u_{n}$ such that $\xi ^{\prime }$ is constant equal to $y_{i}$ on
the $i$-th subinterval and $s_{0}(\varepsilon ^{r})\leq tu_{i}\leq
2s_{0}(\varepsilon ^{r})$. We have $\xi (1)=u_{1}y_{1}*...*u_{n}y_{n}.$
Lemma \ref{sphereapprox} yields points $x_{i}\in H$ such that
\begin{equation*}
\left\| y_{i}-\frac{\pi _{1}(x_{i})}{tu_{i}}\right\| \leq \varepsilon ^{r}
\end{equation*}
and $\rho (e,x_{i})\in [(1-\varepsilon ^{r})tu_{i},(1+\varepsilon
^{r})tu_{i}]$ (note that $tu_{i}>s_{0}(\varepsilon ^{r})$). Let $\overline{%
\xi }$ be the piecewise linear path $[0,1]\rightarrow N_{\infty }$ with the
same discontinuities as $\xi $ and where the value $y_{i}$ is replaced by $%
\frac{\pi _{1}(x_{i})}{tu_{i}}.$ Then according to Lemma \ref{compageod}, $%
d_{\infty }(\xi (1),\overline{\xi }(1))\leq C\varepsilon .$ Since $\rho
(e,x_{i})\leq 4s_{0}(\varepsilon ^{r})$ for each $i$, we may apply Lemma \ref
{largeproducts} (and the remark following it) and see that if $y=x_{1}\cdot
...\cdot x_{n},$
\begin{equation*}
d_{\infty }(\overline{\xi }(1),\delta _{\frac{1}{t}}(y))\leq c_{1}^{\prime
}(\varepsilon )t^{-\frac{1}{r}}
\end{equation*}
Hence $d_{\infty }(\delta _{\frac{1}{t}}(x),\delta _{\frac{1}{t}}(y))\leq
(C_{2}+C)\varepsilon +c_{1}^{\prime }(\varepsilon )t^{-\frac{1}{r}}$ and $%
\rho (e,y)\leq \sum \rho (e,x_{i})\leq (1+\varepsilon ^{r})t$ while $\rho
(x,y)\leq C^{\prime }td_{\infty }(e,\delta _{\frac{1}{t}}(x^{-1}y))+C^{%
\prime }\leq t(Cd_{\infty }(\delta _{\frac{1}{t}}(x),\delta _{\frac{1}{t}%
}(y))+o_{\varepsilon }(1)).$ Hence
\begin{equation*}
\rho (e,x)\leq t+o_{\varepsilon }(t)
\end{equation*}
\endproof%

\begin{remark}
In the last argument we used the fact that $\left\| \delta _{\frac{1}{t}%
}(xu)-\delta _{\frac{1}{t}}(x*u)\right\| =O(\frac{1}{t^{\frac{1}{r}}})$ if $%
\delta _{\frac{1}{t}}(x)$ and $\delta _{\frac{1}{t}}(u)$ are bounded, in
order to get for $y=xu,$%
\begin{eqnarray*}
d_{\infty }(e,\delta _{\frac{1}{t}}(u)) &\leq &d_{\infty }(\delta _{\frac{1}{%
t}}(x),\delta _{\frac{1}{t}}(xu))+d_{\infty }(\delta _{\frac{1}{t}%
}(xu),\delta _{\frac{1}{t}}(x*u)) \\
&\leq &d_{\infty }(\delta _{\frac{1}{t}}(x),\delta _{\frac{1}{t}}(y))+o(1).
\end{eqnarray*}
\end{remark}

\section{Locally compact $G$ and proofs of the main results}\label{locg}

In this section, we prove Theorem \ref{general} and complete the proof of Theorem \ref{main-theorem} and its corollaries. We begin with the latter.

\begin{proof}[Proof of Theorem \ref{main-theorem}]
It is the combination of Proposition \ref{pop}, which reduces the problem to nilpotent Lie groups, and Theorem \ref{Pansu}, which treats the nilpotent case. It only remains to justify the last assertion that $d_\infty$ is invariant under $T(H)$.

Since $K=\overline{T(H)}$ stabilizes $m_1$ (see Lemma \ref{Kinvdec} for the definition of $m_1$) and acts by automorphisms of the nilpotent (nilshadow) structure (Lemma \ref{aut}), given any $k \in K$, the metric $d_\infty(k(x),k(y))$ is nothing else but the left invariant subFinsler metric on the nilshadow associated to the norm $\|k(v)\|$ for $v \in m_1$ (if $\|\cdot\|$ denotes the norm associated to $d_\infty$).

 However, $d_\infty$ is asymptotically invariant under $K$, because of Proposition $\ref{pop}$. Namely $d_\infty(e,k(x))/d_\infty(e,x)$ tends to $1$ as $x$ tends to infinity. Finally $d_\infty(e,v)=\|v\|$ and $d_\infty(e,k(v))=\|k(v)\|$ for all $v \in m_1$. Two asymptotic norms on a vector space are always equal. It follows that the norms $\|\cdot\|$ and $\|k(\cdot)\|$ on $m_1$ coincide. Hence $d_\infty(e,k(x))=d_\infty(e,k(x))$ for all $x \in S$ as claimed.
\end{proof}

\begin{proof}[Proof of Corollary \ref{finsler}] First some initial remark (see also Remark \ref{CC-ball}). If $d$ is a left-invariant subFinlser metric on a simply connected nilpotent Lie group $N$ induced by a norm $\|\cdot\|$ on a supplementary subspace $m_1$ of the commutator subalgebra, then it follows from the very definition of subFinsler metrics (see Paragraph \ref{CCmetrics}) that $\pi_1$ is $1$-Lipschitz between the Lie group and the abelianization of it endowed with the norm $\|\cdot\|$, namely $\|\pi_1(x)\|\leq d(e,x)$, with equality if $x \in m_1$. From this and considering the definition of the limit norm in $(\ref{statement})$, we conclude that $\|\cdot\|$ coincides with the limit norm of $d$. In particular Theorem \ref{Pansu} implies that $d$ is asymptotic to the $*$-left invariant subFinsler metric $d_\infty$ induced by the same norm $\|\cdot\|$ on the graded Lie group $(N_\infty,*)$.

We can now prove Corollary \ref{finsler}. By the above remark, the limit metric $d_\infty$ on the  graded nilshadow of $S$ is asymptotic to the subFinsler metric $d$ induced by the same norm $\|\cdot \|$ on the same ($K$-invariant) supplementary subspace $m_1$ of the commutator subalgebra of the nilshadow, and which is left invariant for the nilshadow structure on $S$. However, it follows from Theorem \ref{main-theorem} that $d_\infty$ and the norm $\|\cdot\|$ are $K$-invariant. This implies that $d$ is also left-invariant with respect to the original Lie group structure of $S$. Indeed, by $(\ref{nilproduct})$, we can write $d(gx,gy)=d(g*(T(g)x),g*(T(g)y))=d(T(g)x,T(g)y)=d(x,y)$, where $*$ denotes this time the nilshadow product structure. We are done.
\end{proof}

\begin{proof}[Proof of Corollary \ref{asyinv}]
This follows immediately from Theorem \ref{main-theorem}, when $*$ denotes the graded nilshadow product. If $*$ denotes the nilshadow group structure, then it follows from Theorem \ref{Pansu} and the remark we just made in the proof of Corollary \ref{finsler} (see also Remark \ref{CC-ball}).
\end{proof}

\subsection{Proof of Theorem \ref{general}.}\label{pfgen}

Let $G$ be a locally compact group of polynomial growth. We will show that $G$ has a compact normal subgroup $K$ such that $G/K$ contains a closed co-compact subgroup, which can be realized as a closed co-compact subgroup of a connected and simply connected solvable Lie group of type $(R)$ (i.e. of polynomial growth). The proof will follow in several steps.\\

(a) First we show that \emph{up to moding out by a normal compact subgroup, we may assume that $G$ is a Lie group whose connected component of the identity has no compact normal subgroup.} 
Indeed, it follows from Losert's refinement of Gromov's theorem (\cite{Los} Theorem 2)
that there exists a normal compact subgroup $K$ of $G$ such that $G/K$ is a
Lie group. So we may now assume that $G$ is a Lie group (not necessarily
connected) of polynomial growth. The connected component $G_{0}$ of $G$ is a
connected Lie group of polynomial growth. Recall the following classical fact:

\begin{lemma} Every connected Lie group has a unique maximal compact normal subgroup. By uniqueness it must be a characteristic Lie subgroup.
\end{lemma}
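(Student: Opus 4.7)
I would prove the three assertions of the lemma—existence, uniqueness, and the characteristic property—in that logical order; only existence has real content.

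\emph{Uniqueness (and the characteristic property) from existence.} If $K_1,K_2$ are compact normal subgroups of $G$, then $K_1K_2=K_2K_1$ is a subgroup, compact as the continuous image of $K_1\times K_2$ under multiplication, and normal as a product of normal subgroups. Hence the family $\mathcal{K}$ of compact normal subgroups of $G$ is closed under finite products, so any two maximal elements $K,K'$ satisfy $K=KK'=K'$. Once uniqueness is granted, the characteristic property is automatic: any continuous automorphism $\phi$ of $G$ carries $\mathcal{K}$ to itself, so $\phi(K_{\max})\in\mathcal{K}$, which forces $\phi(K_{\max})\subseteq K_{\max}$ by maximality, and the same applied to $\phi^{-1}$ gives equality.

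\emph{Existence, first stage: the maximal compact connected normal subgroup.} Any compact connected normal subgroup $K\triangleleft G$ corresponds to an ideal $\mathrm{Lie}(K)\subseteq\g:=\mathrm{Lie}(G)$ of compact type (it carries an $\mathrm{ad}$-invariant inner product coming from Haar measure on $K$). Given two such subgroups $K_1,K_2$, the identity component of the compact normal subgroup $K_1K_2$ has Lie algebra $\mathrm{Lie}(K_1)+\mathrm{Lie}(K_2)$, which is again of compact type. Since $\g$ is finite-dimensional, this family of ideals admits a maximum $\mathfrak{k}_\ast$; the corresponding analytic subgroup $K^0_\ast$ is compact (being the identity component of $K_1K_2$ for any pair $K_1,K_2$ whose Lie algebras sum to $\mathfrak{k}_\ast$), connected, and normal.

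\emph{Existence, second stage: passage to the quotient.} Set $\overline{G}:=G/K^0_\ast$, which is a connected Lie group with no nontrivial compact connected normal subgroup. Any compact normal subgroup of $\overline G$ is therefore totally disconnected, hence finite; and since the conjugation action of the connected group $\overline G$ on such a finite set must be trivial, every such subgroup lies in $Z(\overline G)$. The union of all finite normal subgroups of $\overline G$ is thus a subgroup of the torsion of $Z(\overline G)$. Its preimage in $G$, call it $K_{\max}$, is the desired maximal compact normal subgroup, provided that the union is itself compact.

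\emph{Main obstacle.} The crux of the argument is showing that the union of finite normal subgroups of $\overline G$ is in fact a compact (necessarily finite) subgroup. The key structural input is that, having removed $K^0_\ast$, the center $Z(\overline G)$ contains no nontrivial torus (since such a torus would be a compact connected normal subgroup of $\overline G$), so $Z(\overline G)^0\cong\mathbb{R}^a$ is torsion-free, while the component group $Z(\overline G)/Z(\overline G)^0$ is finitely generated (as a consequence of the finite generation of $\pi_1$ of a connected Lie group). These two facts together force the torsion of $Z(\overline G)$ to be finite, which is exactly what is needed to close the argument.
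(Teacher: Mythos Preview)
Your proof is correct and follows essentially the same strategy as the paper's: both use that products of compact normal subgroups are compact normal, quotient by a compact normal subgroup of maximal dimension to reduce to the case where every compact normal subgroup is finite and hence central, and then finish by bounding the torsion in the center. The only real difference is cosmetic: the paper simply invokes the structure theorem for the center as an abelian Lie group (writing it as $\R^n\times T^m\times\Z^k\times F$ and reading off its maximal compact subgroup), whereas you reconstruct the needed piece of that theorem by hand, arguing that $Z(\overline G)^0\cong\R^a$ (no torus survives the quotient) and that $\pi_0(Z(\overline G))$ is finitely generated via $\pi_1$ of the adjoint group, so the torsion is finite.
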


\begin{proof} Clearly if $K_1$ and $K_2$ are compact normal subgroups, then $K_1K_2$ is again a compact normal subgroup. Considering $G/K$, where $K$ is a compact normal subgroup of maximal dimension, we may assume that $G$ has no compact normal subgroup of positive dimension. But every finite normal subgroup of a connected group is central. Hence the closed group generated by all finite normal subgroups is contained in the center of $G$. The center is an abelian Lie subgroup, i.e. isomorphic to a product of a vector space $\R^n$, a torus $\R^m/\Z^m$, a free abelian group $\Z^k$ and a finite abelian group. In such a group, there clearly is a unique maximal compact subgroup (namely the product of the finite group and the torus). It is also normal, and maximal in $G$.
\end{proof}

The maximal compact normal subgroup of $G_0$ is a characteristic Lie subgroup of of $G_0$. It is therefore normal in $G$ and we may mod out by it. We therefore have shown that every locally compact (compactly generated) group with polynomial growth admits a quotient by a compact normal subgroup, which is a Lie group $G$ whose connected component of the identity $G_0$ has polynomial growth and contains no compact normal subgroup. We will now show that a certain co-compact subgroup of $G$ has the embedding property of Theorem \ref{general}. \\

(b) Second we show that, \emph{up to passing to a co-compact subgroup, we may assume that the connected component $G_0$ is solvable.} For this purpose, let $Q$ be the solvable radical of $G_0$, namely the maximal connected normal Lie subgroup of $G_0$. Note that it is a characteristic subgroup of $G_0$ and therefore normal in $G$. Moreover $G_0/Q$ is a semisimple Lie group. Since $G_0$ has polynomial growth, it follows that $G_0/Q$ must be compact. Consider the action of $G$ by conjugation on $G_0/Q$, namely the map $\phi: G \to Aut(G_0/Q)$. Since $G_0/Q$ is compact semisimple, its group of automorphisms is also a compact Lie group. In particular, the kernel $\ker \phi$ is a co-compact subgroup of $G$.

The connected component of the identity of $Aut(G_0/Q)$ is itself semisimple and hence has finite center. However the image of the connected component $(\ker \phi)_0$ of $\ker \phi$ in $G_0/Q$ modulo $Q$ is central. Therefore it must be trivial. We have shown that $(\ker \phi)_0$  is contained in $Q$ and hence is solvable. Moreover $(\ker \phi)_0$  has no compact normal subgroup, because otherwise its maximal normal compact subgroup, being characteristic in $(\ker \phi)_0$, would be normal in $G$ (note that $(\ker \phi)_0$ is normal in $G$).

Changing $G$ into the co-compact subgroup $\ker \phi$, we can therefore assume that $G_0$ is solvable, of polynomial growth, and has no non trivial compact normal subgroup. The group $G/G_0$ is discrete, finitely generated, and has polynomial growth. By Gromov's theorem, it must be virtually nilpotent, in particular virtually polycyclic.\\

(c) We finally prove the following proposition.

\begin{proposition}\label{empol} Let $G$ be a Lie group such that its connected component of the identity $G_0$ is solvable, admits no compact normal subgroup, and with $G/G_0$ virtually polycyclic. Then $G$ has a closed co-compact subgroup, which can be embedded as a closed co-compact subgroup of a connected and simply connected solvable Lie group.
\end{proposition}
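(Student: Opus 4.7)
The plan is to reduce first to the case where $\Gamma := G/G_0$ is torsion-free nilpotent, then extract from $G_0$ a characteristic simply connected co-compact solvable subgroup $H_0$, and finally build the ambient simply connected solvable Lie group $S$ by combining the Wang realization of $\Gamma$ with the conjugation action on $H_0$.

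First I would reduce $\Gamma$: since $G/G_0$ is discrete, finitely generated (as $G$ has polynomial growth), virtually polycyclic and of polynomial growth, Gromov's theorem makes it virtually nilpotent. Pass to a finite-index torsion-free normal subgroup $\Gamma\leq G/G_0$ and to its preimage $\widetilde G$ in $G$; this is an open hence closed subgroup of finite index, therefore closed and co-compact, and replacing $G$ by $\widetilde G$ we may assume $\Gamma = G/G_0$ is torsion-free finitely generated nilpotent.

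Second, I would extract from $G_0$ a simply connected co-compact subgroup $H_0$ which is characteristic (hence normal in $G$). Let $N_0$ be the nilradical of $G_0$. Any compact subgroup of a connected nilpotent Lie group is central in it, so a compact subgroup of $N_0$ would be normal in $G_0$, contradicting the no-compact-normal-subgroup hypothesis; thus $N_0$ is simply connected. The quotient $G_0/N_0$ is a connected abelian Lie group of type $(R)$, hence of the form $\R^a\times\T^b$. Let $H_0$ be the preimage of the vector factor $\R^a$: it is closed, characteristic in $G_0$ (so $G$-normal), simply connected solvable of type $(R)$ (as an extension of $\R^a$ by the simply connected $N_0$), and co-compact in $G_0$ with quotient the torus $\T^b$.

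For the construction of $S$: H.\,C. Wang's embedding theorem applied to the torsion-free polycyclic $\Gamma$ provides a connected simply connected solvable Lie group $S_\Gamma$ containing $\Gamma$ as a uniform lattice. Conjugation of a set-theoretic lift of $\Gamma$ in $G$ on $H_0$ gives a map $\rho:\Gamma\to\mathrm{Aut}(H_0)$; since $G$ has polynomial growth and $H_0$ is of type $(R)$, each $\rho(\gamma)$ has semisimple differential with purely imaginary eigenvalues, so the Zariski closure of $\rho(\Gamma)$ in the algebraic group $\mathrm{Aut}(H_0)$ is structured enough (nilpotent-by-compact) for the Mostow--Wang extension property to apply, yielding a continuous extension $\widetilde\rho:S_\Gamma\to\mathrm{Aut}(H_0)$. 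Set $S:=H_0\rtimes_{\widetilde\rho}S_\Gamma$: this is connected, simply connected (both factors are), solvable, and of type $(R)$. A suitable co-compact subgroup of $G$ then embeds in $S$ as a closed co-compact subgroup, after dealing with the possible non-splitting of the original extension $1\to H_0\to G\to G/H_0\to 1$; the compact-by-discrete quotient $G/H_0$ must be treated by passing to a further finite-index subgroup (trivializing the torus factor $\T^b$ on that subgroup) and, if needed, by extending a $2$-cocycle from $\Gamma$ to $S_\Gamma$ using the same Wang-rigidity mechanism.

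The main obstacle will be the extension of $\rho$ from $\Gamma$ to $S_\Gamma$: one must verify that the polynomial growth and type-$(R)$ hypotheses force $\rho(\Gamma)$ to have the algebraic structure needed for the Wang--Malcev extension theorem to apply, and that the semidirect product $S$ remains simply connected, solvable, and of type $(R)$. A secondary technical point is the handling of the non-trivial extension class of $1\to H_0\to G\to G/H_0\to 1$ by a second application of the same extension mechanism, but once the homomorphism $\widetilde\rho$ is in hand, closedness and co-compactness of the image of $G$ in $S$ follow routinely from the semidirect-product structure and the fact that $\Gamma$ is a uniform lattice in $S_\Gamma$.
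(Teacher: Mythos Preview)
Your strategy differs substantially from the paper's, and as you yourself flag at the end, the key steps are not actually carried out. The paper does \emph{not} try to build the target solvable group as a semidirect product $H_0\rtimes S_\Gamma$. Instead, it applies Wang's theorem directly to a co-compact subgroup of $G$, after verifying that such a subgroup is an $\mathcal{S}$-group in Wang's sense (a torsion-free nilpotent Lie group with finitely generated component group, extended by a finitely generated abelian group). The argument runs: define $G_N$ to be the maximal normal nilpotent subgroup of the whole group $G$; show $G_N$ is torsion-free (using the no-compact-normal-subgroup hypothesis on $G_0$); prove via a Mal'cev-type argument that $G/G_N$ is virtually abelian; then pass to a co-compact subgroup where $G/G_N$ becomes free abelian. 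This avoids entirely the extension-of-representations and cocycle problems you are facing.

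Your route has genuine gaps beyond the ones you name. First, the subgroup $H_0$ you construct need not be characteristic in $G_0$: the vector factor $\R^a$ is not a characteristic subgroup of $\R^a\times\T^b$ (consider automorphisms of the form $(x,t)\mapsto(x,t+\phi(x))$), so there is no reason $H_0$ should be $G$-normal. Second, the conjugation ``map'' $\rho:\Gamma\to\mathrm{Aut}(H_0)$ via a set-theoretic lift is not a homomorphism, so speaking of its Zariski closure or extending it to $S_\Gamma$ is premature; you would at best get a homomorphism to $\mathrm{Out}(H_0)$, and lifting that back to $\mathrm{Aut}$ and extending to $S_\Gamma$ is exactly the hard part. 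Third, you invoke polynomial growth and type $(R)$ for $G$ and $H_0$, but the proposition as stated assumes neither --- the paper's proof is purely structural. Finally, ``trivializing the torus factor $\T^b$ on a finite-index subgroup'' is not possible: $\T^b$ sits in the identity component and does not disappear on any finite-index subgroup; what one actually needs is a \emph{co-compact} (not finite-index) subgroup avoiding it, and producing one is essentially equivalent to the whole problem.
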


The proof of this proposition is mainly an application of a theorem of H.C. Wang, which is a vast generalization of Malcev's embedding theorem for torsion free finitely generated nilpotent groups. Wang's theorem \cite{Wan} states that any $\mathcal{S}$-group can be embedded as a closed co-compact subgroup of a simply connected real linear solvable Lie group with only finitely many connected components. Wang defines a $\mathcal{S}$-group to be any real Lie group $G$, which admits a normal subgroup $A$ such that $G/A$ is finitely generated abelian and $A$ is a torsion-free nilpotent Lie group whose connected components group is finitely generated. In particular any $\mathcal{S}$-group has a finite index (hence co-compact) subgroup which embeds as a co-compact subgroup in a connected and simply connected solvable Lie group. In order to prove Proposition \ref{empol}, it therefore suffices to establish that $G$ has a co-compact $\mathcal{S}$-group.

We first recall the following simple fact:

\begin{lemma}\label{topfinite} Every closed subgroup $F$ of a connected solvable Lie group $S$ is topologically finitely generated.
\end{lemma}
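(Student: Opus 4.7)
By Cartan's closed subgroup theorem, $F$ is automatically a Lie subgroup of $S$, with closed identity component $F_0$; topological finite generation of $F$ thus reduces to two claims: that $F_0$ is topologically finitely generated, and that the countable discrete quotient $F/F_0$ is finitely generated. The first is a general fact about connected Lie groups, provable by choosing a basis $X_1,\ldots,X_d$ of the Lie algebra of $F_0$ and observing that the subgroup generated by $\exp(X_1),\ldots,\exp(X_d)$ together with $\exp(\alpha_1 X_1 + \cdots + \alpha_d X_d)$ for $\alpha_i$'s in rationally generic position is dense.

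The main content is the finite generation of $F/F_0$, which I will prove by induction on $\dim S$. After passing to the universal cover $\widetilde S \to S$ (which has discrete central kernel, so the property descends) it suffices to handle closed subgroups of simply connected solvable Lie groups. In that setting, let $N$ denote the nilradical of $S$: a closed, connected, simply connected, normal nilpotent subgroup with $S/N \cong \R^k$. If $S=N$ is itself nilpotent, Mal'cev's theorem on discrete subgroups of simply connected nilpotent Lie groups applies directly. Otherwise $\dim N < \dim S$; the inductive hypothesis applied to the closed subgroup $F\cap N \subset N$ yields that $F\cap N$ is topologically finitely generated, and analysis of the short exact sequence $1 \to F\cap N \to F \to FN/N \to 1$ reduces the task to showing that $FN/N$ is topologically finitely generated.

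The chief technical obstacle is that the image $FN/N$ may a priori fail to be closed in $S/N$, which would block a direct application of the classification of closed subgroups of $\R^k$ (vector subspace plus lattice, automatically topologically finitely generated). This is handled using the structure theory of simply connected solvable Lie groups: the projection $S \to S/N$ admits a global smooth section, and a Mal'cev-type semidirect decomposition $S \cong N \rtimes H$ with $H$ a closed subgroup projecting isomorphically onto $S/N$ ensures that $FN$ is closed. Consequently $FN/N$ is a closed subgroup of $\R^k$ and therefore topologically finitely generated; lifting finitely many of its topological generators back into $F$ via such a section and combining with a finite topological generating set of $F\cap N$ yields a dense finitely generated subgroup of $F$. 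Its image in $F/F_0$ then provides the required finite generating set, completing the induction.
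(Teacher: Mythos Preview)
Your approach is considerably more elaborate than the paper's. The paper argues by a bare induction on $\dim S$: choose an epimorphism $\pi\colon S\to\R$, apply the inductive hypothesis to $F\cap\ker\pi$ inside the connected solvable group $\ker\pi$, and then deal with the image $\pi(F)$, which is a subgroup of $\R$. There is no passage to the universal cover, no separate appeal to Mal'cev for a nilpotent base case, and no use of the nilradical; one goes down one dimension at a time rather than $\dim(S/N)$ dimensions at once.

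More seriously, the step you correctly flag as the ``chief technical obstacle'' is not resolved by the mechanism you propose. The existence of a smooth section, or even of a genuine semidirect decomposition $S\cong N\rtimes H$, does \emph{not} force $FN$ to be closed. One always has $FN=\pi^{-1}(\pi(F))$, so closedness of $FN$ is equivalent to closedness of $\pi(F)$ in $S/N\cong\R^k$; but the image of a closed subgroup under a Lie group epimorphism need not be closed, and whether or not the short exact sequence splits has no bearing on this. (Think of a rank-two lattice in $\R^2$ whose projection to one coordinate is $\Z+\Z\sqrt{2}$: the extension $0\to\R\to\R^2\to\R\to 0$ splits trivially, yet the image is dense and not closed.) What you actually need is not that $\pi(F)$ be closed but that it be \emph{finitely generated as an abstract subgroup of~$\R^k$}; a splitting does not supply that either. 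The paper's one-dimension-at-a-time reduction keeps the target equal to $\R$, where the subgroup structure is simple enough to finish directly.
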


\begin{proof} We argue by induction on the dimension of $S$. Clearly there is an epimorphism $\pi: S \to \R$. By induction hypothesis $F \cap \ker \pi$ is topologically finitely generated. The image of $F$ is a subgroup of $\R$. However every subgroup of $\R$ contains either one or two elements, whose subgroup they generate has the same closure as the original subgroup. We are done.
\end{proof}

Next we show the existence of a nilradical.

\begin{lemma} Let $G$ be as in Proposition \ref{empol}. Then $G$ has a unique maximal normal nilpotent subgroup $G_N$.
\end{lemma}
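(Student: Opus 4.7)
The plan is to construct $G_N$ as the closure of the union of all closed normal nilpotent subgroups of $G$, and to show via a topological Fitting-type lemma combined with a uniform bound on the nilpotency class that this closure is itself nilpotent. Uniqueness is then automatic: if two maximal normal nilpotent subgroups $N, N'$ existed, the Fitting step would produce a strictly larger one $\overline{NN'}$, contradicting maximality.

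The canonical starting point is the nilradical $N_0$ of the connected solvable Lie group $G_0$, namely the connected Lie subgroup exponentiating the nilradical of $\g$. As the unique maximal connected normal nilpotent Lie subgroup of $G_0$, it is characteristic in $G_0$, and hence normal in $G$. The key quantitative input is a uniform bound $c_0$ on the nilpotency class of every closed normal nilpotent subgroup $N \subset G$. By the Gromov-Losert structure theory cited in the Introduction (in particular the fact, used already in steps (a) and (b), that $G$ has polynomial growth), every closed subgroup of $G$ has polynomial growth with degree at most $d(G)$. On the other hand, by the Bass-Guivarc'h formula $(\ref{GuiBass})$ together with the fact that the lower central series of a nilpotent Lie algebra strictly decreases until it vanishes, a nilpotent locally compact group of class $c$ has growth degree at least $1+2+\cdots+c=c(c+1)/2$. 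Hence any such $N$ satisfies $c\leq c_0$ for some integer $c_0=c_0(d(G))$.

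The topological Fitting step reads as follows: if $N_1,N_2$ are closed normal nilpotent subgroups of $G$, then $N_1 N_2$ is normal (both factors are) and nilpotent by the classical Fitting argument exploiting $[G,N_i]\subset N_i$, and its closure $\overline{N_1 N_2}$ inherits nilpotency and normality, since the $k$-fold iterated commutator identity $[x_1,[x_2,\ldots,[x_k,x_{k+1}]\cdots ]]=e$ and the normality condition $g N g^{-1}\subset N$ both define closed subsets. Combined with the uniform class bound, every finite product of closed normal nilpotent subgroups of $G$ has closure of class at most $c_0$.

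Finally, let $G_N$ be the closure of the union of all closed normal nilpotent subgroups of $G$. Any finite family of elements of $G_N$ lies, up to approximation, in some finite product of closed normal nilpotent subgroups, hence in a closed normal nilpotent subgroup of class at most $c_0$, so it satisfies the $(c_0+1)$-fold commutator identity. By continuity this identity extends to all of $G_N$, which is therefore closed, normal, nilpotent of class at most $c_0$, and maximal by construction. The main obstacle I anticipate is the clean verification of the topological Fitting step, in particular that the closure of a nilpotent group is nilpotent of the same class; but both this and the preservation of normality reduce to the fact that the defining identities are closed conditions, so the polynomial growth class bound carries everything through.
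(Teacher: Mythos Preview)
Your Fitting-plus-closure scaffolding is sound, but the load-bearing step --- the uniform class bound via the Bass--Guivarc'h formula --- does not hold. The inequality $c(c+1)/2 \leq d(N)$ fails for nilpotent groups with torsion: a finite $p$-group of class $c$ has growth degree $0$, and more to the point, $N = \Bbb{Z} \times UT_3(\Bbb{Z}/p)$ is a discrete group satisfying all the hypotheses of Proposition~\ref{empol} (take $G=N$, $G_0=\{e\}$), is nilpotent of class $2$, yet has growth degree $1$. The Bass--Guivarc'h formula counts only the torsion-free ranks of the successive quotients $\gamma_i N/\gamma_{i+1}N$, so a long lower central series whose tail is finite contributes nothing to the degree. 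Since the hypotheses of Proposition~\ref{empol} allow $G/G_0$ to be merely \emph{virtually} polycyclic, such torsion is not ruled out at this stage (and note that polynomial growth of $G$ is not even among the hypotheses of Proposition~\ref{empol}, though it holds in the intended application).

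The paper avoids this issue entirely. Rather than bounding the class of all normal nilpotent subgroups in advance, it defines $G_N$ as the closure of the subgroup generated by all of them and then shows directly that $G_N$ is \emph{topologically finitely generated}: $G_N/(G_N\cap G_0)$ is finitely generated because $G/G_0$ is virtually polycyclic, and $G_N\cap G_0$ is topologically finitely generated by Lemma~\ref{topfinite}. Once $G_N$ is topologically generated by finitely many elements $g_1,\dots,g_k$, each $g_i$ lies in some normal nilpotent subgroup $N_i$, and a single application of Fitting's lemma to $N_1,\dots,N_k$ gives a normal nilpotent subgroup containing a dense subgroup of $G_N$; nilpotency then passes to the closure. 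This finesses the need for any uniform class bound.
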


\begin{proof} The subgroup generated by any two normal nilpotent subgroups of any given group is itself nilpotent (Fitting's lemma, see e.g. \cite{robinson}[5.2.8]). Let $G_N$ be the closure of the subgroup generated by all nilpotent subgroups of $G$. We need to show that $G_N$ is nilpotent. For this it is clearly enough to prove that it is topologically finitely generated (because any finitely generated subgroup of $G_N$ is nilpotent by the remark we just made). Since $G/G_0$ is virtually polycyclic, every subgroup of it is finitely generated (\cite{Rag}[4.2]). Hence it is enough to prove that $G_N \cap G_0$ is topologically finitely generated. This follows from Lemma \ref{topfinite}.
\end{proof}

Incidently, we observe that the connected component of the identity $(G_N)_0$ coincides with the nilradical $N$ of $G_0$ (it is the maximal normal nilpotent connected subgroup of $G_0$).

We now claim the following:

\begin{lemma}\label{malcev} The quotient group $G/G_N$ is virtually abelian.
\end{lemma}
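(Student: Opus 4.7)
The plan is to exhibit $G/G_N$ as an extension of a virtually nilpotent discrete group by a connected abelian Lie group, and then to use the maximality of the Fitting subgroup $G_N$ together with the polynomial growth hypothesis to rule out any non-abelian nilpotent structure in this extension.

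First I would identify the two layers of $G/G_N$. The identity component $(G_N)_0$ coincides with the nilradical $N$ of $\mathrm{Lie}(G_0)$, and since $G_0$ is solvable, $G_0/N$ is abelian. Hence the identity component of $G/G_N$, namely $A := G_0 G_N/G_N \cong G_0/(G_0 \cap G_N)$, is a connected abelian Lie group. The component group $G/(G_0 G_N)$ is a quotient of $G/G_0$, which is discrete, finitely generated, of polynomial growth, and hence virtually nilpotent by Gromov's theorem. Since replacing $G$ by a finite-index subgroup is permitted in order to prove virtual abelianness of the quotient, I may and shall assume the component group $\Gamma := G/(G_0 G_N)$ is nilpotent. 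Next I would constrain the conjugation action of $\Gamma$ on $A$: for every $g \in G$, polynomial growth of $G$ forces $\mathrm{Ad}(g)$ on $\mathrm{Lie}(G_0)$ to have only unit-modulus eigenvalues, since otherwise $\langle g\rangle\ltimes G_0$ would grow exponentially. Thus the induced action of $\Gamma$ on $\mathrm{Lie}(A)$ lies in the subset of $GL(\mathrm{Lie}(A))$ consisting of automorphisms with eigenvalues on the unit circle. Applying the Jordan decomposition to the finitely generated nilpotent image of $\Gamma$ realizes it, after passing to a finite-index subgroup, as a product of a relatively compact abelian part and a unipotent part.

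The hard part will be eliminating the unipotent contribution of the $\Gamma$-action on $A$, which is where the maximality of $G_N$ enters crucially. If a non-trivial normal subgroup $\Gamma_u$ of $\Gamma$ acted unipotently on $A$, then because the semidirect product of an abelian group by a unipotent automorphism is nilpotent (the lower central series terminates since $(1-\gamma)^k = 0$ for $\gamma$ unipotent), the preimage of $A \cdot \Gamma_u$ in $G$ would, together with $G_N$, produce a normal nilpotent subgroup of $G$ strictly containing $G_N$, contradicting the maximality of $G_N$. After discarding the unipotent part, the image of $\Gamma$ in $\mathrm{Aut}(A)$ is relatively compact; since $\Gamma$ is finitely generated nilpotent, a further finite-index reduction makes this action trivial. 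A final application of the same maximality argument to the commutator subgroup $[\Gamma,\Gamma]$, which would otherwise combine with $A$ to give a nilpotent normal subgroup of $G/G_N$ whose lift exceeds $G_N$, forces $\Gamma$ itself to be abelian. Once $\Gamma$ is abelian and centralizes $A$, the extension $1 \to A \to G/G_N \to \Gamma \to 1$ becomes a central extension of abelian by abelian, and a further finite-index passage (possibly extracting a splitting using the vector structure of $A$ modulo its maximal compact torus) produces an abelian subgroup of finite index in $G/G_N$, establishing the lemma.
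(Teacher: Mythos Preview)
Your strategy has a genuine gap at the step you call ``the hard part.'' You claim that if a nontrivial normal subgroup $\Gamma_u \trianglelefteq \Gamma$ acts unipotently on $A$, then the preimage in $G$ of $A\cdot\Gamma_u \subset G/G_N$ would be a normal nilpotent subgroup strictly containing $G_N$. But the preimage of a nilpotent normal subgroup of $G/G_N$ is merely an extension of $G_N$ by a nilpotent group, and such extensions need not be nilpotent. In fact your argument proves too much: already $A$ itself is a nontrivial abelian (hence nilpotent) normal subgroup of $G/G_N$, and its preimage is $G_0G_N$; if your maximality principle worked, it would force $G_0 \subset G_N$, i.e.\ $G_0$ nilpotent, which is false for a general solvable $G_0$. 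The same flaw recurs in your step forcing $[\Gamma,\Gamma]=1$, and at the very end: even if you succeed in making $G/G_N$ a central extension of abelian by abelian, that only gives $2$-step nilpotent, not virtually abelian (think Heisenberg), and you cannot invoke maximality of $G_N$ to improve this.

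The paper's argument avoids this trap by working in the other direction: rather than trying to lift nilpotent subgroups of $G/G_N$, it produces a finite-index normal subgroup $G' \leq G$ whose \emph{commutator} $[G',G']$ is nilpotent; then $[G',G']$ is characteristic in $G'$ hence normal in $G$, so $[G',G'] \leq G_N$ by maximality, and $G'/G_N$ is abelian. To find $G'$, the paper builds a finite normal series for $G$ with factors $Q_i$ isomorphic to one of $\{\text{finite},\ \mathbb{Z}^n,\ \mathbb{R}^n,\ \mathbb{R}^n/\mathbb{Z}^n\}$, considers the conjugation representations $G \to \mathrm{Aut}(Q_i) \subset GL_n$, and uses Kolchin's theorem (solvable subgroups of $GL_n$ are virtually triangularizable) to find, for each $i$, a finite-index subgroup whose commutator acts unipotently on $Q_i$. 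Intersecting these and observing that a group acting nilpotently on every factor of a normal series is itself nilpotent finishes the proof. Note also that this argument uses only that $G/G_0$ is virtually polycyclic, not polynomial growth.
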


The proof of this lemma is inspired by the proof of the fact, due to Malcev, that polycyclic groups have a finite index subgroup with nilpotent commutator subgroup (e.g. see \cite{robinson}[ 15.1.6]).

\begin{proof} We will show that $G$ has a finite index normal subgroup whose commutator subgroup is nilpotent. This clearly implies the lemma, for this nilpotent subgroup will be normal, hence contained in $G_N$. 

First we observe that the group $G$ admits a finite normal series $G_m \leq G_{m-1} \leq \ldots \leq  G_1=G$, where each $G_i$ is a closed normal subgroup of $G$ such that $G_i/G_{i+1}$ is either finite, or isomorphic to either $\Z^n$, $\R^n$ or $\R^n/\Z^n$. This see it pick one of the $G_i$'s to be the connected component $G_0$ and then treat $G/G_0$ and $G_0$ separately. The first follows from the definition of a polycyclic group ($G/G_0$ has a normal polycyclic subgroup of finite index). While for $G_0$, observe that its nilradical $N$ is a connected and simply connected nilpotent Lie group and it admits such a series of characteristic subgroups (pick the central descending series), and $G_0/N$ is an abelian connected Lie group, hence isomorphic to the direct product of a torus $\R^n/\Z^n$ and a vector group $\R^n$. The torus part is characteristic in $G_0/N$, hence its preimage in $G_0$ is normal in $G$.

The group $G$ acts by conjugation on each partial quotient $Q_i:=G_i/G_{i+1}$. This yields a map $G \to Aut(Q_i)$. Now note that in order to prove our lemma, it is enough to show that for each $i$, there is a finite index subgroup of $G$ whose commutator subgroup maps to a nilpotent subgroup of $Aut(Q_i)$. Indeed, taking the intersection of those finite index subgroup, we get a finite index normal subgroups whose commutator subgroup acts nilpotently on each $Q_i$, hence is itself nilpotent (high enough commutators will all vanish).

Now $Aut(Q_i)$ is either finite (if $Q_i$ is finite), or isomorphic to $GL_n(\Z)$ (in case $Q_i$ is either $\Z^n$ or $\R^n/\Z^n$) or to $GL_n(\R)$ (when $Q_i \simeq \R^n$). The image of $G$ in $Aut(Q_i)$ is a solvable subgroup. However, every solvable subgroup of $GL_n(\R)$ contains a finite index subgroup, whose commutator subgroup is unipotent (hence nilpotent). This follows from Kolchin's theorem for example, that a connected solvable algebraic subgroup of $GL_n(\C)$ is triangularizable. We are done.
\end{proof}

In the sequel we assume that $G/G_0$ is torsion-free polycyclic. It is legitimate to do so in the proof of Proposition \ref{empol}, because every virtually polycyclic group has a torsion-free polycyclic subgroup of finite index (see e.g. \cite{Rag}[Lemma 4.6]).

We now claim the following:

\begin{lemma}\label{tfree} $G_N$ is torsion-free.
\end{lemma}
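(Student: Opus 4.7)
The plan is to split $G_N$ into its connected component and its component group, show both are torsion-free, and glue. First, I would identify $(G_N)_0$ with the nilradical $N$ of $G_0$. On the one hand, $(G_N)_0$ is a connected normal nilpotent Lie subgroup of $G_0$, so it lies in $N$ by definition of the nilradical. On the other hand, $N$ is a characteristic subgroup of $G_0$, hence normal in $G$, and it is nilpotent, so $N \subseteq G_N$; since $N$ is connected this forces $N \subseteq (G_N)_0$, giving equality.

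Next I would check that $N$ is simply connected, which is where the hypothesis that $G_0$ has no compact normal subgroup enters. Any connected nilpotent Lie group is diffeomorphic to a product of a vector group and a central torus, and this maximal compact subgroup is unique, hence characteristic in $N$. Being characteristic in $N$ and $N$ being normal in $G_0$, it is normal in $G_0$, so by hypothesis it is trivial. Thus $N$ is a simply connected nilpotent Lie group, and in particular torsion-free (the exponential map is a diffeomorphism from $\mathfrak{n}$ onto $N$).

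Now I would check that the component group $G_N/(G_N)_0 = G_N/N$ embeds into $G/G_0$. For this I need $G_N \cap G_0 = N$: the inclusion $N \subseteq G_N \cap G_0$ is clear, and the reverse inclusion follows because $G_N \cap G_0$ is a closed normal nilpotent subgroup of $G_0$, hence contained in the nilradical $N$. Consequently the natural map $G_N/N \hookrightarrow G/G_0$ is injective. By the reduction carried out just before the statement of the lemma, $G/G_0$ is torsion-free polycyclic, so $G_N/N$ is torsion-free as well.

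Finally I would conclude: if $g \in G_N$ has finite order, then its image in $G_N/N$ has finite order, hence is trivial, so $g \in N$; but $N$ is torsion-free, so $g = e$. The main (minor) obstacle is merely bookkeeping: making sure that in the various identifications $(G_N)_0 = N$ and $G_N \cap G_0 = N$ we really are using that the nilradical of $G_0$ is normal in all of $G$, and that the hypothesis on compact normal subgroups of $G_0$ is enough to force $N$ to be simply connected rather than merely having no central torus visible at the Lie algebra level.
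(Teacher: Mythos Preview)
Your overall strategy---reduce to $G_N\cap G_0$ and then use that $G/G_0$ is torsion-free---matches the paper's opening move. The gap is in your claim that $G_N\cap G_0=N$. You argue that $G_N\cap G_0$ is a closed normal nilpotent subgroup of $G_0$ and therefore lies in the nilradical $N$; but the nilradical is by definition the maximal \emph{connected} normal nilpotent subgroup, and a disconnected normal nilpotent subgroup of $G_0$ need not sit inside it. Concretely, take $G=G_0=\widetilde{E}(2)=\Bbb{R}\ltimes\Bbb{R}^2$, the universal cover of the group of motions of the plane (this satisfies all the standing hypotheses: connected, solvable, no nontrivial compact subgroup, and $G/G_0$ trivially torsion-free polycyclic). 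Here $N=\{0\}\times\Bbb{R}^2$, but the center of $G_0$ is $2\pi\Bbb{Z}\times\{0\}$, and one checks that $G_N=2\pi\Bbb{Z}\times\Bbb{R}^2$, which strictly contains $N$. So $G_N\cap G_0\neq N$, the map $G_N/N\to G/G_0$ is not injective, and your extension argument collapses.

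The paper avoids this by working directly with the torsion of $G_N\cap G_0$: the torsion elements of $G_N$ form a characteristic subgroup (any two of them generate a nilpotent group), so its intersection with $G_0$ is normal in $G_0$; taking closures gives a closed nilpotent normal subgroup $T$ of $G_0$ with dense torsion, whose identity component is then forced to be compact and hence trivial, leaving a discrete (hence finitely generated, hence finite) torsion group, which is again killed by the no-compact-normal-subgroup hypothesis. Your argument can be repaired along similar lines, but not by the shortcut $G_N\cap G_0=N$; you would instead need to show that $(G_N\cap G_0)/N$ is torsion-free, e.g.\ by embedding it in $G_0/N$ and arguing that the latter is a vector group---which itself requires a nontrivial use of the hypothesis on compact normal subgroups.
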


\begin{proof} Since $G/G_0$ is torsion-free, it is enough to prove that $G_N \cap G_0$ is torsion-free. However the set of torsion elements in $G_N$ forms a subgroup of $G_N$ (if $x,y$ are torsion, then $xy$ is too because $\langle x,y \rangle$ is nilpotent). Clearly it is a characteristic subgroup of $G_N$. Hence its intersection with $G_0$ is normal in $G_0$. Taking the closure, we obtain a nilpotent closed normal subgroup $T$ of $G_0$ which contains a dense set of torsion elements. Recall that $G_0$ has no normal compact subgroup. From this it quickly follows that $T$ is trivial, because first it must be discrete (the connected component $T_0$ is compact and normal in $G_0$), hence finitely generated (by Lemma \ref{topfinite}), hence made of torsion elements. But a finitely generated torsion nilpotent group is finite. Again since $G_0$ has no compact normal subgroup, $T$ must be trivial, and $G_N$ is torsion-free.
\end{proof}

Now observe that the group of connected components of $G_N$, namely $G_N/(G_N)_0$ is finitely generated. Indeed, since $G/G_0$ is finitely generated (as any polycyclic group), it is enough to prove that $(G_0 \cap G_N)/(G_N)_0$ is finitely generated, but this follows from the fact that $G_0 \cap G_N$ is topologically finitely generated (Lemma \ref{topfinite}).

Now we are almost done. Note that $G$ is topologically finitely generated (Lemma \ref{topfinite}), therefore so is $G/G_N$. By Lemma \ref{malcev} $G/G_N$ is virtually abelian, hence has a finite index normal subgroup isomorphic to $\Z^n \times \R^m$. It follows that $G/G_N$ has a co-compact subgroup isomorphic to a free abelian group $\Z^{n+m}$. Hence after changing $G$ by a co-compact subgroup, we get that $G$ is an extension of $G_N$ (a torsion-free nilpotent Lie group with finitely generated group of connected components) by a finitely generated free abelian group. Hence it is an $\mathcal{S}$-group in the terminology of Wang \cite{Wan}. We apply Wang's theorem and this ends the proof of Proposition \ref{empol}.\\

(d) We can now conclude the proof of Theorem \ref{general}. By (a) and (b) $G$ has a quotient by a compact group which admits a co-compact subgroup satisfying the assumptions of Proposition \ref{empol}. Hence to conclude the proof it only remains to verify that the simply connected solvable Lie group in which a co-compact subgroup of $G/K$ embeds has polynomial growth (i.e. is of type $(R)$). But this follows from the following lemma (see \cite{Gui}[Thm. I.2]).

 \begin{lemma}\label{cocomp} Let $G$ be a locally compact group. Then $G$ has polynomial growth if and only if some (resp. any) co-compact subgroup of it has polynomial growth.
 \end{lemma}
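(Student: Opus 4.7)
The plan is to prove the equivalence by directly relating the volume growth of $G$ to that of $H$, using a net inside $H$ whose $V$-translates are disjoint in $G$. Co-compactness enters through the construction of an adapted compact generating set for $H$ and through a thickening step that upgrades a covering of $H$ to a covering of $G$.

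First, I will build a good generating set for $H$. Since $H$ is co-compact, write $G=HF$ with $F$ a compact symmetric neighborhood of $e$, and let $\Omega $ be a compact symmetric generating set of $G$ containing $F$. Set $\Omega _{H}:=(H\cap F\Omega F)\cup (F\cap H)$. The usual partial-product argument---writing each prefix $\omega _{1}\cdots \omega _{k}$ of a $G$-word as $\eta _{k}f_{k}$ with $\eta _{k}\in H$ and $f_{k}\in F$, so that $\eta _{k-1}^{-1}\eta _{k}\in H\cap F\Omega F\subseteq \Omega _{H}$---shows that $\Omega _{H}$ is a compact symmetric generating set of $H$ and yields constants $c_{0},c_{1}>0$ with
$$
H\cap \Omega ^{n}\subseteq \Omega _{H}^{n+c_{0}},\qquad \Omega _{H}^{n}\subseteq \Omega ^{c_{1}n}.
$$

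Next I will compare Haar measures through a net argument. Fix a compact symmetric neighborhood $V$ of $e$ in $G$ with $vol_{G}(V)>0$; then $W_{V}:=V\cap H$ is a neighborhood of $e$ in $H$ with $vol_{H}(W_{V})>0$. Choose a maximal family $\{h_{i}\}_{i\in I}\subset H$ for which $\{h_{i}V\}_{i\in I}$ are pairwise disjoint in $G$; by maximality $H\subseteq \bigcup_{i}h_{i}VV^{-1}$, and since $G=HF$ we deduce $G\subseteq \bigcup_{i}h_{i}K$ with $K:=VV^{-1}F$ compact. Because each $h_{i}\in H$, the slices $h_{i}W_{V}$ remain pairwise disjoint in $H$. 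Combining disjointness in $G$ and in $H$ with the left-invariance of $vol_{H}$ will produce, for every measurable $A\subseteq H$ and every compact symmetric neighborhood $U$ of $e$ in $G$, estimates of the form
$$
vol_{H}(A)\leq C_{1}\cdot vol_{G}(AU_{1})\qquad\text{and}\qquad vol_{G}(AU)\leq C_{2}\cdot vol_{H}(AW),
$$
where $U_{1}=VV^{-1}V\subset G$, $W\subset H$ is a compact set, and $C_{1},C_{2}>0$ depend only on $U,V,F$. Applying the first to $A=\Omega _{H}^{n}$ and the second to $A=\Omega _{H}^{n+c_{0}}$ with $U=F$, and feeding in the inclusions of the previous step, yields
$$
vol_{H}(\Omega _{H}^{n})\leq C_{1}\cdot vol_{G}(\Omega ^{c_{1}n+c_{2}}),\qquad vol_{G}(\Omega ^{n})\leq C_{2}\cdot vol_{H}(\Omega _{H}^{n+c_{3}}),
$$
so polynomial growth of either side forces polynomial growth of the other, and the ``some iff any'' assertion is immediate.

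I expect the main difficulty to lie in the second inequality of the Haar-measure comparison, namely bounding $vol_{G}(AU)$ by a $vol_{H}$-integral over a thickening of $A$. Here it is crucial that the net points $h_{i}$ lie in $H$, so that the disjoint slices $h_{i}W_{V}$ are actually in $H$ and have uniform positive $vol_{H}$-measure $vol_{H}(W_{V})$; and co-compactness is what permits the enlargement from $VV^{-1}$, which only covers $H$, to the compact set $K=VV^{-1}F$, which covers all of $G$. This controls the cardinality of the index sets entering each estimate and makes the two-sided comparison work uniformly in $A$.
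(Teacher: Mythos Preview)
Your proposal is correct and complete enough in outline to be filled in without surprises. The partial-product construction of $\Omega_H$ with the inclusions $H\cap\Omega^n\subseteq\Omega_H^{n+c_0}$ and $\Omega_H^n\subseteq\Omega^{c_1 n}$ is the same as in the paper. However, your volume comparison takes a genuinely different route: the paper obtains the two key inequalities by integrating indicator-type functions $\phi(x)=\int_{\Omega_H^n}1_K(h^{-1}x)\,dh$ and $\psi(x)=\int_{\Omega_H^{n+1}}1_{\Omega_H^{-1}K}(h^{-1}x)\,dh$ over a left Haar measure of $G$ and reading off upper and lower pointwise bounds, whereas you go through a maximal $V$-separated net $\{h_i\}\subset H$ and count how many net points can meet a given set. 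Both arguments avoid any appeal to unimodularity or to an invariant measure on $G/H$, which is the point the paper stresses. Your packing approach is perhaps more geometric and makes the role of co-compactness (passing from the $H$-covering $\bigcup h_iVV^{-1}$ to the $G$-covering $\bigcup h_iVV^{-1}F$) very transparent; the paper's integration trick is shorter to write down once one has the right test functions and yields the inequalities in one line each. Either way the constants are explicit and the conclusion is the same.
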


 \begin{proof}First one checks that $G$ is compactly generated if and only if some (resp. any) co-compact subgroup is. This is by the same argument which shows that finite index subgroups of a finitely generated group are finitely generated. In particular, if $\Omega$ is a compact symmetric generating set of $G$ and $H$ is a co-compact subgroup, then there is $n_0 \in \N$ such that $\Omega^{n_0}H=G$. Then $H \cap \Omega^{3n_0}$ generates $H$.
 
If $G$ has polynomial growth and $H$ is any compactly generated closed subgroup, then $H$ has polynomial growth. Indeed (see \cite{Gui}[Thm I.2]), if $\Omega_H$ denotes a compact generating set for $H$, and $K$ a compact neighborhood of the identity in $G$, then $$vol_G(K) vol_H(\Omega_H^n) \leq vol_H(KK^{-1} \cap H) vol_G(\Omega_H^nK).$$
 This inequality follows by integrating over a left Haar measure of $G$ the function $\phi(x):=\int_{\Omega_H^n} 1_{K}(h^{-1}x)dh$, where $dh$ is a left Haar measure on $H$. This integral equals the left handside of the above displayed equation, while it is pointwise bounded by $vol_H(xK^{-1} \cap H)$ inside $HK$ and by zero outside $HK$.
 
 In the other direction, if $H$ has polynomial growth, then $G$ also has, because one can write $\Omega^n \subset \Omega_H^nK$ for some compact generating set $\Omega_H$ of $H$ and some compact neighborhood $K$ of the identity in $G$ (see Proposition \ref{roughcomp}). Then the result follows from the following inequality
  $$vol_H(\Omega_H) vol_G(\Omega_H^n K) \leq vol_H(\Omega_H^{n+1})vol_G(\Omega_H^{-1}K),$$
 which itself is a direct consequence of the fact that the function $$\psi(x):=\int_{\Omega_H^{n+1}} 1_{\Omega_H^{-1}K}(h^{-1}x)dh,$$ where $dh$ is a left Haar measure on $H$, satisfies $\int_G \psi(x) dx = vol_H(\Omega_H^{n+1})vol_G(\Omega_H^{-1}K)$ on the one hand and is bounded below by $vol_H(\Omega_H)$ for every $x \in \Omega_H^nK$ on the other hand.
 \end{proof}
 
 Note that the above proof would be slightly easier if we already knew that both $G$ and $H$ were unimodular, in which case $G/H$ has an invariant measure. But we know this only a posteriori, because the polynomial growth condition implies unimodularity (\cite{Gui}).
 
 Similar considerations show that $G$ has polynomial growth if and only if $G/K$ has polynomial growth, given any normal compact subgroup $K$ (e.g. see \cite{Gui}).

\endproof%

We end this paragraph with a remark and an example, which we mentioned in the Introduction.

\begin{remark}[Discrete subgroups are virtually nilpotent]\label{polattice} Suppose $\Gamma$ is a discrete subgroup of a connected solvable Lie group of type $(R)$ (i.e. of polynomial growth). Then $\Gamma$ is virtually nilpotent. Indeed, a similar argument as in Lemma \ref{topfinite} shows that every subgroup of $\Gamma$ is finitely generated. It follows that $\Gamma$ is polycyclic. However Wolf \cite{wolf} proved that polycyclic groups with polynomial growth are virtually nilpotent.
\end{remark}

\begin{example}[A group with no nilpotent co-compact subgroup]\label{exx} Let $G$ be the connected solvable Lie group $G=\R \ltimes (\R^2 \times \R^2)$, where $\R$ acts as a dense one-parameter subgroup of $SO(2,\R) \times SO(2,\R)$. Then $G$ is of type $(R)$. It has no compact subgroup. And it has no nilpotent co-compact subgroup. Indeed suppose $H$ is a closed co-compact nilpotent subgroup. Then it has a non-trivial center. Hence there is a non identity element whose centralizer is co-compact in $G$. However a simple examination of the possible centralizers of elements of $G$ shows that none of them is co-compact.
\end{example}

\subsection{Proof of Corollary \ref{volasym} and Theorem \ref{firsthm}.}

Let $G$ be an arbitrary locally compact group of polynomial growth and $\rho
$ a periodic pseudodistance on $G.$\\

\textbf{Claim 1:} \textit{Corollary \ref{volasym} holds for a co-compact
subgroup }$H$\textit{\ of }$G$\textit{, if and only if it holds for }$G$. 
By Lemma \ref{cocomp}, the groups $G$ and $H$ are unimodular, and hence $G/H$ bears a $G$-invariant Radon measure  $vol_{G/H}$, which is finite since $H$ is co-compact. Now let $F$ be a bounded Borel fundamental domain for $H$ inside $G.$
And let $\overline{\rho }$ be the periodic pseudodistance on $G$ induced by the
restriction of $\rho $ to $H,$ that is $\overline{\rho }(x,y):=\rho
(h_{x},h_{y})$ where $h_{x}$ is the unique element of $H$ such that $x\in
h_{x}F.$ By \ref{properness} $(1)$ and $(4)$, $\rho $ and $\overline{\rho }$
are at a bounded distance from each other. In particular, $B_{\overline{\rho
}}(r-C)\subset B_{\rho }(r)\subset B_{\overline{\rho }}(r+C)$. Hence if the
limit (\ref{limitvol}) holds for $\overline{\rho },$ it also holds for $\rho
$ with the same limit. However, $B_{\overline{\rho }}(r)=\{x\in G,\rho
(e,h_{x})\leq r\}=B_{\rho _{H}}(r)F$ where $\rho _{H}$ is the restriction of
$\rho $ to $H.$ Hence $vol_{G}(B_{\overline{\rho }}(r))=vol_{H}(B_{\rho
_{H}}(r))\cdot vol_{G/H}(F).$ By \ref{properness} $(4)$, $\rho _{H}$ is a
periodic pseudodistance on $H.$ So the result holds for $(H,\rho _{H})$ if and only
if it holds for $(G,\rho )$. Conversely, if $\rho _{0}$ is a periodic pseudodistance
on $H,$ then $\overline{\rho _{0}}(x,y):=\rho _{0}(h_{x},h_{y})$ is a
periodic pseudodistance on $G,$ hence again $vol_{G}(B_{\overline{\rho }%
_{0}}(r))=vol_{H}(B_{\rho _{0}}(r))\cdot vol_{G}(F)$ and the result will
hold for $(H,\rho _{0})$ if and only if it holds for $(G,\overline{\rho _{0}}%
).$\\

\textbf{Claim 2:} \textit{If Corollary \ref{volasym} holds for }$G/K$\textit{%
, where }$K$\textit{\ is some compact normal subgroup,} \textit{then it
holds for }$G$\textit{\ as well}. Indeed, if $\rho $ is a periodic
pseudodistance on $G,$ then the $K$-average $\rho^{K}$, as defined in (\ref
{average}), is at a bounded distance from $G$ according to Lemma \ref
{bdedistance}. Now $\rho^{K}$ induces a periodic pseudodistance $\overline{
\rho^{K}}$ on $G/K$ and $B_{\rho^{K}}(r)=B_{\overline{\rho^{K}}}(r)K.$
Hence, $vol_{G}(B_{\rho^{K}}(r))=vol_{G/K}(B_{\overline{{\rho}^{K}}}(r))\cdot vol_{K}(K).$ And if
the
limit (\ref{limitvol}) holds for $%
\overline{{\rho}^{K}},$ it also holds for $\rho^{K}$, hence for $\rho $ too.\\

Thus the discussion above combined with Theorem \ref{general} reduces
Corollary \ref{volasym} to the case when $G$ is simply connected and
solvable, which was treated in Section 5 and 6.
\endproof

\subsection{Proof of Proposition \ref{bdeddistance} and Corollary \ref{asycones}}

\begin{proof}[Proof of Proposition \ref{bdeddistance}] We say that two metric spaces $(X,d_X)$ and $(Y,d_Y)$ are at a bounded distance if they are $(1,C)$-quasi-isometric for some finite $C$. This is an equivalence relation. Now if $\rho$ is $H$-periodic with $H$ co-compact, then $(G,\rho)$ is at a bounded distance from $(H,\rho{|H})$. Hence we may assume that $H=G$, i.e. that $\rho$ is left invariant on $G$. 

Now Theorem \ref{general} gives the existence of a normal compact subgroup $K$, a co-compact subgroup $H$ containing $K$ and a simply connected solvable Lie group $S$ such that $H/K$ is isomorphic to a co-compact subgroup of $S$.

Lemma \ref{bdedistance} shows that $(G,\rho)$ is at a bounded distance from $(G,\rho^K)$, where $\rho^K$ is defined as in $(\ref{average})$. Now $\rho^K$ induces a left invariant periodic metric on $G/K$, and $(G/K,\rho^K)$ is clearly at a bounded distance from $(G,\rho^K)$. Now by \ref{restrict}, its restriction to $H/K$ is at a bounded distance and is left invariant. Now we set $\rho_S(s_1,s_2)=\rho^K(h_1,h_2)$, where (given a bounded fundamental domain $F$ for the left action of $H/K$ on $S$) $h_i$ is the unique element of $H/K$ such that $s_i \in h_iF$. Clearly then $(S,\rho_S)$ is at a bounded distance from $(H/K,\rho^K)$. We are done.
\end{proof}

We note that our construction of $S$ here depends on the stabilizer of $\rho$ in $G$. Certainly not every choice of Lie shadow can be used for all periodic metrics (think that $\R^3$ is a Lie shadow of the universal cover of the group of motions of the plane). Perhaps a single one can be chosen for all, but we have not checked that.

\begin{proof}[Proof of Corollary \ref{asycones}]
Proposition \ref{bdeddistance} reduces the proof to a periodic metric  $\rho$ on  a simply connected solvable Lie group $S$. Let $d_\infty$ the subFinsler metric on $S$ (left invariant for the graded nilshadow group structure $S_N$) as given by Theorem \ref{main-theorem}. Let $\{\delta_t\}_t$ is the group of dilations in the graded nilshadow $S_N$ of $S$ as defined in Section \ref{nilshadow}. By definition of the pointed Gromov-Hausdorff topology (see \cite{gromov-pansu-lafontaine}), it is enough to prove the
 
 \noindent { \bf Claim.} The following quantity $$|\frac{1}{n}\rho(s_1,s_2) - d_\infty(\delta_{\frac{1}{n}}(s_1),\delta_{\frac{1}{n}}(s_2))|$$ converges to zero as $n$ tends to $+\infty$ uniformly for all $s_1,s_2$ in a ball of radius $O(n)$ for the metric $\rho$. 

Now this follows in three steps. First $\rho$ is at a bounded distance from its restriction to the (co-compact) stabilizer $H$ of $\rho$ (cf. \ref{restrict0} (1), \ref{restrict} (4)). Then for $h_1,h_2 \in H$, we can write $\rho(h_1,h_2)=\rho(e,h_1^{-1}h_2)$. However Proposition \ref{pop} implies the existence of another periodic distance $\rho_K$ on $S$, which is invariant under left translations by elements of $H$ for both the original Lie structure and the nilshadow Lie structure on $S$, such that $\frac{\rho(e,x)}{\rho_K(e,x)}$ tends to $1$ as $x$ tends to $\infty$. Hence  $\rho_K(e,h_1^{-1}h_2)=\rho_K(h_1,h_2)=\rho_K(e,h_1^{*-1}h_2)$, where $*$ is the nilshadow product on $S$. Hence $|\frac{1}{n}\rho(h_1,h_2) - \frac{1}{n}\rho_K(e,h_1^{*-1}h_2)|$ tends to zero uniformly as $h_1$ and $h_2$ vary in a ball of radius $O(n)$ for $\rho$.

Finally Theorem \ref{Pansu} implies that $|\frac{1}{n}\rho_K(e,h_1^{*-1}h_2)- \frac{1}{n}d_\infty(e,h_1^{*-1}h_2)|$ tends to zero and the claim follows, as one verifies from the Campbell Hausdorff formula by comparing $(\ref{CHF0})$ and $(\ref{CHF})$ as we did in $(\ref{plug})$, that $$|d_\infty( \delta_{\frac{1}{n}}(h_1),\delta_{\frac{1}{n}}(h_2)) - d_\infty(e,\delta_{\frac{1}{n}}(h_1^{*-1}h_2)|$$ converges to zero.

The fact that the graded nilpotent Lie group does not depend (up to isomorphism) on the periodic metric $\rho$ but only on the locally compact group $G$ follows from Pansu's theorem \cite{Pan2} that if two Carnot groups (i.e. a graded simply connected nilpotent Lie group endowed with left-invariant subRiemannian metric induced by a norm on a supplementary subspace to the commutator subalgebra) are bi-Lipschitz, the underlying Lie groups must be isomorphic. This deep fact relies on Pansu's generalized Rademacher theorem, see \cite{Pan2}. Indeed, two different periodic metrics $\rho_1$ and $\rho_2$ on $G$ are quasi-isometric (see Proposition \ref{roughcomp}), and hence their asymptotic cones are bi-Lipschitz (and bi-Lipschitz to any Carnot group metric on the same graded group, by $(\ref{equivalent})$).
\end{proof}

\section{Coarsely geodesic distances and speed of convergence\label{speed}}

Under no further assumption on the periodic pseudodistance $\rho ,$ the speed of
convergence in the volume asymptotics can be made arbitrarily small. This is
easily seen if we consider examples of the following type: define $\rho
(x,y)=|x-y|+|x-y|^{\alpha }$ on $\Bbb{R}$ where $\alpha \in (0,1)$. It is
periodic and $vol(B_{\rho }(t))=t-t^{\alpha }+o(t^{\alpha }).$

However, many natural examples of periodic metrics, such as word metrics or
Riemannian metrics, are in fact coarsely geodesic. A pseudodistance on $G$
is said to be \textit{coarsely geodesic}, if there is a constant $C>0$ such
that any two points can be connected by a $C$-coarse geodesic, that is, for
any $x,y\in G$ there is a map $g:[0,t]\rightarrow G$ with $t=\rho (x,y),$ $%
g(0)=x$ and $g(t)=y$, such that
\begin{equation*}
\left| \rho (g(u),g(v))-|u-v|\right| \leq C
\end{equation*}
for all $u,v\in [0,t]$.

This is a stronger requirement than to say that $\rho $ is asymptotically
geodesic (see \ref{asymgeo}). This notion is invariant under coarse
isometry. In the case when $G$ is abelian, D. Burago \cite
{Bur} proved the beautiful fact that any coarsely geodesic periodic metric on $G$ is at a bounded
distance from its asymptotic norm. In particular $vol_{G}(B_{\rho
}(t))=c\cdot t^{d}+O(t^{d-1})$ in this case. In the remarkable paper \cite{Sto}, M. Stoll
 proved that such an error term in $O(t^{d-1})$ holds for any finitely generated $2$-step
nilpotent group. Whether $O(t^{d-1})$ is the right error term for any finitely generated
nilpotent group remains an open question.

The example below shows on the contrary that in an arbitrary Lie
group of polynomial growth no universal error term can be expected.

\begin{theorem}
\label{SmallSpeed}Let $\varepsilon _{n}>0$ be an arbitrary sequence of
positive numbers tending to $0.$ Then there exists a group $G$ of polynomial
growth of degree $3$ and a compact generating set $\Omega $ in $G$ and $c>0$
such that
\begin{equation}
\frac{vol_{G}(\Omega ^{n})}{c\cdot n^{3}}\leq 1-\varepsilon _{n}
\label{badspeed}
\end{equation}
holds for infinitely many $n$, although $\frac{1}{c\cdot n^{3}}%
vol_{G}(\Omega ^{n})\rightarrow 1$ as $n\rightarrow +\infty .$
\end{theorem}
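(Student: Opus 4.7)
The plan is to construct $(G,\Omega)$ depending on $(\varepsilon_n)$ so that the rotational averaging responsible for the $K$-invariance of the limit shape (Corollary \ref{shape}) is as slow as prescribed along a subsequence. Take $G=\Z\ltimes_\phi\R^2$, where $\phi$ is rotation of $\R^2$ by an irrational angle $\alpha$ to be chosen depending on $(\varepsilon_n)$. By Example \ref{exple} applied to the Lie shadow $\R\ltimes_\phi\R^2$ (the universal cover of the group of motions of the plane), $G$ has polynomial growth of degree $d(G)=3$; the compact averaging group is $K=\overline{T(\Z)}=SO(2)$, so by Corollary \ref{shape} every word metric on $G$ has a $K$-invariant limit shape $\mathcal C$. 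Choose $\Omega=\{(\pm 1,0)\}\cup\{0\}\times D$, where $D\subset\R^2$ is a compact symmetric convex body with nonempty interior that is \emph{not} rotation-invariant (e.g. a square); this is a compact symmetric generating set of $G$.

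An element of $\Omega^n$ produced by a word with $k$ letters in $\{(\pm 1,0)\}$ and $n-k$ letters $(0,v_j)$ with $v_j\in D$ has the form $(m,w)$ with $|m|\leq k$ and $w=\sum_{j=1}^{n-k}\phi_{s_j\alpha}(v_j)$, where $s_j\in\{-k,\ldots,k\}$ is the partial sum of the $\pm 1$'s preceding the $j$-th $\R^2$-letter. Let $S_m$ denote the $\R^2$-slice of $\Omega^n$ at height $m$; its convex hull has support function $h_n(u)\leq\max_k (n-k)R_k(u)$ with $R_k(u)=\max_{|s|\leq k}h_D(u-s\alpha)$. By Weyl equidistribution of $(s\alpha\pmod{2\pi})_s$, $R_k(u)\to r_{\max}=\max_\theta h_D(\theta)$ as $k\to\infty$, so $h_n(u)/n\to r_{\max}$; this recovers the rotational invariance of $\mathcal C$, whose slice at height $t\in[-1,1]$ is the disk of radius $r_{\max}(1-|t|)$, and gives $vol(\Omega^n)/n^3\to c:=vol(\mathcal C)>0$ by Theorem \ref{firsthm}.

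To obtain arbitrarily slow convergence, I choose $\alpha/(\pi/2)$ (where $\pi/2$ is the period of $h_D$ for a square) of Liouville type, with rational approximants $p_k/q_k$ satisfying $|\alpha/(\pi/2)-p_k/q_k|<q_k^{-N_k}$ for a rapidly growing sequence $N_k$. Then for $n_k$ in an intermediate range such as $q_{k-1}\ll n_k\ll q_k$, the angles $\{s\alpha\pmod{\pi/2}:|s|\leq n_k\}$ lie extremely close to $\{sp_k/q_k\pmod 1:|s|\leq n_k\}$, an arithmetic progression occupying only $O(n_k)$ of the $q_k$ equispaced residues in $[0,1)$. Hence on a positive-measure set of directions $u$, the best angular approximation within this cluster to $u-\pi/4\pmod{\pi/2}$ is bounded below by some $\delta_k>0$; since $h_D$ is quadratic near its maxima, $R_{n_k}(u)\leq r_{\max}-c_1\delta_k^2$ on that set, giving an area deficit of order $\delta_k^2 n_k^2$ in the convex hull of $S_m$ and hence in $S_m$ itself, which after integration over $m$ yields $vol(\Omega^{n_k})\leq(1-c_2\delta_k^2)cn_k^3$. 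Tuning the $N_k$ appropriately so that $c_2\delta_k^2\geq\varepsilon_{n_k}$ for every $k$ produces $(\ref{badspeed})$ along $(n_k)$.

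The main obstacle is the last quantitative step: turning the discrepancy-type lower bound on $\{s\alpha\pmod{\pi/2}\}$ into a concrete volume deficit. This requires showing that the radius deficit $r_{\max}-R_n(u)$ of the support function of the slice is bounded below on a macroscopic set of directions $u$, which reduces to careful analysis of the arithmetic-progression distribution $\{sp_k/q_k\}$ on the quotient circle $\R/(\pi/2)\Z$ combined with the quadratic behavior of $h_D$ near its maxima; the standard area-from-support-function formula for convex bodies then converts this radius deficit into an area deficit for each slice, after which integration over heights $m$ gives the desired volume deficit.
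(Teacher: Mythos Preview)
Your construction is essentially the same as the paper's: the group $\Z\ltimes_\alpha\R^2$ with a Liouville-type rotation angle and a non-rotationally-invariant generating set in the $\R^2$ factor, so that the $K$-averaging behind the limit shape is artificially slow along a subsequence. The differences are in the implementation, and the paper's choices make the quantitative step you flag as ``the main obstacle'' almost immediate.

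The paper takes $D$ to be the ellipse $\{\frac{1}{4}x_1^2+x_2^2\le 1\}$ rather than a square. The advantage is that the rotation-invariant limit norm is then simply $\|x\|_0^2=\frac14(x_1^2+x_2^2)$, and one has the clean pointwise inequality $\rho_\Omega\ge\rho_0$ (since $\|\cdot\|_0\le\|\cdot\|$). The Liouville condition becomes the single requirement that $d(k\alpha,\Z+\tfrac12)\ge 2\delta_n$ for all $|k|\le n$, which is easy to arrange by taking $\alpha=\sum 3^{-n_i}$ (only one ``good'' direction, namely the long axis, must be avoided; the denominators are odd, so $1/2$ is automatically missed). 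Rather than bounding slice support functions, the paper argues directly on the metric: for $x$ in the angular sector $S_n=\{|\theta|\le\delta_n\}$ one gets $\rho_\Omega(e,(k,x))\ge |k|+(1+\tfrac{\delta_n^2}{4})\|x\|_0$, because every rotated copy $R_{k\alpha}x_i$ with $|k|\le n$ still has $\|\cdot\|$-length at least $(1+\delta_n^2)\|x_i\|_0$ when $x_i$ is near the short axis. This immediately yields a relative volume deficit of order $\delta_n^{3}$ (sector width $\delta_n$ times metric deficit $\delta_n^2$), so one sets $\delta_n=(4\varepsilon_n)^{1/3}$ and is done.

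Your support-function route via $R_k(u)=\max_{|s|\le k}h_D(u-s\alpha)$ is a correct dual formulation, but note two points. First, your stated deficit $c_2\delta_k^2$ is off by one factor of $\delta_k$: the set of directions on which the radius deficit holds has measure comparable to $\delta_k$, so the area (and volume) deficit scales like $\delta_k^3$, not $\delta_k^2$; this matters when tuning against an arbitrary $(\varepsilon_n)$. Second, your Liouville setup via convergents $p_k/q_k$ needs the additional (easy) observation that the midpoint of the period of $h_D$ is actually avoided by the approximating grid --- this is the analogue of the paper's ``$1/2$ is not a $3$-adic rational'' step, and without it the gap $\delta_k$ could collapse. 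Both are easily fixed, but the paper's ellipse and direct metric estimate sidestep them entirely.
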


The example we give below is a semi-direct product of $\Bbb{Z}$ by $\Bbb{R}%
^{2}$ and the metric is a word metric. However, many similar examples can be
constructed as soon as the map $T:G\rightarrow K$ defined in Paragraph \ref
{asyminv} in not onto. For example, one can consider left invariant
Riemannian metrics on $G=\Bbb{R}\cdot (\Bbb{R}^{2}\times \Bbb{R}^{2})$ where
$\Bbb{R}$ acts by via a dense one-parameter subgroup of the $2$-torus $%
S^{1}\times S^{1}.$ Incidently, this group $G$ is known as the \textit{Mautner group}
and is an example of a \textit{wild} group in representation theory.

\subsection{An example with arbitrarily small speed\label{cones}}

In this paragraph we describe the example of Theorem \ref{SmallSpeed}. Let $%
G_{\alpha }=\Bbb{Z}\cdot \Bbb{R}^{2}$ where the action of $\Bbb{Z}$ is given
by the rotation $R_{\alpha }$ of angle $\pi \alpha ,$ $\alpha \in [0,1).$
The group $G_{\alpha }$ is quasi-isometric to $\Bbb{R}^{3}$ and hence of
polynomial growth of order $3$ and it is co-compact in the analogously
defined Lie group $\widetilde{G_{\alpha }}=\Bbb{R} \ltimes \Bbb{R}^{2}.$ Its
nilshadow is isomorphic to $\Bbb{R}^{3}.$ The point is that if $\alpha $ is
a suitably chosen Liouville number, then the balls in $G_{\alpha }$ will not
be well approximated by the limit norm balls.

Elements of $G_{\alpha }$ are written $(k,x)$ where $k\in \Bbb{Z}$ and $x\in
\Bbb{R}^{2}.$ Let $\left\| x\right\| ^{2}=\frac{1}{4}x_{1}^{2}+x_{2}^{2}$ be
a Euclidean norm on $\Bbb{R}^{2}$, and let $\Omega $ be the symmetric
compact generating set given by $\{(\pm 1,0)\}\cup \{(0,x),\left\| x\right\|
\leq 1\}.$ It induces a word metric $\rho _{\Omega }$ on $G$. It follows
from Theorem \ref{MetComp} and the definition of the asymptotic norm that $%
\rho _{\Omega }(e,(k,x))$ is asymptotic to the norm on $\Bbb{R}^{3}$ given
by $\rho _{0}(e,(k,x)):=|k|+\left\| x\right\| _{0}$ where $\left\| x\right\|
_{0}$ is the rotation invariant norm on $\Bbb{R}^{2}$ defined by $\left\|
x\right\| _{0}^{2}=\frac{1}{4}(x_{1}^{2}+x_{2}^{2}).$ The unit ball of $%
\left\| \cdot \right\| _{0}$ is the convex hull of the union of all images
of the unit ball of $\left\| \cdot \right\| $ under all rotations $%
R_{k\alpha },$ $k\in \Bbb{Z}.$

\begin{figure}\label{fig2}
\begin{center}
\setlength{\belowcaptionskip}{0pt}
\includegraphics[scale=.4]{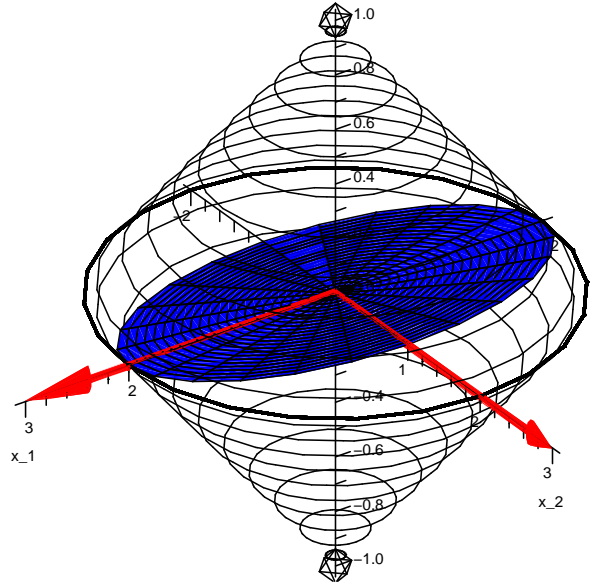}
\caption{The union of the two cones,
with basis the disc of radius $2$, represents the limit shape of the balls $
\Omega ^{n}$ in the group $\Z \ltimes \R^{2}$, where $\Bbb{Z}$ acts by an
irrational rotation, with generating set $\Omega =\{(\pm 1,0,0)\}\cup
\{(0,x_{1},x_{2}),\frac{1}{4}x_{1}^{2}+x_{2}^{2}\leq 1\}$.}
\end{center}
\end{figure}

We are going to choose $\alpha $ as a suitable Liouville number so that (\ref
{badspeed}) holds. Let $\delta _{n}=(4\varepsilon _{n})^{1/3}$ and choose $%
\alpha $ so that the following holds for infinitely many $n$'s:
\begin{equation}
d(k\alpha ,\Bbb{Z}+\frac{1}{2})\geq 2\delta _{n}  \label{Liouville}
\end{equation}
for all $k\in \Bbb{Z},$ $|k|\leq n.$ This is easily seen to be possible if
we choose $\alpha $ of the form $\sum 1/3^{n_{i}}$ for some suitable
lacunary increasing sequence of $(n_{i})_{i}.$

Note that, since $\left\| x\right\| _{0}\geq \left\| x\right\| ,$ we have $%
\rho _{\Omega }\geq \rho _{0}.$ Let $S_{n}$ be the piece of $\Bbb{R}^{2}$
defined by $S_{n}=\{|\theta |\leq \delta _{n}\}$ where $\theta $ is the
angle between the point $x$ and the vertical axis $\Bbb{R}e_{2}.$ We \textit{%
claim} that if $x\in S_{n}$, $\rho _{0}(e,(k,x))\leq n$ and $n$ satisfies (%
\ref{Liouville}), then
\begin{equation*}
\rho _{\Omega }(e,(k,x))\geq |k|+(1+\frac{\delta _{n}^{2}}{4})\left\|
x\right\| _{0}
\end{equation*}
It follows easily from the claim that $vol_{G}(\Omega ^{n})\leq
(1-\varepsilon _{n})\cdot vol_{G}(B_{\rho _{0}}(n)).$ Moreover $%
vol_{G}(B_{\rho _{0}}(n))=c\cdot n^{3}+O(n^{2}),$ where $c=\frac{4\pi }{3}$
if $vol_{G}$ is given by the Lebesgue measure.

\textit{Proof of claim.} Here is the idea to prove the claim. To find a
short path between the identity and a point on the vertical axis, we have to
rotate by a $R_{k\alpha }$ such that $k\alpha $ is close to $\frac{1}{2},$
hence go up from $(0,0)$ to $(k,0)$ first, thus making the vertical
direction shorter. However if (\ref{Liouville}) holds, the vertical
direction cannot be made as short as it could after rotation by any of the $%
R_{k\alpha }$ with $|k|\leq n.$

Note that if $\rho _{0}(e,(k,x))\leq n$ then $|k|\leq n$ and $\rho _{\Omega
}(e,(k,x))\geq |k|+\inf \sum \left\| R_{k_{i}\alpha }x_{i}\right\| $ where
the infimum is taken over all paths $x_{1},...,x_{N}$ such that $x=\sum x_{i}
$ and all rotations $R_{k_{i}\alpha }$ with $|k_{i}|\leq n.$ Note that if $%
\delta _{n}$ is small enough and (\ref{Liouville}) holds then for every $%
x\in S_{n}$ we have $\left\| R_{k\alpha }x\right\| \geq (1+\delta
_{n}^{2})\left\| x\right\| _{0}.$ On the other hand $\left\| x\right\|
_{0}=\sum \left\| x_{i}\right\| _{0}\cos (\theta _{i})$ where $\theta _{i}$
is the angle between $x_{i}$ and the $x$. Hence
\begin{eqnarray*}
\sum \left\| R_{k_{i}\alpha }x_{i}\right\|  &\geq &\sum_{|\theta _{i}|\leq
\delta _{n}}\left\| R_{k_{i}\alpha }x_{i}\right\| +\sum_{|\theta
_{i}|>\delta _{n}}\left\| R_{k_{i}\alpha }x_{i}\right\|  \\
&\geq &(1+\delta _{n}^{2})\sum_{|\theta _{i}|\leq \delta _{n}}\left\|
x_{i}\right\| _{0}\cos (\theta _{i})+\frac{1}{\cos (\delta _{n})}%
\sum_{|\theta _{i}|>\delta _{n}}\left\| x_{i}\right\| _{0}\cos (\theta _{i})
\\
&\geq &(1+\frac{\delta _{n}^{2}}{4})\cdot \left\| x\right\| _{0}
\end{eqnarray*}
\edpf


\subsection{Limit shape for more general word metrics on solvable Lie groups of polynomial growth}\label{solshape}
The determination of the limit shape of the word metric in Paragraph \ref{cones} was possible due to the rather simple nature of the generating set. In general, using the identity (see $(\ref{nilproduct})$)
\begin{equation}\label{iterated}\omega_1\cdot \ldots \cdot \omega_m=\omega_1*(T(\omega_1)\omega_2)*\ldots*(T(\omega_{m-1}\cdot \ldots \cdot \omega_1)\omega_m)
\end{equation}
it is easy to check that the unit ball of the limit norm $\|\cdot\|_\infty$ inducing the limit subFinsler metric $d_\infty$ on the nilshadow associated to a given word metric with generating set $\Omega$ is contained in the $K$-orbit of the convex hull of the projection of $\Omega$ to the abelianized nilshadow, namely the convex hull of $K\cdot \pi_1(\Omega)$.

In the example of Paragraph \ref{cones}, we even had equality between the two. However this is not the case in general. For example, the limit shape is always $K$-invariant, but clearly the limit shape associated to a generating set $\Omega$ coincides with the one associated with a conjugate $g\Omega g^{-1}$ of it, while the convex hull of the respective $K$-orbits may not be the same.

Of course if the generating set $\Omega$ is $K$-invariant to begin with, then $\Omega^n=\Omega^{*n}$ and we are back in the nilpotent case, where we know that the unit ball of the limit norm is just the convex hull of the projection of the generating set to the abelianization. In general however it is a challenging problem to determine the precise asymptotic shape of a word metric on a general solvable Lie group with polynomial growth, and there seems to be no simple description analogous to what we have in the nilpotent case.

Even in the above example $G_\alpha=\Z \ltimes_\alpha \R^2$, or in the universal cover of the group of motions of the plane (in which $G_\alpha$ embeds co-compactly), it is not that simple. In general the shape is determined by solving an optimization problem in which one has to find the path which maximizes the coordinates of the endpoint. In order to illustrate this, we treat without proof the following simple example.

Suppose $\Omega$ is a symmetric compact neighborhood of the identity in $G_\alpha=\Z \ltimes_\alpha \R^2$ of the form $\Omega=(0,\Omega_0) \cup (1,\Omega_1) \cup (1,\Omega_1)^{-1}$, where $\Omega_0,\Omega_1 \subset \R^2$. Then the limit shape of the word metric $\rho_\Omega$ associated to $\Omega$ is the solid body (rotationally symmetric around the vertical axis as in Figure 2) made of two copies (upper and lower) of a truncated cone  with base a disc on $(0,\R^2)$ of radius $\max\{r_0,r_1\}$ and top (resp. bottom) a disc on the plane $(1,\R^2)$ (resp. $(-1,\R^2)$) of radius $r_2$, where the radii are given by
$$r_0=\max\{\|x\|,x \in \Omega_0\}, r_1=\frac{1}{2}diam(\Omega_1),$$
where $diam(\Omega_1)$ is the diameter of $\Omega_1$ and $r_2$ is given by the integral
\begin{equation}\label{top-part}
r_2=\int_0^{2\pi} \max \{\pi_\theta(\Omega_1)\} \frac{d\theta}{2\pi},
\end{equation}
where $\pi_\theta(\Omega_1)$ is the orthogonal projection on the $x$-axis of image of $\Omega_1 \subset \R^2$ by a rotation of angle $\theta$ around the origin. It is indeed convex (note that $r_2\leq r_1$).


For example if $\Omega_1$ is made of only one point, then the limit shape is the same as in the previous paragraph and as in Figure 2, namely two copies of a cone. However if $\Omega_1$ is made of two points $\{a,b\}$, then the upper part of the limit shape will be a truncated cone with an upper disc of radius $r_2=\frac{\|a-b\|}{\pi}$ (which is the result of the computation of the above integral).

Let us briefly explain the formula $(\ref{top-part})$. A path of length $n$ reaching the highest $z$-coordinate in $G_\alpha$ is a word of the form $(1,\omega_1)\cdot \ldots \cdot (1,\omega_n)$, with $\omega_i \in \Omega_1$. By  $(\ref{iterated})$ this word equals
$$(n,\sum_1^n R_\alpha^{i-1}\omega_i).$$
Here $\omega_i$ can take any value in $\Omega_1$. In order to maximize the norm of the second coordinate, or equivalently (by rotation invariance) its $x$-coordinate, one has to choose $\omega_i \in \Omega_1$ at each stage in such a way that the $x$-coordinate of $R_\alpha^{i-1}\omega_i$ is maximized. Formula $(\ref{top-part})$ now follows from the fact that $\{R_\alpha^{i-1}\}_{1\leq i \leq n}$ becomes equidistributed in $SO(2,\R)$ as $n$ tends to infinity.

In order to show that $\max\{r_0,r_1\}$ is the radius of the base disc and more generally that the limit shape is no bigger than this double truncated cone, one needs to argue further by considering all possible paths of the form $(\eps_1,\omega_1)\cdot \ldots \cdot (\eps_n,\omega_n)$ where $\eps_i \in \{0, \pm 1\}$ and $\sum \eps_i$ is prescribed.

\subsection{Bounded distance versus asymptotic metrics}\label{burbur}

In this paragraph we answer a question of D. Burago and G. Margulis (see \cite{Bur2}). Based on the abelian case and the reductive case (Abels-Margulis \cite{abels-margulis}), Burago and Margulis had conjectured that every two asymptotic word metrics should be at a bounded distance. We give below a counterexample to this. We first give an example ($A$) of a
 nilpotent Lie group endowed with two left invariant subFinsler metrics $d_\infty$ and $d'_\infty$ that are asymptotic to each other, i.e. $d_\infty(e,x)/d'_\infty(e,x)\rightarrow 1$ as $x\rightarrow \infty $ but such that $|d_\infty(e,x)-d'_\infty(e,x)|$ is not uniformly bounded. Then we exhibit ($B$) a word metric that is
not at a bounded distance from any homogeneous quasi-norm. Finally these examples also yield ($C$) two word metrics $\rho_1$ and $\rho_2$ on the same finitely generated nilpotent group which are asymptotic but not at a bounded distance.

Note that the group $G_{\alpha }$ with $\rho _{0}$ and $\rho _{\Omega }$ from the last paragraph also
provides an example of asymptotic metrics which are not at a bounded distance (but this group was not
discrete).

$(A)$ Let $N=\Bbb{R}\times H_{3}(\Bbb{R})$ where $H_{3}$ is classical
Heisenberg group and $\Gamma =\Bbb{Z}\times H_{3}(\Bbb{Z})$ a lattice in $N$%
. In the Lie algebra $\frak{n}=\Bbb{R}V\oplus \frak{h}_{3}$ we pick two
different supplementary subspaces of $[\frak{n},\frak{n}]=\Bbb{R}Z,$ i.e. $%
m_{1}=span\{V,X,Y\}$ and $m_{1}^{\prime }=span\{V+Z,X,Y\}$, where $\frak{h}%
_{3}$ is the Lie algebra of $H_{3}(\Bbb{R})$ spanned by $X,Y$ and $Z=[X,Y].$
We consider the $L^{1}$-norm on $m_{1}$ (resp. $m_{1}^{\prime }$)
corresponding to the basis $(V,X,Y)$ (resp. $(V+Z,X,Y)$). Both norms induce
the same norm on $\frak{n}/[\frak{n},\frak{n}].$ They give rise to left
invariant Carnot-Caratheodory Finsler metrics on $N$, say $d_{\infty }$
(resp. $d_{\infty }^{\prime }$). We use the coordinates $(v,x,y,z)=\exp
(vV+xX+yY+zZ)$.

According to Remark $(2)$ after Theorem \ref{Pansu}, $d_{\infty }$ and $%
d_{\infty }^{\prime }$ are asymptotic. Let us show that they are not at a
bounded distance. First observe that, since $V$ is central, $d_{\infty
}(e,(v;(x,y,z)))=|v|+d_{H_{3}}(e,(x,y,z))$ where $d_{H_{3}}$ is the
Carnot-Caratheodory Finsler metric on $H_{3}(\Bbb{R})$ defined by the
standard $L^{1}$-norm on the $span\{X,Y\}.$ Similarly $d_{\infty }^{\prime
}(e,(v;(x,y,z)))=|v|+d_{H_{3}}(e,(x,y,z-v))).$ If $d_{\infty }$ and $d_{\infty
}^{\prime }$ were at a bounded distance, we would have a $C>0$ such that for
all $t>0$
\begin{equation*}
|d_{\infty }(e,(t;(0,0,t)))-t|\leq C
\end{equation*}
Hence $|d_{H_{3}}(e,(0,0,t))|\leq C,$ which is a contradiction.\\


$(B)$ Now let $\Omega =\{(1;(0,0,1))^{\pm 1},(1;(0,0,-1))^{\pm 1},(0;(1,0,0))^{\pm
1},(0;(0,1,0))^{\pm 1}\}$ be a generating set for $\Gamma $ and $\rho _{\Omega
}$ the word metric associated to it. Let $|\cdot |$ be a homogeneous
quasi-norm on $N$ which is at a bounded distance from $\rho _{\Omega },$
i.e. $|\rho _{\Omega }(e,g)-|g||$ is bounded. Then $|\cdot |$ is asymptotic
to $\rho _{\Omega },$ hence is equal to the Carnot-Caratheodory Finsler
metric $d$ asymptotic to $\rho _{\Omega }$ and homogeneous with respect to
the same one parameter group of dilations $\{\delta _{t}\}_{t>0}.$ Let $%
m_{1}=\{v\in \frak{n}$, $\delta _{t}(v)=tv\}.$ Then $d$ is induced by some
norm $\left\| \cdot \right\| _{0}$ on $m_{1},$ whose unit ball is given,
according to Theorem \ref{MetComp} by the convex hull of the projections to $%
m_{1}$ of the generators in $\Omega $. There is a unique vector in $m_{1}$
of the form $V+z_{0}Z.$ Its $\left\| \cdot \right\| _{0}$-norm is $1$ and $%
d(e,(1;(0,0,z_{0})))=1.$ However $%
d(e,(v;(x,y,z)))=|v|+d_{H_{3}}(e,(x,y,z-vz_{0}))$. Since $\rho _{\Omega
}(e,(n;(0,0,n)))=n,$ we get
\begin{equation*}
d(e,(n;(0,0,n)))-\rho _{\Omega }(e,(n;(0,0,n)))=d_{H_{3}}(e,(0,0,n(1-z_{0})))
\end{equation*}
If this is bounded, this forces $z_{0}=1.$ But we can repeat the same
argument with $(n;(0,0,-n))$ which would force $z_{0}=-1.$ A contradiction.\\

$(C)$ Let now $\Omega_2:=\{(1;(0,0,0))^{\pm 1},(0;(1,0,0))^{\pm
1},(0;(0,1,0))^{\pm 1}\}$ and $\rho_{\Omega_2}$ the associated word metric on $\Gamma$. Then again $\rho_\Omega$ and $\rho_{\Omega_2}$ are asymptotic by Theorem \ref{Pansu} because the convex hull of their projection modulo the $z$-coordinate coincide. However $\rho_{\Omega_2}$ is a product metric, namely we have $\rho_{\Omega_2}(e,(v;(x,y,z)))=|v|+\rho(e,(x,y,z))$, where $\rho$ is the word metric on the discrete Heisenberg group $H_3(\Z)$ with standard generators $\{(1,0,0)^{\pm
1},(0,1,0)^{\pm 1}\}$. In particular
\begin{equation*}
\rho_{\Omega}(e,(n;(0,0,n)))-\rho _{\Omega_2}(e,(n;(0,0,n)))=\rho(e,(0,0,n))
\end{equation*}
which is unbounded.\\

\begin{remark}[An abnormal geodesic] We refer the reader to \cite{breuillard-ledonne} for more on these examples. In particular we show there that $\rho_1$ and $\rho_2$ above are not $(1,C)$-quasi-isometric for any $C>0$. The key phenomenon behind this example is the presence of an \emph{abnormal geodesic} (see \cite{Monty}), namely the one-parameter group $\{(t;(0,0,0))\}_t$.
\end{remark}

\begin{remark}[Speed of convergence in the nilpotent case]
The slow speed phenomenon in Theorem \ref{SmallSpeed} relied crucially on the presence
of a non-trivial semisimple part in $G_{\alpha }$ ; this doesn't occur in nilpotent
groups. In \cite{breuillard-ledonne}, we show that for word metrics on finitely generated nilpotent groups, the convergence in Theorem \ref{Pansu} has a polynomial speed with an error term at least as good as $O(d_\infty(e,x)^{-\frac{2}{3r}})$, where $r$ is the nilpotency class. We conjecture there that the optimal exponent is $\frac{1}{2}$. This involves refining quantitatively the estimates of the above proof of Theorem \ref{Pansu}.
\end{remark}

\section{Appendix: the Heisenberg groups}

Here we show how to compute the asymptotic shape of balls in the
Heisenberg groups $H_{3}(\Bbb{Z})$ and $H_{5}(\Bbb{Z})$ and their volume,
thus giving another approach to the main result of Stoll \cite{Sto2}.  The
leading term for the growth of $H_{3}(\Bbb{Z})$ is rational for all
generating sets (Prop. \ref{rati} below), whereas in $H_{5}(\Bbb{Z})$ with
its standard generating set, it is transcendental.  This explains how our
Figure 1 was made (compare with the odd \cite{Kar} Fig. 1).

\subsection{3-dim Heisenberg group}

Let us first consider the Heisenberg group
\begin{equation*}
H_{3}(\Bbb{Z})=\left\langle a,b|[a,[a,b]]=[b,[a,b]]=1\right\rangle .
\end{equation*}
We see it as the lattice generated by $a=\exp (X)$ and $b=\exp (Y)$ in the
real Heisenberg group $H_{3}(\Bbb{R})$ with Lie algebra $\frak{h}_{3}$
generated by $X,Y$ and spanned by $X,Y,Z=[X,Y].$ Let $\rho _{\Omega }$ be
the standard word metric on $H_{3}(\Bbb{Z})$ associated to the generating
set $\Omega =\{a^{\pm 1},b^{\pm 1}\}.$ According to Theorem \ref{MetComp},
the limit shape of the $n$-ball $\Omega ^{n}$ in $H_{3}(\Bbb{Z})$ coincides
with the unit ball $\mathcal{C}_{3}=\{g\in H_{3}(\Bbb{R}),d_{\infty
}(e,g)\leq 1\}$ for the Carnot-Caratheodory metric $d_{\infty }$ induced on $%
H_{3}(\Bbb{R})$ by the $\ell ^{1}$-norm $\left\| xX+yY\right\| _{0}=|x|+|y|$
on $m_{1}=span\{X,Y\}\subset \frak{h}_{3}.$

Computing this unit ball is a rather simple task. Exchanging the roles of $X$
and $Y$, we see that $\mathcal{C}_{3}$ is invariant under the reflection $%
z\mapsto -z.$ Then clearly $\mathcal{C}_{3}$ is of the form $\{xX+yY+zZ,$
with $|x|+|y|\leq 1$ and $|z|\leq z(x,y)\}.$ Changing $X$ to $-X$ and $Y$ to
$-Y,$ we get the symmetries $z(x,y)=z(-x,y)=z(x,-y)=z(y,x).$ Hence when
determining $z(x,y)$, we may assume $0\leq y\leq x\leq 1,$ $x+y\leq 1.$

The following well known observation is crucial for computing $z(x,y)$. If $%
\xi (t)$ is a horizontal path in $H_{3}(\Bbb{R})$ starting from $id,$ then $%
\xi (t)=\exp (x(t)X+y(t)Y+z(t)Z)$, where $\xi ^{\prime }(t)=x(t)X+y(t)Y$ and
$z(t)$ is the ``balayage'' area of the between the path $\{x(s)X+y(s)Y\}_{0%
\leq s\leq t}$ and the chord joining $0$ to $x(t)X+y(t)Y.$

Therefore, $z(x,y)$ is given by the solution to the ``Dido isoperimetric
problem'' (see \cite{Monty}): find a path in the $X,Y$-plane between $0$ and
$xX+yY$ of $\left\| \cdot \right\| _{0}$-length $1$ that maximizes the
``balayage area''. Since $\left\| \cdot \right\| _{0}$ is the $\ell ^{1}$%
-norm in the $X,Y$-plane, as is well-known (see \cite{Bus}), such extremal curves are given
by arcs of square with sides parallel to the $X,Y$-axes. There is therefore
a dichotomy: the arc of square has either $3$ or $4$ sides (it may have $1$
or $2$ sides, but these are included are limiting cases of the previous
ones).

If there are $3$ sides, they have length $\ell ,$ $x$ and $y+\ell $ with $%
y+\ell \leq x.$ Hence $1=\ell +x+y+\ell $ and $z(x,y)=\ell x+\frac{1}{2}xy.$
Therefore this occurs when $y\leq 3x-1$ and we then have $z(x,y)=\frac{x(1-x)%
}{2}.$

If there are $4$ sides, they have length $\ell ,x+u,y+\ell $ and $u$, with $\ell +y=x+u.$ Hence $1=2\ell +2u+x+y$ and $z(x,y)=(\ell +y)(x+u)-\frac{xy}{2}.$ This occurs when $y\geq 3x-1$ and we then have $z(x,y)=\frac{(1+x+y)^{2}}{16}-\frac{xy}{2}.$

Hence if $0\leq y\leq x\leq 1$ and $x+y\leq 1$
\begin{equation}
z(x,y)=1_{y\leq 3x-1}\frac{x(1-x)}{2}+1_{y>3x-1}\frac{(1+x+y)^{2}}{16}-\frac{%
xy}{2}  \label{zizi}
\end{equation}
The unit ball $\mathcal{C}_{3}$ drawn in Figure 1 is the solid body
$\mathcal{C}_{3}=\{xX+yY+zZ,$ with $|x|+|y|\leq 1$ and $|z|\leq z(x,y)\}.$

A simple calculation shows that $vol(\mathcal{C}_{3})=\frac{31}{72}$ in the
Lebesgue measure $dxdydz.$ Since $H_{3}(\Bbb{Z})$ is easily seen to have
co-volume $1$ for this Haar measure on $H_{3}(\Bbb{R})$ (actually $%
\{xX+yY+zZ,x\in [0,1),y\in [0,1),z\in [0,1)\}$ is a fundamental domain), it
follows that
\begin{equation*}
\lim_{n\rightarrow \infty }\frac{\#(\Omega ^{n})}{n^{4}}=vol(\mathcal{C}%
_{3})=\frac{31}{72}
\end{equation*}
We thus recover a well-known result (see \cite{Ben}, \cite{Sha} where even
the full growth series is computed and shown to be rational).

One can also determine exactly which points of the sphere $\partial \mathcal{%
C}_{3}$ are joined to $id$ by a unique geodesic horizontal path. The reader
will easily check that uniqueness fails exactly at the points $(x,y,\pm
z(x,y))$ with $|x|<\frac{1}{3}$ and $y=0,$ or $|y|<\frac{1}{3}$ and $x=0,$
or else at the points $(x,y,z)$ with $|x|+|y|=1$ and $|z|<z(x,y).$

The above method also yields the following result.

\begin{prop}\label{rati}
Let $\Omega $ be any symmetric generating set for $H_{3}(\Bbb{Z}).$ Then
the
leading coefficient in $\#(\Omega ^{n})$ is rational, i.e.
\begin{equation*}
\lim_{n\rightarrow \infty }\frac{\#(\Omega ^{n})}{n^{4}}=r
\end{equation*}
is a rational number.
\end{prop}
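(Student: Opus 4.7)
The plan is to reduce the statement, via the main theorem, to the claim that the volume of the limit unit ball is rational, and then to deduce this rationality from the piecewise-algebraic structure of the solutions to the Dido isoperimetric problem for a polygonal norm with integer vertices.

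By Theorem \ref{MetComp} and the choice of Haar measure in Corollary \ref{volasym}, the limit in question equals $\mathrm{vol}(\mathcal{C}_\Omega)$, where $\mathcal{C}_\Omega$ is the unit ball of the left-invariant Carnot--Carath\'eodory metric $d_\infty$ on $H_3(\R)$ associated to the norm $\|\cdot\|_0$ on $m_1=\mathrm{span}\{X,Y\}$ whose unit ball is $P := \mathrm{CvxHull}(\pi_1(\Omega))$, as in $(\ref{WordMetricBall})$. Since $\pi_1\colon H_3(\Z)\to \Z X+\Z Y$ is the abelianization and $\Omega$ is a finite symmetric subset, $P$ is a symmetric convex polygon with integer vertices, so $\|\cdot\|_0$ is an integral polygonal norm. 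Exactly as in the computation of the preceding subsection, $\mathcal{C}_\Omega$ takes the form $\{xX+yY+zZ : (x,y)\in P,\ |z|\le z_\Omega(x,y)\}$, where $z_\Omega(x,y)$ is the maximal balayage area enclosed by a $\|\cdot\|_0$-horizontal path of length $\le 1$ from $0$ to $(x,y)$ together with the chord $[0,(x,y)]$. Since the covolume of $H_3(\Z)$ is $1$ for Lebesgue measure $dx\,dy\,dz$, it suffices to show that $\mathrm{vol}(\mathcal{C}_\Omega)=2\int_P z_\Omega(x,y)\,dx\,dy$ is rational.

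The key step is to prove that $z_\Omega$ is a piecewise polynomial function with rational coefficients on a decomposition of $P$ into finitely many polygonal sub-regions with rational vertices. For this I would invoke the classical Busemann--Minkowski isoperimetric theory in the normed plane $(\R^2,\|\cdot\|_0)$: the extremal closed curves of prescribed $\|\cdot\|_0$-length maximizing enclosed area are (up to translation and scaling) homothets of the \emph{isoperimetrix} $I$, which for a polygonal norm with unit ball $P$ is the $90^\circ$ rotation of the polar polygon $P^\circ$. Since $P$ has integer vertices, the edges of $P^\circ$ have integer inward normals, so the edges of $I$ lie in finitely many rational directions. The extremal open curve from $0$ to $(x,y)$ is then a concatenation of straight segments parallel to these edges. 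There are only finitely many combinatorial types of such concatenations, and for each type the set of $(x,y)$ for which it is optimal is cut out by finitely many rational linear inequalities. Within each region, the segment lengths are determined by a linear system (endpoint equations and the total length constraint $=1$) with rational coefficients, so they are rational affine functions of $(x,y)$; Green's theorem then expresses $z_\Omega$ as a rational polynomial of degree $\le 2$ in $(x,y)$.

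Once this piecewise structure is established, $\int_P z_\Omega\,dx\,dy$ decomposes into a finite sum of integrals of rational polynomials over rational polygons, each of which is a rational number, completing the proof. The main obstacle is the second step: making the appeal to Busemann--Minkowski isoperimetry explicit enough to enumerate the combinatorial types of extremals coming from the edges of $I$, handle the degenerate types (e.g.\ when fewer edges are used, as in both cases of $(\ref{zizi})$), and verify that the regions of optimality are genuinely rational polygons rather than curvilinear. All of this should follow from the polyhedral nature of $I$ together with the observation that, for each fixed combinatorial type, the isoperimetric optimization is a linear program with rational data, so both the optimal value and the region where that type is optimal are described by rational linear data in $(x,y)$.
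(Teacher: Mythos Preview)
Your proposal is correct and follows essentially the same route as the paper's own (sketched) proof: reduce via Theorem \ref{MetComp} to the volume of the limit Carnot--Carath\'eodory ball, invoke Busemann's solution to the Dido problem for a polygonal norm (extremals are arcs of the isoperimetrix, i.e.\ the $90^\circ$-rotated dual polygon), and observe that because $P=\mathrm{CvxHull}(\pi_1(\Omega))$ has integer vertices the dual polygon is rational, forcing $z_\Omega(x,y)$ to be piecewise a rational quadratic on rational polygonal pieces, whence the integral is rational. Your version is in fact slightly more explicit than the paper's sketch (you spell out the linear-programming structure and the role of Green's theorem), but the underlying argument is the same.
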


\proof
We only sketch the proof here. We can apply the method above and compute
$r$ as the volume of the unit $CC$-ball $\mathcal{C}(\Omega )$ of the
limit
$CC$-metric $d_{\infty }$ defined in Theorem \ref{MetComp}. Since we know what
is the norm $\left\| \cdot \right\| $ in the $(x,y)$-plane $%
m_{1}=span\left\langle X,Y\right\rangle $ that generates $d_{\infty }$ (it
is the polygonal norm given by the convex hull of the points of $\Omega
$),
we can compute $\mathcal{C}(\Omega )$ explicitly. We need to know the
solution to Dido's isoperimetric problem for $\left\| \cdot \right\| $ in
$m_{1}$, and as is well known (see \cite{Bus}) it is given by polygonal lines from the dual
polygon rotated by $90^{\circ }$. Since the polygon defining $\left\|
\cdot \right\| $ is made of rational lines (points in $\Omega $ have
integer coordinates), any vector with rational coordinates has rational
$\left\|\cdot \right\| $-length, and the dual polygon is also rational. The
equations defining $z(x,y)$ will therefore have only rational
coefficients,
and $z(x,y)$ will be piecewisely given by a rational quadratic form in $x$
and $y,$ where the pieces are rational triangles in the $(x,y)$-plane. The
total volume of $\mathcal{C}(\Omega )$ will therefore be rational.
\edpf

\subsection{5-dim Heisenberg group}

The Heisenberg group $H_{5}(\Bbb{Z})$ is the group generated by $%
a_{1},b_{1},a_{2},b_{2}$,$c$ with relations $c=[a_{1},b_{1}]=[a_{2},b_{2}]$,
$a_{1}$ and $b_{1}$ commute with $a_{2}$ and $b_{2}$ and $c$ is central. Let
$\Omega =\{a_{i}^{\pm 1},b_{i}^{\pm 1},i=1,2\}.$ Let us describe the limit
shape of $\Omega ^{n}$. Again, we see $H_{5}(\Bbb{Z})$ as a lattice of
co-volume $1$ in the group $H_{5}(\Bbb{R})$ with Lie algebra $\frak{h}_{5}$
spanned by $X_{1},Y_{1}X_{2},Y_{2}$ and $Z=[X_{i},Y_{i}].$ By Theorem \ref
{MetComp}, the limit shape is the unit ball $\mathcal{C}_{5}$ for the
Carnot-Caratheodory metric on $H_{5}(\Bbb{R})$ induced by the $\ell ^{1}$%
-norm $\left\| x_{1}X_{1}+y_{1}Y_{1}+x_{2}X_{2}+y_{2}Y_{2}\right\|
_{0}=|x_{1}|+|y_{1}|+|x_{2}|+|y_{2}|.$

Since $X_{1},Y_{1}$ commute with $X_{2},Y_{2},$ in any piecewise linear
horizontal path in $H_{5}(\Bbb{R}),$ we can swap the pieces tangent to $%
X_{1} $ or $Y_{1}$ with those tangent to $X_{2}$ or $Y_{2}$ without changing
the end point of the path. Therefore if $\xi (t)=\exp
(x_{1}(t)X_{1}+y_{1}(t)Y_{1}+x_{2}(t)X_{2}+y_{2}(t)Y_{2}+z(t)Z)$ is a
horizontal path, then $z(t)=z_{1}(t)+z_{2}(t)$, where $z_{i}(t)$, $i=1,2,$
is the ``balayage area'' of the plane curve $\{x_{i}(s)X_{i}+y_{i}(s)Y_{i}%
\}_{0\leq s\leq t}.$

Since, just like for $H_{3}(\Bbb{Z}),$ we know the curve maximizing this
area, we can compute the unit ball $\mathcal{C}_{5}$ explicitly. In
exponential coordinates it will take the form $\mathcal{C}_{5}=\{\exp
(x_{1}X_{1}+y_{1}Y_{1}+x_{2}X_{2}+y_{2}Y_{2}+zZ),$ $%
|x_{1}|+|y_{1}|+|x_{2}|+|y_{2}|\leq 1$ and $|z|\leq
z(x_{1},y_{1},x_{2},y_{2})\}.$ Then $z(x_{1},y_{1},x_{2},y_{2})=\sup_{0\leq
t\leq 1}\{z_{t}(x_{1},y_{1})+z_{1-t}(x_{2},y_{2})\},$ where $z_{t}(x,y)$ is
the maximum ``balayage area'' of a path of length $t$ between $0$ and $xX+yY.
$ It is easy to see that $z_{t}(x,y)=t^{2}z(x/t,y/t)$ where $z$ is given by $%
(\ref{zizi})$. Hence $z_{t}$ is a piecewise quadratic function of $t.$ Again
$z(x_{1},y_{1},x_{2},y_{2})$ is invariant under changing the signs of the $%
x_{i}$,$y_{i}$'s, and swapping $x$ and $y,$ or else swapping $1$ and $2.$ We
may thus assume that the $x_{i}$,$y_{i}$'s lie in $D=\{0\leq y_{i}\leq
x_{i}\leq 1$ and $x_{1}+y_{1}+x_{2}+y_{2}\leq 1$, and $x_{2}-y_{2}\geq
x_{1}-y_{1}\}.$ We may therefore determine explicitly the supremum $%
z(x_{1},y_{1},x_{2},y_{2})$, which after some straightforward calculations
takes on $D$ the following form:
\begin{equation*}
z(x_{1},y_{1},x_{2},y_{2})=1_{A}\max \{d_{1},d_{2}\}+1_{B}\max
\{d_{1},c_{1}\}+1_{C}\max \{c_{1},c_{2}\}
\end{equation*}
where $d_{1}=\frac{x_{1}y_{1}}{2}+\frac{x_{2}}{2}(1-x_{1}-y_{1}-x_{2})$, $%
c_{1}=\frac{1}{16}(1+x_{1}+y_{1}-x_{2}-y_{2})^{2}+\frac{x_{2}y_{2}-x_{1}y_{1}%
}{2},$ and $d_{2}$ and $c_{2}$ are obtained from $d_{1}$ and $c_{1}$ by
swapping the indices $1$ and $2.$ The sets $A,B$ and $C$ form the following
partition of $D:$ $A=D\cap \{m\leq x_{1}-y_{1}\},$ $B=D\cap
\{x_{1}-y_{1}<m<x_{2}-y_{2}\}$ and $C=D\cap \{x_{2}-y_{2}\leq m\},$ where $%
m=(1-x_{1}-x_{2}-y_{1}-y_{2})/2.$

Since $\mathcal{C}_{5}$ has such an explicit form, it is possible to compute
its volume. The fact that $z(x_{1},y_{1},x_{2},y_{2})$ is piecewisely given
by the maximum of two quadratic forms makes the computation of the integral
somewhat cumbersome but tractable. Our equations coincide (fortunately!)
with those of Stoll (appendix of \cite{Sto2}), where he computed the main
term of the asymptotics of $\#(\Omega ^{n})$ by a different method. Stoll
did calculate that integral and obtained
\begin{equation*}
\lim_{n\rightarrow \infty }\frac{\#(\Omega ^{n})}{n^{6}}=vol(\mathcal{C}%
_{5})=\frac{2009}{21870}+\frac{\log (2)}{32805}
\end{equation*}
which is transcendental. It is also easy to see by this method that
 if we change the generating set to $\Omega_0=\{a_{1}^{\pm 1}b_{1}^{\pm 1}a_2^{\pm 1}b_{2}^{\pm 1}\}$,
then we get a rational volume. Hence the rationality of the growth series of $H_5(\Bbb Z)$ depends on
the choice of generating set, which is Stoll's theorem.

One advantage of our method is that it can also apply to fancier generating sets. The case of Heisenberg
groups of higher dimension with the standard generating set is analogous:  the function
$z(\{x_{i}\},\{y_{i}\})$ is again piecewisely defined as the maximum of finitely many explicit quadratic
forms on a linear partition of the $\ell ^{1}$-unit ball $\sum |x_{i}|+|y_{i}|\leq 1.$\\

\noindent {\bf Acknowledgments.}
I\ would like to thank Amos Nevo for his hospitality at the Technion of
Haifa in December 2005, where part of this work was conducted, and for triggering my interest
in this problem by showing me the possible implications of Theorem \ref
{firsthm} to Ergodic Theory.
My thanks are also due to V. Losert for pointing out an inaccuracy in my first proof of Theorem \ref{weaklycom} and for his other remarks on the manuscript.
Finally I thank Y. de Cornulier, M. Duchin, E. Le Donne, Y. Guivarc'h, A. Mohammadi, P. Pansu and R. Tessera for several useful conversations.

\end{document}